\documentclass[11pt]{amsart}
\usepackage[a4paper,margin=2.5cm]{geometry}

\usepackage{amssymb}
\usepackage{enumerate, xspace}
\usepackage{enumitem}
\usepackage{mathrsfs}
\usepackage[normalem]{ulem}
\usepackage{bbm}       %allows \mathds{E 1}, without sans is less heavy
\usepackage[backgroundcolor=blue!20!white, linecolor=blue!20!white,
textsize=footnotesize]{todonotes}
\usepackage{verbatim}
\usepackage[colorlinks=true,linkcolor=blue,citecolor=magenta]{hyperref}
\numberwithin{equation}{section}
\usepackage{stmaryrd}

\usepackage{enumitem}
\usepackage{hyperref}

\usepackage{crossreftools}

\makeatletter
\newcommand{\optionaldesc}[2]{%
  \phantomsection
  #1\protected@edef\@currentlabel{#1}\label{#2}%
}
\makeatother
\usepackage{xurl} % optional but recommended for line breaks in URLs

\usepackage{hyperref} % required to make hyperlinks clickable

\makeindex
\newtheorem{theorem}{Theorem}[section]
\newtheorem{lemma}[theorem]{Lemma}

\newtheorem{proposition}[theorem]{Proposition}
\newtheorem{corollary}[theorem]{Corollary}

\newtheorem{definition}[theorem]{Definition}
\newtheorem{remark}[theorem]{Remark}

\newcommand{\mylabel}[2]{#2\def\@currentlabel{#2}\label{#1}}

\newcommand\ve{\varepsilon}

\newcommand\vf{\varphi}

\newcommand\fR{\mathfrak R}

\newcommand\bS {\mathbb S}

\renewcommand{\ge}{\geqslant}
\renewcommand{\le}{\leqslant}

\renewcommand{\hat}{\widehat}
\renewcommand{\tilde}{\widetilde}
\renewcommand{\bar}{\overline}

\newcommand\cA{{\mathcal A}}
\newcommand\cB{{\mathcal B}}
\newcommand\cC{{\mathcal C}}
\newcommand\cD{{\mathcal D}}
\newcommand\cE{{\mathcal E}}
\newcommand\cF{{\mathcal F}}
\newcommand\cG{{\mathcal G}}
\newcommand\cH{{\mathcal H}}

\newcommand\cI{{\mathcal I}}
\newcommand\cJ{{\mathcal J}}
\newcommand\cL{{\mathcal L}}
\newcommand\cO{{\mathcal O}}
\newcommand\cM{{\mathcal M}}

\newcommand\cQ{{\mathcal Q}}
\newcommand\cR{{\mathcal R}}
\newcommand\cS{{\mathcal S}}
\newcommand\cT{{\mathcal T}}

\newcommand{\cZ}{\mathcal Z}

\newcommand\bH{{\mathbb H}}

\newcommand\bN{{\mathbb N}}

\newcommand\bR{{\mathbb R}}
\newcommand\bT{{\mathbb T}}

\newcommand\bZ{{\mathbb Z}}
\newcommand\Id{{\mathbbm{1}}}

\DeclareMathAlphabet{\mymathbb}{U}{BOONDOX-ds}{m}{n}

\newcommand{\cW}{\mathcal W}

\newcommand{\Center}{\text{Center}}
\newcommand{\rw}{r_{1,W}}
\newcommand{\funny}{2e^{\varpi_2}}

\begin{document}
\title{Linear response for Sinai billiards with small holes}

\author{Giovanni Canestrari}
\address{Department of Mathematics, University of Toronto, 
40 St.~George Street, Toronto, ON M5S 2E4, Canada. Email:
\texttt{giovanni.canestrari@utoronto.ca}.}

\date{\today}

\begin{abstract}
We show that the conditional survival probability measure for a Sinai billiard with a small hole on the boundary of the table is differentiable with respect to the size \(t\) of the hole at \(t = 0\) and we compute the derivative.
\end{abstract}

\maketitle

\section{Introduction}\label{sec:intro}

The importance of linear response is hard to overestimate. It is a central tool in statistical mechanics and condensed matter physics,  climate science, and engineering (see, e.g., the introduction of \cite{GaLu26} for an overview of the applications). However, from the mathematical point of view, the results are rather limited. In particular, in dynamical systems, there exist strong results only for cases in which the dynamics and the observables are smooth, and the perturbed system is conjugate to the original one. These are very strong limitations and a lot of energy has been devoted to trying to overcome them (see, e.g.\! \cite{MR3729040, MR3622285, MR3876559, CANESTRARI2026111008}). The present paper deals with billiards with holes: an example in which the perturbations are not conjugate to the original system, establishing, for the first time, a linear response formula for the transition from a close to an open system. As the approach is rather general, it is likely to open the door to further applications.

The study of physically relevant invariant measures and their behavior under perturbations is a well-developed field in smooth ergodic theory and dynamical systems. A class of perturbation with its own lore is obtained by opening a hole in the phase space of a dynamical system (see \cite{MR3213491} for recent developments on the topic). These are paradigmatic examples of perturbations that turn a conservative system into a dissipative one. A hole \(\cH\) is a portion of the phase space \(\cM\) with the property that trajectories entering \(\cH\) are removed from the ensemble. If the system is chaotic enough, it is expected that Lebesgue a.e.\! trajectory explores the whole phase space and eventually enters the hole. In this scenario, the analog of SRB measures are the so-called \textit{conditionally invariant measures}. These are measures whose evolution, conditioned to the event of survival, is constant in time. The set of conditionally invariant measures is typically huge, as it contains, for example, periodic orbits that do not intersect the hole, but there are criteria to select measures considered more physically relevant. In particular, if the associated closed system preserves an SRB measure \(\mu_0\), it is natural to consider the evolution of \(\mu_0\) conditioned to survival. Starting with \cite{MR534126}, it has been shown that in many instances such evolution converges to a conditionally invariant measure \(\mu_{\cH}\). (See, e.g., \cite{MR1294552} for expanding maps on the interval with Markovian and non-Markovian \cite{MR1908547, MR2151600} holes, \cite{MR1479506, MR1479505, MR1779391, MR1653291} for Anosov maps, \cite{MR2643708} for non-uniformly hyperbolic settings). Typically, when this convergence takes place, one could start also from another initial ensemble different from \(\mu_0\) and obtain the same limit, and the convergence is exponentially fast for uniformly hyperbolic systems \cite{MR1406590, MR1978986, MR2199394}.

Once existence of physically relevant conditionally invariant measures is established, the next immediate question is to understand their dependence on the hole. A way to formalize this is to consider a family \(\cH_t\) of holes indexed by a real parameter \(t\), select a smooth observable \(\phi: \cM \to \bR\) and study the regularity of the function \(t \to \mu_{\cH_t}(\phi)\). In this context, typical results comprise Hölder continuity of \(\mu_{\cH_t}\). See, for example, \cite{MR2403704} for holes in piecewise \(\cC^2\) hyperbolic maps and \cite{MR3213499, MR2947939} for holes in the billiard. There are of course other ways to grasp information about the dependence of the surviving dynamics on the hole, which include the regularity of the topological entropy \cite{MR857203, MR3681986, MR3833340}, escape rates (see, e.g., \cite{MR2579459, MR2535206, MR4509333} and references therein) and Poissonian approximations (see, e.g.,\! \cite{MR4751722} and reference therein).

In the present work, we consider Sinai billiards, which are paradigmatic examples of conservative chaotic mechanical systems. The dynamics is given by a point-like particle which moves frictionless in a straight line until it collides elastically with some convex scatterer, or wall, having infinite mass. At collision, the particle conserves its speed and obeys the law that the angle of reflection equals the angle of incidence, as prescribed by momentum and energy conservation. Hyperbolicity is due to the strict convexity of the walls. This guarantees that parallel incoming particles are reflected as a divergent beam after a collision, thus spreading trajectories.

We consider an open system as we place a hole of size \(t\) in the boundary of the billiard table. The main contribution of this paper is a linear response result: we prove that the function \(t \to \mu_{\cH_t}(\phi)\) is differentiable at \(t = 0\) for any \(\cC^1\) observable \(\phi\). To the best of the author's knowledge, this is the first result establishing linear response when passing from a closed to an open dynamical system. In this setting, the perturbed evolution fails to be conjugated to the original one so that the traditional techniques for linear response based on structural stability (see \cite{MR1463827}) cannot be applied. In order to bound the modulus of continuity of \(\mu_{\cH_t}(\phi)\), one needs to consider convergence under the conditional evolution of an initial measure, which is essentially an indicator function supported on the image of the hole. As the hole shrinks to a point in the boundary of the table, the support of this initial measure becomes closer and closer to an unstable curve. Despite being singular, measures supported on unstable curves have in fact nice mixing properties for systems with some hyperbolicity. Indeed, this class of measures, called \textit{standard pairs}, has a long history, tracing back at least to Pesin and Sinai \cite{MR721733}, and the modern form has been introduced by Dolgopyat \cite{MR2241812}. Standard pairs are often used together with coupling, a technique of probabilistic origin employed to prove exponential decay of correlations, introduced into dynamical systems by Lai-Sang Young \cite{MR1637655}.

Unfortunately, the cases considered in the literature are not sufficient to obtain the needed mixing result for unstable curves, because none of them consider the leaky evolution. To remedy this, we prove that standard families (which are measures supported on a collection of unstable curves sufficiently long on average) converge exponentially fast to the relevant invariant measure under the conditional evolution of the billiard with the hole. This is a result of independent interest, as it expands the class of admissible initial conditions which relax to \(\mu_{\cH_t}\) in the infinite time limit. Together with the established invariance of standard families for the open dynamics, the above tool allows us to prove differentiability of \(\mu_{\cH_t}\).

Linear response lies at the core of non-equilibrium statistical mechanics because of its relation with fluctuation dissipation relations \cite{MR1412073, MR1224092}. Since the pioneering works of Ruelle in the smooth hyperbolic case \cite{MR1463827, MR1963142}, linear response has been a very active field in smooth ergodic theory and it has been established in a variety of settings (see, e.g., \cite{MR2031432, MR3551256} and recently \cite{CANESTRARI2026111008, AB26, Fu26} and references therein). A class of perturbations that somehow deviates from the classical theory is given by discontinuous perturbations and perturbations of discontinuous systems. These emerge in natural physical models \cite{MR1832968, MR1224092, MR2499824, MR2403704}, and are receiving a lot of attention nowadays \cite{MR4034566, MR4827592, MR4752563, MR4935941, OLTIBOEV2025116944}. However, on the mathematical side, the situation is not neat. We know that whenever the perturbation changes the topological class of the unperturbed map, at least for the corresponding theory of 1D expanding systems, there are many `typical' situations in which differentiability fails (see, e.g., \cite{MR3789184, MR4200481} and the survey \cite{MR3729040} and references therein).

In \cite{CANESTRARI2026111008}, a mechanism responsible for the differentiability of the SRB measure of certain discontinuous perturbations of 2D conservative chaotic systems has been investigated. We prove here that a similar mechanism also applies when the perturbation is obtained by opening a hole. At an intuitive level, the emergence of a hole represents a departure from conservativity and leads to study inhomogeneities in the mass distribution, which are similar to those produced by contributions from retracting or overlapping branches in deterministic perturbations. Note however that this is just an analogy and the case of holes is different from a classical perturbation. For example, the perturbed operator, which specifies the conditional evolution, is nonlinear and the usual `resolvent-type' formulas for the derivative of the SRB measures do not hold.

We remark that our arguments are not billiard-specific - and in fact we believe that they can be extended to different systems with some hyperbolicity and to higher dimensions -  but rely significantly on the geometry of the hole. The relevant property is whether one can foliate the image of the hole with long standard pairs. In particular, a circular hole in the phase space of an Anosov map would be out of reach, and we actually expect failure of linear response in that case. Another related but different question is the regularity of \(\mu_{\cH_t}\) for \(t >0\), i.e., not in the small hole regime (or, more in general, starting from a dissipative unperturbed system). Both these cases seem to have in common the necessity to introduce very small standard pairs in the picture, thus posing additional obstructions to linear response.

Lastly, we mention that, in the existing literature, the problem of stability for holes or other discontinuous perturbations has also been addressed via the `Banach space technology' \cite{MR1294552, MR2403704, MR3170623, MR3213499}. This is a set of powerful techniques, complementary to coupling, based on the abstract perturbation theory developed by Keller and Liverani \cite{MR1679080}. Once this machinery is applied, it yields all at once results about stability of conditionally invariant measures, escape rates and resonances. At the moment, we believe that proving differentiability with Banach space techniques is unfeasible for the systems investigated here. In fact, to obtain linear response, it seems necessary to study the evolution of a single unstable curve. However, as of today and to the author's knowledge, there are no Banach spaces for general piecewise hyperbolic systems (without a skew-product structure) containing standard pairs. 

The paper is structured as follows: in Section \ref{sec:setting}, we introduce a uniform class of Sinai billiards with holes, and we state our main result. In Section \ref{sec:basic-billiard} we recap some basic properties of the billiard map. In Section \ref{sec:standard-families} we state two fundamental results for the conditional dynamics of standard families, namely invariance and `conditional' decay of correlations. These results are then proved in Section \ref{sec:coupling}. Finally, in Section \ref{sec:linear-response} we prove the main theorem about linear response. Sections \ref{sec:linear-response} and \ref{sec:coupling} are independent of each other.

Throughout the paper \(\bN = \{1,2,...\}\), \(\bN_0 = \bN \cup \{0\}\), \(\bT^n = \bR^n /\bZ^n \) and \(c, C >0\) are constants which depend only on the billiard table and on the regularity data \(\varpi_1, B_1, D_1\) of the initial standard families. We also denote by \(C_{\mathrm{cone}} >0\) a constant that depends only on the unstable and stable cones. Actually \(C_{\mathrm{cone}}\) could also be denoted by \(C\), but we highlight in this way some dependencies in our arguments. For a vector \(v = (v_1, v_2) \in \bR^2\), \(|v| = \sqrt{v_1^2 + v_2^2}\) is the Euclidean norm and \(B(v,r)\) is the open ball of radius \(r\) centered in \(v\). For a set \(S \subset \bR^2\), \([S]_r = \cup_{x\in S}B(x,r)\). For a disjoint union of curves \(\Gamma\subset \cM\), we denote by \(|\Gamma|\) the sum of their lengths measured in the Euclidean metric. The scatterers are \(\cD = \cup_i \cD_i\) and the phase space for the billiard is \(\cM = \cup_{i} \partial \cD_{i} \times [-\pi/2, \pi/2]\) so that \(|\partial \cD|\) is the total perimeter of the scatterers. We denote by \(\cC^k(\cM)\) the set of continuous \(k\) times differentiable functions on \(\cM\). We write that \(P \sim Q\) and \(P \lesssim Q\) as shortcuts for \(C^{-1}P \le Q \le C P\) and \(P \le C Q\) respectively.\\

\textbf{Acknowledgments:} I am grateful to Nicholas Fleming Vázquez, Jacopo De Simoi and Carlangelo Liverani for many useful conversations. I acknowledge membership in the GNFM/INDAM and the UMI group DinAmicI.\\
\section{Setting and results}
\label{sec:setting}
We consider a Sinai billiard in \(\bT^2\) of type A according to the classification in \cite{MR2229799}, namely with bounded horizon and no corners. More precisely, we let \(\cD = \cup_{i}\cD_i \subset \bT^2\) be a finite and disjoint union of strictly convex and simply connected regions of \(\bT^2\). We call these regions scatterers and we assume that each \(\partial \cD_i\) is a \(\cC^3\) closed curve. We denote by \(\cM\) the \textit{phase space} for the billiard dynamics
\[
    \cM = \cup_{i} \partial \cD_{i} \times \biggl[-\frac \pi 2, \frac \pi 2\biggr],
\]
and by \(\cF: \cM \to \cM\) the billiard map. To define \(\cF\), we consider a point-like particle of unit mass and speed that is moving frictionless on \(\bT^2 \setminus \cD\) and when it hits the boundary, it reflects according to the law that the angle of reflection equals the angle of incidence. The map \(\cF\) is the Poincaré map corresponding to collisions with the scatterers. Each initial condition is identified by the index \(i\) of the scatterer, the arc-length coordinate on \(\partial \cD_i\) and the angle of the post-collisional velocity with the normal to the scatterers. For notational convenience, we identify \(\cup_i \{i\} \times [0, |\partial \cD_i|]\) with \([0, |\partial \cD|]\) so that we have only one arc-length coordinate and one angle.

For \(x = (r,\vf) \in \cM\), we set \(x_n = (r_n,\vf_n) = \cF^n (x)\). We also call \(\pi_r\) and \(\pi_{\vf}\) the projections on the first and second coordinate respectively. We assume that the billiard has bounded horizon, i.e., there are no trajectories for the billiard flow that never hit a scatterer. In particular, this implies that the free path between collisions \(\tau\) is bounded above by some constant (\cite[Exercise 2.17]{MR2229799}). We call \(\kappa\) the curvature of the boundary, which is the modulus of the second derivative of the arc-length parametrization of \(\partial \cD\). Both \(\kappa\) and \(\tau\) are functions \(\cM \to \bR^{+}\). Moreover, the strict convexity assumption implies that \(\kappa\) is bounded from below. To sum up, as a consequence of our assumptions, there exist \(\tau_{\mathrm{min}}, \tau_{\mathrm{max}}, \kappa_{\mathrm{min}}, \kappa_{\mathrm{max}} >0 \) such that
\begin{equation}\label{eq:bound-table}
\tau_{\mathrm{min}} \le \tau \le \tau_{\mathrm{max}} \quad \text{and}\quad \kappa_{\mathrm{min}} \le \kappa \le \kappa_{\mathrm{max}}.
\end{equation}
The other bounds are a consequence of the fact that the scatterers are disjoint and their boundaries are \(\cC^3\) close curves. It is well known \cite[Lemma 2.35]{MR2229799} that the billiard map preserves a unique absolutely continuous invariant probability measure \(\mu_0\), whose density w.r.t. Lebesgue is given by 
\[
d\mu_0 (r, \vf) = \frac{\cos \vf}{2|\partial \cD|} d\vf dr.
\]
We now introduce the hole. We fix for the rest of the paper a location \(r_{*} \in [0,|\partial \cD|]\) on the boundary of the table and we let \(\cH_t\) be the hole centered at \(r_{*}\) of size \(t\)
\begin{equation}\label{eq:def-hole}
\cH_t = \biggl[r_{*} -\frac{t|\partial \cD|}{2}, r_{*} + \frac{t|\partial \cD|}{2}\biggr] \times \biggl[-\frac{\pi}{2}, \frac{\pi}{2}\biggr],
\end{equation}
in the boundary of some scatterer. The normalization is chosen so that \(\mu_0 (\cH_t) = t\). We consider the corresponding \textit{open dynamical system} in which particles entering the hole \textit{escape} and we stop considering them. For a small \(t_0 > 0\) and \(t \in [0, t_0]\), \(n \in \bN\), we define the open domains \
\begin{equation}\label{eq:open-domain}
\cM_t = \cM \setminus \cH_t \quad \text{and} \quad \cM_{t}^n = \cap_{k =0}^{n}\cF^{-k}(\cM_t).
\end{equation}
The billiard map with the hole \(\cF_t\) is defined as \(\cF\) but restricted to \(\cM_t^1\)
\[
\begin{split}
    &\cF_t: \cM_t \cap \cF^{-1}( \cM_t) \to  \cM_t, \quad  \cF_t = \cF_{|\cM_t \cap \cF^{-1}( \cM_t)}, \quad \cF_t^n = \cF^n_{| \cM_t^{n}}.
\end{split}
\]
This lifts to a map on the set of probability measures given by the conditional probability of survival. For a measure \(\nu\) with \(\nu(\cM_t \cap \cF^{-1}(\cM_t)) > 0\), we define its conditional evolution \(\cL_{t}\nu\) as, for any \(\phi \in \cC^0(\cM)\),
\begin{equation}\label{eq:def-push}
(\cL_{t}\nu) (\phi) = \frac{\nu (\phi \circ \cF  \Id_{\cM_t\cap \cF^{-1}(\cM_t)})}{\nu(\cM_t \cap \cF^{-1}(\cM_t))} = \frac{\nu (\phi \circ \cF  \Id_{\cM_t^1})}{\nu(\cM_t^1)} .
\end{equation}
It is easy to check that iterating \eqref{eq:def-push}, for any \(n \in \bN\) and whenever it is well defined,
\begin{equation}\label{eq:iterate-transf-op-hole}
(\cL^n_{t}\nu) (\phi) =  \frac{\nu (\phi \circ \cF^n  \Id_{\cM_t^n})}{\nu(\cM_t^n)} .
\end{equation}
Note that \(\cL_{t}\) preserves the space of probability measures and, in the case \(t =0\), it reduces to the usual transfer operator \((\cL_0 \nu)(\phi) = \nu (\phi \circ \cF)\). It is well known (\cite[Theorem 2]{MR2579459}, \cite[Theorem 8.1]{MR3213499}) that for all \(t\) small enough there exists a measure \(\mu_t\) such that, for any \(\phi \in \cC^0 (\cM)\),
\begin{equation}\label{eq:conditional-invariant-measure}
\lim_{n \to \infty} (\cL_{t}^n \mu_0) (\phi) = \mu_t (\phi).
\end{equation}
These measures are conditionally invariant in the sense that \(\cL_t \mu_t = \mu_t\). Recall that \(r_{*}\) is the center of the hole \(\cH_t\). The main result of this paper is the following.
\begin{theorem}\label{thm:linear-response}
    For any \(\phi \in \cC^1(\cM)\) the function \(t \to \mu_t( \phi)\) is differentiable at \(t = 0\) and
    \[
    \lim_{t \to 0} \frac{\mu_t(\phi) - \mu_0(\phi)}{t} =\sum_{k=0}^{\infty}\biggl[ \mu_0 (\phi) - \frac{1}{2}\int_{- \pi/ 2}^{\pi/2} \phi \circ \cF^{k}(r_{*}, \vf) \cos \vf d \vf \biggr].
    \]
\end{theorem}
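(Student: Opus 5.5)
We would write \(\mu_t = \lim_n \cL_t^n \mu_0\) as a telescoping sum,
\[
\mu_t(\phi) - \mu_0(\phi) = \sum_{n=0}^{\infty}\bigl[(\cL_t^{n+1}\mu_0)(\phi) - (\cL_t^{n}\mu_0)(\phi)\bigr],
\]
and analyse each increment to first order in \(t\). By \eqref{eq:iterate-transf-op-hole} and \(\mu_0\circ\cF^{-1}=\mu_0\) we have \(\cL_t^n\mu_0 = \cL_t(\cL^{n-1}_t\mu_0)\) with \(\cL_t\mu_0(\phi) = \mu_0(\phi\circ\cF\,\Id_{\cM_t^1})/(1-\mu_0(\cH_t\cup\cF^{-1}\cH_t))\). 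To first order this equals \(\mu_0(\phi)+\mu_0(\phi\circ \cF)\,2t - \mu_0(\phi\circ\cF\,\Id_{\cH_t}) - \mu_0(\phi\circ\cF\,\Id_{\cF^{-1}\cH_t}) + O(t^2)\).

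Rather than expanding the iterates directly, we would use conditional invariance:
\[
\mu_t(\phi)-\mu_0(\phi) = \mu_t(\phi - \mu_0(\phi)).
\]
We then rewrite \(\mu_t\) through \(\cL_t^N\mu_0\) with \(N \approx C\log(1/t)\), and decompose the defect measure \(\nu_t := \mu_0\,\Id_{\cH_t}\) (mass \(t\)). The key identity is the following. For a measure \(\nu\) absolutely continuous with respect to \(\mu_0\),
\[
\cL_t^n\mu_0(\phi)\,\mu_0(\cM^n_t) = \mu_0(\phi) - \sum_{k=0}^{n}\mu_0\bigl(\Id_{\cH_t}\,\phi\circ\cF^{n-k}\,\Id_{\text{survive }n-k\text{ further steps}}\bigr)\cdot(\ldots).
\]
This is a first-exit decomposition: \(\mu_0(\phi\circ\cF^n \Id_{\cM^n_t}) = \mu_0(\phi) - \sum_{k=0}^{n} \mu_0(\phi\circ\cF^{n}\,\Id_{\cF^{-k}\cH_t}\Id_{\cM_t^{k-1}})\). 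By invariance, the \(k\)-th term equals \(\int_{\cH_t} \phi\circ\cF^{n-k}\,\Id_{\{\text{no earlier visit}\}}\circ\cF^{-k}\,d\mu_0\). Up to errors of order \(t\cdot \mu_0(\text{return to }\cH_t\text{ within }N\text{ steps} \mid \cH_t)\), which is \(o(t)\) since short returns to a shrinking hole are rare (no periodic orbits hit the point \(r_*\) with positive measure of angles), this becomes \(\nu_t(\phi\circ\cF^{n-k})\). Dividing by \(\mu_0(\cM_t^n)\approx 1 - (n+1)t\) and collecting terms, the first-order contribution is
\[
\sum_{j=0}^{n}\bigl[t\mu_0(\phi) - \nu_t(\phi\circ\cF^{j})\bigr].
\]
Since \(\nu_t(\phi\circ\cF^j)/t \to \frac12\int \phi\circ\cF^j(r_*,\vf)\cos\vf\,d\vf\) for each fixed \(j\) (continuity of \(\phi\circ\cF^j\) away from singularities, with \(\phi\in\cC^1\)), each term matches the summand of the claimed formula.

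The main step is uniformity in \(j\), needed to pass to the limit in the sum. The summand \(\mu_0(\phi) - \nu_t(\phi\circ\cF^j)/t\) must decay exponentially in \(j\) uniformly in small \(t\), and the same must hold for the conditional dynamics, since beyond time \(\sim\log(1/t)\) the open evolution differs from the closed one. Here we would invoke the standard family results of Section \ref{sec:standard-families}. The measure \(\nu_t/t\), pushed forward once, is supported on \(\cF(\cH_t)\), which we foliate by unstable curves (images of the vertical segments \(\{r\}\times[-\pi/2,\pi/2]\), which are unstable curves after one iterate). This gives a proper standard family with uniform regularity data. The invariance statement and conditional exponential decay of correlations for \(\cL_t\) acting on standard families then give
\[
|\cL_t^j(\nu_t/t)(\phi) - \mu_t(\phi)| \le C\|\phi\|_{\cC^1}\theta^j,
\]
uniformly in \(t\). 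The same holds for \(\cL_0\) with limit \(\mu_0\), which yields exponential convergence of the series. Combined with \(|\mu_t(\phi)-\mu_0(\phi)| = O(t\log(1/t))\) (from truncation at \(N\)) to control the second-order terms, this shows that the remainder is \(o(t)\).

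I expect the main obstacle to be this bookkeeping. The denominators \(\mu_0(\cM_t^n)\) make the operator nonlinear, and one must show that the cross terms, namely trajectories hitting the hole twice and the replacement of \(\mu_t\) by \(\mu_0\) inside the correction, are genuinely \(o(t)\) uniformly. My first attempt would be to split the sum at \(N=C\log(1/t)\): use the closed-system decay of correlations for the standard family \(\cF_*(\nu_t/t)\) for \(j\le N\), and use the conditional decay for the tail. The double-hit error would be bounded by the growth lemma, which shows that the standard family \(\cF^j_*(\nu_t/t)\) assigns mass \(O(t)\) to \(\cH_t\) for \(j\ge 1\).
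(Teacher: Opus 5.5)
Your outline is sound, but it is organized differently from the paper's proof. The paper never truncates the telescoping series. It proves the exact one-step identity
\[
(\cL_t^{n+1}\mu_0)(\phi)-(\cL_t^{n}\mu_0)(\phi)=\frac{\mu_0(\cF(\cH_t)\cap\cM_t^n)}{\mu_0(\cM_t^n)}\bigl((\cL_t^{n+1}\mu_0)(\phi)-(\cL_t^{n}\cD_t)(\phi)\bigr),
\]
with $\cD_t(\phi)=\mu_0(\phi\circ\cF\,\Id_{\cH_t})/t$. It bounds the prefactor by $Ct$ (Lemma~\ref{lem:nasty-term}) and the bracket by $C\gamma^n\|\phi\|_{\cC^1}$, using conditional mixing of the initial regular measures $\cL_t\mu_0$ and $\cD_t$. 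Hence every increment is $O(t\gamma^n)$ uniformly in $t$ (Lemma~\ref{lem:response-formula}). The $t\to0$ limit of each increment is then computed at fixed $n$ (Lemma~\ref{lem:limit-single-terms}), which needs only the fixed-time return bound of Lemma~\ref{lem:int-hole}, and dominated convergence finishes.

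You instead cut at $N\asymp\log(1/t)$ and expand $\cL_t^N\mu_0$ to first order by the first-exit decomposition. You dominate the summands $\mu_0(\phi)-(\cL_0^{j-1}\cD_t)(\phi)$, $j\ge1$, by closed-system mixing of $\cD_t$, uniformly in $t$. The open dynamics enter only to discard the tail, via $|\cL_t^N\mu_0(\phi)-\mu_t(\phi)|\le C\gamma^N=o(t)$, obtained as in the proof of Corollary~\ref{cor:exp-ren-mixing}. Your route makes it transparent that the response series is a closed-system object, and it dispenses with Lemma~\ref{lem:nasty-term} and with conditional mixing of $\cD_t$. The paper's route gives the stronger increment-wise bound, hence the Lipschitz estimate \eqref{eq:formal-power-series} directly, and it needs short-return control only at fixed times.

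Two points need tightening.

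\emph{Quantitative return bound.} You sum $N\asymp\log(1/t)$ first-exit terms and expand $\mu_0(\cM_t^N)$, so a per-term $o(t)$ bound justified by the absence of periodic orbits is not enough. You need
\[
\mu_0(\cH_t\cap\cF^{-i}\cH_t)=t\,\cD_t(\Id_{\cH_t}\circ\cF^{i-1})\le Ct^2
\]
uniformly in $i\ge1$. This does follow from Lemma~\ref{lem:hole-intersection} applied to the regular measures $\cL_0^{i-1}\cD_t$, using Lemma~\ref{lem:convergence-discontinuous-function} to evaluate them on indicators. The total error in the expansion is then $O(N^2t^2)=o(t)$.

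\emph{The measure you evolve.} $\cL_t^j(\nu_t/t)$ is undefined, because $\nu_t$ is supported in $\cH_t$ and is killed at the first step. The object you want is $\cD_t=\cF_*(\nu_t/t)$. It is a continuous average of the standard families of Lemma~\ref{lem:push-vertical-line}, so in the paper's countable framework it is only a weak-$\star$ limit of standard families (Lemma~\ref{lem:exp-conv-initial}). In your route it is the closed-system mixing of $\cD_t$, not its conditional mixing, that is used.
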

The term \(\frac{1}{2}\int_{- \frac \pi 2}^{\frac \pi 2} \phi \circ \cF^{k}(r_{*}, \vf) \cos \vf d \vf\) can be written as \(\mu_{r_{*}} (\phi \circ \cF^{k})\) where \(\mu_{r_*}\) is the measure obtained by integrating \(\cos \vf\) over the vertical line \(\{(r_*,\vf), \vf \in [-\pi/2, \pi/2]\}\). As a consequence of Lemma \ref{lem:push-vertical-line} and Proposition \ref{prop:invariance-standard-families} in the case \(t=0\), the measures \(\cL_0^{k} \mu_{r_*}\) correspond to standard families, a class of measures defined in Section \ref{sec:standard-families} which are weighted collections of singular measures supported on unstable curves which are not too short on average. It is well known (\cite[Theorem 7.31]{MR2229799} or Theorem \ref{thm:exp-mixing-sf} in the case \(t =0\)) that standard families converge exponentially fast to \(\mu_0\) in the dual of \(\cC^1\) under the dynamics. This in particular implies that the sum on the right-hand side of the equation in Theorem \ref{thm:linear-response} is convergent.

\section{Basic properties of the billiard map}\label{sec:basic-billiard}

Here we recall some well-known properties of \(\cF\) and introduce additional notation related to the billiard dynamics. Let \(\cS_0 := \partial \cM = \{\vf = \pm \pi/2\}\) and \(\cS_1 = \partial \cM \cup \cF^{-1}(\partial \cM)\), \(\cS_{-1} = \partial \cM \cup \cF (\partial \cM)\) be the singularities of the billiard map and its inverse. We define inductively the singularities of the iterates of \(\cF\) and \(\cF^{-1}\) respectively as
\[
     \cS_{n+1} = \cS_n \cup \cF^{-1}(\cS_n) \quad \text{and}\quad \cS_{-(n+1)} = \cS_{-n} \cup \cF(\cS_{-n}).
\]
Homogeneity strips are a well-known tool used to control distortion. Let \(k_0 \in \bN\) be a large constant (how large will be specified later on) and for \(k \ge k_0\), we set
\begin{equation}\label{eq:homogeneity}
\begin{split}
        \bH_{k} = \bH_k^{+} \cup \bH_k^{-} &= \{(r, \vf): \pi/2 -k^{-2} < \vf < \pi/2 -(k+1)^{-2}\}\\
        & \hspace{2cm}\cup \{(r, \vf): -\pi/2 + (k+1)^{-2} < \vf < -\pi/2 +k^{-2}\},\\
        \bH_0 &= \{(r,\vf): -\pi/2 + k_0^2 < \vf < \pi/2 - k_0^2\}.
    \end{split}
\end{equation}
The boundaries of these strips
\[
        \bS_k   = \{(r, \vf):   |\vf| = \pi/2 -k^{-2}\} \cup \{(r, \vf):   |\vf| = -\pi/2 +k^{-2}\}, \quad \bS = \bigcup_{k \ge k_0}\bS_k
\]
define the extended singularity set
\[
 \cS_{n}^{\bH} = \cS_n \cup \bigcup_{k=0}^{n}\cF^{-k}(\bS) \quad \text{and}\quad \cS_{-n}^{\bH} = \cS_{-n} \cup \bigcup_{k=0}^{n}\cF^{k}(\bS).
\]
We denote by \(\partial \cH_t\) the discontinuities of the hole, namely the two vertical lines \(\{ (r_{*} \pm t|\partial \cD|/2, \vf): \vf \in [-\pi/2, \pi/2]\}\). (This set slightly differs from the boundary of \(\cH_t\) since it does not contains the horizontal boundaries, which are already included in \(\cS_0\)). We introduce the set
\[
 \Xi_{n,t} = \bigcup_{k=0}^{n} \cF^{-k}(\partial \cH_t),
\]
which separates the phase space between trajectories that survive up to time \(n\) and those that don't. Finally, we introduce the most general discontinuities
\begin{equation}\label{eq:added-discontinuity-hole}
\cS_{n,t} = \cS_n \cup \Xi_{n,t} \quad \text{and}\quad \cS^{\bH}_{n,t} = \cS_n^{\bH} \cup \Xi_{n,t}.
\end{equation}
Recalling \eqref{eq:open-domain} for the definition of \(\cM_t^n\), 
\begin{equation}\label{eq:discontinuity-survival}
    \partial \cM_t^n \subseteq \bigcup_{k=0}^{n}\partial \cF^{-k} ( \cM_t) \subseteq \bigcup_{k=0}^{n} \cF^{-k}(\partial \cM_t) \cup \cS_{k} =  \bigcup_{k=0}^{n}\cF^{-k} (\partial \cH_t \cup \cS_0) \cup \cS_{k} = \cS_{n,t}.
\end{equation}
As for the hyperbolicity properties, we start by discussing the differential of the map. Denoting by \( D_{(r,\vf)}\cF\) the differential of \(\cF\) at the point \((r,\vf)\), we have (see \cite[Equation (2.26)]{MR2229799})
\begin{equation}\label{eq:differential}
\begin{split}
    D_{(r,\vf)}\cF &=   \frac{-1}{\cos \vf_1}
    \begin{pmatrix}
      \tau \kappa + \cos \vf & \tau \\
      \tau \kappa \kappa_1 + \kappa \cos \vf_1 + \kappa_1 \cos \vf & \tau\kappa_1 + \cos \vf_1
    \end{pmatrix} \\
    &:= \frac{-1}{\cos \vf_1}\begin{pmatrix}
      A & B \\
      C & D
    \end{pmatrix}.
    \end{split}
\end{equation}
Here, we recall that \((r_1, \vf_1) = \cF(r,\vf)\) and we introduced functions \(A,B,C\) and \(D : \cM \to \bR\), which by \eqref{eq:bound-table} have bounded \(L^{\infty}\) norm. In particular,
\begin{equation}\label{eq:bound-differential}
    \|D_{(r,\vf)}\cF\| \le \frac{C}{\cos \vf_1}.
\end{equation}
We also introduce the following unstable cone fields defined on the tangent bundle,
\[
\cC^u_x =\left\{(dr, d\vf): \kappa \le \frac{d\vf}{dr} \le \infty\right\}\quad \text{and}\quad \hat\cC^u_x = \left\{(dr, d\vf): \kappa\le \frac{d\vf}{dr} \le \kappa + \frac{\cos \vf}{\tau_{-1}}\right\},
\]
and stable counterparts obtained by exchanging \(\vf\) with \(-\vf\),
\[
\cC^s_x = \left\{(dr, d\vf): -\infty \le \frac{d\vf}{dr} \le -\kappa\right\} \quad \text{and}\quad \hat\cC^u_x = \left\{(dr, d\vf): -\kappa - \frac{\cos \vf}{\tau} \le \frac{d\vf}{dr} \le  -\kappa\right\}.
\]
In the expression above, the subscript \(-1\) means that the function is evaluated at the preimage of the point. We will sometimes refer to the cone field \(\cC^u\) as the large unstable cone field and to \(\hat\cC^u\) as the small unstable cone field, and similarly for the stable cone fields. According to \cite[Exercise 4.19]{MR2229799}, the following strong form of cone invariance holds
\begin{equation}\label{eq:cone-invariance}
D_x\cF (\cC_x^u) \subseteq  \hat\cC^u_{\cF(x)}\quad \text{and} \quad D_x\cF^{-1} (\cC_x^s) \subseteq  \hat\cC^s_{\cF^{-1}(x)}.
\end{equation}
It is also useful to notice that for any vector \(v = (v_1, v_2) \in \hat \cC_x^u\) or \(\hat  \cC_x^s\), we have that \(C_{\mathrm{cone}}^{-1} \le |v_1/v_2| \le C_{\mathrm{cone}}\). We also know (see e.g. \cite[Equation (4.19)]{MR2229799}) that there exist \(c,\Lambda_0 >1\), such that, for all \(n \in \bN\),
\begin{equation}\label{eq:cone-espansion}
\inf_{v \in \cC^u_x} \frac{\|D\cF^n v\|}{\|v\|} \ge c\Lambda_0^n \quad \text{and}\quad \inf_{v \in \cC^s_x} \frac{\|D\cF^{-n} v\|}{\|v\|} \ge c\Lambda_0^n.
\end{equation}
The relations \eqref{eq:cone-invariance} and \eqref{eq:cone-espansion} make the billiard map \(\cF\) uniformly hyperbolic.\footnote{In the sense that they imply the existence of invariant stable and unstable subspaces defined almost everywhere with uniform contraction and expansion.} We call a \(\cC^1\) curve \(W\subset \cM\) unstable if the tangent line at any point \(x \in W\) belongs to the small unstable cone \(\hat\cC_x^u\) and stable if it belongs to the small stable cone \(\hat\cC_x^s\). By \eqref{eq:cone-espansion} if \(W\) is an unstable, resp. stable, curve (or actually just aligned with the large unstable resp.\! stable cone),
 \begin{equation}\label{eq:curve-expanding}
 |\cF^n(W)| \ge C \Lambda_0^n |W|\quad \text{and}\quad |\cF^{-n}(W)| \ge C \Lambda_0^n |W|.
 \end{equation}
Recall that \(|\cdot|\) measures length in the Euclidean norm. The definition of a stable curve is similar. \cite[ Exercise 4.50]{MR2229799} gives the following bound on the growth of stable and unstable curves.
\begin{lemma}\label{lem:upp-bound-stretch-curves}
There exists \(C>0\) such that for any unstable, resp. stable, curve \(W\), we have  \( |\cF(W)| \le C \sqrt{|W|}\), resp. \( |\cF^{-1}(W)| \le C \sqrt{|W|}\).
\end{lemma}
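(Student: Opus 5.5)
The plan is to compare, along the image curve, the arc-length of \(\cF(W)\) with the arc-length of \(W\), using the explicit differential \eqref{eq:differential}. Because the two quantities are related by a factor \(\cos\vf_1\), and \(\cos\) vanishes only linearly at \(\pm\pi/2\), a short integration will produce the square root. We treat the unstable case; the stable case follows by time reversal, exchanging \(\vf\) with \(-\vf\).

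\emph{Step 1: reduction to a smooth piece.} First assume that \(\cF\) is smooth on \(W\), i.e. \(W\cap \cS_1=\emptyset\). If \(W\) meets \(\cS_1\), it is cut into pieces, each mapped smoothly. By bounded horizon and the finiteness of the scatterers, the number of such pieces for a short unstable curve is bounded by a constant depending only on the table; long curves are handled trivially, since \(|\cF(W)|\le C\) always. So it suffices to prove \(|\cF(W)|\le C\sqrt{|W|}\) on each smooth piece.

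\emph{Step 2: local comparison of lengths.} By \eqref{eq:cone-invariance}, \(\cF(W)\) is an unstable curve, and its tangent lies in \(\hat\cC^u\). There \(\kappa_{\min}\le d\vf/dr\le \kappa_{\max}+1/\tau_{\min}\), so \(|\cF(W)|\sim |I|\), where \(I\) is the interval of values of \(\vf_1\) along \(\cF(W)\). Let \(ds\) denote arc-length on \(W\) and \(v\) a unit tangent vector. From \eqref{eq:bound-differential}, \(|D\cF v|\le C/\cos\vf_1\), hence \(d\ell_{\cF(W)}\le C\,ds/\cos\vf_1\). Since \(d\ell_{\cF(W)}\sim |d\vf_1|\), this gives
\[
ds\ \ge\ c\cos\vf_1\,|d\vf_1| .
\]
Integrating over \(W\) yields \(|W|\ge c\int_I\cos\vf_1\,d\vf_1\).

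\emph{Step 3: minimizing the integral.} Among all subintervals \(I\subset[-\pi/2,\pi/2]\) of a given length \(\ell=|I|\), the integral \(\int_I\cos\) is smallest when \(I\) sits at an endpoint. Its value there is
\[
\int_{-\pi/2}^{-\pi/2+\ell}\cos\vf\,d\vf=1-\cos\ell\ \ge\ c\,\ell^2 .
\]
Therefore \(|W|\ge c\,\ell^2\ge c'|\cF(W)|^2\), which is the claim.

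The only delicate point is Step 2: one must check that for unstable vectors the expansion really is \(\lesssim 1/\cos\vf_1\) and not worse. This is exactly the content of \eqref{eq:bound-differential}, because \(A,B,C,D\) are bounded by \eqref{eq:bound-table}. Everything else is the elementary integration in Step 3 together with the piece-counting in Step 1.
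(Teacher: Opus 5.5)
Your proof is correct. The paper gives no proof of its own and cites \cite[Exercise~4.50]{MR2229799}; your argument is the standard one behind that citation: combine \(\|D\cF\|\le C/\cos\vf_1\) with the bounded slope of \(\cF(W)\) in \(\hat\cC^u\) to get \(|W|\gtrsim\int_I\cos\vf_1\,d\vf_1\ge 1-\cos|I|\gtrsim|I|^2\).

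Step 1 deserves one more sentence. The number \(K\) of pieces is uniform because each smooth component of \(\cS_1\setminus\cS_0\) is a decreasing (stable) graph, which the increasing graph \(W\) crosses at most once. The piecewise bounds then recombine via \(\sum_i\sqrt{|W_i|}\le\sqrt{K|W|}\).
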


Recall that \(P \sim Q\) if \(C^{-1}P \le Q \le C P\). The following fact will be used many times.
\begin{lemma}\label{lem:unstable-curves-cosine-k}
For any unstable curve \(W \subset \bH_k\) which intersects both sides of \(\bH_k\), \(\vf \in \pi_{\vf}(\bH_k)\),
  \[
  |W| \sim k^{-3}\sim \cos^{\frac{3}{2}} \vf .
  \]
\end{lemma}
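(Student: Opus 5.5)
We need to prove Lemma \ref{lem:unstable-curves-cosine-k}: for an unstable curve \(W\subset \bH_k\) crossing both boundaries of \(\bH_k\), and any \(\vf\in\pi_\vf(\bH_k)\), we have \(|W|\sim k^{-3}\sim\cos^{3/2}\vf\). The plan is to get the two equivalences separately. The first comes from the height of the strip plus the slope of unstable curves; the second is elementary trigonometry.

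\emph{Height of the strip.} By symmetry I treat \(\bH_k^+\); the case \(\bH_k^-\) is identical with \(\vf\mapsto-\vf\). Since \(W\) meets both boundary lines \(\vf=\pi/2-k^{-2}\) and \(\vf=\pi/2-(k+1)^{-2}\) and lies inside \(\bH_k\), its \(\vf\)-projection \(\pi_\vf(W)\) has length exactly
\[
\Delta_k=k^{-2}-(k+1)^{-2}=\frac{2k+1}{k^2(k+1)^2}.
\]
Hence \(\Delta_k\sim k^{-3}\) with absolute constants, for all \(k\ge k_0\ge1\). I read ``intersects both sides'' as crossing the strip. If \(W\) could leave and re-enter, then \(|W|\) would be bounded below by this, and bounded above by the crossing piece, which is all one can mean.

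\emph{Comparing length to height.} Tangent vectors \(v=(v_1,v_2)\) to \(W\) lie in \(\hat\cC^u\). By the remark after \eqref{eq:cone-invariance}, they satisfy \(C_{\mathrm{cone}}^{-1}\le|v_1/v_2|\le C_{\mathrm{cone}}\). This follows from \(\kappa_{\min}\le d\vf/dr\le\kappa_{\max}+1/\tau_{\min}\) by \eqref{eq:bound-table}. Therefore \(|v_2|\le|v|\le(1+C_{\mathrm{cone}}^2)^{1/2}|v_2|\). Moreover \(d\vf/dr>0\), so \(\vf\) is strictly monotone along \(W\) and \(W\) is a graph over its \(\vf\)-projection. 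Integrating along \(W\) then gives \(|\pi_\vf(W)|\le|W|\le(1+C_{\mathrm{cone}}^2)^{1/2}|\pi_\vf(W)|\), so \(|W|\sim\Delta_k\sim k^{-3}\).

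\emph{Cosine estimate.} For \(\vf\in\pi_\vf(\bH_k^+)\), set \(s=\pi/2-\vf\in((k+1)^{-2},k^{-2})\). Then \(\cos\vf=\sin s\). On \([0,1]\) we have \(\tfrac{2}{\pi}s\le\sin s\le s\), and \(s\le1\) holds since \(k_0\ge1\). Also \((k+1)^{-2}\ge\tfrac14k^{-2}\), so \(\cos\vf\sim k^{-2}\), and raising to the power \(3/2\) gives \(\cos^{3/2}\vf\sim k^{-3}\).

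None of these steps is a real obstacle. The only point needing care is that \(W\) is a graph transversal to the horizontal strip boundaries. This is guaranteed by the positive lower bound \(\kappa_{\min}\) on the slope, which keeps the length comparable to the vertical extent rather than letting \(W\) run nearly horizontally inside the strip.
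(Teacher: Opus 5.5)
Your proof is correct and is the paper's argument made explicit. The slope bounds for \(\hat\cC^u\), in particular the lower bound \(\kappa_{\min}\), make \(|W|\) comparable to the gap \(k^{-2}-(k+1)^{-2}\sim k^{-3}\) between \(\bS_k\) and \(\bS_{k+1}\), and \(\cos\vf\sim k^{-2}\) on \(\bH_k\) gives the second relation; the only superfluous bit is your aside about \(W\) leaving and re-entering the strip, which cannot happen since \(W\subset\bH_k\) and \(\vf\) is strictly monotone along \(W\).
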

\begin{proof}
The first relation follows from the fact that vectors in the small unstable cone have slope bounded from above and below so that the length of any unstable curve in \(\bH_k\) is comparable with the distance between \(\bS_k\) and \(\bS_{k+1}\). The second relation is a consequence of \(\cos \vf \sim k^{-2}\) on \(\bH_k\), see \eqref{eq:homogeneity}.
\end{proof}
It is useful to notice that \(\cS_{n,t}\setminus \cS_{0,t}\) is a union of stable curve and \(\cS_{-n,t}\setminus \cS_{0,t}\) is a union of unstable curves. Indeed, for \(t = 0\) the statement follows by \cite[Proposition 4.41]{MR2229799} and for \(t>0\) we observe that \(\partial \cH_t\) are vertical lines and so they are aligned to both the large stable and the large unstable cone fields. Then the statement follows by cone invariance \eqref{eq:cone-invariance}. As such, \(\cS_{n,t}\) and \(\cS_{-n,t}\) are transversal to unstable and stable curves respectively, uniformly in the location. We call a curve monotonic if both coordinates \((r,\vf)\) are monotonic functions w.r.t. to the parametrization. The next property is often referred to as \textit{continuation of singularity lines}.
\begin{lemma}\label{lem:continuation-of-singularities}
    For any \(n \in \bN\) and \(t \in [0,t_0]\), every curve \(S \subseteq \cS_{n,t} \setminus \cS_{0}\) is part of some monotonic continuous and piece-wise \(\cC^1\) curve \(\tilde S\subseteq \cS_{n,t} \setminus \cS_{0}\) which terminates on \(\partial \cM = \cS_0\).
\end{lemma}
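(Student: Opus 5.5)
The plan is to prove Lemma~\ref{lem:continuation-of-singularities} by induction on \(n\), in the spirit of \cite[Proposition 4.41 and Exercise 4.46]{MR2229799}, treating the new hole discontinuities \(\partial\cH_t\) exactly like the vertical line pieces already present in \(\cF^{-1}(\cS_0)\).

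\emph{Base case.} For \(n=0\), \(\cS_{0,t}\setminus\cS_0=\partial\cH_t\). These are two vertical segments \(\{r_*\pm t|\partial\cD|/2\}\times(-\pi/2,\pi/2)\). They are trivially monotonic (constant \(r\), monotone \(\vf\)), smooth, and terminate on \(\vf=\pm\pi/2\), that is, on \(\cS_0\).

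For \(n=1\) there is one additional set, \(\cF^{-1}(\cS_0\cup\partial\cH_t)\setminus\cS_0\). The classical argument covers \(\cF^{-1}(\cS_0)\): it consists of the grazing-preimage curves, which are stable curves (hence monotonic, decreasing in the \((r,\vf)\) plane) and terminate either on \(\cS_0\) or on other such curves. This gives continuation through chains ending on \(\cS_0\). For \(\cF^{-1}(\partial\cH_t)\), take a vertical segment \(V\subset\partial\cH_t\) and pull it back. On each connectivity component of \(\cM\setminus\cS_1\), \(\cF^{-1}\) is a \(\cC^2\) diffeomorphism. Since \(V\) lies in the large stable cone, \eqref{eq:cone-invariance} makes each piece of \(\cF^{-1}(V)\) a stable curve, hence monotonic. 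Each piece ends only where \(V\) meets \(\cS_0\), which maps into \(\cS_0\) or \(\cS_1\), or where \(V\) meets \(\cS_{-1}\). In the latter case the preimage endpoint lies on \(\cS_1=\cS_0\cup\cF^{-1}(\cS_0)\). Continuing along the already constructed monotone chain in \(\cF^{-1}(\cS_0)\) (stable, so monotonic in the same direction) and concatenating, we obtain a monotone, continuous, piecewise \(\cC^1\) curve reaching \(\cS_0\).

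\emph{Inductive step.} Write \(\cS_{n+1,t}=\cS_{n,t}\cup\cF^{-1}(\cS_{n,t})\). Take \(S\subseteq\cF^{-1}(\cS_{n,t})\setminus\cS_0\). Then \(\cF(S)\) lies in \(\cS_{n,t}\), possibly touching \(\cS_0\); the parts on \(\cS_0\) come back inside \(\cF^{-1}(\cS_0)\), already handled. By induction, \(\cF(S)\) extends to a monotone chain \(\tilde S'\subseteq\cS_{n,t}\) ending on \(\cS_0\). Pull \(\tilde S'\) back by \(\cF^{-1}\) component by component. The image is a union of stable curves, hence monotone and decreasing, so concatenation preserves monotonicity. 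The chain can break only where \(\tilde S'\) crosses \(\cS_{-1}\). At such a break point the preimage lies on \(\cS_1\subseteq\cS_{n+1,t}\), whose elements continue to \(\cS_0\) by the \(n=1\) case. At the end of \(\tilde S'\) on \(\cS_0\), the preimage lies on \(\cF^{-1}(\cS_0)\), which again continues to \(\cS_0\). Concatenating yields \(\tilde S\).

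\emph{Main obstacle.} The delicate point is the monotonicity of the concatenated curve at junctions. One must check that the continuation is chosen along a branch whose direction is compatible, meaning both coordinates vary monotonically with \(r\) decreasing as \(\vf\) increases. This follows because all pieces in \(\cS_{n,t}\setminus\cS_0\) with \(n\ge1\) are stable curves, with negative slope by \eqref{eq:cone-invariance}. The vertical hole lines are monotone with infinite slope, which is compatible (weakly) with both directions. At a junction one can choose the continuation branch going in the direction that keeps \(\vf\) moving the same way. Such a branch exists by the standard ``no dead ends'' property of \cite[Prop.~4.41]{MR2229799}, applied to the enlarged family. The argument needs no smallness of \(t\) beyond \(\cH_t\) staying inside one scatterer boundary.
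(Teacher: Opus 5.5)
\textbf{There is a genuine gap at the junctions of your induction.} The skeleton is sensible: extend \(\cF(S)\) by the induction hypothesis, pull the chain back by \(\cF^{-1}\), and restart from \(\cS_1\) at the breaks. But the step that carries the content of the lemma is not proved.

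At each junction you need a singular curve that leaves the junction point \(p\) in a \emph{prescribed} monotone direction. Here \(p\) is either the point of \(\cS_1\setminus\cS_0\) where the pulled-back chain lands, or the point where a curve of \(\cF^{-1}(\partial\cH_t)\) ends. The statement you induct on does not supply this. It only says that some monotone curve through a given arc reaches \(\cS_0\). That is useless if the \(\cS_1\)-curve through \(p\) has \(p\) as an endpoint, or leaves \(p\) only on the wrong side.

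So the hypothesis must be strengthened to a directional form, and the case \(n=1\) must be proved in that form, including for the new curves \(\cF^{-1}(\partial\cH_t)\). You would have to check the following. Whenever such a curve ends at an interior point (the trajectory reaches the hole endpoint tangentially, or grazes another scatterer on the way), it ends on a grazing curve of \(\cF^{-1}(\cS_0)\) that passes through that point.

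You dispose of all this with the ``no dead ends'' property of \cite[Proposition 4.41]{MR2229799} ``applied to the enlarged family''. However, the results of \cite{MR2229799} concern the singular sets \(\cS_n\) of a genuine billiard, and \(\cS_{n,t}\) is not one of them unless you say why. For \(\cS_{n,t}\) that property is essentially the lemma itself, so the argument is circular exactly at its crux.

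A smaller point: a union of decreasing arcs glued continuously need not be monotone. What keeps a pulled-back chain monotone on a domain of continuity is that \(A,B,C,D>0\) in \eqref{eq:differential}. Hence \(D\cF^{-1}\) maps the closed quadrant \(\{dr\ge 0,\ d\vf\le 0\}\setminus\{0\}\) into the opposite open quadrant.

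\textbf{The missing idea is the paper's reduction.} The set \(\cS_{n,t}\) is exactly the singular set \(\cS_n\) of the same table with two fictitious corners placed at \(r_*\pm t|\partial\cD|/2\). For that table the phase-space boundary acquires the two vertical lines \(\partial\cH_t\). Since \cite[Proposition 4.47]{MR2229799} holds for tables with corners, it applies verbatim; for \(t=0\) it is literally the cited statement. A continuation that ends on \(\partial\cH_t\) is then prolonged along that vertical segment, in the direction of \(\vf\) compatible with monotonicity, until it reaches \(\cS_0\).

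Your phrase ``vertical line pieces already present in \(\cF^{-1}(\cS_0)\)'' is also inaccurate for the corner-free tables treated here, where those curves have slope at most \(-\kappa\). Vertical singular lines arise precisely from corners. That is why viewing the hole endpoints as corners is the natural way to handle \(\partial\cH_t\).
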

In the case \(t =0\) this is \cite[Proposition 4.47]{MR2229799}. For \(t > 0\), we note that \(\cS_{n,t}\) coincides with the singularities of a billiard with a couple of `fake corners' placed at the boundary of the hole \(\partial \pi_r(\cH_t)\). These billiards can be regarded as billiards with corners and \cite[Proposition 4.47]{MR2229799} holds as well, proving the statement. We will also use the following simple fact.
\begin{lemma}\label{lem:bound-number-smooth-components}
For each \(n \in \bN\), \(\cS_{n,t}\) is a finite union of \(\cC^1\) components whose number is bounded uniformly in \(t\).
\end{lemma}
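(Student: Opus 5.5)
We prove Lemma \ref{lem:bound-number-smooth-components} by induction on \(n\), using the recursive structure of the singularity sets. Recall that
\[
\cS_{n,t} = \cS_n \cup \Xi_{n,t} = \bigcup_{k=0}^{n} \cF^{-k}\bigl(\cS_0 \cup \partial \cH_t\bigr).
\]
The second expression follows because \(\cS_n = \bigcup_{k=0}^n \cF^{-k}(\cS_0)\) by the inductive definition. So it suffices to show that, for each \(k \le n\), the set \(\cF^{-k}(\cS_0 \cup \partial\cH_t)\) has a number of \(\cC^1\) components bounded independently of \(t\).

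The base case is immediate. The set \(\cS_0\) consists of two horizontal segments per scatterer, and \(\partial \cH_t\) consists of two vertical segments. Hence \(\cS_{0,t}\) has at most \(2p+2\) smooth components, where \(p\) is the number of scatterers.

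For the inductive step, take a \(\cC^1\) curve \(\gamma\) which is either a component of \(\cF^{-k}(\cS_0\cup\partial\cH_t)\setminus \cS_0\) or a segment of \(\cS_0\cup\partial \cH_t\). Its preimage \(\cF^{-1}(\gamma)\) is smooth except where \(\gamma\) meets \(\cS_{-1} = \cS_0\cup\cF(\cS_0)\), the singularity set of \(\cF^{-1}\). Away from that set, \(\cF^{-1}\) is a \(\cC^2\) diffeomorphism, since the scatterer boundaries are \(\cC^3\). So \(\cF^{-1}(\gamma)\) splits into at most \(1 + \#(\gamma \cap \cS_{-1})\) \(\cC^1\) pieces.

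Everything therefore reduces to bounding \(\#(\gamma\cap\cS_{-1})\) uniformly in \(t\) and in the curve. The set \(\cF(\cS_0)\) is a finite union of \(\cC^1\) unstable curves, the images of the grazing lines, and their number \(N_1\) depends only on the table. Consider first \(k \ge 1\). Then \(\gamma\) is a stable curve, because \(\cS_{n,t}\setminus\cS_{0,t}\) is a union of stable curves, as noted before Lemma \ref{lem:continuation-of-singularities}. A stable curve meets each unstable \(\cC^1\) curve at most once, by transversality of the cones: both are graphs over \(r\) with slopes of opposite signs. It meets \(\cS_0\) at most twice, by monotonicity. So \(\#(\gamma \cap \cS_{-1}) \le N_1 + 2\). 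Now consider \(k = 0\). A vertical segment of \(\partial \cH_t\) is a graph over \(\vf\), while each unstable curve is a strictly increasing graph over \(r\), so again each crossing happens at most once. The horizontal segments of \(\cS_0\) are handled the same way.

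Inductively, the number of components of \(\cF^{-k}(\cS_0 \cup \partial\cH_t)\) is then at most \((2p+2)(N_1+3)^k\). This bound is independent of \(t\). Summing over \(k \le n\) gives the claim.

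The main obstacle is the intersection count. A stable curve is a monotone graph \(\vf = g(r)\) with \(g' \le -\kappa\). An unstable curve is a graph \(\vf = h(r)\) with \(h' \ge \kappa\). The difference \(h - g\) is then strictly increasing, so the two curves can have at most one common point on each connected piece of their common \(r\)-domain. The \(r\)-coordinate is periodic on each scatterer, so one works with the pieces on a fixed scatterer. Each such piece has bounded length by Lemma \ref{lem:upp-bound-stretch-curves}, so the count per component stays bounded. A crude uniform bound is all that is needed here, since the statement only asks for finiteness uniformly in \(t\).
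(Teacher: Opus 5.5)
Your proof is correct and is essentially the paper's own argument. Both use induction on \(n\): the preimage of each smooth piece of \(\cS_{n,t}\) is cut only where that piece crosses \(\cS_{-1}\setminus\cS_0\), and transversality between stable and unstable directions bounds the number of crossings by a constant that does not depend on \(t\). The paper runs the recursion \(\cS_{n+1,t}=\cS_{n,t}\cup\cF^{-1}(\cS_{n,t})\) instead of your level sets \(\cF^{-k}(\cS_{0,t})\), which makes no difference.

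Two small fixes. Because \(\cS_0\subset\cS_{-1}\), you should count crossings with \(\cS_{-1}\setminus\cS_0\), as the paper does; otherwise \(\#(\gamma\cap\cS_{-1})\) is infinite when \(\gamma\subset\cS_0\). Also, the bounded \(r\)-extent of the curves comes from the slope bounds of the cones together with \(\vf\in[-\pi/2,\pi/2]\), not from Lemma \ref{lem:upp-bound-stretch-curves}.
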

\begin{proof}
For \(n = 0\) the statement holds. Suppose that the statement is true for some \(n\) so that \(\cS_{n,t}\) is a finite union of at most \(M_n >0\) curves, uniformly in \(t\). Notice that \(\cS_{n+1,t} = \cS_{n,t} \cup \cF^{-1}(\cS_{n,t})\). The set \(\cS_{n,t}\) consists of curves which are transversal to the small unstable cone field so that each of them intersects \(\cS_{-1}\setminus \cS_0\) in at most \(Q\) points, where \(Q\) is the number of smooth components of \(\cS_{-1}\). Thus, the preimage under \(\cF\) of any of these curves is composed of at most \(Q+1\) smooth components and \(\cS_{n+1,t}\) is composed of at most \((Q+1) M_n + M_n\) curves, independently of \(t\).
\end{proof}

\section{Standard Families}\label{sec:standard-families}
To prove linear response, we need a preliminary result about convergence to equilibrium for the open billiard dynamics starting from a certain class of initial measures. Recall that a smooth curve \(W \subset \cM\) is called unstable if the tangent space \(\cT_x W\) at \(x \in W\) is aligned with \(\hat\cC^u_x\) for any \(x \in W\). Since the vectors in the small unstable cone have bounded slope, \(W\) is the graph of a function \(\vf_W\). For \(D>0\) and \(\delta_{*} \le 1\), we denote by \(\cW(D)\) the set of admissible unstable curves,
\begin{equation}\label{eq:graph-sp-def}
\begin{split}
\cW(D) = \biggl\{ W = \vf_{W}(I): \text{ } |W|\le \delta_{*}, \quad \vf_{W} &\in \cC^2\biggl( I , \biggl[-\frac \pi 2 , \frac \pi 2\biggr]\biggr),  \\
&\cT_x W \in \hat\cC_{x}^u \text{ } \forall x \in W, \quad  |\vf_W''| \le D \biggr\}.\\
\end{split}
\end{equation}
In the definition above, \(I\subset \bR\) is an interval which coincides with the \(r\)-projection of \(W\). Whenever we want to indicate the interval corresponding to some particular \(W \in \cW(D)\), we write
\[
I_W := \{r \in [0, |\Gamma|]: (r,\vf_W(r)) \in W\} = \pi_r (W).
\]
The curves in \eqref{eq:graph-sp-def} are everywhere aligned with the unstable cone field and are graphs of \(\cC^2\) functions.
Moreover, their size is at most \(\delta_{*}\). The parameter \(\delta_{*}\) is to be considered fixed (even if its specific value will be specified later on), and so we do not track this dependence in the notation. We now equip the curves in \(\cW\) with a regular density. For \(\varpi>0\), set
\begin{equation}\label{eq:cones-def}
    \begin{split}
    &\cC(\varpi) = \biggl\{\rho \in \cC^1(I, \bR):\quad \frac{\rho(x)}{\rho(y)} \le e^{\varpi |x-y|^{\frac{1}{3}}} \text{ } \forall x,y \in I,\quad \rho >0, \text{ } \int_I \rho = 1 \biggr\},
\end{split}
\end{equation}
and \(\cC = \cup_{\varpi \in \bR^+}\cC(\varpi)\). Quite often we will use the following simple estimate.
\begin{lemma}\label{lem:ubiquous-cone}
Let \(W \in \cW(D)\) and \(\rho \in \cC(\varpi)\) defined on \(I_W\). We have, for some \(C_{\mathrm{cone}} >0\),
\[
\frac{ e^{-\varpi }}{|W|} \le \rho \le \frac{C_{\mathrm{cone}} e^{\varpi}}{|W|}.
\]
\end{lemma}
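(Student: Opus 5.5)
The lower bound comes from the Hölder-type distortion condition in \(\cC(\varpi)\) together with the normalization \(\int_{I_W}\rho=1\). The upper bound comes from the same condition combined with a comparison between \(|I_W|\) and \(|W|\), which is where \(C_{\mathrm{cone}}\) enters.

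The first step is a two-sided comparison of lengths. Since \(W\) is the graph of \(\vf_W\) over \(I_W\) with tangent vectors in \(\hat\cC^u\), every tangent vector \((dr,d\vf)\) satisfies \(C_{\mathrm{cone}}^{-1}\le |dr/d\vf|\le C_{\mathrm{cone}}\). Hence
\[
|I_W|\le |W|=\int_{I_W}\sqrt{1+\vf_W'(r)^2}\,dr\le \sqrt{1+C_{\mathrm{cone}}^2}\,|I_W|,
\]
so \(|W|\) and \(|I_W|\) agree up to a constant depending only on the cones. After enlarging \(C_{\mathrm{cone}}\), we have \(|I_W|\ge |W|/C_{\mathrm{cone}}\).

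The second step is the pointwise comparison of values of \(\rho\). For any \(x,y\in I_W\) we have \(|x-y|\le |I_W|\le |W|\le \delta_*\le 1\), so \(|x-y|^{1/3}\le 1\). The defining inequality of \(\cC(\varpi)\), applied in both orders, then gives
\[
e^{-\varpi}\rho(y)\le \rho(x)\le e^{\varpi}\rho(y)\qquad\text{for all } x,y\in I_W .
\]

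The third step integrates this against the normalization. Fixing \(x\) and integrating the right inequality in \(y\) over \(I_W\) gives \(\rho(x)\le e^{\varpi}\rho(y)\) for every \(y\), so \(\rho(x)|I_W|\le e^{\varpi}\int_{I_W}\rho=e^{\varpi}\). Therefore
\[
\rho(x)\le \frac{e^{\varpi}}{|I_W|}\le \frac{C_{\mathrm{cone}}e^{\varpi}}{|W|}.
\]
Integrating the left inequality in the same way gives \(\rho(x)|I_W|\ge e^{-\varpi}\). Since \(|I_W|\le |W|\), this yields \(\rho(x)\ge e^{-\varpi}/|I_W|\ge e^{-\varpi}/|W|\).

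The proof is routine, and the only point needing care is the first step. One has to use the bounded slope of the small unstable cone to get \(|I_W|\gtrsim|W|\), and to note that \(|W|\le\delta_*\le1\) is what controls the exponent \(|x-y|^{1/3}\).
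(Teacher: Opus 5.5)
Your proposal is correct and takes essentially the paper's approach. The paper uses the mean value theorem to pick \(\bar x\in I_W\) with \(\rho(\bar x)=1/|I_W|\), then compares \(\rho\) to that value using the cone condition, \(|I_W|\le|W|\le C_{\mathrm{cone}}|I_W|\), and \(|W|\le\delta_*\le 1\); integrating the pointwise bound \(e^{-\varpi}\rho(y)\le\rho(x)\le e^{\varpi}\rho(y)\) over \(I_W\), as you do, amounts to the same argument.
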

\begin{proof}
    By the mean value theorem there exists \(\bar x \in I_W\) such that \(\rho(\bar x) = 1/|I_W|\). Since \(\cT_x W \subseteq \hat\cC_x^u\), we have that \( |I_W| \le |W| \le C_{\mathrm{cone}} |I_W|\). The bound then follows by the definition of the cone and the fact that \(|I_W| \le |W| \le \delta_{*} \le 1\).
\end{proof}
We call \textit{standard pair} the pair \(\ell = (W, \rho)\), where \(W \in \cW(D)\) and \(\rho \in \cC\) is defined on \(I_W\). For a countable set of indices \(\cJ \subseteq \bN\), we call the collection \(\cG = \{(p_j, \ell_j)\}_{j \in \cJ} \equiv \{(p_j, W_j, \rho_j)\}_{j \in \cJ}\) a standard family, where \(p_j \ge 0\), \(\sum_{j} p_j =1\) and \(\ell_j\) are standard pairs. We will often omit the index set \(\cJ\). Any standard pair \(\ell\) induces a measure on \(\cM\) by
\[
\mu_{\ell}(\phi) = \int_{I_W} \rho(r) \phi(r,\vf_W(r))dr, \quad \phi \in \cC^0(\cM).
\]
Similarly, a standard family induces a measure given by the convex combination of the measures of the associated standard pairs,
\[
    \mu_{\cG} (\phi) = \sum_j p_j \mu_{\ell_j}(\phi), \quad \phi \in \cC^0(\cM).
\]
We need to keep track of how small is the average curve is in a standard family. To this aim, we introduce the \textit{measure of the boundary} (or just \textit{boundary}\footnote{See \cite[Exercise 7.14]{MR2229799} for an explanation of the name.}) \(\cZ(\cG)\) of the standard family \(\cG = \{(p_j, W_j, \rho_j)\}\),
\begin{equation}\label{eq:boundary-def}
\cZ(\cG) = \sum_{j} p_j \frac{1}{|W_j|}.   
\end{equation}
It turns out that when evolving standard families it is sufficient to keep track of only a few parameters: the value of \(\cZ\) in \eqref{eq:boundary-def} and \(\varpi\) in \eqref{eq:cones-def}. The first value tells how regular the support of \(\mu_{\cG}\) is, while the second tells how regular the densities are. We will sometimes refer to these values as the regularity of the standard family. For technical reasons, we also need to record the bound on the second derivative of the graphs \(\vf_W\) in \(\cW\). 
\begin{definition}\label{def:regularity-book-keeping}
We say that a standard family \(\cG = \{\bigl(p_j, W_j,\rho_j)\}_{j\in \cJ}\) is a \((\varpi, B, D)\)-standard family for some \(\varpi, B, D >0 \) if \(\rho_j \in \cC(\varpi)\), \(W_j \in \cW(D)\) for all \(j \in \cJ\) and \(\cZ(\cG) \le B\).    
\end{definition}

Because of transversality between the directions in the unstable cone and \(\partial \cH_t\), standard families consisting of long unstable curves cannot concentrate too much mass inside the hole.

\begin{lemma}\label{lem:hole-intersection}
There exists \(C >0\) such that for any  \((\varpi, B,D)\)-standard family \(\cG\), \(t \in [0,t_0]\),
    \[
    \mu_{\cG} (\cH_{t}) \le C e^{\varpi} B t.
    \]
\end{lemma}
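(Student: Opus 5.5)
The plan is to estimate one standard pair at a time and then sum over the family. Fix a standard pair $\ell=(W,\rho)$ with $W\in\cW(D)$ and $\rho\in\cC(\varpi)$, and write $\cH_t = J_t\times[-\pi/2,\pi/2]$ with $J_t=[r_*-t|\partial\cD|/2,\,r_*+t|\partial\cD|/2]$. Since $W$ is the graph of $\vf_W$ over $I_W$, the definition of $\mu_\ell$ gives
\[
\mu_\ell(\cH_t)=\int_{I_W\cap J_t}\rho(r)\,dr .
\]
This identity is the whole point of the transversality remark: the graph structure turns the measure of the hole into an integral in the $r$ variable only, and $J_t$ has length $t|\partial\cD|$.

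By Lemma \ref{lem:ubiquous-cone}, $\rho\le C_{\mathrm{cone}}e^{\varpi}/|W|$. Together with $|I_W\cap J_t|\le t|\partial\cD|$ this yields
\[
\mu_\ell(\cH_t)\le \frac{C_{\mathrm{cone}}\,|\partial\cD|\,e^{\varpi}\,t}{|W|}.
\]

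Summing over the family, with $\cG=\{(p_j,W_j,\rho_j)\}$, gives
\[
\mu_\cG(\cH_t)=\sum_j p_j\,\mu_{\ell_j}(\cH_t)\le C e^{\varpi}t\sum_j \frac{p_j}{|W_j|}=Ce^{\varpi}t\,\cZ(\cG)\le Ce^{\varpi}Bt,
\]
where $C=C_{\mathrm{cone}}|\partial\cD|$. This constant depends only on the cones and the table, as the statement requires.

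There is no real obstacle here. The only point needing care is the identification of the arc-length coordinate across scatterers, which could make $J_t$ wrap around. For $t_0$ small this does not happen, and in any case $|J_t|\le t|\partial\cD|$ still holds. The bound does not need $W$ to cross the hole, and short curves are handled automatically through the $1/|W|$ factor, which is controlled by $\cZ$.
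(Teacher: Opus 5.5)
Your proof is correct and follows the paper's argument. As there, you write $\mu_{\ell_j}(\cH_t)$ as the integral of $\rho_j$ over $\pi_r(W_j\cap\cH_t)$, which has length at most $t|\partial\cD|$, bound $\rho_j\le C_{\mathrm{cone}}e^{\varpi}/|W_j|$ by Lemma \ref{lem:ubiquous-cone}, and sum against the weights $p_j$ to obtain $\cZ(\cG)\le B$.
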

\begin{proof}
Let \(\cG = \{(p_j, W_j, \rho_j)\}\) and note that \(|\pi_r (W_j \cap \cH_t)| \le |\pi_r (\cH_t)| = t|\partial \cD|\). Moreover, since  \(\rho_j \in \cC(\varpi)\) and \(\cZ(\cG) \le B\), using Lemma \ref{lem:ubiquous-cone},
\[
\begin{split}
     \mu_{\cG} (\cH_{t} ) = \sum_{j}p_j \int_{I_{W_j}} &\Id_{\cH_t}(r, \vf_{W_{j}}(r)) \rho_j (r)dr =    \sum_{j}p_j \int_{\pi_r (W_j \cap \cH_t)} \!\!\!\rho_j \\
     &\le  |\partial \cD| C_{\mathrm{cone}} e^{\varpi}\sum_{j}p_j \frac{  t}{|W_j|} \le  C e^{\varpi} \cZ(\cG) t\le  C e^{\varpi} B t.
\end{split}
\]
\end{proof}

We now state two results on standard families, Proposition \ref{prop:invariance-standard-families} and Theorem \ref{thm:exp-mixing-sf}, that will be proved in Section \ref{sec:coupling}. These are essentially all we need to know for our application to linear response. The first result is about invariance of standard families.

\begin{proposition}\label{prop:invariance-standard-families}
For any \(\varpi_1, B_1, D_1 >0\) there exist \( \varpi_2, B_2, D_2>0\) and \(t_0>0\) such that the following happens. For any  \((\varpi_{1}, B_{1}, D_{1})\)-standard family \(\cG\), any \(n \in \bN_0\), and any \(t \in [0,t_0]\), there exists another \((\varpi_{2}, B_{2}, D_{2})\)-standard family \(\cG'\) such that, for any \(\phi \in \cC^0(\cM)\),
    \[
    (\cL_{t}^n\mu_{\cG} )(\phi) = \mu_{\cG'} (\phi).
    \]
\end{proposition}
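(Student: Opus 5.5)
The plan is to adapt the classical proof of invariance of standard families for the closed billiard (\cite[Section 7.4]{MR2229799}) to the conditional dynamics, treating \(\partial\cH_t\) as an additional singularity. First I observe that for \(t>0\) the image under \(\cF^n\) of a standard pair, restricted to \(\cM_t^n\), is again a collection of unstable curves cut along \(\cS^{\bH}_{n,t}\). Indeed, \(\Xi_{n,t}\) consists of stable curves by cone invariance \eqref{eq:cone-invariance}, and is therefore transversal to \(W\). Then I write
\[
(\cL_t^n\mu_\ell)(\phi)=\frac{\sum_{i}\int_{W_i}\rho\,\phi\circ\cF^n}{\mu_\ell(\cM_t^n)},
\]
where \(W_i\) are the connected components of \(W\cap\cM_t^n\setminus\cS^{\bH}_{n}\), further subdivided so that their images have length at most \(\delta_*\). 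Changing variables pushes each \((W_i,\rho|_{W_i})\) to a pair \((\cF^nW_i,\rho_i)\). The closed-system bounds then apply verbatim: curvature bounds give \(D_2\), and distortion bounds in homogeneity strips give \(\varpi_2\). This works because the densities are only restricted, not modified, by the hole, and normalisation commutes with the ratio condition in \(\cC(\varpi)\). So \(\varpi\) and \(D\) are controlled exactly as for \(t=0\), after a few initial iterates take \(\varpi_1,D_1\) into the invariant regime.

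The main obstacle is the boundary \(\cZ\). Here I would prove a one-step growth lemma for the open map. The claim is \(\cZ(\cL_t\cG)\le \frac{1}{\mu_\cG(\cM_t^1)}\bigl(\theta\,\cZ(\cG)+K\bigr)\) with \(\theta<1\) and \(K\) independent of \(t\). The unnormalised part follows as in the closed case, since cutting by \(\partial\cH_t\) adds at most two cuts per curve in a single step; it only adds boundary terms of the form \(\sum_j p_j\,\#\text{cuts}/|W_j|\)-type contributions. Those contributions are controlled by the one-step expansion estimate \(\sum \Lambda_i^{-1}<1\) (holding after taking \(k_0\) large and using an iterate \(\cF^{m}\), since the two extra cuts merely add a bounded number of pieces). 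Using the mass bound \(\mu_\cG(\cH_t)\le Ce^{\varpi}Bt\) from Lemma \ref{lem:hole-intersection}, the normalisation factor is at most \((1-Ce^{\varpi}Bt)^{-1}\). If I choose \(B_2\) large, \(\theta'=\theta(1-Ce^{\varpi_2}B_2t_0)^{-1}<1\) for \(t_0\) small. Then \(\cZ\le B_2\) is preserved by induction, provided \(B_2\ge \max(B_1, K'/(1-\theta'))\).

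To make the induction close despite \(\theta\) possibly only being contracting for \(\cF^m\), I would iterate in blocks of \(m\) steps. In between I would use the crude bound that \(\cZ\) grows by at most a constant factor per step (Lemma \ref{lem:upp-bound-stretch-curves} and Lemma \ref{lem:bound-number-smooth-components} give uniform control over \(m\) steps, uniformly in \(t\)). Finally, the conditional measure \(\cL_t^n\mu_\cG\) equals \(\mu_{\cG'}\), with weights \(p'_{j,i}=p_j\mu_{\ell_j}(W_{j,i})/\mu_\cG(\cM_t^n)\), which sum to one.
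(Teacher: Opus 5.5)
Your architecture matches the paper's proof. The common pieces are: densities through distortion on homogeneous pieces (Corollary~\ref{cor:useful-density}), curves through curvature bounds (Lemma~\ref{lem:invariance-unstable-curves}), a growth lemma for the map cut along \(\cS^{\bH}_{n,t}\) (the paper's \(\hat\cF_t\), Lemma~\ref{lem:invariance1}), and the comparison \(\cZ(\cL_t^n\cG)\le\cZ(\hat\cF_t^n\cG)/\mu_\cG(\cM_t^n)\) (Lemma~\ref{lem:comparison-hole-regularity-images}). They also include a survival bound \(\mu_\cG(\cM_t^{m})\ge 1-C_mt\) and an induction in blocks with a crude factor \(Z_1\) per step in between. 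For the survival bound, apply Lemma~\ref{lem:hole-intersection} to every \(\cF^k\cG\), \(k\le m\), as in Lemma~\ref{lem:lost-mass-with-t}; bounding \(\mu_\cG(\cH_t)\) alone is not enough.

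The genuine gap is in the one quantitative input that makes a block contract. Your one-step claim with \(\theta<1\) is false in general. Take a curve that meets no singularity but straddles \(\cH_t\), or whose image straddles it. It loses almost no mass, yet its contribution \(p/|W|\) becomes about \(2p/(\Lambda_W|W|)\), and the minimal one-step expansion need not exceed \(2\). No \(t\)-independent \(K\) absorbs this when all the weight sits on such curves. Passing to \(\cF^m\) is the right move, but ``the extra cuts merely add a bounded number of pieces'' does not close it. Each point of \(W\cap\Xi_{m,t}\) splits one piece and adds at most \(\max_i\Lambda_i^{-1}\lesssim\Lambda^{-m}\), so you get \(\theta^m+K_mC\Lambda^{-m}\). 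This is small only if the number \(K_m\) of hole-induced cuts on a short curve grows slower than \(\Lambda^m\). The uniform-in-\(t\) bound of Lemma~\ref{lem:complexity-small-pieces} gives no rate in \(m\).

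What you need is that, for curves shorter than some \(\delta_m\), the hole contributes only a factor linear in \(m\):
\(\sum_j|J_{\hat W^m_j}\cF^{-m}|_*\le(1+m(B_0-1))\theta_*^m\).
This is Lemma~\ref{lem:n-step-contraction-hole-D}, which the paper imports from \cite[Lemma~8.4]{MR3213499}; an equivalent route is a linear complexity bound for \(\cS_{m,t}\), uniform in \(t\). With it, the order of choices is:
\begin{enumerate}
\item fix \(\varpi_2\) from the density estimates;
\item take the block length \(n_*\) so large that \((1+n_*(B_0-1))\theta_*^{n_*}\) beats the constant \(2e^{\varpi_2}Q_3C_{\mathrm{cone}}C_{\mathrm{metric}}\), which comes from the density oscillation, the metric comparison and chopping long images;
\item only then fix \(\delta_*=\min\{\delta_{n_*},1\}\) in the definition of \(\cW\);
\item finally choose \(B_2\) and then \(t_0\).
\end{enumerate}
With this input in place, your block induction closes exactly as in the paper.
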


We now fix throughout Sections \ref{sec:standard-families} and \ref{sec:linear-response} some big enough value of  \(\varpi_1, B_1\) and \(D_1\), and we let \(\varpi_{2}\), \(B_{2}\) and \(D_{2}\) be given by Proposition \ref{prop:invariance-standard-families}. In particular, we consider \(\varpi_1 \ge \varpi_{*}\), \(B_1 \ge B_*\) and \(D_1\ge D_*\) where \(\varpi_{*}, B_*, D_* >0\) are given by Lemma \ref{lem:push-vertical-line} and depend only on the billiard table. We call any \((\varpi_{1}, B_{1}, D_{1})\)-standard family a \textit{initial regular standard family} and any \((\varpi_{2}, B_{2}, D_{2})\)-standard family a \textit{regular standard family}. With this terminology, Proposition \ref{prop:invariance-standard-families} tells that the conditional evolution of \(\mu_{\cG}\) for an initial regular standard family \(\cG\) is a measure associated with a regular standard family.

\begin{remark}\label{rem:constant-invariance}
The constants \(B_{2}, \varpi_{2}, D_{2}\) in Proposition \ref{prop:invariance-standard-families} depend only on \(B_1, \varpi_1, D_1\) and the billiard table. Hence, they can be incorporated in the constant \(C\). See the discussion about the general constant \(C\) at the end of Section \ref{sec:intro}.
\end{remark}

We don't really need to keep track of the value of the second derivative \(\vf_{W}''\), but we need only to have a uniform bound throughout the argument. Hence, we will often omit to record the data about \(|\vf_{W}''|\). In particular, we say that \(\cG\) is a \((\varpi, B)\)-standard family if the conditions on the density and boundary are satisfied and \(W_j \in \Sigma(D_2)\) for all \(j\). Before presenting the second result, we add an observation for later, which is a consequence of Proposition \ref{prop:invariance-standard-families}. It cannot happen that, for some iterate, all the mass of a regular standard family ends up in the hole.

\begin{lemma}\label{lem:lower-bound-survival}
    There exist \(C,t_0>0\) such that, for any initial regular standard family \(\cG\), \(n \in \bN\) and \(t \in [0,t_0]\), we have \( \mu_{\cG} (\cM_t^n) \ge (1-Ct)^n\).
\end{lemma}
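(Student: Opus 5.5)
The plan is to telescope the survival probability into one-step conditional survival probabilities, each controlled by Lemma \ref{lem:hole-intersection} applied to a conditionally evolved family that is again regular by Proposition \ref{prop:invariance-standard-families}. Since \(\cM_t^{k+1} = \cM_t^k \cap \cF^{-k}(\cM_t^1)\), for \(\mu_{\cG}(\cM_t^k)>0\) we have
\[
\frac{\mu_{\cG}(\cM_t^{k+1})}{\mu_{\cG}(\cM_t^{k})} = \frac{\mu_{\cG}\bigl(\Id_{\cM_t^1}\circ \cF^k \, \Id_{\cM_t^k}\bigr)}{\mu_{\cG}(\cM_t^k)} = (\cL_t^k \mu_{\cG})(\cM_t^1),
\]
by \eqref{eq:iterate-transf-op-hole} applied to the indicator of \(\cM_t^1\). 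Taking the product over \(k=0,\dots,n-1\) gives \(\mu_{\cG}(\cM_t^n) = \prod_{k=0}^{n-1} (\cL_t^k\mu_{\cG})(\cM_t^1)\), with \(\mu_{\cG}(\cM_t^0)=\mu_{\cG}(\cM_t)\) handled as the \(k=0\) factor. More precisely, it is cleaner to write the factors as \((\cL_t^k\mu_{\cG})(\cM_t\cap\cF^{-1}\cM_t)\), with the first factor \(\mu_{\cG}(\cM_t)\) treated separately.

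Next, by Proposition \ref{prop:invariance-standard-families}, \(\cL_t^k\mu_{\cG} = \mu_{\cG_k'}\) for a regular \((\varpi_2,B_2,D_2)\)-standard family \(\cG_k'\). Since \eqref{eq:iterate-transf-op-hole} is stated for continuous \(\phi\), it has to be extended to the indicator. I would do this by monotone approximation of \(\Id_{\cM_t^1}\) by continuous functions. Standard-family measures give zero mass to \(\partial\cM_t^1\subseteq\cS_{1,t}\), because that set is a finite union of stable curves (Lemma \ref{lem:bound-number-smooth-components}), transversal to unstable curves carrying absolutely continuous densities. Both sides of the identity \(\cL_t^k\mu_{\cG}=\mu_{\cG'_k}\) therefore agree on this indicator.

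For the lower bound on each factor, the complement of \(\cM_t\cap\cF^{-1}\cM_t\) is \(\cH_t\cup\cF^{-1}\cH_t\). Lemma \ref{lem:hole-intersection} gives \(\mu_{\cG'_k}(\cH_t)\le Ce^{\varpi_2}B_2t\). For \(\cF^{-1}\cH_t\) I would instead use \(\mu_{\cG_k'}(\cF^{-1}\cH_t) = \mu_{\cG_k'}(\cF^{-1}\cH_t\cap\cM_t)+O(t)\). The first term equals \(\mu_{\cG'_k}(\cM_t)\cdot(\cL_t\mu_{\cG'_k})(\cH_t)\), and \(\cL_t\mu_{\cG'_k}\) is again regular; here I take the initial constants large enough that the regular class is also admissible, or else simply apply the proposition to \(\cG\) with \(k+1\) steps. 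Hence each factor is at least \(1-Ct\) with \(C\) depending only on the table and the regularity data (Remark \ref{rem:constant-invariance}). This yields \(\mu_{\cG}(\cM_t^n)\ge(1-Ct)^n\) after renaming constants, and \(t_0\) is chosen so that \(Ct_0<1\).

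The main obstacle is the bookkeeping of positivity. The identity requires \(\mu_{\cG}(\cM_t^k)>0\) at each step, which follows inductively from the bound itself since \(1-Ct>0\). It also requires that the indicator extension of \eqref{eq:iterate-transf-op-hole} is legitimate, which is the zero-boundary-mass argument above.
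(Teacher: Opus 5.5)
Your telescoping is correct and is the same induction the paper uses. With \(\cL_t^0\) the identity, \(\mu_{\cG}(\cM_t^n)=\prod_{k=0}^{n-1}(\cL_t^k\mu_{\cG})(\cM_t^1)\), so no separate factor \(\mu_{\cG}(\cM_t)\) is needed. Evaluating \(\cL_t^k\mu_{\cG}=\mu_{\cG_k'}\) on indicators also needs no boundary-mass argument, since finite Borel measures that agree on \(\cC^0(\cM)\) agree on Borel sets.

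The gap is the one-step estimate \(\mu_{\tilde\cG}(\cF^{-1}(\cH_t))\le Ct\) for a regular family \(\tilde\cG\). You claim \(\mu_{\tilde\cG}(\cF^{-1}(\cH_t)\cap\cM_t)=\mu_{\tilde\cG}(\cM_t)\,(\cL_t\mu_{\tilde\cG})(\cH_t)\). But by \eqref{eq:def-push} the operator \(\cL_t\) conditions on \(\cM_t^1=\cM_t\cap\cF^{-1}(\cM_t)\), which is disjoint from \(\cF^{-1}(\cH_t)\). Hence
\[
(\cL_t\nu)(\cH_t)=\frac{\nu\bigl(\Id_{\cF^{-1}(\cH_t)}\Id_{\cM_t\cap\cF^{-1}(\cM_t)}\bigr)}{\nu(\cM_t^1)}=0
\]
for every \(\nu\). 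Your identity would therefore say that no mass outside the hole enters it at the next collision, which is false. The mass you need to control is exactly what \(\cL_t\) discards. So the regularity of \(\cL_t\mu_{\tilde\cG}\), or of \(\cL_t^{k+1}\mu_{\cG}\) as in your fallback, gives no information about it.

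The missing ingredient is the closed push-forward: \(\mu_{\tilde\cG}(\cF^{-1}(\cH_t))=(\cL_0\mu_{\tilde\cG})(\cH_t)\). The paper applies Proposition \ref{prop:invariance-standard-families} at \(t=0\) with \((\varpi_2,B_2,D_2)\) as initial data. This makes \(\cL_0\mu_{\tilde\cG}\) a \((\varpi_3,B_3,D_3)\)-standard family, with constants depending only on the table and the initial regularity. Lemma \ref{lem:hole-intersection} then gives \(\mu_{\tilde\cG}(\cH_t\cup\cF^{-1}(\cH_t))\le Ct\), i.e.\ \(\mu_{\tilde\cG}(\cM_t^1)\ge 1-Ct\).

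A direct alternative also works. Each standard curve is cut by \(\cS_1\) into boundedly many pieces, and on each piece \(|r_{1,W}'|\ge c\) by Lemma \ref{lem:basic-FGu}. So the part mapped into \(\cH_t\) has \(r\)-length \(O(t)\), and you conclude as in Lemma \ref{lem:hole-intersection}. With either estimate in place of yours, your product argument proves the lemma.
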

\begin{proof}
Let \(\tilde \cG\) be a regular standard family (a \((\varpi_2,B_2,D_2)\)-standard family, as described after Proposition \ref{prop:invariance-standard-families}). Observe that
\[
\mu_{\tilde \cG}(\cH_t \cup \cF^{-1}(\cH_t)) \le \mu_{\tilde \cG}(\cH_t) + \mu_{\tilde \cG}(\cF^{-1}(\cH_t)) = \mu_{\tilde\cG}(\cH_t) + (\cL_{0}\mu_{\tilde\cG}) (\cH_t),
\]
where \(\cL_0\) is given by \eqref{eq:def-push} with \(t=0\). We can then apply Proposition \ref{prop:invariance-standard-families} in the case \(t =0\) for initial standard families that are \((\varpi_2, B_2,D_2)\) and we have that \((\cL_{0}\mu_{\tilde \cG})\) is a \((\varpi_3, B_3,D_3)\)-standard family for some \(\varpi_3, B_3, D_3>0\) depending only on \(\varpi_2, B_2, D_2\) and the billiard table. Therefore, by Remark \ref{rem:constant-invariance}, the constants \(\varpi_2, \varpi_3, B_2, B_3, D_2\) and \(D_3\) can be incorporated in the generic constant \(C\) as well. Hence, by Lemma  \ref{lem:hole-intersection} and the equation above, there exists \(C>0\) such that for any regular standard family \(\tilde \cG\),
\begin{equation}\label{eq:hole-plus-preimage}
\mu_{\tilde \cG}(\cH_t \cup \cF^{-1}(\cH_t)) \le  C e^{\varpi_2} B_2 t +  C e^{\varpi_3} B_3 t\le Ct.
\end{equation}
In other words, there exists \(C>0\) such that for any regular standard family \(\tilde \cG\) and \(t \in [0,t_0]\),
\begin{equation}\label{eq:mass-hole-first-iteration}
    \mu_{\tilde \cG}(\cM_t^1) = 1 - \mu_{\tilde \cG}(\cH_t \cup \cF^{-1}(\cH_t)) \ge 1-Ct.
\end{equation}
We now prove the statement by induction. Applying \eqref{eq:mass-hole-first-iteration} to the initial regular standard family \(\cG\) of the statement proves the case \(n=1\). For any \(n \in \bN\),
\begin{equation}\label{eq:induction-hole-mass-inside}
\begin{split}
\mu_{\cG}(\cM_{t}^{n+1}) = \mu_{\cG}(\cM_{t}^{n} \cap \cF^{-n-1}(\cM_t)) &= \mu_{\cG}(\cM_{t}^{n})(\cL_t^n \mu_{\cG})(\cF^{-1}(\cM_t)).
\end{split}
\end{equation}
Let \(t_0>0\) be such that Proposition \ref{prop:invariance-standard-families} applies and assume that the statement holds for some \(n \in \bN\). By the mentioned proposition, for any \(n \in \bN\) and \(t \in [0,t_0]\), there exists a regular standard family \(\cG_{n}\) such that \(\cL_{t}^{n} \mu_{\cG} = \mu_{\cG_n}\). Therefore, by \eqref{eq:mass-hole-first-iteration} in the case \(\tilde \cG = \cG_n\), equation \eqref{eq:induction-hole-mass-inside} and the inductive hypothesis,
\[
\begin{split}
\mu_{\cG}(\cM_t^{n+1}) &= \mu_{\cG}(\cM_t^n) \mu_{\cG_n}(\cF^{-1}(\cM_t)) \ge \mu_{\cG}(\cM_t^n) \mu_{\cG_n}(\cM_{t}^1) \ge (1-Ct)^{n+1}.
\end{split}
\]
This concludes the proof of the statement.
\end{proof}

The second result tells that initial regular standard families converge exponentially fast to the physically relevant conditionally invariant measure \(\mu_t\) under the action of the renormalized push-forward.

\begin{theorem}\label{thm:exp-mixing-sf}
    There exist \(t_0 >0\), \(C>0\) and \(\gamma \in (0,1)\) such that for any two initial regular standard families \(\cG_1\), \(\cG_2\) for any \(n \in \bN\), \(\phi \in \cC^1 (\cM)\) and \(t \in [0,t_0]\),
    \[
    |(\cL_{t}^n\mu_{\cG_1}) (\phi) - (\cL_t^n\mu_{\cG_2}) (\phi)| \le C \|\phi\|_{\cC^1} \gamma^n.
    \]
\end{theorem}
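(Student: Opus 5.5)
The plan is to adapt the Chernov--Dolgopyat coupling argument for standard pairs to the leaky evolution. We will work throughout with the unnormalised quantities
\[
\mu_{\cG_i}\bigl(\phi\circ\cF^n\,\Id_{\cM_t^n}\bigr), \qquad i=1,2,
\]
and divide only at the end, using the identity \eqref{eq:iterate-transf-op-hole}. Two basic inputs are needed, both uniform in \(t\in[0,t_0]\).

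\emph{Growth lemma for the open map.} The one-step expansion estimate \(\sum_{W'\subset \cF(W)} |W|/|W'| < 1\) (for \(|W|\) small, after possibly passing to an iterate \(\cF^{m_0}\)) should survive when the cuts along \(\Xi_{n,t}\) are added. The reason is that \(\partial\cH_t\) consists of two vertical lines transversal to \(\hat\cC^u\). These lines cut each unstable curve at most twice per step, and the total complexity is bounded uniformly in \(t\) by Lemma \ref{lem:bound-number-smooth-components}. Together with Proposition \ref{prop:invariance-standard-families}, this gives that a fixed proportion of the surviving mass lies on curves of length \(\ge \delta_1\) after a bounded recovery time.

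\emph{A magnet adapted to the hole.} We fix a solid rectangle \(R\) foliated by stable manifolds \(W^s\) of the closed map that have length \(\ge \delta_2\). We then keep only those leaves whose forward orbit never meets \(\Xi_{\infty,t}\) transversally, i.e.\ leaves that are not cut by \(\partial\cH_t\) at any future time. Since \(\mu_0\bigl(\bigcup_k \cF^{-k}[\partial\cH_t]_{\ve\Lambda_0^{-k}}\bigr)\lesssim \ve\) uniformly in small \(t\), this surviving magnet still has positive \(\mu_0\)-measure, bounded below uniformly in \(t\). The crucial property is that two points coupled along such a leaf either both survive up to every time or both enter the hole at the same time.

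The coupling proceeds as follows. By the growth lemma and the mixing of the closed billiard (Theorem \ref{thm:exp-mixing-sf} at \(t=0\) is classical, \cite[Theorem 7.31]{MR2229799}), after a fixed time \(N\) each conditionally evolved family \(\cL_t^N\mu_{\cG_i}\) places mass \(\ge c_0\) on long curves that fully cross the magnet. We match equal amounts of mass \(c_0/2\) of the two families along the surviving stable leaves. The densities are matched via the holonomy, whose Jacobian is \(e^{O(\text{dist}^{1/3})}\) close to \(1\), and the holonomy error is absorbed by splitting off a small regular remainder. This yields a decomposition
\[
\cL_t^N\mu_{\cG_i}=\alpha\,\nu_i^{\mathrm{c}}+(1-\alpha)\,\mu_{\cG_i'},
\]
where \(\cG_i'\) is again a regular standard family (the bookkeeping follows Proposition \ref{prop:invariance-standard-families}). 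The coupled parts \(\nu_1^{\mathrm c},\nu_2^{\mathrm c}\) are related by a map \(\Theta\) sliding points along the leaves, with \(\Id_{\cM_t^m}\circ\Theta=\Id_{\cM_t^m}\) for every \(m\) and \(|\cF^m x-\cF^m\Theta x|\le C\Lambda_0^{-m}\). Consequently the coupled parts have the same survival mass, and
\[
\bigl|\nu_1^{\mathrm c}(\phi\circ\cF^m\Id_{\cM_t^m})-\nu_2^{\mathrm c}(\phi\circ\cF^m\Id_{\cM_t^m})\bigr|\le C\|\phi\|_{\cC^1}\Lambda_0^{-m}\,\nu^{\mathrm c}(\cM_t^m).
\]
We then iterate the decomposition on the uncoupled remainders. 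Since \(\cL_t\) is nonlinear, the weight of each piece after further evolution is proportional to its survival mass. By Lemma \ref{lem:lower-bound-survival} and \eqref{eq:mass-hole-first-iteration}, these survival ratios over \(N\) steps lie in \([(1-Ct)^N,1]\), so for \(t_0\) small the uncoupled normalised weight after \(kN\) steps is still \(\le (1-c_0/4)^k\). The normalising constants \(\mu_{\cG_i}(\cM_t^n)\) then agree up to a relative error \(O(\gamma^n)\). Combining the numerator and denominator estimates gives the bound \(C\|\phi\|_{\cC^1}\gamma^n\), with \(\gamma\) determined by \(\max\{\Lambda_0^{-1/2},(1-c_0/4)^{1/N}\}\).

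The main obstacle I expect is the construction of the magnet, uniformly in \(t\). It must be shown that the set of long closed-map stable leaves never crossed by \(\cF^{-k}(\partial\cH_t)\) retains positive, \(t\)-independent measure, and that standard pairs cross it properly despite the extra cuts. I would first try to handle this by treating \(\partial\cH_t\) as fake corners, as in Lemma \ref{lem:continuation-of-singularities}. I would then reuse the closed-system estimate for the measure of \(\varepsilon\)-neighbourhoods of \(\cS_{\infty}\), applied to this enlarged singularity set.
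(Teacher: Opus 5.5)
\textbf{The main gap is the nonlinearity of \(\cL_t\).} You divide only at the end and assert that the normalising constants \(\mu_{\cG_i}(\cM_t^n)\) agree up to a relative error \(O(\gamma^n)\). This is false for \(t>0\).

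Indeed \(\mu_{\cG}(\cM_t^n)=\prod_{j=0}^{n-1}(\cL_t^j\mu_{\cG})(\cM_t^1)\). Already the \(j=0\) factors \(1-\mu_{\cG_i}(\cH_t\cup\cF^{-1}(\cH_t))\) in general differ by \(O(t)\) for two different families, and nothing undoes this. So \(\mu_{\cG_1}(\cM_t^n)/\mu_{\cG_2}(\cM_t^n)\) tends to a family-dependent constant \(1+O(t)\), generally \(\neq 1\).

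Consequences for your scheme:
\begin{itemize}
\item If coupled parts carry equal unnormalised mass, the uncoupled masses differ by \(|\mu_{\cG_1}(\cM_t^n)-\mu_{\cG_2}(\cM_t^n)|\), an order-\(t\) fraction of the total.
\item Each coupling step removes equal amounts from both uncoupled pools, so this difference is never coupled away. At best you get \(C\|\phi\|_{\cC^1}(\gamma^n+t)\).
\item In normalised terms, the two copies of a coupled component acquire different weights after each step. Restoring equal weights means returning already-coupled mass to the uncoupled pool and re-coupling it later. That is impossible when this mass lives on the Cantor set of magnet leaves, since it is not a standard family.
\end{itemize}
Lemma \ref{lem:lower-bound-survival}, which you invoke, only bounds the survival of regular families and does not touch this issue.

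The missing idea is the identity behind \eqref{eq:very-funny-decomposition}:
\((\cL_t^{N_c}\mu)(\phi)=\mu(\phi\circ\cF^{N_c}\Id_{\cM_t^{N_c}})+\mu(\cM\setminus\cM_t^{N_c})\,(\cL_t^{N_c}\mu)(\phi)\).
With it the paper does three things:
\begin{itemize}
\item It re-injects lost mass as the regular family \(\cL_t^{N_c}\mu_{\cG_\iota(k)}\) (Proposition \ref{prop:invariance-standard-families}).
\item It divides out the common part of the coupled loss and puts the discrepancy \(A_{\iota,r}\) into the uncoupled pool. The weights \((c_r,u_r)\) therefore stay common to both families (Definition \ref{def:ckuk}).
\item Short pieces swallowed by the hole are automatically regularised (Lemma \ref{lem:growth-conditional}).
\end{itemize}
In your set-up, where coupled pairs survive identically, this identity would even make the coupled weights exactly common, but you have to use it.

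\textbf{Two further steps are not justified.}

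First, coupling along a Cantor set of stable leaves leaves the uncoupled mass on the gaps, which contain arbitrarily short curves. So \(\cG_i'\) is not a regular standard family. You would need recovery bookkeeping such as the classes \(\fR_r\) and the norm \eqref{eq:fancy-norm}, and in the open system the survival of these short pieces is not controlled.

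Second, the bound \(\mu_0\bigl(\bigcup_k\cF^{-k}[\partial\cH_t]_{\ve\Lambda_0^{-k}}\bigr)\lesssim\ve\) cannot show the pruned magnet has positive measure. To catch every leaf that is ever cut, \(\ve\) must be comparable to the stable length \(\ell\) of the leaves in \(R\). But \(\mu_0(R)\lesssim\ell\cdot\mathrm{diam}(R)\), so the removed set could exhaust \(R\). You would need a localised, \(t\)-uniform estimate of \(\mu_0(\cF^k(R)\cap[\partial\cH_t]_{\ell\Lambda_0^{-k}})\).

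The paper avoids magnets entirely. It couples whole sub-intervals of \(\eta_0\)-stacked curves (Lemma \ref{lem:fundamental-coupling}) and transports the coupling along finite-time fake holonomies \(\cH_n(x)\). It pays for cuts by \(\cS^{\bH}_{n,t}\) (including \(\Xi_{n,t}\)), for density mismatch, and for unequal hole losses with \(O(\eta^{1/3})\) decoupled mass of controlled regularity (Lemmata \ref{lem:decouple-single-sp}, \ref{lem:difference-measure-hole}, \ref{lem:one-step-loss-mass}). The linear scheme of Lemma \ref{lem:linear-scheme1} then absorbs this decoupled mass.
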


The above result, together with the fact that \(\mu_0\) is well approximated by standard families (see Lemma \ref{lem:exp-conv-initial}), implies exponential convergence of standard families under the conditional evolution to the unique physically relevant measure \(\mu_t\) of Equation \ref{eq:conditional-invariant-measure}.

\begin{corollary}\label{cor:exp-ren-mixing}
     There exist \(t_0 >0\), \(C>0\) and \(\gamma \in (0,1)\) such that for any initial regular standard family \(\cG\), for any \(n \in \bN\), \(t \in [0,t_0]\), \(\phi \in \cC^1 (\cM)\),
    \[
    |(\cL_{t}^n\mu_{\cG}) (\phi) - \mu_t (\phi)| \le C \|\phi\|_{\cC^1} \gamma^n.
    \]
\end{corollary}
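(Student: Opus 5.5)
Let \(\cG\) be any initial regular standard family. The plan is to compare \(\cL_t^n\mu_{\cG}\) with a version started from \(\mu_0\), and then pass to the limit \(n\to\infty\).

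Two ingredients are available. Theorem \ref{thm:exp-mixing-sf} bounds the distance between the evolutions of any two initial regular standard families by \(C\|\phi\|_{\cC^1}\gamma^n\), uniformly in \(t\in[0,t_0]\). The limit \eqref{eq:conditional-invariant-measure} identifies \(\mu_t\) as the limit of \(\cL_t^n\mu_0\). The missing link is Lemma \ref{lem:exp-conv-initial}, which says that \(\mu_0\) is well approximated by standard families. I will use it in the following form: there is a sequence of initial regular standard families \(\cG_m\) such that \(|\mu_{\cG_m}(\psi)-\mu_0(\psi)|\le C\|\psi\|_{\cC^1}\theta^m\) for some \(\theta<1\). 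For instance, \(\cG_m\) can be taken as a foliation of \(\mu_0\) into unstable curves, giving exact equality, or as the \(m\)-fold push-forward \(\cL_0^m\) of a fixed family.

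The simplest route is to decompose \(\mu_0\) itself as \(\mu_{\cG_0}\) for a single initial regular standard family \(\cG_0\). To do this, foliate \(\cM\) by unstable curves of length about \(\delta_*\), cut along homogeneity strips, and use the conditional densities \(\cos\vf\) restricted to the curves. Since \(\cos\vf\) is smooth along the curves, these densities are regular. The short curves near \(\partial\cM\) carry mass \(\mu_0(\bH_k)\sim k^{-5}\) and have length \(\sim k^{-3}\) by Lemma \ref{lem:unstable-curves-cosine-k}, so their contribution to \(\cZ\) is summable. This gives \(\cZ(\cG_0)<\infty\), so \(\cG_0\) is an initial regular standard family once \(B_1\) is chosen large enough. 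With this, \(\cL_t^n\mu_0=\cL_t^n\mu_{\cG_0}\) exactly.

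Theorem \ref{thm:exp-mixing-sf} then gives
\[
|(\cL_t^n\mu_{\cG})(\phi)-(\cL_t^n\mu_0)(\phi)|\le C\|\phi\|_{\cC^1}\gamma^n \qquad \text{for all } n.
\]
Letting \(n\to\infty\) on the right side of the difference, and using \eqref{eq:conditional-invariant-measure}, I get
\[
|(\cL_t^n\mu_{\cG})(\phi)-\mu_t(\phi)|\le C\|\phi\|_{\cC^1}\gamma^n,
\]
because the bound on the difference holds for every \(n\) while the second term converges.

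If exact decomposition is not available, I will instead use the approximation form. Since \(\cL_t^n\) is nonlinear, the approximation error \(\mu_0-\mu_{\cG_m}\) must be propagated with care. Two routes are possible. The first writes \(\mu_0\) as a convex combination of a regular standard family and a remainder of small mass \(\epsilon\). The numerator \(\nu(\phi\circ\cF^n\Id_{\cM_t^n})\) is linear in \(\nu\), and the denominators are bounded below by Lemma \ref{lem:lower-bound-survival}. However, that lower bound decays like \((1-Ct)^n\), which is the main obstacle. The second route avoids this by using, for a convex combination, the identity
\[
\cL_t^n(a\nu_1+b\nu_2)=\frac{a\nu_1(\cM_t^n)\,\cL_t^n\nu_1+b\nu_2(\cM_t^n)\,\cL_t^n\nu_2}{a\nu_1(\cM_t^n)+b\nu_2(\cM_t^n)}.
\]
This expresses the evolution of the mixture as a convex combination of the evolved parts, so the remainder contributes only through its relative survival weight. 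If the remainder is itself a standard family, Theorem \ref{thm:exp-mixing-sf} controls every piece uniformly, and no \((1-Ct)^{-n}\) factor appears. For this reason I will aim to obtain an exact standard-family decomposition of \(\mu_0\), as sketched above, which makes the corollary immediate.
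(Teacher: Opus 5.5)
\textbf{The limit step is the gap.} You write that, since
\[
|(\cL_t^n\mu_{\cG})(\phi)-(\cL_t^n\mu_0)(\phi)|\le C\|\phi\|_{\cC^1}\gamma^n
\]
for every \(n\) and \(\cL_t^n\mu_0\to\mu_t\), you can let \(n\to\infty\) "on the right side of the difference". That inference is invalid. Theorem \ref{thm:exp-mixing-sf} compares the two evolutions at the \emph{same} time \(n\). A matched-time bound plus convergence of one sequence says nothing about the rate at which the other approaches the limit: take \(a_n=b_n=1/n\), \(b=0\), \(\epsilon_n=2^{-n}\). The triangle inequality leaves you with the term \(|(\cL_t^n\mu_0)(\phi)-\mu_t(\phi)|\). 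Equation \eqref{eq:conditional-invariant-measure} gives no rate for this term, and bounding it by \(C\gamma^n\) is just the corollary itself for \(\mu_0\).

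\textbf{The missing idea: a time shift combined with invariance.} Compare \(\mu_{\cG}\) not with \(\mu_0\) but with \(\cL_t^m\mu_0\).
\begin{enumerate}
\item By Proposition \ref{prop:invariance-standard-families} (Lemma \ref{lem:invariance} at the level of measures), \(\cL_t^m\mu_0\) is regular, with data \((\varpi_2,B_2,D_2)\) independent of \(m\).
\item Theorem \ref{thm:exp-mixing-sf} holds for any choice of initial regularity data. You can therefore rerun it with data \((\varpi_2,B_2,D_2)\), which dominates both \(\cG\) and \(\cL_t^m\mu_0\). This enlarges \(C\) and shrinks \(t_0\), but uniformly in \(m\).
\item This gives \(|(\cL_t^n\mu_{\cG})(\phi)-(\cL_t^{n+m}\mu_0)(\phi)|\le C\|\phi\|_{\cC^1}\gamma^n\) for all \(m\).
\item Only now let \(m\to\infty\) with \(n\) fixed, and use \eqref{eq:conditional-invariant-measure}.
\end{enumerate}
This is exactly the paper's proof.

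\textbf{Second problem: \(\mu_0\) is not \(\mu_{\cG_0}\) for any standard family in the paper's sense.} Standard families have countable index sets. A countable mixture of measures carried by curves is singular with respect to Lebesgue, whereas \(\mu_0\) is absolutely continuous. Your foliation is a continuum mixture. Your density and \(\cZ\) computations for it are fine and parallel Lemma \ref{lem:push-vertical-line}. But such a mixture lies outside the scope of Proposition \ref{prop:invariance-standard-families} and Theorem \ref{thm:exp-mixing-sf} unless you re-prove them for measurable families.

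The paper handles this differently:
\begin{itemize}
\item It realizes \(\mu_0\) as a weak-\(\star\) limit of initial regular standard families (Lemma \ref{lem:exp-conv-initial}).
\item It transfers invariance and mixing to such limits (Lemmas \ref{lem:convergence-discontinuous-function}, \ref{lem:limit-iterate}, \ref{lem:invariance}, \ref{lem:exp-mixing}).
\item It uses transversality of \(\cS_{n,t}\) to unstable curves to handle the discontinuous indicators \(\Id_{\cM_t^n}\).
\end{itemize}
Your concern about the \((1-Ct)^{-n}\) factor does not arise there. The approximation is done at fixed \(n\) (this is the harmless \(4^n\) in Lemma \ref{lem:limit-iterate}), and the approximation error is sent to zero first. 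So no uniformity in \(n\) is needed, and the exponential rate comes entirely from Theorem \ref{thm:exp-mixing-sf}.
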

The proof of this Corollary is in the next section after Lemma \ref{lem:exp-conv-initial}.

\section{Linear Response}\label{sec:linear-response}
In this section we use the results of Section \ref{sec:standard-families} to prove Theorem \ref{thm:linear-response}. We start by proving that the push-forward of certain measures supported on vertical lines are standard families. 

\begin{lemma}\label{lem:push-vertical-line}
There exist \(\varpi_{*}, B_{*},D_{*}>0\), depending only on \(\cD\), such that, for any \(r \in [0, |\partial \cD|]\), there exists a \((\varpi_{*}, B_{*}, D_{*})\)-standard family \(\cG_{r}\) such that, for any \(\phi \in \cC^0 (\cM)\),
\[
\frac{1}{2 }\int_{-\pi/2}^{\pi/2} \phi \circ \cF (r,\vf) \cos \vf d\vf = \mu_{\cG_r}(\phi).
\]
\end{lemma}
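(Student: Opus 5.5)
We prove the lemma by pushing the vertical segment \(V_r = \{(r,\vf): \vf \in (-\pi/2,\pi/2)\}\) forward once, with the weight \(\frac12\cos\vf\,d\vf\). Since \(\int_{-\pi/2}^{\pi/2}\frac12\cos\vf\,d\vf = 1\), this is a probability measure \(\nu_r\) on \(V_r\), and the left-hand side of the lemma is \(\nu_r(\phi\circ\cF) = (\cL_0\nu_r)(\phi)\). So it suffices to show three things: \(\cF(V_r)\) is a union of unstable curves; the pushed density is regular on each of them; and the resulting family has bounded boundary \(\cZ\).

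\emph{Geometry.} \(V_r\) is vertical, hence aligned with the large unstable cone \(\cC^u\), since \(d\vf/dr = \infty\). By \eqref{eq:cone-invariance}, the tangent vectors of \(\cF(V_r)\) lie in \(\hat\cC^u\). The curve \(V_r\) meets \(\cS_1\setminus\cS_0\) in finitely many points, uniformly in \(r\) by transversality and Lemma \ref{lem:bound-number-smooth-components}. We also cut \(V_r\) along the preimages \(\cF^{-1}(\bS_k)\) of the homogeneity strip boundaries, so that each piece of \(\cF(V_r)\) lies in a single strip \(\bH_k\). Finally we subdivide the pieces longer than \(\delta_*\). The bound \(|\vf_W''|\le D_*\) then follows from the standard curvature estimate for images of curves in the large unstable cone under one iterate (as in \cite[Chapter 4]{MR2229799}), using that the boundary is \(\cC^3\); here the homogeneity strips are what keep this bound uniform.

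\emph{Densities.} Parametrize \(V_r\) by \(\vf\) and its image piece \(W\) by the \(r_1\)-coordinate. The pushed density is
\[
\rho(r_1) = \frac{\tfrac12\cos\vf}{|dr_1/d\vf|}.
\]
From \eqref{eq:differential}, \(dr_1/d\vf = -B/\cos\vf_1 = -\tau/\cos\vf_1\). Hence \(\rho\) is proportional to \(\cos\vf\cos\vf_1/\tau\), up to the normalization of each piece. The factor \(\cos\vf\) is an explicit smooth weight: along \(V_r\) its \(\log\) has derivative \(\tan\vf\), and this is compensated because \(|d\vf/dr_1|\lesssim\cos\vf_1/\tau\) is small. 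The factor \(\cos\vf_1\) restricted to a curve inside \(\bH_k\) has \(\log\)-Hölder-\(1/3\) regularity by the usual distortion argument, using Lemma \ref{lem:unstable-curves-cosine-k}. The factor \(\tau\) is smooth and bounded below by \eqref{eq:bound-table}. Together these give \(\rho\in\cC(\varpi_*)\).

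\emph{Boundary \(\cZ\); the main obstacle.} The weight of a piece \(W\) is \(p_W = \nu_r(\cF^{-1}W)\), and we must show \(\sum_W p_W/|W|\le B_*\). Pieces that are short only because they end on \(\cS_1\) are finitely many, but they may be arbitrarily short, since the position of \(r\) relative to \(\cS_1\) is arbitrary. Pieces in the strip \(\bH_k\) have length \(\sim k^{-3}\). We handle the latter by bounding \(p_W\) through the change of variables: \(p_W = \int_W\rho \lesssim |W|\cdot\sup_W \cos\vf_1 \lesssim |W| k^{-2}\). Summing \(p_W/|W|\lesssim k^{-2}\) over \(k\) converges.

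For pieces cut by \(\cS_1\) or near \(\partial\cM\), we use the same bound \(p_W \lesssim |W|\sup_W(\cos\vf\cos\vf_1)\). Then \(p_W/|W|\) is bounded by a constant, and the number of such pieces is bounded uniformly in \(r\). The key point is that the density vanishes like \(\cos\vf_1\) where the image curve is short, which is exactly the cancellation that the factor \(\cos\vf\) in the initial measure provides. This summation is where the care is needed. If short pieces far from \(\partial\cM\) caused trouble, I would instead merge adjacent pieces where possible, or rely on the fact that only boundedly many pieces exist outside the homogeneity strips.
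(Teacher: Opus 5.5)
There is a genuine gap in your density step. You cut \(V_r\) only along \(\cS_1\) and along \(\cF^{-1}(\bS_k)\), so each image piece lies in one strip \(\bH_k\) of the \emph{arrival} angle \(\vf_1\). You never cut according to the \emph{departure} angle \(\vf\).

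Consider the piece of \(\cF(V_r)\) that contains the images of \((r,\vf)\) with \(\vf\) close to \(\pi/2\). The ray leaving \(r\) tangentially hits the next scatterer at some angle \(\vf_1\). For generic \(r\) this angle is bounded away from \(\pm\pi/2\). So near this end of the piece, \(\cos\vf_1\) and \(\tau\) are bounded above and below, and \(|d\vf/dr_1|=\cos\vf_1/\tau\) is bounded below. Hence \(\rho\propto\cos\vf\cos\vf_1/\tau\) vanishes linearly in the distance to the endpoint.

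Such a density lies in no \(\cC(\varpi)\): by Lemma \ref{lem:ubiquous-cone} it would have to satisfy \(\rho\ge e^{-\varpi}/|W|\). Subdividing long pieces does not help, because a piece adjacent to that endpoint always exists. Your claimed compensation fails exactly here. The log-derivative \(\frac{d}{dr_1}\ln\cos\vf=\tan\vf\,\cos\vf_1/\tau\) is unbounded, since \(\cos\vf_1/\tau\) is small only when the \emph{arrival} is nearly grazing, which is unrelated to \(\vf\to\pm\pi/2\).

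The missing idea, which the paper uses, is to cut also by the homogeneity strips of the departure angle. That is, take the connected components of \(\cF(V_r)\cap\bH_k^{s_1}\cap\cF(\bH_j^{s_2})\). On such a piece \(W\) you have:
\begin{itemize}
\item \(\cos\vf\sim j^{-2}\), and \(|\cF^{-1}W|\lesssim j^{-3}\), because \(\cF^{-1}W\) is a vertical segment inside \(\bH_j\);
\item \(|\cF^{-1}W|\gtrsim|W|\min_W\cos\vf_1\), by \eqref{eq:bound-differential};
\item \(\cos\vf_1\sim k^{-2}\) on \(W\).
\end{itemize}
Combining these gives \(\cos\vf_1/\cos\vf\lesssim|W|^{-2/3}\). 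Therefore
\(|\ln\cos\vf(r_a)-\ln\cos\vf(r_b)|\lesssim|r_a-r_b|\,|W|^{-2/3}\le|r_a-r_b|^{1/3}\).

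With this extra cut, the rest of your argument goes through. Your observation \(p_W/|W|\le\sup_W\rho\lesssim\sup_W\cos\vf\cos\vf_1\) now gives \(p_W/|W|\lesssim j^{-2}k^{-2}\). The boundary \(\cZ\) is then controlled by the convergent double sum \(\sum_{j,k}j^{-2}k^{-2}\), exactly as in the paper.
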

\begin{proof}
Denote by \(l_r\) the vertical line
\[
l_r =\{r\} \times \biggl[-\frac{\pi}{2}, \frac{\pi}{2}\biggr].
\]
The support of a candidate standard family \(\mu_{\cG_r}\) have to coincide with \(\cF(l_r)\). Recalling that \((r_1 (r,\vf), \vf_1 (r,\vf)) = \cF(r,\vf)\) and that \(r\) is fixed, and using \eqref{eq:differential} for the differential and \eqref{eq:bound-table}, we obtain 
\begin{equation}\label{eq:implicit-function-theorem}
    \frac{d }{d \vf}r_1 (r,\vf) = -\frac{\tau}{\cos\vf_1} (r,\vf)\le -\tau_{\mathrm{min}} < 0.
\end{equation}
Hence, the function \(\vf \to r_1 (r,\vf)\) is strictly decreasing and the inverse \(\bar\vf_r : \pi_r (\cF(l_r)) \to [-\pi/2, \pi/2]\) is well defined and it is such that \(r_1 (r, \bar\vf_r (r_1)) = r_1\). Moreover, 
\begin{equation}\label{eq:derivative-inverse}
\frac{d\bar \vf_r}{dr_1} (r_1) = - \frac{\cos \vf_1}{\tau} (r, \bar \vf_r (r_1)).
\end{equation}
Recall \(\bH_k^{\pm}\) from \eqref{eq:homogeneity}. For \(j,k \in \{0\} \cup \{n \ge k_0\}\), \(s_1, s_2 \in \{+,-\}\), we denote by \(W_{j,k,s_1,s_2}\) the maximal connected components of
\begin{equation}\label{eq:1-homogeneous-push-vertical}
\cF(l_r) \cap \bH_k^{s_1} \cap \cF(\bH_j^{s_2}).
\end{equation}
These are the subsets of \(\cF(l_r)\) which belong to just one homogeneity strip and such that their preimage also belongs to just one homogeneity strip. Set \(\vf_{\cF(l_r)} : \pi_r (\cF(l_r)) \to [-\pi/2, \pi/2]\),
\begin{equation}\label{eq:push-graph}
\vf_{\cF(l_r)} (r_1) = \vf_1 (r, \bar \vf_r (r_1)).
\end{equation}
This is the graph of \(\cF(l_r)\). By \eqref{eq:derivative-inverse}, \eqref{eq:push-graph} and a change of variable, and partitioning \(\pi_r (\cF(l_r)) \) into homogeneous components,
\begin{equation}\label{eq:change-variable-vertical}
\begin{split}
\frac{1}{2} \int_{-\pi/2}^{\pi/2} &\phi \circ \cF (r,\vf) \cos\vf d\vf = \frac{1}{2} \int_{\pi_r (\cF(l_r))} \phi  (r_1, \vf_{\cF(l_r)} (r_1))  \cos\bar\vf_r (r_1) \frac{\cos \vf_{\cF(l_r)}(r_1)}{\tau}  dr_1 \\
&= \frac{1}{2} \sum_{j,k,s_1,s_2} \int_{I_{W_{j,k,s_1,s_2}}} \phi  (r_1, \vf_{\cF(l_r)} (r_1))  \cos\bar\vf_r (r_1) \frac{\cos \vf_{\cF(l_r)}(r_1)}{\tau}  dr_1.
\end{split}
\end{equation}
Here the minus sign in \eqref{eq:implicit-function-theorem} cancels with the reversed orientation of \(\pi_r (\cF(l_r))\). Setting 
\begin{equation}\label{eq:density-push-vertical}
\begin{split}
p_{j,k,s_1,s_2} &= \int_{I_{W_{j,k,s_1,s_2}}} \rho ,\quad \rho  =  \frac{\cos\bar\vf_r \cos \vf_{\cF(l_r)}}{2\tau} , \\
&\cG_{r} = \{(p_{j,k,s_1,s_2}, W_{j,k,s_1,s_2}, (\rho \Id_{I_{W_{j,k,s_1,s_2}}} )/ p_{j,k,s_1,s_2})\}_{j,k,s_1,s_2}, 
\end{split}
\end{equation}
Equation \eqref{eq:change-variable-vertical} can be written in the form of a measure associated with the standard family \(\cG_r\). However, we need to check the properties of the graph \(\vf_{\cF(l_r)}\), the density \(\rho\) and the boundary \(\cZ\). By \eqref{eq:cone-invariance} and because the vertical direction belongs to \(\cC^u\), we have that \(\cT_x W_{j,k,s_1,s_2} \in \hat \cC_x^u\). Moreover, recalling the expression for \(\cD\cF\) in \eqref{eq:differential} and \eqref{eq:derivative-inverse}, we have, on \(I_{W_{j,k,s_1,s_2}}\),
\begin{equation}\label{eq:derivativeG-push-vertical}
\begin{split}
\vf_{\cF(l_r)}' (r_1)= \frac{d  }{d \vf}\vf_1 (r, \cdot)_{|\bar \vf_r (r_1)} \frac{d\bar \vf_r}{dr_1} (r_1) &= -\frac{\tau \kappa_1 + \cos \vf_{\cF(l_r)} (r_1)}{\cos \vf_{\cF(l_r)}(r_1)} \biggl (- \frac{\cos \vf_{\cF(l_r)}(r_1)}{\tau} \biggr) \\
&= \frac{\tau \kappa_1 + \cos \vf_{\cF(l_r)}(r_1)}{\tau}.
\end{split}
\end{equation}
Here \(\tau = \tau (r, \bar \vf_r (r_1))\) and \(\kappa_1 = \kappa_1 (r_1)\) are functions of \(r_1\) only. In particular, calling \((\bar x, \bar y)\) the parametrization of \(\partial \cD\) w.r.t.\! the arc-length,
\begin{equation}\label{eq:tau-regularity}
\tau (r,\bar \vf_r (r_1)) = \sqrt{\bigl (\bar x (r) - \bar x(r_1)\bigr )^2 + \bigl (\bar y (r) -\bar y(r_1) \bigr )^2}.
\end{equation}
Hence, by the \(\cC^3\) assumption on \(\partial \cD\) and the fact that \(\tau \ge \tau_{\mathrm{min}}>0\), we have that the function \(r_1 \to \tau (r,\bar \vf_r (r_1))\) is uniformly \(\cC^1\) in the domain \(I_{W_{j,k,s_1,s_2}}\) since \(W_{j,k,s_1,s_2} \subset \cM \setminus  \cS_{-1}\). The same holds for \(\kappa\) because of the regularity of \(\partial \cD\). By differentiating \eqref{eq:derivativeG-push-vertical} once more with respect to \(r_1\) we obtain
\[
\vf_{\cF(l_r)}'' = \frac{\tau' \kappa_1 + \tau \kappa_1 ' - \vf_{\cF(l_r)}' \sin \vf_{\cF(l_r)} }{\tau} - \frac{\tau \kappa_1 + \cos \vf_{\cF(l_r)}}{\tau^2}\tau'.
\]
By \eqref{eq:derivativeG-push-vertical} (or simply noticing that \(\cF(l_r)\) belongs to the small unstable cone), we have that \(|\vf_{\cF(l_r)}'|\) is bounded. Therefore, by the discussion on the regularity of \(\tau\) and \(\kappa\) and the equation above, there exists \(D_{*} >0\) big enough such that \(|\vf_{\cF(l_r)}''| \le D_{*}\). In order to dilute the technical part of the argument, we postpone the proof that \(\rho \in \cC(\varpi_{*})\) for some \(\varpi_{*}>0\) to Lemma \ref{lem:technical-cone-rho} right below. It remains to estimate the boundary \(\cZ\). Recalling the definition \eqref{eq:1-homogeneous-push-vertical} of \(W_{j,k,s_1,s_2}\) and by Lemma \ref{lem:unstable-curves-cosine-k}, for all \(j,k,s_1,s_2\),
\begin{equation}\label{eq:estimates-cosines-0}
\begin{split}
    & \max_{ I_{W_{j,k,s_1,s_2}}}\cos \bar \vf_r \sim j^{-2} \quad \text{and}\quad \max_{ I_{W_{j,k,s_1,s_2}}}\cos \vf_{\cF(l_r)} \sim k^{-2}.
\end{split}
\end{equation}
In the first equation we used that \(\cF^{-1}(W_{j,k,s_1,s_2}) \subset \bH_j^{s_2}\) and in the second that \(W_{j,k,s_1,s_2} \subset \bH_k^{s_1}\).
Hence, by \eqref{eq:density-push-vertical} and \eqref{eq:estimates-cosines-0}, we have that 
\[
p_{j,k,s_1,s_2} = \int_{I_{W_{j,k,s_1,s_2}}}  \frac{\cos\bar\vf_r \cos \vf_{\cF(l_r)}}{2\tau} \le C \frac{|I_{W_{j,k,s_1,s_2}}|}{j^2 k^2}\le C \frac{|W_{j,k,s_1,s_2}|}{j^2 k^2}.
\]
Recalling that \(s_{1},s_2 \in \{+, -\}\), there exists \(B_*>0\) such that
\[
\begin{split}
\cZ(\cG_{r}) =  \sum_{j,k,s_1,s_2} p_{j,k,s_1,s_2} \frac{1}{|W_{j,k,s_1,s_2}|} \le C \sum_{j,k}\frac{1}{j^2 k^2} \le B_*.
\end{split}
\]
This concludes the proof of the Lemma.
\end{proof}

\begin{lemma}\label{lem:technical-cone-rho}
    Let \(\rho\) be defined by \eqref{eq:density-push-vertical} and \(W_{j,k,s_1,s_2}\) by \eqref{eq:1-homogeneous-push-vertical}. There exists \(\varpi_{*}>0\) such that for any \(r_a, r_b \in I_{W_{j,k,s_1,s_2}}\),
    \[
    \frac{\rho(r_a)}{\rho(r_b)} \le e^{\varpi_* |r_a - r_b|^{\frac 1 3}}.
    \]
\end{lemma}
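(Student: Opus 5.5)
We will bound \(|\log\rho(r_a)-\log\rho(r_b)|\) and split \(\log\rho\) into its three factors:
\[
\log\rho = \log\cos\bar\vf_r + \log\cos\vf_{\cF(l_r)} - \log\tau - \log 2 .
\]
Each term will be controlled separately on a fixed component \(W=W_{j,k,s_1,s_2}\). The factor \(\tau(r,\bar\vf_r(r_1))\) is the easy one. By \eqref{eq:tau-regularity} and the discussion after it, \(r_1\mapsto\tau\) is uniformly \(\cC^1\) on \(I_W\) and bounded below by \(\tau_{\mathrm{min}}\). Hence \(|\log\tau(r_a)-\log\tau(r_b)|\le C|r_a-r_b|\le C|r_a-r_b|^{1/3}\), using \(|r_a-r_b|\le\delta_*\le1\). (If \(|I_W|\) exceeds \(\delta_*\), we simply note that the estimate is uniform and chop \(W\) later if necessary. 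This does not affect the present lemma.)

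For the two cosine factors, the key input is the homogeneity property from Lemma \ref{lem:unstable-curves-cosine-k}. On \(W\subset\bH_k^{s_1}\), the derivative of \(\log\cos\vf_{\cF(l_r)}\) is \(-\tan\vf_{\cF(l_r)}\,\vf'_{\cF(l_r)}\). Here \(\vf'_{\cF(l_r)}\) is bounded by \eqref{eq:derivativeG-push-vertical}, and \(|\tan\vf|\lesssim 1/\cos\vf\sim k^2\). So by the mean value theorem,
\[
|\log\cos\vf_{\cF(l_r)}(r_a)-\log\cos\vf_{\cF(l_r)}(r_b)|\le Ck^2|r_a-r_b|.
\]
We then interpolate. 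Since \(|r_a-r_b|\le|W|\lesssim k^{-3}\) by Lemma \ref{lem:unstable-curves-cosine-k} (for \(k=0\) the bound \(Ck_0^2|r_a-r_b|\) is already fine), we get
\[
k^2|r_a-r_b| = k^2|r_a-r_b|^{2/3}|r_a-r_b|^{1/3}\le C k^2 k^{-2}|r_a-r_b|^{1/3} = C|r_a-r_b|^{1/3}.
\]
This is exactly why the exponent is \(1/3\).

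The factor \(\cos\bar\vf_r(r_1)\) is the main obstacle, because it lives on the preimage vertical line rather than on \(W\) itself. By \eqref{eq:derivative-inverse},
\[
\frac{d}{dr_1}\log\cos\bar\vf_r = \tan\bar\vf_r\,\frac{\cos\vf_{\cF(l_r)}}{\tau}.
\]
Its modulus is \(\lesssim \cos\vf_{\cF(l_r)}/\cos\bar\vf_r\sim j^2k^{-2}\). I would avoid interpolating in \(r_1\) directly and instead change variables to the \(\vf\)-coordinate on \(l_r\). Since \(\cF^{-1}(W)\subset l_r\cap\bH_j^{s_2}\) is a vertical segment in a single homogeneity strip, its length is \(\lesssim j^{-3}\). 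Moreover, \(\log\cos\vf\) has derivative \(\lesssim j^2\) there. So the same interpolation gives
\[
|\log\cos\bar\vf_r(r_a)-\log\cos\bar\vf_r(r_b)|\le C|\bar\vf_r(r_a)-\bar\vf_r(r_b)|^{1/3}.
\]
Finally, \(|d\bar\vf_r/dr_1|=\cos\vf_1/\tau\le1/\tau_{\mathrm{min}}\) by \eqref{eq:derivative-inverse}, which converts this into \(C|r_a-r_b|^{1/3}\).

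Summing the three bounds gives the claim with \(\varpi_*\) equal to the sum of the constants. All of these constants depend only on the table through \(\tau_{\mathrm{min}},\kappa_{\mathrm{max}},k_0\) and the cone constants.
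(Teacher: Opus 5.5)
Your proof is correct. The treatment of $\tau$ and of $\cos\vf_{\cF(l_r)}$ is essentially the paper's: a derivative bound of order $1/\cos\vf\sim k^{2}$, interpolated against the length $\lesssim k^{-3}$ of a curve lying in a single strip.

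The genuine difference is in the factor $\cos\bar\vf_r$.

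\textbf{The paper's route.} It stays in the $r_1$ variable and bounds the derivative $\tan\bar\vf_r\cos\vf_{\cF(l_r)}/\tau$ pointwise by $C|I_W|^{-2/3}$. This needs an extra comparison step. From \eqref{eq:bound-differential} it gets $|\cF^{-1}(W)|\ge c|W|\min_{I_W}\cos\vf_{\cF(l_r)}$. It combines this with $|\cF^{-1}(W)|^{2/3}\lesssim\cos\bar\vf_r$, and uses the homogeneity of $W$ in $\bH_k$ to cancel the $\cos\vf_{\cF(l_r)}$ factors.

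\textbf{Your route.} You factor through the $\vf$-coordinate on $l_r$. First, $\log\cos$ is uniformly $\frac13$-H\"older on a single homogeneity strip. Second, $r_1\mapsto\bar\vf_r(r_1)$ is $\tau_{\mathrm{min}}^{-1}$-Lipschitz by \eqref{eq:derivative-inverse}. Since $\pi_\vf(\bH_j^{s_2})$ is an interval, the mean value step needs no connectivity argument at all.

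\textbf{Comparison.} Your version is shorter. For this factor it uses only $\cF^{-1}(W)\subset\bH_j^{s_2}$, and never relates the length of $W$ to that of its preimage. The paper's version produces a derivative bound of the standard distortion form $|I_W|^{-2/3}$, the same template as Lemmas \ref{lem:distortion} and \ref{lem:bounded-distortion}. Yours exploits the special fact that the preimage is a vertical segment, on which $\vf$ itself is a coordinate.

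Two cosmetic remarks. The case $j=0$ for the $\bar\vf_r$ factor should be dispatched explicitly, just as you did for $k=0$. The chopping comment for $\tau$ is unnecessary: $|r_a-r_b|\le|\partial\cD|$ already gives $|r_a-r_b|\le C|r_a-r_b|^{1/3}$.
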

\begin{proof}
Recall \eqref{eq:push-graph} and the discussion after \eqref{eq:implicit-function-theorem} for the definition of \(\vf_{\cF(l_r)}\) and \(\bar \vf_r\). By \eqref{eq:bound-differential}, for some \(c>0\),
\[
|\cF^{-1}(W_{j,k,s_1,s_2})| \ge c  |W_{j,k,s_1,s_2}| \min_{I_{W_{j,k,s_1,s_2}}} \cos \vf_{\cF(l_r)} 
\]
Moreover, by definition \(\cF^{-1}(W_{j,k,s_1,s_2})\) is a subset of \(\bH_j^{s_2}\) so that by Lemma \ref{lem:unstable-curves-cosine-k}, for every \(r_1 \in I_{W_{j,k,s_1,s_2}}\), we have that \( |\cF^{-1}(W_{j,k,s_1,s_2})|^{2/3} \lesssim \cos \bar \vf_r\) and, for some \(C>0\),
\[
\begin{split}
\frac{\cos\vf_{\cF(l_r)}(r_1)}{\cos \bar \vf_r (r_1)} &\le C \frac{\cos \vf_{\cF(l_r)}(r_1)}{|\cF^{-1}(W_{j,k,s_1,s_2})|^{\frac 2 3}} \le C \frac{\cos \vf_{\cF(l_r)}(r_1)}{|W_{j,k,s_1,s_2}|^{\frac 2 3} \min_{I_{W_{j,k,s_1,s_2}}} \cos \vf_{\cF(l_r)}(r_1)^{\frac 2 3}}\\
&\le C\frac{1}{|W_{j,k,s_1,s_2}|^{\frac 2 3}}\le C C_{\mathrm{cone}} \frac{1}{|I_{W_{j,k,s_1,s_2}}|^{\frac 2 3}}.
\end{split}
\]
Therefore, using the lower bound on \(\tau\) and \eqref{eq:derivative-inverse}, for some \(C>0\) and any \(r_a, r_b \in I_{W_{j,k,s_1,s_2}}\),
\[
\begin{split}
|\ln \cos \bar\vf_r (r_{a}) -\ln \cos \bar\vf_r (r_{b})| &\le \int_{r_a}^{r_b} \biggl|\frac{\sin \bar \vf_r (r_1)}{\cos \bar \vf_r (r_1)}\frac{\cos \vf_{\cF(l_r)}(r_1)}{\tau} \biggr | dr_1 \le  \frac{ C |r_b - r_a|}{|I_{W_{j,k,s_1,s_2}}|^{\frac 2 3}} \le C |r_b - r_a|^{\frac 1 3}.
\end{split}
\]
Since \( |W_{j,k,s_1,s_2}|^{2/3}\sim |I_{W_{j,k,s_1,s_2}}|^{2/3} \lesssim \cos \vf_{\cF(l_r)}\), a similar computation yields,  for some \(C>0\),
\[
\begin{split}
|\ln \cos \vf_{\cF(l_r)} (r_{a})& -\ln \cos \vf_{\cF(l_r)} (r_{b})| \le \int_{r_a}^{r_b} \biggl|\frac{\sin \vf_{\cF(l_r)}(r_1)}{\cos \vf_{\cF(l_r)}(r_1)} \biggr| |\vf_{\cF(l_r)}' (r_1)|dr_1 \le    C |r_{b} - r_{a}|^{\frac 1 3}.
\end{split}
\]
Finally, using the fact that \(\tau\) is uniformly \(\cC^1\) as a function of \(r_1\) on \(I_{W_{j,k,s_1,s_2}}\), as discussed after \eqref{eq:tau-regularity}, for some \(C>0\),
\[
|\ln \tau(r_a) - \ln \tau (r_b)| \le \int_{r_b}^{r_a} \biggl|\frac{\tau'(r_1)}{\tau(r_1)}\biggr|dr_1 \le C |r_a- r_b|.
\]
Since \(\rho  = (\cos\bar\vf_r \cos \vf_{\cF(l_r)})/(2\tau)\), the previous three equations prove the Lemma.
\end{proof}

Recalling the discussion after Proposition \ref{prop:invariance-standard-families}, we fix \(\varpi_1> \varpi_{*}\), \(B_1 >B_{*}\) and \(D_1 >D_{*}\) so that, according to Lemma \ref{lem:push-vertical-line}, the measures
\[
\phi \to \frac{1}{2 }\int_{-\pi/2}^{\pi/2} \phi \circ \cF (r,\vf) \cos \vf d\vf,
\]
correspond to initial regular standard families for any \(r\). Working with regular standard families is almost sufficient for our calculations, but not really. We need to `pass to the limit' with the following slightly larger class of measures \(\cM_{\omega, B,D}\).

\begin{definition}\label{def:initial-measures}
 We denote by \(\cM_{\omega, B, D}\), for \(\omega, B,D >0\), the weak-\(\star\) closure of the set of measures corresponding to \((\omega, B, D)\)-standard families. We say that \(\mu\) is a initial regular measure if \(\mu \in \cM_{\varpi_1, B_1, D_1}\) and that \(\mu\) is a regular measure if \(\mu \in \cM_{\varpi_2, B_2, D_2}\).
\end{definition}

In the next Lemma we show that \(\mu \in \cM_{\varpi, B, D}\) has a representation in terms of standard families also when it acts on discontinuous functions whose discontinuities are allowed only on \(\cS_{n,t}\). This result uses transversality between unstable curves and \(\cS_{n,t}\) and the fact that the family has bounded \(\cZ\).

\begin{lemma}\label{lem:convergence-discontinuous-function} Let \(\mu \in \cM_{\omega, B,D}\) and \(\{\cG_k\}_k\) be any sequence of \((\omega, B, D)\)-standard families such that \(\mu = \lim_k \mu_{\cG_k}\). For any bounded function \(h\), discontinuous at most on \(\cS_{n,t}\), \(n \in \bN\), \(t \in [0,t_0]\), we have that \(\lim_{k \to \infty}\mu_{\cG_k}(h) = \mu(h)\).
\end{lemma}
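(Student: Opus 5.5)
The plan is a standard portmanteau-type approximation: replace \(h\) by a continuous function that agrees with it away from a thin neighbourhood of \(\cS_{n,t}\), and show that the neighbourhood carries uniformly small mass. This smallness must hold simultaneously for every \(\mu_{\cG_k}\) and for the limit \(\mu\).

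\textbf{Step 1 (uniform thin-neighbourhood bound).} By Lemma \ref{lem:bound-number-smooth-components}, \(\cS_{n,t}\) is a finite union of \(\cC^1\) curves, and their number \(M_n\) does not depend on \(t\). Away from \(\cS_0\) these curves are stable curves or the vertical lines \(\partial\cH_t\), so they are uniformly transversal to \(\hat\cC^u\). I will show that there is \(C_n\), independent of \(k\), such that for every \((\omega,B,D)\)-standard family \(\cG\) and every \(\ve>0\),
\[
\mu_{\cG}\bigl([\cS_{n,t}]_\ve\bigr)\le C_n e^{\omega}B\,\ve .
\]
The argument repeats the proof of Lemma \ref{lem:hole-intersection}. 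Transversality implies that each unstable curve \(W_j\) meets the \(\ve\)-neighbourhood of one smooth component in a set whose \(r\)-projection has length \(\lesssim \ve\). Using Lemma \ref{lem:ubiquous-cone}, the contribution of that set is at most \(C e^{\omega}\ve/|W_j|\). Summing over the \(M_n\) components and over \(j\) with weights \(p_j\) gives a bound of order \(C_n e^{\omega}\cZ(\cG)\ve\).

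The part \(\cS_0=\{\vf=\pm\pi/2\}\) consists of horizontal lines, which are also transversal to the unstable cone, so the same estimate applies to it. One caveat: near \(\cS_0\), stable curves can become tangent in a degenerate way. I expect only uniform transversality within \(\hat\cC^s\) to be needed, and this is guaranteed by the remark that \(|v_1/v_2|\) is bounded above and below by \(C_{\mathrm{cone}}\).

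\textbf{Step 2 (approximation).} Let \(\ve>0\) and let \(U=[\cS_{n,t}]_\ve\). Since \(\cS_{n,t}\) is closed, \(h\) is continuous on the compact set \(\cM\setminus U\). By Tietze extension there is a continuous \(g\) with \(\|g\|_\infty\le\|h\|_\infty\) and \(g=h\) on \(\cM\setminus U\). Then
\[
|\mu_{\cG_k}(h)-\mu(h)|\le |\mu_{\cG_k}(g)-\mu(g)| + 2\|h\|_\infty\bigl(\mu_{\cG_k}(U)+\mu(U)\bigr).
\]
The first term tends to zero by weak-\(\star\) convergence. For the second term, Step 1 gives \(\mu_{\cG_k}(U)\le C_n e^{\omega}B\ve\) uniformly in \(k\). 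Because \(U\) is open, the portmanteau theorem gives \(\mu(U)\le\liminf_k\mu_{\cG_k}(U)\le C_ne^{\omega}B\ve\). Taking \(\limsup_k\) and then letting \(\ve\to0\) concludes the proof. As a by-product, \(\mu(\cS_{n,t})=0\), so \(\mu(h)\) is well defined regardless of how \(h\) is chosen on \(\cS_{n,t}\).

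\textbf{Main obstacle.} The delicate step is the uniform transversality estimate in Step 1. Its key point is bounding the length of \(W\cap[S]_\ve\) by \(C\ve\) for every smooth component \(S\) and every unstable \(W\). If an unstable curve could cross a component many times, the bound would fail. Lemma \ref{lem:continuation-of-singularities} rules this out: components are monotone curves, of decreasing type in \((r,\vf)\) coordinates, while unstable curves are increasing. Hence an unstable curve intersects each smooth component at most once, and the uniform angle between the two directions then gives the \(C\ve\) bound.
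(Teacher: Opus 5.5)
Your proof is correct. It has the same skeleton as the paper's proof:
\begin{itemize}
\item replace \(h\) by a continuous function that agrees with it off \([\cS_{n,t}]_\ve\);
\item bound \(\mu_{\cG_k}([\cS_{n,t}]_\ve)\le C_n e^{\omega}B\ve\) uniformly in \(k\), using transversality of \(\cS_{n,t}\) to the unstable cone, Lemma \ref{lem:ubiquous-cone} and \(\cZ(\cG_k)\le B\). This is exactly \eqref{eq:discontinuities-measure-standard-family}.
\end{itemize}

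The one place you diverge is in controlling the limit measure \(\mu\) on the neighbourhood. You use the portmanteau inequality \(\mu(U)\le\liminf_k\mu_{\cG_k}(U)\) for the open set \(U=[\cS_{n,t}]_\ve\). The paper instead uses Lusin's theorem to produce a continuous \(\tilde h_\ve\) that equals \(h_\ve-h\) off a \(\mu\)-small set.

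Your route is the cleaner one. The paper bounds \(\lim_k\mu_{\cG_k}(\tilde h_\ve)\) by \(\sup_k\mu_{\cG_k}(|h-h_\ve|)\). This tacitly requires \(\tilde h_\ve\) to be small off \([\cS_{n,t}]_\ve\) with respect to the singular measures \(\mu_{\cG_k}\). Lusin controls only \(\mu\), so it does not give this without an extra cutoff.

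Portmanteau gives \(\mu([\cS_{n,t}]_\ve)\lesssim\ve\) directly. An equivalent way is to test against a continuous cutoff lying between \(\Id_{[\cS_{n,t}]_\ve}\) and \(\Id_{[\cS_{n,t}]_{2\ve}}\). As you note, this also yields \(\mu(\cS_{n,t})=0\), so \(\mu(h)\) is well defined independently of the values of \(h\) on the discontinuity set.
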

\begin{proof}
For any \(\ve >0\), let \(h_{\ve} \in \cC^0(\cM)\) be such that \(h_{\ve} = h\) on \(\cM \setminus [\cS_{n,t}]_{\ve}\) and \(\|h_{\ve}\|_{\cC^0} \le \|h\|_{L^{\infty}}\). Set also \(\cG_k = \{(p_j^k, W_{j}^k, \rho_{j}^k)\}\). Recall from the discussion after Lemma \ref{lem:unstable-curves-cosine-k} that \(\cS_{n,t} \setminus \cS_{0,t}\) is a union of stable curves. Therefore, using also Lemma \ref{lem:bound-number-smooth-components}, for any \(n\), the set \(\cS_{n,t}\) consists of a finite union of curves transversal to the unstable cone whose number and transversality is uniform in \(t\). Hence, there exists \(C_n >0\) such that, for all \(k\), \(j\), \(t \in [0,t_0]\) and \(\ve >0\),
\begin{equation}\label{eq:transversality-unstable-discontinuities-unif-t}
  |\pi_r (W_j^k \cap [\cS_{n,t}]_{\ve})| \le  | W_j^k \cap [\cS_{n,t}]_{\ve}| \le C_n \ve.
\end{equation}
By definition of \(\mu \in \cM_{\omega, B,D}\), we have  that \(\rho_{j}^k \in \cC(\varpi)\). Hence, by \eqref{eq:transversality-unstable-discontinuities-unif-t}, Lemma \ref{lem:ubiquous-cone} and the fact that \(\cZ(\cG_k) \le B\), for some \(C_{\mathrm{cone}} >0\),
\begin{equation}\label{eq:discontinuities-measure-standard-family}
\begin{split}
 \mu_{\cG_k} ([\cS_{n,t}]_{\ve}) &= \sum_{j}p_j^k \int_{\pi_r (W_j^k \cap [\cS_{n,t}]_{\ve})} \rho_j^k \le C_{\mathrm{cone}}  e^{\varpi}\sum_{j}p^k_j \frac{C_n \ve}{|W_j^k|} \le C_{\mathrm{cone}} e^{\varpi} C_n B \ve.
 \end{split}
\end{equation}
Therefore, by definition of \(h_{\ve}\), for each \(k \in \bN\) and \(\ve >0\),
\begin{equation}\label{eq:Gk-todiscontinuous}
|\mu_{\cG_k} (|h-h_{\ve}|) | \le 2\|h\|_{L^{\infty}}\mu_{\cG_k} ([\cS_{n,t}]_{\ve})  \le C_{\mathrm{cone}} e^{\varpi} C_n B \|h\|_{L^{\infty}}\ve.
\end{equation}
By Lusin's theorem, for any \(\ve >0\), there exist a set \(L_{\ve}\) and a continuous function \(\tilde h_{\ve}\) such that \(h_{\ve} - h = \tilde h_{\ve}\) on \(L_{\ve}\) and \(\mu (\cM \setminus L_{\ve}) \le \ve\). We can also require that \(\|\tilde h_{\ve}\|_{\cC^0} \le \|h - h_{\ve}\|_{L^{\infty}} \le 2\|h\|_{L^{\infty}}\). Therefore,
\[
\begin{split}
\mu(h_{\ve}) -\mu(h) = \mu(h_{\ve} - h) &= \mu (\tilde h_{\ve}) + \mu ((\tilde h_{\ve} - h_{\ve} - h) \Id_{\cM \setminus L_{\ve}}) \\
&= \lim_{k \to \infty}\mu_{\cG_k}(\tilde h_{\ve}) + \mu((\tilde h_{\ve} - h_{\ve} - h)\Id_{\cM \setminus L_{\ve}}).
\end{split}
\]
By the equation above and \eqref{eq:Gk-todiscontinuous}, for any \(\ve >0\)
\begin{equation}\label{eq:muheps}
|\mu(h_{\ve}) - \mu(h)| \le \sup_k \mu_{\cG_k} (|h - h_{\ve}|) + \mu (\Id_{\cM \setminus L_{\ve}}) \|\tilde h_{\ve} - h_{\ve} - h\|_{L^{\infty}} \le  ( C_{\mathrm{cone}} e^{\varpi} C_n B + 4)\|h\|_{L^{\infty}} \ve .
\end{equation}
Moreover, since \(h_{\ve}\) is a continuous function, \(\lim_{k \to \infty} \mu_{\cG_k}(h_\ve) = \mu (h_{\ve})\). Therefore, by \eqref{eq:Gk-todiscontinuous} and \eqref{eq:muheps}, for any \(\ve >0\) there exists \(k\) big enough such that
\[
\begin{split}
|\mu(h) - \mu_{\cG_k}(h)| &\le |\mu_{\cG_k} (h) - \mu_{\cG_k} (h_{\ve}) | + |\mu_{\cG_k}(h_{\ve}) - \mu(h_{\ve})| + |\mu(h_{\ve}) - \mu(h)|\\
& \le C_{\mathrm{cone}} e^{\varpi} C_n B \|h\|_{L^{\infty}}\ve + \ve + ( C_{\mathrm{cone}} e^{\varpi} C_n B + 4)\|h\|_{L^{\infty}} \ve.
\end{split}
\]
The arbitrariness of \(\ve\) concludes the proof of the statement.
\end{proof}
Recall that initial regular measures were introduced by Definition \ref{def:initial-measures}.
\begin{lemma}\label{lem:limit-iterate}
 Let \(\mu\) be a initial regular measure and \(\{\cG_k\}_k\) be a sequence of initial regular standard families such that \(\mu = \lim_{k}\mu_{\cG_k}\). There exists \(t_0>0\) such that, for any \(n\in \bN\) and \(t \in [0, t_0]\), 
 \[
 \lim_{k \to \infty} \cL_{t}^n \mu_{\cG_k} = \cL_{t}^n \mu.
 \]
\end{lemma}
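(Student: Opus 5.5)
By \eqref{eq:iterate-transf-op-hole}, for any \(\phi \in \cC^0(\cM)\) we have
\[
(\cL_t^n \mu_{\cG_k})(\phi) = \frac{\mu_{\cG_k}(\phi\circ\cF^n \Id_{\cM_t^n})}{\mu_{\cG_k}(\cM_t^n)}, \qquad (\cL_t^n\mu)(\phi) = \frac{\mu(\phi\circ\cF^n \Id_{\cM_t^n})}{\mu(\cM_t^n)}.
\]
The plan is to pass to the limit separately in the numerator and in the denominator. For this we apply Lemma \ref{lem:convergence-discontinuous-function} with \((\omega,B,D) = (\varpi_1,B_1,D_1)\). We then only need that the denominator of the limit is nonzero, so that \(\cL_t^n\mu\) is well defined and the quotient of the limits is the limit of the quotients.

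\textbf{Step 1: discontinuity sets.} We check that \(h_1 = \phi\circ\cF^n\Id_{\cM_t^n}\) and \(h_2 = \Id_{\cM_t^n}\) are bounded and discontinuous at most on \(\cS_{n,t}\).
\begin{itemize}
\item Boundedness is immediate: \(|h_1|\le\|\phi\|_{\cC^0}\) and \(|h_2|\le 1\).
\item For \(h_2\), equation \eqref{eq:discontinuity-survival} gives \(\partial\cM_t^n\subseteq\cS_{n,t}\), so \(\Id_{\cM_t^n}\) is continuous off \(\cS_{n,t}\).
\item For \(h_1\), the map \(\cF^n\) is continuous (indeed smooth) on \(\cM\setminus\cS_n\), and \(\cS_n\subseteq\cS_{n,t}\). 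Hence \(\phi\circ\cF^n\) is continuous off \(\cS_{n,t}\), and so is the product \(h_1\).
\end{itemize}
Thus Lemma \ref{lem:convergence-discontinuous-function} yields \(\mu_{\cG_k}(h_i)\to\mu(h_i)\) for \(i=1,2\), for every fixed \(n\) and every \(t\in[0,t_0]\).

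\textbf{Step 2: the denominator stays away from zero.} Choose \(t_0\) as in Lemma \ref{lem:lower-bound-survival}. That lemma gives \(\mu_{\cG_k}(\cM_t^n)\ge(1-Ct)^n\) uniformly in \(k\). Shrinking \(t_0\) so that \(Ct_0<1\), the bound is positive. Passing to the limit via Step 1, we get \(\mu(\cM_t^n)\ge(1-Ct)^n>0\). Hence \(\cL_t^n\mu\) is well defined through \eqref{eq:iterate-transf-op-hole}, and
\[
(\cL_t^n\mu_{\cG_k})(\phi) \longrightarrow (\cL_t^n\mu)(\phi) \qquad \text{for every } \phi\in\cC^0(\cM).
\]
This is exactly weak-\(\star\) convergence.

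\textbf{Main obstacle.} The only delicate point is the discontinuity of the observable \(\phi\circ\cF^n\Id_{\cM_t^n}\). Weak-\(\star\) convergence alone does not allow testing against indicator functions. This is precisely what Lemma \ref{lem:convergence-discontinuous-function} handles, using transversality and the bound \(\cZ\le B\). Beyond that, we only need to verify carefully that no discontinuities of \(\phi\circ\cF^n\) lie outside \(\cS_n\): the map \(\cF\) is continuous away from \(\cS_1\), and iterating gives continuity of \(\cF^n\) away from \(\cS_n\). We also need that \(t_0\) is independent of \(n\), which holds since the bound in Lemma \ref{lem:lower-bound-survival} is uniform in \(n\).
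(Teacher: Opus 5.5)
Your proposal is correct and follows essentially the same route as the paper. The paper also bounds the survival mass uniformly from below via Lemma \ref{lem:lower-bound-survival}, choosing \(t_0\) so that the bound is \((1/2)^n\), and transfers this bound to \(\mu\). It then passes to the limit in the numerator and the denominator through Lemma \ref{lem:convergence-discontinuous-function}, writing the difference of quotients explicitly to get a \(4^n(\ve+\|\phi\|_{\cC^0}\ve)\) estimate.
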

\begin{proof}
First notice that by Lemma \ref{lem:lower-bound-survival} there exists \(t_0\) small enough such that \(\mu_{\cG_k}(\cM_{t}^n) \ge (1/2)^n\) for any \(k\) and \(t \in [0,t_0]\). By \eqref{eq:discontinuity-survival} the function \(\Id_{\cM_{t}^n}\) is discontinuous at most on \(\cS_{n,t}\) so that, by Lemma \ref{lem:convergence-discontinuous-function}, we have that \( \mu(\cM_{t}^n) = \lim_{k \to \infty}\mu_{\cG_k}(\cM_{t}^n)  \ge (1/2)^n\) as well. Therefore, for any \( t\in [0,t_0]\) and \(n \in \bN\), both measures \(\cL_{t}^n \mu\) and \(\cL^n_t \mu_{\cG_k}\) are well defined.
Recalling the expression \eqref{eq:iterate-transf-op-hole} for the iterates \(\cL_t^n\), for any \(\phi \in \cC^0(\cM)\),
\[
(\cL_{t}^n \mu)(\phi) - (\cL_{t}^n \mu_{\cG_k})(\phi) = \frac{\mu(\phi \circ \cF^n \Id_{\cM_t^n} )\mu_{\cG_k}(\cM_t^n) - \mu_{\cG_k}(\phi \circ \cF^n \Id_{\cM_t^n})\mu(\cM_t^n)}{\mu(\cM_t^n) \mu_{\cG_k}(\cM_t^n)}.
\]
Fix \(\ve >0\). By \eqref{eq:discontinuity-survival}, \(\phi \circ \cF^n\Id_{\cM_t^n}\) is discontinuous at most on \(\cS_{n,t}\). Hence, by Lemma \ref{lem:convergence-discontinuous-function}, there exists \(k\) big enough such that 
\[
|\mu(\phi \circ \cF^n \Id_{\cM_t^n} ) - \mu_{\cG_k}(\phi \circ \cF^n \Id_{\cM_t^n} )| \le \ve \quad \text{and} \quad |\mu(\cM_t^n ) - \mu_{\cG_k}(\cM_t^n )| \le \ve.
\]
Therefore, using the lower bounds on \(\mu_{\cG_k}(\cM_{t,n})\) and \(\mu(\cM_{t}^n)\) 
\[
\begin{split}
|(\cL_{t}^n \mu)(\phi) - (\cL_{t}^n \mu_{\cG_k})(\phi)| &\le \frac{1}{\mu(\cM_t^n) \mu_{\cG_k}(\cM_t^n)} \bigl( |\mu(\phi \circ \cF^n \Id_{\cM_t^n} ) - \mu_{\cG_k}(\phi \circ \cF^n \Id_{\cM_t^n})|\\
&\hspace{1cm}|\mu_{\cG_k}(\cM_t^n)| + |\mu_{\cG_k}(\phi \circ \cF^n \Id_{\cM_t^n})| |\mu(\cM_t^n)-\mu_{\cG_k}(\cM_t^n)| \bigr)\\
&\le 4^n (\ve + \|\phi\|_{\cC^0} \ve ).
\end{split}
\]
By the arbitrariness of \(\ve\), we conclude the proof.
\end{proof}

We now `lift' invariance for standard families to the bigger class \(\cM_{\varpi, B, D}\).

\begin{lemma}\label{lem:invariance}
There exists \(t_0 >0\) such that, for any initial regular measure \(\mu\), \(n \in \bN\) and \(t \in [0, t_0]\), we have that \(\cL_{t}^n \mu\) is a regular measure. 
\end{lemma}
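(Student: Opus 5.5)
The plan is to approximate \(\mu\) by standard families, push each approximant forward with Proposition \ref{prop:invariance-standard-families}, and pass to the limit with Lemma \ref{lem:limit-iterate}. The point to exploit is that the class of regular measures is defined as a weak-\(\star\) closure, so it is closed under weak-\(\star\) limits.

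Let \(\mu\) be an initial regular measure. By Definition \ref{def:initial-measures} there is a sequence \(\{\cG_k\}_k\) of initial regular standard families, that is, \((\varpi_1,B_1,D_1)\)-standard families, with \(\mu_{\cG_k}\to\mu\) weak-\(\star\). I will take \(t_0\) to be the minimum of the constants given by Proposition \ref{prop:invariance-standard-families} and Lemma \ref{lem:limit-iterate}; both depend only on \(\varpi_1,B_1,D_1\) and the table.

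Fix \(n\in\bN\) and \(t\in[0,t_0]\). For each \(k\), Proposition \ref{prop:invariance-standard-families} gives a \((\varpi_2,B_2,D_2)\)-standard family \(\cG_k'\) with \(\cL_t^n\mu_{\cG_k}=\mu_{\cG_k'}\). Every \(\mu_{\cG_k'}\) therefore lies in \(\cM_{\varpi_2,B_2,D_2}\).

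By Lemma \ref{lem:limit-iterate}, \(\cL_t^n\mu_{\cG_k}\to\cL_t^n\mu\) weak-\(\star\). Hence \(\cL_t^n\mu\) is a weak-\(\star\) limit of measures of \((\varpi_2,B_2,D_2)\)-standard families. Since \(\cM_{\varpi_2,B_2,D_2}\) is by definition the weak-\(\star\) closure of that set, \(\cL_t^n\mu\in\cM_{\varpi_2,B_2,D_2}\), so \(\cL_t^n\mu\) is a regular measure.

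The only delicate step is the convergence \(\cL_t^n\mu_{\cG_k}\to\cL_t^n\mu\). The conditional evolution involves the discontinuous indicator \(\Id_{\cM_t^n}\) and a normalization that could degenerate. Lemma \ref{lem:limit-iterate} already handles both issues, through the transversality argument of Lemma \ref{lem:convergence-discontinuous-function} and the lower bound on survival in Lemma \ref{lem:lower-bound-survival}. I also need that Lemma \ref{lem:limit-iterate} holds for every \(n\) with a \(t_0\) independent of \(n\), and its statement gives exactly that.
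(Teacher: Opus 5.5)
Your proposal is correct and follows the paper's proof essentially verbatim. The paper also approximates \(\mu\) by initial regular standard families, applies Proposition \ref{prop:invariance-standard-families} to each approximant, and passes to the limit with Lemma \ref{lem:limit-iterate}; since \(\cM_{\varpi_2,B_2,D_2}\) is a weak-\(\star\) closure, the limit \(\cL_t^n\mu\) is a regular measure.
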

\begin{proof}
    By Definition \ref{def:initial-measures} of initial regular measure \(\mu\) there exists a sequence \(\{\cG_k\}_k\) of initial regular standard families such that \(\mu = \lim_{k \to \infty} \mu_{\cG_k}\). By Lemma \ref{lem:limit-iterate}, for \(t_0 >0\) small enough and \(t \in [0,t_0]\), we have that \(\cL_{t}^n \mu = \lim_{k \to \infty} \cL_{t}^n \mu_{\cG_k}\) for any \(n\). On the other hand, by Proposition \ref{prop:invariance-standard-families} and because \(\cG_k\) are initial regular, there exist regular standard families \(\cG_{n,k}'\) such that \(\cL_{t}^n \mu_{\cG_k} = \mu_{\cG_{k,n}'} \). Hence, \(\cL_{t}^n \mu = \lim_{k \to \infty} \mu_{\cG_{n,k}'}\), showing that \(\cL_{t}^n \mu\) are regular measures.
\end{proof}

The decay of correlations for initial regular standard families carries over to initial regular measures.

\begin{lemma}\label{lem:exp-mixing}
There exists \(t_0>0\), \(C>0\) and \(\gamma \in (0,1)\) such that for any two initial regular measure \(\mu_1, \mu_2\), \(t \in [0,t_0]\) and \(n \in \bN\), for any \(\phi \in \cC^1 (\cM)\),
   \[
   \bigl|(\cL_{t}^n \mu_1)(\phi) - (\cL_{t}^n \mu_2)(\phi)  \bigr| \le C \|\phi\|_{\cC^1}\gamma^n .
   \]
\end{lemma}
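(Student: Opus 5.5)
The plan is to approximate both initial regular measures by initial regular standard families and pass to the limit, using Theorem \ref{thm:exp-mixing-sf} for the approximants and Lemma \ref{lem:limit-iterate} for the convergence. By Definition \ref{def:initial-measures}, there are sequences \(\{\cG_{1,k}\}_k\) and \(\{\cG_{2,k}\}_k\) of initial regular standard families with \(\mu_1 = \lim_k \mu_{\cG_{1,k}}\) and \(\mu_2 = \lim_k \mu_{\cG_{2,k}}\) in the weak-\(\star\) topology. We take \(t_0\) to be the minimum of the thresholds in Theorem \ref{thm:exp-mixing-sf} and Lemma \ref{lem:limit-iterate}, and \(C,\gamma\) as in Theorem \ref{thm:exp-mixing-sf}.

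Fix \(n\in\bN\), \(t\in[0,t_0]\) and \(\phi\in\cC^1(\cM)\). For every \(k\), Theorem \ref{thm:exp-mixing-sf} gives
\[
\bigl|(\cL_t^n\mu_{\cG_{1,k}})(\phi) - (\cL_t^n\mu_{\cG_{2,k}})(\phi)\bigr| \le C\|\phi\|_{\cC^1}\gamma^n .
\]
The constants here are uniform in \(k\). This is essential, and it holds because they depend only on the regularity data \(\varpi_1,B_1,D_1\) and the table.

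Next, Lemma \ref{lem:limit-iterate} gives \(\cL_t^n\mu_{\cG_{i,k}} \to \cL_t^n\mu_i\) weak-\(\star\) for \(i=1,2\). Since \(\phi\) is continuous, \((\cL_t^n\mu_{\cG_{i,k}})(\phi)\to(\cL_t^n\mu_i)(\phi)\). Write
\[
(\cL_t^n\mu_1)(\phi)-(\cL_t^n\mu_2)(\phi)
\]
as the sum of three terms: the difference for the approximating families, bounded as above, plus two errors that tend to \(0\) as \(k\to\infty\). Letting \(k\to\infty\) yields the claim.

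There is no real obstacle, since the genuine difficulties are already absorbed into the earlier lemmas. The first is that the normalizations \(\mu(\cM_t^n)\) stay bounded below, which is handled in Lemma \ref{lem:limit-iterate} via Lemma \ref{lem:lower-bound-survival}. The second is that weak-\(\star\) convergence survives the indicator \(\Id_{\cM_t^n}\), which is discontinuous on \(\cS_{n,t}\); this is handled by Lemma \ref{lem:convergence-discontinuous-function}.

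The only point to check is that the approximating families are initial regular, with data \((\varpi_1,B_1,D_1)\), so that Theorem \ref{thm:exp-mixing-sf} applies to them. This holds by the definition of initial regular measure. For \(n=0\), if needed, the bound is trivial, since \(|\mu_1(\phi)-\mu_2(\phi)|\le 2\|\phi\|_{\cC^0}\), after enlarging \(C\).
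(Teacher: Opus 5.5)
Your proposal is correct and follows essentially the same route as the paper: approximate $\mu_1,\mu_2$ by initial regular standard families, apply Theorem~\ref{thm:exp-mixing-sf} with its uniform constants to the approximants, and pass to the limit using Lemma~\ref{lem:limit-iterate}. The paper phrases this with an arbitrary $\varepsilon$ rather than a sequence $k\to\infty$, but the argument is the same.
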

\begin{proof}
Let \(C>0\) and \(\gamma \in (0,1)\) be given by Theorem \ref{thm:exp-mixing-sf} and \(t_0 >0\) small enough so that Theorem \ref{thm:exp-mixing-sf} and Lemma \ref{lem:limit-iterate} apply, and take \(t \in [0,t_0]\), \(n \in \bN\) and \(\phi \in \cC^1(\cM)\). By Lemma \ref{lem:limit-iterate} and because \(\mu_1\) and \(\mu_2\) are initial regular measures, for any \(\ve >0\) there exist two initial regular standard families \(\cG_{1,\ve}\) and \(\cG_{2,\ve}\) such that,
\[
|(\cL_{t}^n \mu_\iota)(\phi) - (\cL_{t}^n \mu_{\cG_{\iota,\ve}})(\phi) | \le \ve \quad \text{for } \iota \in \{1,2\}.
\]
Hence, by Theorem \ref{thm:exp-mixing-sf},
\[
\begin{split}
|(\cL_{t}^n \mu_1)(\phi) - (\cL_{t}^n \mu_2)(\phi)| &\le   |(\cL_{t}^n \mu_{\cG_{1,\ve}})(\phi) - (\cL_{t}^n\mu_{\cG_{2,\ve}})(\phi)| + |(\cL_{t}^n \mu_1)(\phi) - (\cL_{t}^n \mu_{\cG_{1,\ve}})(\phi)|\\
&\hspace{0.5cm}+  |(\cL_{t}^n \mu_2)(\phi) - (\cL_{t}^n \mu_{\cG_{2,\ve}})(\phi)| \\
&\le C \|\phi\|_{\cC^1}\gamma^n + 2\ve,
\end{split}
\]
and, by the arbitrariness of \(\ve\), we conclude the proof of the Lemma.
\end{proof}
For \(t \in [0,t_0]\), we define the measures \(\cD_t \) as, for \(\phi \in \cC^0(\cM)\),
\begin{equation}\label{eq:survival-and-hole-measures}
 \cD_t (\phi) = \frac{\mu_0(\phi  \Id _{\cF(\cH_t)})}{\mu_0(\cF(\cH_t))} = \frac{\mu_0(\phi  \Id _{\cF(\cH_t)})}{t} = \frac{\mu_0(\phi \circ \cF  \Id _{\cH_t})}{t}.
\end{equation}
Recall that \(\mu_0\) is the SRB probability measure for the billiard map \(\cF\).
\begin{lemma}\label{lem:exp-conv-initial}
For any \(t \in [0, t_0]\), the measures \(\cD_t\) and \(\mu_0\) are initial regular measures.
\end{lemma}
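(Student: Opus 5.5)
We disintegrate both measures along vertical lines and then show that the resulting averages of standard families are weak-\(\star\) limits of initial regular standard families. First, \(\cD_t\). By definition and Fubini,
\[
\cD_t(\phi)=\frac{\mu_0(\phi\circ\cF\,\Id_{\cH_t})}{t}=\frac{1}{t|\partial\cD|}\int_{r_*-t|\partial\cD|/2}^{r_*+t|\partial\cD|/2}\frac12\int_{-\pi/2}^{\pi/2}\phi\circ\cF(r,\vf)\cos\vf\,d\vf\,dr .
\]
By Lemma \ref{lem:push-vertical-line}, the inner integral equals \(\mu_{\cG_r}(\phi)\), where \(\cG_r\) is a \((\varpi_*,B_*,D_*)\)-standard family. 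Hence \(\cD_t=\int\mu_{\cG_r}\,d\nu_t(r)\), with \(\nu_t\) the normalized Lebesgue measure on \(\pi_r(\cH_t)\). The same computation with \(\mu_0\) and \(\cF\)-invariance gives \(\mu_0(\phi)=\mu_0(\phi\circ\cF)=\int \mu_{\cG_r}(\phi)\,d\nu(r)\), where \(\nu\) is normalized Lebesgue on \([0,|\partial\cD|]\). The case \(t=0\) needs separate care: \(\cD_0\) is defined as the limit, which is \(\mu_{\cG_{r_*}}\) by continuity of \(\phi\circ\cF\) off a null set, or directly via Lemma \ref{lem:push-vertical-line}. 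In either reading \(\cD_0\) is given by a single standard family, so it is initial regular.

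Next, for a probability \(\nu\) on \([0,|\partial\cD|]\), we approximate \(\int\mu_{\cG_r}d\nu(r)\) by Riemann sums. Choose partitions into intervals \(J_i\) with sample points \(r_i\), and set
\[
\cG^{(N)}=\{(\nu(J_i)p_j^{(r_i)},W_j^{(r_i)},\rho_j^{(r_i)})\}_{i,j}.
\]
This is a convex combination of \((\varpi_*,B_*,D_*)\)-standard families. The density and curvature bounds are inherited, and \(\cZ(\cG^{(N)})=\sum_i\nu(J_i)\cZ(\cG_{r_i})\le B_*\). Since \(\varpi_1>\varpi_*\), \(B_1>B_*\) and \(D_1>D_*\), each \(\cG^{(N)}\) is initial regular. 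The index set is countable, as required.

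It remains to show \(\mu_{\cG^{(N)}}(\phi)\to\int\mu_{\cG_r}(\phi)d\nu(r)\) for \(\phi\in\cC^0\), i.e. that the Riemann sums of the function \(g(r)=\frac12\int\phi\circ\cF(r,\vf)\cos\vf\,d\vf\) converge. The function \(g\) is bounded by \(\|\phi\|_{\cC^0}\). It is also continuous in \(r\) by dominated convergence, because for fixed \(\vf\) the map \(r\mapsto\phi\circ\cF(r,\vf)\) is continuous except where \((r,\vf)\in\cS_1\). For fixed \(r\), the set \(\{\vf:(r,\vf)\in\cS_1\}\) is finite: the components of \(\cF^{-1}\cS_0\setminus\cS_0\) are stable curves and hence transversal to vertical lines, and Lemma \ref{lem:bound-number-smooth-components} bounds the number of components. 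Thus \(g\) is Riemann integrable, and the sums converge to \(\int g\,d\nu\). This shows \(\cD_t,\mu_0\in\cM_{\varpi_1,B_1,D_1}\).

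The main obstacle is this continuity or integrability step. Grazing collisions make \(\phi\circ\cF\) discontinuous, so it must be checked that the discontinuities meet each vertical line in a null set of \(\vf\). If this is delicate near \(\cS_0\), there is an alternative that avoids regularity of \(g\). Every bounded measurable \(g\) can be approximated in \(L^1(\nu)\) by step functions. Choosing \(r_i\in J_i\) with \(g(r_i)\) close to the average of \(g\) over \(J_i\) (possible up to \(\ve\) by the mean value property on a positive-measure set) gives convergence for each fixed \(\phi\). A diagonal argument over a countable dense subset of \(\cC^0(\cM)\) then yields weak-\(\star\) convergence.
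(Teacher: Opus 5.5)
Your proposal is correct and is essentially the paper's proof. You approximate \(\cD_t\) and \(\mu_0=\mu_0(\,\cdot\circ\cF)\) by averages over finitely many vertical lines, apply Lemma \ref{lem:push-vertical-line} to each line, note that convex combinations keep the \((\varpi_*,B_*,D_*)\) bounds, and pass to the limit. The only variation is how you justify that limit: you prove continuity of \(r\mapsto\frac12\int\phi\circ\cF(r,\vf)\cos\vf\,d\vf\) by dominated convergence, since each vertical line meets \(\cS_1\) in finitely many points. The paper instead replaces \(\phi\circ\cF\) by a continuous function off \([\cS_1]_\ve\) and bounds the mass of \([\cS_1]_\ve\) uniformly in \(N\); both rest on the same transversality of \(\cS_1\) to vertical lines. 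Your explicit treatment of \(t=0\), where \eqref{eq:survival-and-hole-measures} is literally \(0/0\), is a welcome addition.

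The fallback in your last paragraph is unnecessary, and as written its selection step fails. A discontinuous, e.g.\ two-valued, \(g\) need not come within \(\ve\) of its average anywhere on \(J_i\), so you would need a Chebyshev-type choice of sample points if you ever relied on it.
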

\begin{proof}
Let \(t \in [0,t_0]\). We first show that \(\cD_t\) is an initial regular measure. Recall that \(r_* \in [0, |\partial \cD|]\) is the arc-length coordinate of the center of the hole \(\cH_t\). For any \(N \in \bN\) and \(j \in \{0,1,...,N-1\}\), let \(r_{j} = r_{*} - \frac{t|\partial \cD|}{2} + j\frac{t|\partial \cD|}{N}\) be a evenly spread grid in \(\pi_r (\cH_t)\) and set
\begin{equation}\label{eq:discrete-approx}
\cD_{t,N} (\phi) =  \frac{1}{N}\sum_{j=0}^{N-1} \frac{1}{2} \int_{-\pi/2}^{\pi/2}\phi \circ \cF (r_{j},\vf)\cos\vf d\vf.
\end{equation}
The measures \(\cD_{t,N}\) are discrete - vertical lines - approximations of \(\cD_t\) as it can be seen by the last expression for \(\cD_t\) in \eqref{eq:survival-and-hole-measures}.
By Lemma \ref{lem:push-vertical-line}, there exist initial regular standard families \(\cG_{r_{j}}\) such that,
\[
\cD_{t,N} =  \sum_{j=0}^{N-1} \frac{1}{N} \mu_{\cG_{r_j}}.
\]
Since the regularity of standard families is preserved by convex combinations, we have that \(\cD_{t,N} = \mu_{\cG_{t,N}}\) for some initial regular standard family \(\cG_{t,N}\). Hence, in order to prove that \(\cD_t\) is an initial regular measure, it remains to prove that \(\lim_{N \to \infty}\cD_{t,N} = \cD_t\) on continuous functions. To this aim, consider the measures, for \(\phi \in \cC^0(\cM)\),
\[
\begin{split}
&\zeta_t (\phi) =  \frac{1}{t}\int_{\cH_t} \phi (r,\vf)\frac{\cos \vf d\vf dr}{2|\partial \cD|} \quad \text{and} \quad \zeta_{t,N}(\phi) =  \frac{1}{ N}\sum_{j=0}^{N-1} \frac{1}{2} \int_{-\pi/2}^{\pi/2}\phi  (r_{j},\vf)\cos\vf d\vf.
\end{split}
\]
Note that \(\cD_t(\phi) = \zeta_t (\phi \circ \cF)\),\(\cD_{t,N} (\phi) = \zeta_{t,N} (\phi \circ \cF)\) and \(\lim_{N \to \infty} \zeta_{t,N} = \zeta_t\) on continuous functions because \(\{r_j\}_j\) is a \(|\pi_r(\cH_t)|/N\)-dense grid in \(\pi_r(\cH_t)\).  However, we cannot conclude directly because \(\phi \circ \cF\) might be discontinuous on \(\cS_1\) and we must argue by approximation. Fix any \(\ve >0\). There exists a continuous function \(g_{\ve}\) such that \(g_{\ve} = \phi \circ \cF\) on \(\cM \setminus [\cS_{1}]_{\ve}\) and \(\|g_{\ve}\|_{L^{\infty}} \le \|\phi \circ \cF\|_{L^{\infty}} \le \|\phi\|_{\cC^0}\). Moreover,
\[
\zeta_t([\cS_{1}]_{\ve}) =  \frac{1}{t}\int_{\cH_t} \Id _{[\cS_{1}]_{\ve}}\frac{\cos \vf d\vf dr}{2|\partial \cD|} \le \frac{m ([\cS_{1}]_{\ve} \cap \cH_t)}{ 2|\partial \cD|t}.
\]
As already mentioned, \(\cS_1 \setminus \cS_0\) belong to the small stable cone and \(\cS_0\) are horizontal lines. Hence, \(\cS_1\) consists of a finite union of smooth curves transversal to the vertical direction. Therefore, \(m ([\cS_{1}]_{\ve} \cap \cH_t) \le C_{\mathrm{cone}}t \ve\) for some \(C_{\mathrm{cone}}>0\) and \(\zeta_t([\cS_{1}]_{\ve}) \le C_{\mathrm{cone}}\ve\). Because it is supported on vertical lines, a similar bound holds for \(\zeta_{t,N}\), yielding, uniformly in \(N\),
\[
\max \{\zeta_t([\cS_{1}]_{\ve}), \zeta_{t,N}([\cS_{1}]_{\ve})\} \le C_{\mathrm{cone}} \ve.
\]
Therefore, 
\[
|\zeta_t (\phi \circ \cF) - \zeta_t (g_{\ve})| \le \|\phi \circ \cF - g_{\ve}\|_{L^{\infty}} \zeta_t ([\cS_{1}]_{\ve}) \le 2C_{\mathrm{cone}} \|\phi\|_{\cC^0} \ve,
\]
and, uniformly in \(N\),
\[
\begin{split}
|\zeta_{t,N} (\phi \circ \cF) - \zeta_{t,N} (g_{\ve})| \le \|\phi \circ \cF - g_{\ve}\|_{L^{\infty}} \zeta_{t,N} ([\cS_{1}]_{\ve}) \le 2C_{\mathrm{cone}} \|\phi\|_{\cC^0} \ve.
\end{split}
\]
Let \(N\) be big enough such that \(|\zeta_{t,N} (g_{\ve}) - \zeta_t (g_{\ve})| \le \ve\). Then, using the last two equations,
\[
\begin{split}
    |\cD_{t,N}(\phi)& - \cD_t (\phi)| = |\zeta_{t,N} (\phi \circ \cF) - \zeta_t (\phi \circ \cF)| \le |\zeta_{t,N} (\phi \circ \cF) - \zeta_{t,N} (g_{\ve})| \\
    &+ |\zeta_{t,N} (g_{\ve}) - \zeta_t (g_{\ve})| + |\zeta_t(g_{\ve}) - \zeta_t(\phi \circ \cF)|\le 4C_{\mathrm{cone}}\|\phi\|_{\cC^0}\ve + \ve.
\end{split}
\]
By the arbitrariness of \(\ve\) this shows that \(\lim_{N \to \infty}\cD_{t,N} = \cD_t\) on continuous functions and \(\cD_t\) is a initial regular measure. As for \(\mu_0\), by invariance under \(\cF\),
\[
\mu_0 (\phi) = \mu_0 (\phi \circ \cF) = \int_{\cM} \phi \circ \cF (r,\vf) \frac{\cos \vf d\vf dr}{2|\partial \cD|}.
\]
We can then approximate the equation above with measures \(\mu_{0,N}\) supported on vertical lines as in \eqref{eq:discrete-approx}, which become denser and denser in \(\cM\), and by transversality one obtains that \(\max \{\mu_0([\cS_{1}]_{\ve}), \mu_{0,N}([\cS_{1}]_{\ve})\} \le C_{\mathrm{cone}} \ve\). Hence, we can repeat the same argument as above and conclude that \(\mu_0 = \lim_{N \to \infty} \mu_{\cG_k}\) for a sequence of initial regular standard families \(\{\cG_k\}_k\). This concludes the proof of the Lemma.
\end{proof}

\begin{proof}[Proof of Corollary \ref{cor:exp-ren-mixing}]
Let \(t_0, C>0\) and \(\gamma \in (0,1)\) be given by  Lemma \ref{lem:exp-mixing}. By \eqref{eq:conditional-invariant-measure}, we have that \(\lim_{k \to \infty} \cL_t^k \mu_0 = \mu_t\). Hence, for any initial regular standard family \(\cG\) and \(n \in \bN\)
\begin{equation}\label{eq:decay-to-mut}
    |\cL_t^n \mu_{\cG} - \mu_t| = \lim_{m \to \infty} |\cL_t^n \mu_{\cG} - \cL_t^{n} (\cL_t^m\mu_0)| .
\end{equation}
By Lemma \ref{lem:exp-conv-initial}, \(\mu_0\) is an initial regular measure and, by Lemma \ref{lem:invariance}, \(\cL_t^m \mu_0\) is a regular measure. In other words \(\cL_t^m \mu_0 \in \cM_{\omega_2, B_2, D_2}\) for any \(m \in \bN\). Since we are free to choose any arbitrary \(\varpi_1, B_1, D_1\) as initial conditions in Proposition \ref{prop:invariance-standard-families}, we may consider measures in \(\cM_{\omega_2, B_2, D_2}\) as initial regular measures. Therefore, possibly with a larger \(C>0\), Lemma \ref{lem:exp-mixing} holds for measures in \(\cM_{\omega_2, B_2, D_2}\), and the expression inside the limit in \eqref{eq:decay-to-mut} is less than \(C \gamma^n \|\phi\|_{\cC^1}\) for any \(m\). This concludes the proof of the Corollary.
\end{proof}

In the next Lemma we establish some sort of independence, uniform in time, between the event \(\cM_{t}^n\) of surviving up to time \(n\) in the future and the event \(\cF(\cH_t)\) of ending up in the hole one iteration in the past, when both events are considered w.r.t.\! the SRB measure \(\mu_0\).

\begin{lemma}\label{lem:nasty-term}
There exist \(C, t_0>0\) such that, for any \(n \in \bN\) and \(t \in [0,t_0]\),
    \[
     \frac{\mu_0( \cF(\cH_t) \cap \cM_t^n)}{\mu_0 (\cM_t^n)} \le Ct.
    \]
\end{lemma}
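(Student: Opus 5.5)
We rewrite the numerator using the measure \(\cD_t\) from \eqref{eq:survival-and-hole-measures}. Since \(\mu_0(\cF(\cH_t)) = t\),
\[
\mu_0(\cF(\cH_t)\cap \cM_t^n) = t\,\cD_t(\cM_t^n).
\]
The claim therefore reduces to
\[
\frac{\cD_t(\cM_t^n)}{\mu_0(\cM_t^n)} \le C
\]
uniformly in \(n\) and \(t \in [0,t_0]\). This is the same as a uniform comparison of the survival probabilities of two initial regular measures, which is exactly what the conditional mixing machinery controls.

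To compare survival probabilities we telescope with the conditional evolution. Iterating \eqref{eq:induction-hole-mass-inside} (which holds for any measure for which the quantities are defined), for any initial regular measure \(\mu\),
\[
\mu(\cM_t^n) = \mu(\cM_t^0)\prod_{m=0}^{n-1} (\cL_t^m\mu)(\cF^{-1}(\cM_t)).
\]
Here \(\cM_t^0=\cM_t\). By Lemma \ref{lem:exp-conv-initial}, both \(\cD_t\) and \(\mu_0\) are initial regular measures.

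The ratio \(\cD_t(\cM_t^n)/\mu_0(\cM_t^n)\) then becomes a product of ratios of factors \(1-(\cL_t^m\mu)(\cF^{-1}(\cH_t))\), \(\mu \in\{\cD_t,\mu_0\}\), together with the zeroth factor. By Lemma \ref{lem:invariance}, the measures \(\cL_t^m\mu\) are regular measures. Passing the estimate \eqref{eq:hole-plus-preimage} to the weak-\(\star\) limit via Lemma \ref{lem:convergence-discontinuous-function} (the indicator of \(\cH_t\cup\cF^{-1}(\cH_t)\) is discontinuous only on \(\cS_{1,t}\)), each factor is \(1-O(t)\). Hence each ratio of factors is \(1+O(t)\) and \(\log\) of each ratio is bounded by
\[
C\bigl|(\cL_t^m\cD_t)(\cF^{-1}\cH_t) - (\cL_t^m\mu_0)(\cF^{-1}\cH_t)\bigr| + O(t^2)\text{-type corrections}.
\]
The whole product is bounded uniformly in \(n\) provided
\[
\sum_{m}\bigl|(\cL_t^m\cD_t)(\cF^{-1}\cH_t)-(\cL_t^m\mu_0)(\cF^{-1}\cH_t)\bigr|
\]
is bounded uniformly in \(n\) and \(t\). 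Note that the error in \(\log(1-a)-\log(1-b)\) is \(O(|a-b|)\) when \(a,b\le Ct\), so only differences matter.

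The main obstacle is this summability. Lemma \ref{lem:exp-mixing} gives decay \(\gamma^m\) only for \(\cC^1\) observables, while \(\Id_{\cF^{-1}(\cH_t)}\) is an indicator. I would split the difference into two regimes.

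For large \(m\), I would apply Lemma \ref{lem:exp-mixing} to \(\cL_t^{m-1}\) and the observable \(\Id_{\cH_t}\circ\cF\) seen after one more step. More precisely, I would write \((\cL_t^m\mu)(\cF^{-1}\cH_t)\) in terms of \((\cL_t^{m+1}\mu)\)-type quantities of \(\Id_{\cH_t}\) up to \(1+O(t)\) normalizations. I would then mollify \(\Id_{\cH_t}\) at scale \(\ve\) by a \(\cC^1\) function \(g_\ve\) with \(\|g_\ve\|_{\cC^1}\lesssim \ve^{-1}\). The error on regular measures is \(O(\ve)\), by transversality exactly as in \eqref{eq:discontinuities-measure-standard-family}. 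This gives the bound \(C(\ve+\ve^{-1}\gamma^m)\). Choosing \(\ve=\gamma^{m/2}\) yields \(C\gamma^{m/2}\), which is summable.

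For small \(m\) nothing beyond the crude bound \(Ct\) is needed, since each term is at most \(Ct\) and finitely many terms give \(O(t)\). In fact the exponential bound alone already suffices for all \(m\). The zeroth factor \(\cD_t(\cM_t)/\mu_0(\cM_t)\) is trivially bounded, since \(\mu_0(\cM_t)=1-t\).

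Combining, the sum is at most \(C\sum_m\gamma^{m/2}\le C\). Hence \(\cD_t(\cM_t^n)\le C\mu_0(\cM_t^n)\), which proves the lemma.
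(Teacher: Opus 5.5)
Your strategy works and differs from the paper's, but one step fails as written. In the large-$m$ regime you propose to express $(\cL_t^m\mu)(\cF^{-1}\cH_t)$ through $(\cL_t^{m+1}\mu)(\Id_{\cH_t})$. However, $(\cL_t^{m+1}\mu)(\Id_{\cH_t})=0$ identically: $x\in\cM_t^{m+1}$ forces $\cF^{m+1}x\in\cM_t$. So the conditioned measure at time $m+1$ carries no information about $b_m$ beyond its normalization. Nor can you mollify $\Id_{\cH_t}$ and compose with $\cF$. The function $g_\ve\circ\cF$ is discontinuous on $\cS_1$ and its derivative is unbounded near it, so Lemma~\ref{lem:exp-mixing} does not apply to it.

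The repair is to mollify $\Id_{\cF^{-1}(\cH_t)}$ itself at scale $\ve$. Its discontinuities lie in $\cS_{1,t}$, so the error on any regular measure is at most $C\ve$, uniformly in $t$. This follows from \eqref{eq:discontinuities-measure-standard-family} with $n=1$, passed to weak-$\star$ limits on the open set $[\cS_{1,t}]_\ve$. Lemma~\ref{lem:exp-mixing} applied to $\cD_t$ and $\mu_0$ then gives $|a_m-b_m|\le C(\ve+\ve^{-1}\gamma^m)\le C\gamma^{m/2}$, as you intended. With this fix your telescoping argument goes through.

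The paper's proof is two lines and uses no mixing at all. By $\cF$-invariance of $\mu_0$ and the partition $\cM=\cF(\cM_t)\cup\cF(\cH_t)$,
\[
\mu_0(\cF(\cH_t)\cap\cM_t^n)=\mu_0(\cM_t^n)-\mu_0(\cM_t^{n+1})=\mu_0\bigl(\cM_t^n\cap\cF^{-n-1}(\cH_t)\bigr).
\]
So the ratio in the lemma equals $(\cL_t^n\mu_0)(\Id_{\cH_t}\circ\cF)$, which in your notation is exactly $b_n$. You already prove $b_m\le Ct$ for every $m$ when you observe that each factor is $1-O(t)$. That uses regularity of $\cL_t^m\mu_0$ (Lemma~\ref{lem:invariance}), Lemma~\ref{lem:convergence-discontinuous-function} and \eqref{eq:hole-plus-preimage}. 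That observation alone is the whole proof.

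What your longer route buys is generality. It never uses the invariance relation linking $\cD_t$ to $\mu_0$. So it shows that the survival probabilities of \emph{any} two initial regular measures are comparable uniformly in $n$, in fact with ratio $1+O(t\ln t^{-1})$. For the specific pair $\cD_t,\mu_0$ this is no stronger than the lemma: the numerator is $t\,\cD_t(\cM_t^n)$, so the two statements are equivalent. The price is a dependence on Theorem~\ref{thm:exp-mixing-sf} plus a mollification step, whereas the paper needs only invariance of regular measures and the hole estimate.
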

\begin{proof}
Using that \(\cF(\cM_t) \cup \cF(\cH_t)\) and \(\cF^{-n-1}(\cM_t) \cup \cF^{-n-1}(\cH_t)\) are partitions of \(\cM\) and that \(\mu_0\) is \(\cF\)-invariant,
\[
\begin{split}
\mu_{0}(\cF(\cH_t) \cap \cM_t^n) &= \mu_0(\cM_t^n) - \mu_0 (\cF(\cM_t)\cap \cM_t^n) = \mu_0(\cM_t^n) - \mu_0 (\cM_t^{n+1}) \\
&= \mu_{0}(\cM_t^n \cap \cF^{-n-1}(\cH_t)).
\end{split}
\]
Hence, recalling the expression \eqref{eq:iterate-transf-op-hole} for \(\cL_t^n\),
\[
\frac{\mu_0( \cF(\cH_t) \cap \cM_t^n)}{\mu_0 (\cM_t^n)} = \frac{ \mu _0 \bigl((\Id_{\cH_t}  \circ \cF)  \circ \cF^n  \Id_{\cM_t^n}\bigr)}{\mu_0 (\cM_t^n)} = (\cL_{t}^n \mu_0) (\Id_{\cH_t}  \circ \cF).
\]
By Lemma \ref{lem:exp-conv-initial}, \(\mu_0\) is a initial regular measure. Hence, for \(t_0>0\) small enough, we can apply Lemma \ref{lem:invariance}, obtaining that \(\cL_{t}^{n}\mu_0\) is a regular measure for each \(n \in \bN_0\) and \(t \in [0,t_0]\). Moreover, \(\Id_{\cH_t} \circ \cF\) is discontinuous at most on \(\cS_{1,t}\). Therefore, by Lemma \ref{lem:convergence-discontinuous-function}, there exists a sequence \(\{\cG_k\}_k\) of regular standard families such that 
\[
(\cL_{t}^{n}\mu_0) (\Id_{\cH_t} \circ \cF) = \lim_{k \to \infty}\mu_{\cG_k} (\Id_{\cH_t} \circ \cF).
\]
Finally, by Equation \eqref{eq:hole-plus-preimage}, there exists \(C>0\) such that
\[
|\mu_{\cG_k} (\Id_{\cH_t} \circ \cF)| \le |\mu_{\cG_k} (\cH_t \cup \cF^{-1}(\cH_t))|\le Ct
\]
uniformly in \(k\). This concludes the proof.
\end{proof}

The next one is the crucial computation involving decay of correlations.

\begin{lemma}\label{lem:response-formula}
There exist \(C, t_0 >0\) and \(\gamma \in (0,1)\) such that, for any \(n \in \bN_0\), \(t \in [0,t_0]\) and \(\phi \in \cC^1(\cM)\), we have
\[
|(\cL_{t}^{n+1} \mu_0) (\phi) - (\cL_{t}^n \mu_0) (\phi)| \le C t \gamma^n \|\phi\|_{\cC^1}.
\]
\end{lemma}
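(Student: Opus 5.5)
We will write \(\cL_t^{n+1}\mu_0 = \cL_t^n(\cL_t\mu_0)\) and decompose \(\cL_t\mu_0\) as a convex combination involving \(\mu_0\) itself and the measure \(\cD_t\) of \eqref{eq:survival-and-hole-measures}. By \(\cF\)-invariance of \(\mu_0\),
\[
\mu_0(\psi\circ\cF\,\Id_{\cM_t^1}) = \mu_0(\psi\,\Id_{\cF(\cM_t)}\Id_{\cM_t}),
\]
and \(\Id_{\cF(\cM_t)} = 1-\Id_{\cF(\cH_t)}\). Since \(\mu_0(\cF(\cH_t))=t\), this gives the identity of measures
\[
\mu_0(\cM_t^1)\,\cL_t\mu_0 = \Id_{\cM_t}\mu_0 - t\,\Id_{\cM_t}\cD_t .
\]

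We then apply \(\cL_t^n\) and use \eqref{eq:iterate-transf-op-hole}. For any measure \(\nu\), we have \(\cL_t^n(\Id_{\cM_t}\nu)=\cL_t^n\nu\), because \(\cM_t^n\subset\cM_t\); the same holds for the unnormalized pushforward. Writing \(\nu(\phi\circ\cF^n\Id_{\cM_t^n})\) for the unnormalized quantity, we obtain
\[
(\cL_t^{n+1}\mu_0)(\phi)=\frac{\mu_0(\phi\circ\cF^n\Id_{\cM_t^n}) - t\,\cD_t(\phi\circ\cF^n\Id_{\cM_t^n})}{\mu_0(\cM_t^n)-t\,\cD_t(\cM_t^n)}.
\]
Set \(a=\mu_0(\cM_t^n)\), \(b=t\,\cD_t(\cM_t^n)\), \(x=\mu_0(\phi\circ\cF^n\Id_{\cM_t^n})\) and \(y=t\,\cD_t(\phi\circ\cF^n\Id_{\cM_t^n})\). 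Then
\[
\frac{x-y}{a-b}-\frac{x}{a}=\frac{b}{a-b}\Bigl(\frac{x}{a}-\frac{y}{b}\Bigr)
= \frac{b}{a-b}\Bigl((\cL_t^n\mu_0)(\phi)-(\cL_t^n\cD_t)(\phi)\Bigr).
\]

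The second factor is handled by exponential mixing. By Lemma \ref{lem:exp-conv-initial}, both \(\mu_0\) and \(\cD_t\) are initial regular measures, so Lemma \ref{lem:exp-mixing} bounds the bracket by \(C\|\phi\|_{\cC^1}\gamma^n\). The denominator \(b\) is positive: \(\cD_t(\cM_t^n)\ge 2^{-n}>0\) by Lemma \ref{lem:lower-bound-survival} combined with Lemma \ref{lem:convergence-discontinuous-function}, exactly as in the proof of Lemma \ref{lem:limit-iterate}. Hence \(\cL_t^n\cD_t\) is well defined.

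The remaining and main obstacle is the prefactor \(b/(a-b)\): it must be \(O(t)\) \emph{uniformly in \(n\)}, even though both \(a\) and \(b\) decay with \(n\). This is exactly what Lemma \ref{lem:nasty-term} provides. Since \(b=\mu_0(\cF(\cH_t)\cap\cM_t^n)\), that lemma gives \(b/a\le Ct\). Therefore, for \(t_0\) small enough,
\[
\frac{b}{a-b}=\frac{b/a}{1-b/a}\le 2Ct,
\]
which yields the claim. One should also check the case \(n=0\), which is covered by the same identity with \(\cM_t^0=\cM_t\).
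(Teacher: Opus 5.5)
Your proof is correct and uses the same ingredients as the paper. These are the partition \(\cM=\cF(\cM_t)\cup\cF(\cH_t)\), Lemma~\ref{lem:nasty-term} for the prefactor, and Lemma~\ref{lem:exp-mixing} applied against \(\cD_t\). The difference is only in how the algebra is arranged:
\begin{itemize}
\item The paper writes the increment as \(\frac{b}{a}\bigl((\cL_t^{n+1}\mu_0)(\phi)-(\cL_t^n\cD_t)(\phi)\bigr)\).
\item You write it as \(\frac{b}{a-b}\bigl((\cL_t^{n}\mu_0)(\phi)-(\cL_t^n\cD_t)(\phi)\bigr)\).
\end{itemize}
Both equal \((bx-ay)/(a(a-b))\).

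Your arrangement is slightly cleaner. The two measures you feed into Lemma~\ref{lem:exp-mixing} are initial regular directly by Lemma~\ref{lem:exp-conv-initial}. The paper instead has to invoke Lemma~\ref{lem:invariance} and then treat the regular measure \(\cL_t\mu_0\) as an initial regular one with enlarged constants. The price you pay is the extra factor \(1/(1-b/a)\), which Lemma~\ref{lem:nasty-term} makes harmless.

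One small correction concerns your remark that \(n=0\) is ``covered by the same identity with \(\cM_t^0=\cM_t\)''. With that convention your \(x/a\) equals \(\mu_0(\phi\Id_{\cM_t})/\mu_0(\cM_t)\), which is \(\mu_0\) conditioned on \(\cM_t\). It is not \((\cL_t^0\mu_0)(\phi)=\mu_0(\phi)\), and the latter is what the lemma and the telescoping sum in Theorem~\ref{thm:linear-response} need.

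So at \(n=0\) your identity only controls \((\cL_t\mu_0)(\phi)\) minus that conditional average. You must add the elementary estimate
\[
\bigl|\mu_0(\phi\Id_{\cM_t})/(1-t)-\mu_0(\phi)\bigr|\le 2t\|\phi\|_{\cC^0}/(1-t),
\]
which follows from \(\mu_0(\cH_t)=t\). Also, at \(n=0\) the bracket should be bounded by the trivial \(2\|\phi\|_{\cC^0}\) rather than by Lemma~\ref{lem:exp-mixing}, which is stated for \(n\in\bN\). The paper avoids this by treating \(n=0\) with a separate direct computation.
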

\begin{proof}
Partitioning \(\cM =  \cF(\cM_t) \cup \cF(\cH_t)\) and using that \(\mu_0\) is \(\cF\)-invariant,
\[
\begin{split}
\frac{\mu_0 (\phi \circ \cF^{n}\Id_{\cM_t^{n}}) }{\mu_0 (\cM_t^{n}) } &= \frac{\mu_0 (\phi \circ \cF^{n}(\Id_{\cF(\cM_t)} + \Id_{\cF(\cH_t)})\Id_{\cM_t^n} )}{\mu_0 (\cM_t^n)} \\
&= \frac{\mu_0 (\phi \circ \cF^{n}\Id_{\cF(\cM_t) \cap \cM_t^n} )}{\mu_0 (\cM_t^n)} + \frac{\mu_0 (\phi \circ \cF^{n} \Id_{\cF(\cH_t)}\Id_{\cM_t^n} )}{\mu_0 (\cM_t^n)}\\
& = \frac{\mu_0 (\phi \circ \cF^{n+1}\Id_{\cM_t^{n+1}} )}{\mu_0 (\cM_t^n)} + \frac{\mu_0 (\phi \circ \cF^{n} \Id_{\cF(\cH_t)}\Id_{\cM_t^n} )}{\mu_0 (\cM_t^n)}.
\end{split}
\]
Therefore, for any \(\phi \in \cC^0 (\cM)\) and \(n \ge 1\),
\[
\begin{split}
(\cL_{t}^{n+1} \mu_0) &(\phi) - (\cL_{t}^n \mu_0) (\phi) = \frac{\mu_0 (\phi \circ \cF^{n+1}\Id_{\cM_t^{n+1}}) }{\mu_0 (\cM_t^{n+1}) } - \frac{\mu_0 (\phi \circ \cF^{n}\Id_{\cM_t^{n}}) }{\mu_0 (\cM_t^{n}) }\\
&=\mu_0 (\phi \circ \cF^{n+1}\Id_{\cM_t^{n+1}})\biggl(\frac{1}{\mu_0 (\cM_t^{n+1})} - \frac{1}{\mu_0 (\cM_t^{n})} \biggr) - \frac{\mu_0 (\phi \circ \cF^{n} \Id_{\cF(\cH_t)}\Id_{\cM_t^n} )}{\mu_0 (\cM_t^n)}\\
& =\frac{\mu_0 (\phi \circ \cF^{n+1}\Id_{\cM_t^{n+1}} )}{\mu_0 (\cM_t^{n+1})} \biggl(\frac{\mu_0 (\cM_t^{n}) - \mu_0 (\cM_t^{n+1})}{\mu_0 (\cM_t^{n})} \biggr) \\[10pt]
    &\hspace{5cm}- \frac{\mu_0( \cF(\cH_t) \cap \cM_t^n)}{\mu_0 (\cM_t^n)}\frac{\mu_0(\phi \circ \cF^n \Id _{\cF(\cH_t)} \Id_{\cM_t^n})}{\mu_0(\cF(\cH_t)\cap \cM_t^n)}.
\end{split}
\]
Since \(\mu_0 (\cM_t^{n}) - \mu_0 (\cM_t^{n+1}) = \mu_0(\cM_t^{n}) - \mu_0 (\cF(\cM_t) \cap \cM_t^n) = \mu_0 (\cF(\cH_t)\cap \cM_t^n)\), and recalling the definition \eqref{eq:survival-and-hole-measures} of \(\cD_t\),
\begin{equation}\label{eq:direct-computation}
\begin{split}
&(\cL_{t}^{n+1} \mu_0) (\phi) - (\cL_{t}^n \mu_0) (\phi) \\[10pt]
& =\frac{\mu_0( \cF(\cH_t) \cap \cM_t^n)}{\mu_0 (\cM_t^n)}\biggl( \frac{\mu_0 (\phi \circ \cF^{n+1}\Id_{\cM_t^{n+1}} )}{\mu_0 (\cM_t^{n+1})}- \frac{\mu_0(\phi \circ \cF^n \Id _{\cF(\cH_t)} \Id_{\cM_t^n})}{\mu_0(\cF(\cH_t) \cap \cM_t^n)} \biggr)\\[10pt]
&=\frac{\mu_0( \cF(\cH_t) \cap \cM_t^n)}{\mu_0 (\cM_t^n)} \bigl((\cL_{t}^{n+1}\mu_0)(\phi) - (\cL_{t}^n \cD_t)(\phi) \bigr).
\end{split}
\end{equation}
Therefore, by Lemma \ref{lem:nasty-term}, there exists \(C>0\) such that, for all \(n\in \bN\) and \(t \in [0,t_0]\),
\begin{equation}\label{eq:first-part-decay-auxiliary}
\begin{split}
|(\cL_{t}^{n+1} \mu_0) (\phi)& - (\cL_{t}^n \mu_0) (\phi)| \le Ct\bigl|(\cL_{t}^{n+1}\mu_0)(\phi) - (\cL_{t}^n \cD_t)(\phi) \bigr|.
\end{split}
\end{equation}
Let \(t_0 >0\) be small enough. By Lemma \ref{lem:exp-conv-initial}, \(\mu_0\) and \(\cD_t\) are initial regular measures and, by Lemma \ref{lem:invariance}, \(\cL_t \mu_0\) is a regular measure for any \(t \in [0,t_0]\). Therefore, by considering regular measures as initial regular measures (note that Proposition \ref{prop:invariance-standard-families} holds for any \(\varpi_1\), \(B_1\), \(D_1\)), we can apply Lemma \ref{lem:exp-mixing}, possibly with a bigger \(C>0\), to the measures \(\cL_t \mu_0\) and \(\cD_t\). This implies that, for some \(C>0\) and \(\gamma \in (0,1)\), we have, for any \(t \in [0,t_0]\), \(n \in \bN\) and \(\phi \in \cC^1 (\cM)\),
\[
\bigl|(\cL_{t}^{n+1}\mu_0)(\phi) - (\cL_{t}^n \cD_t)(\phi) \bigr| \le C \gamma^n \|\phi\|_{\cC^1}.
\]
This, together with \eqref{eq:first-part-decay-auxiliary}, proves the statement of the Lemma for all \(n \ge 1\). For \(n = 0\) \eqref{eq:direct-computation} does not hold and using the invariance of the measure,
\[
\begin{split}
&(\cL_{t} \mu_0) (\phi) -  \mu_0 (\phi) = \frac{\mu_0 (\phi \circ \cF \Id_{\cM_t^1})}{\mu_0(\cM_t^1)} - \mu_0 (\phi \circ \cF (\Id_{\cM_t^1} + \Id_{\cH_t \cup \cF^{-1}(\cH_t)})) \\
& = \mu_0 (\cH_t \cup \cF^{-1}(\cH_t)) \biggl(\frac{\mu_0 (\phi \circ \cF \Id_{\cM_t^1})}{\mu_0(\cM_t^1)} - \frac{\mu_0 (\phi \circ \cF \Id_{\cH_t \cup \cF^{-1}(\cH_t)})}{\mu_0 (\cH_t \cup \cF^{-1}(\cH_t))} \biggr).
\end{split}
\]
For \(t\) small enough the hole and its preimage are disjoint so that \(\mu_0 (\cH_t \cup \cF^{-1}(\cH_t)) = 2t\) and
\begin{equation}\label{eq:nasty-zero-case}
(\cL_{t} \mu_0) (\phi) -  \mu_0 (\phi) = t \biggl(2\frac{\mu_0 (\phi \circ \cF \Id_{\cM_t^1})}{\mu_0(\cM_t^1)} - \frac{\mu_0 (\phi \circ \cF \Id_{\cH_t})}{t} - \frac{\mu_0 (\phi \Id_{\cH_t})}{t} \biggr).
\end{equation}
Therefore, we obtain the bound \(|(\cL_{t} \mu_0) (\phi) -  \mu_0 (\phi)| \le 4\|\phi\|_{\cC^0} t\). This concludes the proof of the Lemma.
\end{proof}

By the previous result, it follows that the modulus of continuity is of order \(t\). Indeed,
\begin{equation}\label{eq:formal-power-series}
|\mu_t(\phi) - \mu_0 (\phi)| = \bigl|\lim_{n \to \infty} (\cL^n_{t}\mu_0)(\phi)  - \mu_0(\phi) \bigr| \le  \sum_{k=0}^{\infty} \bigl|(\cL_{t}^{k+1} \mu_0) (\phi) - (\cL_{t}^k \mu_0) (\phi)\bigr|,
\end{equation}
and by Lemma \ref{lem:response-formula} the above is of order \(t\) for \(\phi \in \cC^1(\cM)\). We now turn to the original problem of Linear response.  We start with a very mild control on the dependence of the preimages of the hole on its size.

\begin{lemma}\label{lem:neigh-sing}
    For each \(n \in \bN\) there exists a function \(h_n :[0,t_0] \to \bR^+\), \(\lim_{t \to 0}h_n (t) = 0\), such that
    \[
    \cF^{-n}(\cH_t) \subseteq \bigl[ \cS_{n} \cup \cF^{-n}(\partial \cH_t) \bigr ]_{h_n(t)}.
    \]
\end{lemma}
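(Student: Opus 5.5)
The idea is to slide the image point \(\cF^n(x)\) horizontally inside the hole until it either leaves the hole or meets a singularity, and then to pull this short slide back by a uniformly continuous extension of \(\cF^{-n}\). The plan is a compactness and uniform continuity argument for \(\cF^{-n}\) on the closures of its continuity domains, with no hyperbolicity needed.

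\textbf{Step 1: uniform continuity of \(\cF^{-n}\).} Fix \(n\). The set \(\cM\setminus\cS_{-n}\) is a finite union of open connected components \(U_1,\dots,U_m\), by the analogue of Lemma \ref{lem:bound-number-smooth-components} for \(\cS_{-n}\). On each \(U_i\) the map \(\cF^{-n}\) is a \(\cC^1\) diffeomorphism onto a component of \(\cM\setminus\cS_n\). It is standard for billiards without corners (\cite{MR2229799}) that \(\cF^{-n}|_{U_i}\) extends continuously to \(\bar U_i\), and that this extension maps \(\partial U_i\) into \(\cS_n\), which is closed. Since each \(\bar U_i\) is compact, the extensions are uniformly continuous. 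So there is a modulus \(\omega_n\) with \(\omega_n(s)\to0\) as \(s\to0\) such that \(|\cF^{-n}_i(y)-\cF^{-n}_i(y')|\le\omega_n(|y-y'|)\) for \(y,y'\in\bar U_i\).

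\textbf{Step 2: sliding inside the hole.} Let \(x\in\cF^{-n}(\cH_t)\). If \(x\in\cS_n\) there is nothing to prove, so assume \(x\notin\cS_n\). Then \(y=\cF^n(x)\in\cH_t\cap U_i\) for some \(i\). Let \(\sigma\) be the horizontal segment from \(y\) towards, say, the right boundary line of \(\partial\cH_t\), keeping \(\vf\) fixed. This segment lies in \(\cH_t\) and has length at most \(t|\partial\cD|\). Let \(y'\in\sigma\) be the first point where \(\sigma\) meets \(\partial U_i\cup\partial\cH_t\); such a point exists because \(\sigma\) ends on \(\partial\cH_t\). Then \(|y-y'|\le t|\partial\cD|\), and the subsegment \([y,y')\) lies in \(U_i\).
\begin{itemize}
\item If \(y'\in\partial\cH_t\cap U_i\), then \(x'=\cF^{-n}(y')\in\cF^{-n}(\partial\cH_t)\).
\item If \(y'\in\partial U_i\), then \(x'=\cF^{-n}_i(y')\in\cS_n\) by the boundary property of the extension.
\end{itemize}
In both cases Step 1 gives \(|x-x'|\le\omega_n(t|\partial\cD|)\).

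\textbf{Step 3: conclusion.} Set \(h_n(t)=2\,\omega_n(t|\partial\cD|)+t\). The extra term makes the inclusion into the open neighbourhood \([\cdot]_{h_n(t)}\) strict. Then \(h_n(t)\to0\) as \(t\to0\), and Step 2 gives \(\cF^{-n}(\cH_t)\subseteq[\cS_n\cup\cF^{-n}(\partial\cH_t)]_{h_n(t)}\).

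\textbf{Main obstacle.} I expect the main difficulty to be justifying Step 1, namely the continuous extension of \(\cF^{-n}\) to \(\bar U_i\) with boundary values in \(\cS_n\). Near \(\cS_{-1}\) the derivative of \(\cF^{-1}\) blows up (cf. \eqref{eq:bound-differential}), so uniform continuity cannot come from derivative bounds. I would instead use the geometric fact that at a tangential collision the flow trajectory depends continuously on initial data from each side. Iterating this \(n\) times gives continuity of \(\cF^{-n}\) up to the boundary of each component. Boundary points go to tangential or singular configurations, which lie in \(\cS_n\) since \(\cS_n\) is closed and contains \(\partial\cM\) and its preimages. If components accumulate at a point, I would refine the \(U_i\) into finitely many pieces using the continuation of singularity lines, Lemma \ref{lem:continuation-of-singularities}.
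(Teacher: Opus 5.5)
Your proof is correct in outline, but it takes a different route from the paper.

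\textbf{The paper's route.} The paper does not slide horizontally. It joins \(\cF^n(x)\) to \(\partial\cH_t\) by a stable curve \(W_0\subset\cH_t\), so that \(|W_0|\le C_{\mathrm{cone}}t\). It then pulls this curve back one step at a time, keeping at each step the component of \(W_k\setminus\cS_{-1}\) through \(\cF^{n-k}(x)\). Lengths are controlled by the square-root bound \(|\cF^{-1}(W)|\le C\sqrt{|W|}\) for stable curves from Lemma \ref{lem:upp-bound-stretch-curves}. The endpoints of the pulled-back curve automatically lie in \(\cS_k\cup\cF^{-k}(\partial\cH_t)\). This yields the explicit modulus \(h_n(t)=C_n(C_{\mathrm{cone}}t)^{2^{-n}}\).

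\textbf{What each route buys.} Choosing a stable curve is what makes a quantitative estimate available. Your horizontal segment lies in neither cone, so Lemma \ref{lem:upp-bound-stretch-curves} does not apply to it. Following a single curve of controlled length also avoids any discussion of closures of continuity domains. Your soft approach gives no rate. That is harmless, since Lemma \ref{lem:int-hole} and the proof of Lemma \ref{lem:growth-conditional} only use \(h_n(t)\to0\).

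\textbf{Where your Step 1 is too strong.} The price of your route is Step 1, which, as you anticipate, is stated too strongly. A component of \(\cM\setminus\cS_{-n}\) may approach a point of \(\cS_{-n}\) from two sides, for instance both sides of a singularity curve ending at a T-junction. Such a component need not admit a continuous extension to \(\bar U_i\). Also, finiteness of the number of complementary components is not what Lemma \ref{lem:bound-number-smooth-components} asserts; it counts curves, not components. What you actually use is only continuity along segments contained in \(U_i\).

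\textbf{How to remove the obstacle.} You can bypass Step 1 entirely by contradiction.
Suppose points \(x_k\in\cF^{-n}(\cH_{t_k})\), with \(t_k\to0\), stay at distance at least \(\ve\) from \(\cS_n\cup\cF^{-n}(\partial\cH_{t_k})\). Pass to a limit \(x_\infty\); it satisfies \(d(x_\infty,\cS_n)\ge\ve\). Then \(\cF^n(x_k)\to\cF^n(x_\infty)\notin\cS_{-n}\). Your horizontally slid points \(y_k'\in\partial\cH_{t_k}\) converge to the same limit. Continuity of \(\cF^{-n}\) at \(\cF^n(x_\infty)\) then gives \(\cF^{-n}(y_k')\to x_\infty\), which contradicts the choice of \(x_k\).
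This argument uses only that \(\cS_{\pm n}\) are closed and that \(\cF^{\pm n}\) are continuous off them.
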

\begin{proof}
Let \(x \in \cF^{-n}(\cH_t)\) and let \(\bar x := \cF^{n}(x) \in \cH_t\). Let \(W_0\) be a stable curve connecting \(\bar x\) to \(\partial \cH_t\). Since \(W_0 \subseteq \cH_t\) and the vectors in the stable cone have bounded slope, \(|W_0| \le C_{\mathrm{cone}}t\) for some \(C_{\mathrm{cone}}>0\). We define \(W_k\), \(k \in \{1,...,n\}\), inductively in the following way. Consider the unique connected component \(\tilde W_k\) of \(W_k \setminus \cS_{-1}\) containing \(\cF^{-k}(\bar x)\) and set \(W_{k+1} = \cF^{-1}(\tilde W_k)\). Hence, \(\tilde W_0\) has endpoints in \(\cS_{-1} \cup \partial \cH_t\) and \(W_1 = \cF^{-1}(\tilde W_0)\) has endpoints in \(\cS_1 \cup \cF^{-1}(\partial \cH_t)\). In general, arguing by induction, one has that \(W_k\) is a stable curve containing \(\cF^{-k}(\bar x) = \cF^{n-k}(x)\) whose endpoints belong to \(\cS_{k} \cup \cF^{-k}(\partial \cH_t) \). Moreover, by Lemma \ref{lem:upp-bound-stretch-curves}, there exists \(C>0\) such that \(|W_{k+1}| = |\cF^{-1}(\tilde W_k)|\le |\cF^{-1}(W_k)| \le C\sqrt{|W_k|}\). Hence, for each \(n \in \bN\), there exists \(C_n >0\) such that for all \(t\in [0,t_0]\),
\[
|W_{n}| \le C_n |W_0|^{\frac{1}{2^n}} \le C_n (C_{\mathrm{cone}}t)^{\frac{1}{2^n}}.
\]
Because \(W_n\) is a continuous curve connecting \(x\) to \(\cS_{n} \cup \cF^{-n}(\partial \cH_t)\),
\[
\inf_{y \in \cS_{n} \cup \cF^{-n}(\partial \cH_t)}|x - y| \le  C_n (C_{\mathrm{cone}}t)^{\frac{1}{2^n}} := h_n (t),
\]
and the statement follows by the fact that \(x \in \cF^{-n}(\cH_t)\) is arbitrary and \(\lim_{t \to 0}h_n (t) =0\).
\end{proof}

Note that we didn't prove anything about the uniformity of \(h_n\) in \(n\). Hence, Lemma \ref{lem:neigh-sing} is useful only for fixed \(n\) and very small \(t\). This is also the case for the following result.

\begin{lemma}\label{lem:int-hole}
    For each \(n \ge 1\), \(\mu_{0}(\cH_t \cap \cup_{k=1}^n \cF^{-k}(\cH_t)) = o(t)\) as \(t \to 0\).
\end{lemma}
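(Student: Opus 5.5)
By the union bound, it suffices to show that for each fixed \(k\in\{1,\dots,n\}\) we have \(\mu_0(\cH_t\cap\cF^{-k}(\cH_t)) = o(t)\). Fix such a \(k\). The idea is that \(\cH_t\) shrinks to the vertical segment \(l_{r_*}\) as \(t\to0\), while \(\cF^{-k}(\cH_t)\) shrinks to a set of zero measure relative to that segment.

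By Lemma \ref{lem:neigh-sing},
\[
\cF^{-k}(\cH_t)\subseteq \bigl[\cS_k\cup\cF^{-k}(\partial\cH_t)\bigr]_{h_k(t)}, \qquad h_k(t)\to0 .
\]
So it is enough to show that
\[
\mu_0\bigl(\cH_t\cap[\cS_k\cup\cF^{-k}(\partial\cH_t)]_{h_k(t)}\bigr)=o(t).
\]
The tool is transversality. Every smooth component of \(\cS_k\setminus\cS_0\) is a stable curve, and every component of \(\cF^{-k}(\partial\cH_t)\) is a stable curve as well, by cone invariance, since \(\partial\cH_t\) consists of vertical lines. Hence both sets are transversal to the vertical direction, uniformly. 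By Lemma \ref{lem:bound-number-smooth-components}, the number of components of \(\cS_{k,t}\) is bounded uniformly in \(t\). Moreover \(\cS_0\) consists of horizontal lines, and there the density \(\cos\vf\) vanishes, so these contribute only the thin strips \(\{|\vf|\ge\pi/2-h_k(t)\}\).

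Now I slice \(\cH_t\) into vertical segments \(\{r\}\times[-\pi/2,\pi/2]\) with \(r\in\pi_r(\cH_t)\). On each segment, the \(h_k(t)\)-neighbourhood of a uniformly transversal curve has vertical length \(\le C_{\mathrm{cone}}h_k(t)\). The number of such curves meeting the segment is at most \(M_k\), uniformly in \(t\): stable curves cross a vertical line at most once per smooth piece, after using monotonicity from Lemma \ref{lem:continuation-of-singularities}. Integrating against \(\cos\vf\,d\vf\,dr/(2|\partial\cD|)\) over \(r\in\pi_r(\cH_t)\), an interval of length \(t|\partial\cD|\), gives
\[
\mu_0\bigl(\cH_t\cap[\cdots]_{h_k(t)}\bigr)\le C\,M_k\,h_k(t)\,t = o(t).
\]

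The main obstacle is the uniformity in \(t\) of the slice count \(M_k\), because \(\cF^{-k}(\partial\cH_t)\) moves with \(t\). Uniform transversality alone is not enough here: a stable curve could also lie nearly tangent to the vertical segment near \(\cS_0\). I would handle this as follows. Near \(\cS_0\) the \(\cos\vf\) weight makes the contribution small anyway. Away from \(\cS_0\), slopes of stable curves are bounded by \(C_{\mathrm{cone}}\), so the vertical length of the neighbourhood is at most \(C_{\mathrm{cone}}h_k(t)\) per crossing.

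If controlling \(\cF^{-k}(\partial\cH_t)\) directly proves awkward, the fallback is Lemma \ref{lem:convergence-discontinuous-function}-type reasoning. Using \(\mu_0(\cH_t\cap A)/t=\zeta_t(A)\), this amounts to estimating \(\zeta_t\) of an \(h_k(t)\)-neighbourhood of \(\cS_{k,t}\). As in Lemma \ref{lem:exp-conv-initial}, that quantity is bounded by \(C_k h_k(t)\), which tends to \(0\).
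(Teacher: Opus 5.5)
Your main argument is correct and is essentially the paper's proof: Lemma \ref{lem:neigh-sing}, uniform transversality to vertical lines of the stable curves making up \(\cS_k\cup\cF^{-k}(\partial\cH_t)\) for \(k\ge 1\) (plus the horizontal lines \(\cS_0\)), and the \(t\)-uniform component count of Lemma \ref{lem:bound-number-smooth-components} together give a bound of order \(C M_n\, t\, \max_{k\le n}h_k(t)=o(t)\). Two side remarks. First, the near-vertical tangency you fear cannot occur, since curves in \(\hat\cC^s\) have slope in \([-\kappa-\cos\vf/\tau,-\kappa]\), so the fix via the \(\cos\vf\) weight is not needed. Second, your fallback is false as literally stated: \(\cS_{k,t}\) contains the vertical lines \(\partial\cH_t\), whose \(h_k(t)\)-neighbourhood contains all of \(\cH_t\) (since \(h_k(t)\gg t\) for the \(h_k\) produced in Lemma \ref{lem:neigh-sing}). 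So restricting to \(k\ge1\), i.e.\ working with \(\cS_{n,t}\setminus\partial\cH_t\) as your main argument does and as the paper stresses, is essential.
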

\begin{proof}
For any \(k \in \bN\), let \(h_k\) be the function given by Lemma \ref{lem:neigh-sing}, and let \(\theta_n = \max_{k \in \{1,...,n\}} h_k\). One has that \(\lim_{t \to 0} \theta_n(t) = 0\). Moreover, by Lemma \ref{lem:neigh-sing} and using that \(\cos \vf \le 1\),
\begin{equation}\label{eq:estimate-measure-intersections}
\begin{split}
        \mu_{0}\bigl(\cH_t \cap \cup_{k=1}^n \cF^{-k}(\cH_t)\bigr) &\le \frac{1}{2|\partial \cD|} m \bigl(\cH_t \cap\cup_{k=1}^{n} \bigl[\cS_{k} \cup \cF^{-k}(\partial \cH_t) \bigr ]_{h_k(t)}\bigr)\\
        &=  \frac{1}{2|\partial \cD|}  m \bigl(\cH_t \cap \bigl[\cS_{n,t} \setminus \partial \cH_t\bigr]_{\theta_n (t)} \bigr) .
\end{split}
\end{equation}
In the last inequality we used that the sum starts at \(k=1\) and not \(k=0\).
As we mentioned after Lemma \ref{lem:unstable-curves-cosine-k}, the set \(\cS_{n,t} \setminus \partial \cH_t\) consists of a finite union of smooth stable curves \(\{\alpha_j\}_j\) (except for the two horizontal singularity lines \(\cS_0 = \{\vf = \pm \pi/2\}\) which are not aligned with \(\hat \cC^s\) but nonetheless we group with the \(\{\alpha_j\}_j\)). In particular, there is a bound, uniform in \(t\), on the slope that any \(\alpha_j\) can have. I.e., the tangent lines to any \(\alpha_j\) are far from being vertical. Hence, there exists \(C_{\mathrm{cone}}>0\) such that \(m \bigl([\alpha_j]_{\theta_n(t)} \cap \cH_t\bigr) \le C_{\mathrm{cone}} t \theta_n (t)\) for any \(j\) and \(t\). Moreover, by Lemma \ref{lem:bound-number-smooth-components}, the cardinality of \(\{\alpha_j\}_j\) is also bounded by some \(M_n >0\) uniform in \(t\). Therefore,
\[
 m \bigl(\cH_t \cap \bigl[\cS_{n,t} \setminus \partial \cH_t\bigr]_{\theta_n (t)} \bigr) \le \sum_{j} m (\cH_t \cap [\alpha_j]_t ) \le M_n C t \theta_n (t),
\]
which, together with \eqref{eq:estimate-measure-intersections}, proves the statement.
\end{proof}

To compute the derivative of \(\mu_t\) we fix \(n\) and consider separately each term of the series \eqref{eq:formal-power-series} divided by \(t\). 

\begin{lemma}\label{lem:limit-single-terms}
For each \(n \ge 1\) and \(\phi \in \cC^0(\cM)\),
\[
\lim_{t \to 0}\frac{ (\cL_{t}^{n+1} \mu_0) (\phi) - (\cL_{t}^n \mu_0) (\phi)}{t} = \mu_0 (\phi) - \frac{1}{2}\int_{- \frac \pi 2}^{\frac \pi 2} \phi \circ \cF^{n+1}(r_{*}, \vf) \cos \vf d \vf,
\]
and, for \(n =0\),
\[
\lim_{t \to 0}\frac{ (\cL_{t} \mu_0) (\phi) - \mu_0 (\phi)}{t} = \mu_0 (\phi) - \frac{1}{2}\int_{- \frac \pi 2}^{\frac \pi 2}\!\!\phi (r_{*}, \vf)  \cos \vf d \vf + \mu_0 (\phi) - \frac{1}{2}\int_{- \frac \pi 2}^{\frac \pi 2} \!\!\phi \circ \cF (r_{*}, \vf)  \cos \vf d \vf.
\]
\end{lemma}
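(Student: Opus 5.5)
Start from the exact identity \eqref{eq:direct-computation}, valid for \(n\ge1\):
\[
\frac{(\cL_{t}^{n+1} \mu_0) (\phi) - (\cL_{t}^n \mu_0) (\phi)}{t}=\frac{\mu_0( \cF(\cH_t) \cap \cM_t^n)}{t\,\mu_0 (\cM_t^n)} \bigl((\cL_{t}^{n+1}\mu_0)(\phi) - (\cL_{t}^n \cD_t)(\phi) \bigr).
\]
With \(n\) fixed, pass to the limit \(t\to0\) in each factor separately. The claim amounts to three limits:
\begin{itemize}
\item the prefactor tends to \(1\);
\item \((\cL_t^{n+1}\mu_0)(\phi)\to\mu_0(\phi)\);
\item \((\cL_t^n\cD_t)(\phi)\to \frac12\int\phi\circ\cF^{n+1}(r_*,\vf)\cos\vf\,d\vf\).
\end{itemize}
The sign works out because the bracket is multiplied by \(+1\).

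\emph{Prefactor.} First, \(\mu_0(\cM_t^n)\ge 1-\sum_{k=0}^n\mu_0(\cF^{-k}\cH_t)=1-(n+1)t\to1\). By invariance, \(\mu_0(\cF(\cH_t)\cap\cM_t^n)=\mu_0(\cH_t\cap\cF^{-1}\cM_t^n)\). Since \(\cH_t\setminus\cF^{-1}\cM_t^n\subseteq \cH_t\cap\bigcup_{k=1}^{n+1}\cF^{-k}(\cH_t)\), Lemma \ref{lem:int-hole} gives \(\mu_0(\cF(\cH_t)\cap\cM_t^n)=t-o(t)\). Hence the prefactor tends to \(1\).

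\emph{Second factor.} Fix \(n\). Then
\[
(\cL_t^{n+1}\mu_0)(\phi)=\frac{\mu_0(\phi\circ\cF^{n+1}\Id_{\cM_t^{n+1}})}{\mu_0(\cM_t^{n+1})},
\]
and \(\mu_0(\cM\setminus\cM_t^{n+1})\le(n+2)t\). So this quantity differs from \(\mu_0(\phi\circ\cF^{n+1})=\mu_0(\phi)\) by \(O(nt\|\phi\|_{\cC^0})\), which vanishes as \(t\to0\).

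\emph{Third factor.} This is the main step. Write
\[
(\cL_t^n\cD_t)(\phi)=\frac{\cD_t(\phi\circ\cF^n\Id_{\cM_t^n})}{\cD_t(\cM_t^n)}.
\]
We have \(\cD_t(\cM\setminus\cM_t^n)=t^{-1}\mu_0(\cH_t\setminus\cF^{-1}\cM_t^n)=o(1)\) by the same use of Lemma \ref{lem:int-hole}. Hence the denominator tends to \(1\), and the numerator is \(\cD_t(\phi\circ\cF^n)+o(1)\). It remains to show
\[
\cD_t(\phi\circ\cF^n)=\zeta_t(\phi\circ\cF^{n+1})\to\zeta_0(\phi\circ\cF^{n+1}),\qquad \zeta_0(g):=\tfrac12\int g(r_*,\vf)\cos\vf\,d\vf,
\]
with \(\zeta_t\) as in the proof of Lemma \ref{lem:exp-conv-initial}. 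We have \(\zeta_t\to\zeta_0\) weakly, since \(\zeta_t\) is the average of vertical-line measures over \(r\in\pi_r(\cH_t)\), which shrinks to \(r_*\). The function \(g=\phi\circ\cF^{n+1}\) is discontinuous only on \(\cS_{n+1}\), which consists of finitely many curves (Lemma \ref{lem:bound-number-smooth-components}), each stable or horizontal and hence uniformly transversal to vertical lines. Approximate \(g\) by a continuous \(g_\ve\) that agrees with \(g\) off \([\cS_{n+1}]_\ve\), exactly as in the proof of Lemma \ref{lem:exp-conv-initial}. Transversality gives \(\zeta_t([\cS_{n+1}]_\ve)\le C_n\ve\) and \(\zeta_0([\cS_{n+1}]_\ve)\le C_n\ve\), uniformly in \(t\). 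An \(\ve/3\) argument then yields the limit.

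\emph{Case \(n=0\).} Use \eqref{eq:nasty-zero-case}. The first term is \(2\mu_0(\phi\circ\cF\Id_{\cM_t^1})/\mu_0(\cM_t^1)\to2\mu_0(\phi)\). The second is \(\cD_t(\phi)=\zeta_t(\phi\circ\cF)\to\zeta_0(\phi\circ\cF)\), by the argument just given with \(\cS_1\). The third is \(t^{-1}\mu_0(\phi\Id_{\cH_t})=\zeta_t(\phi)\to\zeta_0(\phi)\), directly by continuity of \(\phi\). This gives the stated formula.

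The main obstacle I expect is the \(o(1)\) control on the survival-based quantities: the mass of \(\cD_t\) that re-enters the hole within \(n\) steps. This is exactly where Lemma \ref{lem:int-hole} is needed, applied for fixed \(n\).
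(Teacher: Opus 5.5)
Your proposal is correct and follows the paper's proof essentially step by step: you pass to the limit factor by factor in \eqref{eq:direct-computation}, you control the re-entry mass by Lemma \ref{lem:int-hole}, you obtain the vertical-line limit of \((\cL_t^n\cD_t)(\phi)\) by approximating \(\phi\circ\cF^{n+1}\) off \([\cS_{n+1}]_\ve\) and using transversality, and you treat \(n=0\) via \eqref{eq:nasty-zero-case}. The only differences are cosmetic: you use an explicit \(O(nt\|\phi\|_{\cC^0})\) bound where the paper invokes dominated convergence, and you phrase the last limit as weak convergence \(\zeta_t\to\zeta_0\).
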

\begin{proof}
Since \(\mu_0\) is \(\cF\)-invariant and using Lemma \ref{lem:int-hole},  
\begin{equation}\label{eq:o(t)}
     \mu_0(\cF(\cH_t) \cap (\cM_{t}^n)^c ) =  \mu_0 (\cF(\cH_t) \cap \cup_{k=0}^{n}\cF^{-k}(\cH_t))= \mu_0 (\cH_t \cap \cup_{k=1}^{n+1}\cF^{-k}(\cH_t)) = o(t).
\end{equation}
Hence,
\[
\begin{split}
    \mu_0( \cF(\cH_t) \cap \cM_t^n ) &=  \mu_0 (\cF(\cH_t)) - \mu_0(\cF(\cH_t) \cap (\cM_{t}^n)^c ) =  t + o(t).
\end{split}
\]
Therefore, using also that \(\lim_{t \to 0}\mu_0 (\cM_{t}^n) = \mu_0 (\cM) = 1\), we have,
\begin{equation}\label{eq:limit-zero}
    \lim_{t \to 0} \frac{\mu_0(\cF(\cH_t) \cap \cM_t^n )}{t\mu_0 (\cM_t^n)} =  1.
\end{equation}
Moreover, by \eqref{eq:o(t)},
\[
\begin{split}
(\cL_{t}^n \cD_t)(\phi) &= \frac{\mu_0 (\phi \circ \cF^n \Id_{\cF(\cH_t) \cap \cM_t^n})}{\mu_0 (\cF(\cH_t) \cap \cM_t^n)} = \frac{\mu_0 (\phi \circ \cF^n \Id_{\cF(\cH_t)}) - \mu_0 (\phi \circ \cF^n \Id_{\cF(\cH_t) \cap (\cM_t^n)^c})}{\mu_0(\cF(\cH_t)) - \mu_0 (\cF(\cH_t) \cap (\cM_t^n)^c)}\\[9pt]
&= \frac{\mu_0 (\phi \circ \cF^n \Id_{\cF(\cH_t)}) + \|\phi\|_{\cC^0}o(t)}{t + o(t)}.
\end{split}
\]
Therefore, using again that \(\mu_0\) is \(\cF\)-invariant,
\begin{equation}\label{eq:limit-1}
\begin{split}
\lim_{t \to 0} (\cL_{t}^n \cD_t)(\phi) &= \lim_{t \to 0}  \frac{\mu_0 (\phi \circ \cF^n \Id_{\cF(\cH_t)})}{t } = \lim_{t \to 0}  \frac{\mu_0 (\phi \circ \cF^{n+1} \Id_{\cH_t})}{t } \\
&= \frac{1}{2}\int_{- \frac \pi 2}^{\frac \pi 2} \phi \circ \cF^{n+1}(r_{*}, \vf) \cos \vf d \vf.
\end{split}
\end{equation}
Note that the expression on the r.h.s.\! of the first line of the equation above is an average of \(\phi \circ \cF^{n+1}\) on the hole \(\cH_t\) centered at \(r_{*}\). In order to obtain the last equality, we approximate \(\phi \circ \cF^{n+1}\) with a continuous function on \(\cM \setminus [\cS_{n+1}]_{\ve}\) and use that fact that \(\mu_0 (\Id_{\cH_t} \cap [\cS_{n+1}]_{\ve}) \le C_{\mathrm{cone}} t \ve\) by transversality.
Finally, by Lebesgue dominated convergence theorem and the invariance of \(\mu_0\),
\begin{equation}\label{eq:limit-2}
 \lim_{t \to 0} (\cL_{t}^{n+1} \mu_0)(\phi) = \lim_{t \to 0} \int \phi \circ \cF^{n+1} \Id_{\cM_t^n}  \frac{\cos \vf d \vf dr}{2|\partial \cD|} = \mu_0 (\phi \circ \cF^{n+1}) = \mu_0(\phi),
\end{equation}
where in the second equality we used again that  \(\lim_{t \to 0}\mu_0 (\cM_{t}^n) = 1\). The expression \eqref{eq:direct-computation}
\[
\begin{split}
&(\cL_{t}^{n+1} \mu_0) (\phi) - (\cL_{t}^n \mu_0) (\phi) =\frac{\mu_0( \cF(\cH_t) \cap \cM_t^n)}{\mu_0 (\cM_t^n)} \bigl((\cL_{t}^{n+1}\mu_0)(\phi) - (\cL_{t}^n \cD_t)(\phi) \bigr),
\end{split}
\]
together with \eqref{eq:limit-zero}, \eqref{eq:limit-1} and \eqref{eq:limit-2}, proves the statement for \(n \ge 1\). The case \(n =0\) follows by \eqref{eq:nasty-zero-case} and the same arguments of above.
\end{proof}

Finally, Lemma \ref{lem:response-formula} guarantees the needed uniformity to exchange the limit with the sum in the expression for the derivative.

\begin{proof}[Proof of Theorem \ref{thm:linear-response}]
By \eqref{eq:conditional-invariant-measure} and by adding and subtracting \((\cL_{t}^k \mu_0)(\phi)\), we obtain the telescopic series
\begin{equation}\label{eq:telescopic}
\begin{split}
    \frac{\mu_t(\phi) - \mu_0(\phi)}{t} &= \frac{\lim_{n \to \infty} (\cL^n_{t}\mu_0)(\phi)  - \mu_0(\phi)}{t} = \sum_{k=0}^{\infty} \frac{ (\cL_{t}^{k+1} \mu_0) (\phi) - (\cL_{t}^k \mu_0) (\phi)}{t}.  
\end{split}
\end{equation}
By Lemma \ref{lem:response-formula}, there exist \(C>0\) and \(\gamma \in (0,1)\) such that, for \(t_0 >0\) small enough, uniformly in \(t \in [0,t_0]\),
\begin{equation}\label{eq:exp-conv-derivative}
\biggl |  \frac{ (\cL_{t}^{k+1} \mu_0) (\phi) - (\cL_{t}^k \mu_0) (\phi)}{t}\biggr | \le C \|\phi\|_{\cC^1} \gamma^k .
\end{equation}
Moreover, by Lemma \ref{lem:limit-single-terms}, for each \(k \in \bN_0\), the limit for \(t \to 0\) of the l.h.s.\! of \eqref{eq:exp-conv-derivative} exists. Therefore, we can exchange the limit in \(t\) with the series and, applying Lemma \ref{lem:limit-single-terms} for each \(k\) we obtain,
\[
\begin{split}
\lim_{t \to 0} \frac{\mu_t(\phi) - \mu_0(\phi)}{t} &=  \sum_{k=0}^{\infty} \lim_{t \to 0}\frac{ (\cL_{t}^{k+1} \mu_0) (\phi) - (\cL_{t}^k \mu_0) (\phi)}{t}\\
& = \sum_{k = 0}^{\infty} \mu_0 (\phi) - \frac{1}{2}\int_{- \frac \pi 2}^{\frac \pi 2} \phi \circ \cF^{k}(r_{*}, \vf) \cos \vf d \vf.
\end{split}
\]
This concludes the proof of the Theorem.
\end{proof}

\section{Coupling}\label{sec:coupling}
In this section we prove Proposition \ref{prop:invariance-standard-families} and Theorem \ref{thm:exp-mixing-sf}. For the reader interested in those results and not in their application to linear response or vice-versa, we remark that this section is completely independent of Section \ref{sec:linear-response}. The coupling techniques are similar to those developed in \cite{MR3556527, CANESTRARI2026111008} which use `finite time' holonomies and avoid Cantor rectangles. This is somehow convenient because the dependence on the perturbation might be easier to track in finite time arguments. Note however that the presence of a hole is a novelty in coupling and it requires additional arguments in order to implement the coupling algorithm. Fix any \(\varpi_1, B_1, D_1 >0\).
\subsection{Evolution of unstable curves and densities}\label{subsec:graph-transform}
\textit{In this section we consider a single standard pair and we study how the regularity of the unstable curve \(W\) and density \(\rho\) are affected by the dynamics.}

We start with the invariance for the support of standard pairs, i.e., of the set \(\cW(D)\) of unstable curves introduced in \eqref{eq:graph-sp-def}. We do not present a full proof of the required statement since it is well known in the literature, but not immediate. In particular, the fact that the second derivative of unstable curves remains bounded under iterations of the billiard map is not obvious given the blow-up of the derivatives of \(\cF\) near singularities. Therefore, we rely on the so-called \textit{curvature bounds}. See below for references to the relevant statement. 
\begin{lemma}\label{lem:invariance-unstable-curves}
 There exists \(D_2 >0 \) such that, for any \(W \in \cW(D_1)\) and \(n \in \bN\), there exists a finite set \(\{W_{i}\}_i \subseteq \cW(D_2)\) such that \(\cF^n(W) = \cup_i W_i \). 
\end{lemma}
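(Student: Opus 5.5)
The statement concerns only the closed map \(\cF\). The hole plays no role here: the pieces \(W_i\) are the smooth components of \(\cF^n(W)\) cut along \(\cS_{-n}\), together with any further subdivision needed to respect the length bound \(\delta_*\). I would argue by induction on \(n\), iterating one step at a time, and separate three properties: alignment with the small unstable cone, the length bound, and the bound on \(|\vf_W''|\).

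\textbf{Cone and length.} Cutting \(\cF^n(W)\) along \(\cS_{-n}\) gives at most countably many smooth unstable curves. Each tangent vector of \(W\) lies in \(\hat\cC^u\subseteq\cC^u\), so by the cone invariance \eqref{eq:cone-invariance} every image tangent lies in \(\hat\cC^u\). Hence each piece is the graph over its \(r\)-projection of a \(\cC^2\) function; the \(\cC^2\) regularity comes from the \(\cC^3\) boundary, which makes \(\cF\) a \(\cC^2\) map away from \(\cS_1\). Finiteness of \(\{W_i\}\) holds because \(W\) crosses the finitely many smooth components of \(\cS_n\) (Lemma \ref{lem:bound-number-smooth-components} with \(t=0\)) only finitely often, by transversality. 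Any piece longer than \(\delta_*\) is then chopped into finitely many subcurves of length at most \(\delta_*\). Chopping does not change the second derivative, so it is harmless.

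\textbf{Curvature.} This is the main obstacle, and it is where I would rely on the classical curvature bounds (\cite[Proposition 4.29]{MR2229799} and the subsequent discussion). The plan is to write the one-step transformation of the slope \(\mathcal V=d\vf/dr\) and of the curvature of an unstable curve using \eqref{eq:differential}. This yields an estimate of the form
\[
|\vf''_{\cF W}| \le C_1 + \theta\,(\cos\vf_1)^{3}\,|\vf''_{W}|\,\|D\cF|_{TW}\|^{-3}\cdots,
\]
which after normalization becomes \(\mathcal K_1\le C_1+\theta\mathcal K\) with some \(\theta<1\). The two ingredients are:
\begin{itemize}
\item the factor \(1/\cos^3\vf_1\) from the blow-up of \(D\cF\) is exactly compensated, because the image curve is expanded by \(\sim 1/\cos\vf_1\) and the curvature transforms with the inverse cube of the expansion;
\item the uniform lower bound \(\tau\ge\tau_{\mathrm{min}}\) keeps \(C_1\) bounded.
\end{itemize}

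\textbf{Iteration.} Since the one-step estimate is contracting in \(\mathcal K\), iterating it gives
\[
|\vf''|\le \max\Bigl\{D_1,\ \frac{C_1}{1-\theta}\Bigr\}
\]
after any number of steps. Passing between curvature and \(\vf''\) is harmless because slopes in \(\hat\cC^u\) are bounded above and below.

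There is a technical point if the one-step contraction fails because of the \(\cos\vf\) factors. In that case I would use \(\cF^{m}\) for a fixed \(m\), for which hyperbolicity \eqref{eq:cone-espansion} gives a net contraction \(c\Lambda_0^m>1\). I would then control the intermediate steps \(n<m\) by the one-step bound with a constant.

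Setting \(D_2\) equal to the resulting uniform bound gives \(\{W_i\}\subseteq\cW(D_2)\), which completes the plan.
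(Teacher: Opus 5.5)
Your proposal is correct and follows essentially the same route as the paper: cut \(W\) along \(\cS_n\) (equivalently, cut the image along \(\cS_{-n}\)), use cone invariance to get graphs aligned with \(\hat\cC^u\) and the \(\cC^3\) boundary for \(\cC^2\) regularity, import the classical curvature bound from the literature, and subdivide pieces longer than \(\delta_*\); the paper cites \cite[Lemma~4.1]{MR1832968} for the curvature bound, while you cite \cite{MR2229799}. Your heuristic for the one-step curvature estimate is only a sketch: it omits the Jacobian factor \(\det D\cF=\cos\vf/\cos\vf_1\), and this omission is precisely why the one-step coefficient need not be below \(1\), so the multi-step telescoping you mention is the right repair; since you, like the paper, rely on the published curvature bounds, this does not affect the argument.
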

\begin{proof}
    We construct \(\{W_i\}_i\) in the following way. Take the connected components \(\{\tilde W_i\}_i\) of \(W \setminus \cS_n\), which are a finite number, and set \(W_i = \cF^n (\tilde W_i)\). By the requirement on the tangent space of \(W\) and cone invariance, each tangent line at \(W_i\) is aligned with the small unstable cone field and so \(W_i\) is the graph of a function \(\vf_{W_i}\). By differentiability of \(\cF^n\) on \(\cM \setminus \cS_n\) and the fact that \(W\) is the graph of a twice differentiable function \(\vf_W\) each \(\vf_{W_i}\) is twice differentiable. The fact that \(|\vf_{W_i}''| \le D_2\) for \(D_2 >0\) big enough follows from \cite[Lemma~4.1]{MR1832968} and uses the \(\cC^3\) regularity of the boundary. Finally, we can ensure that \(|W_i| \le \delta_*\) by partitioning each \(W_i\) into several pieces if needed.
\end{proof}

\begin{remark}
We fix \(D_2\) to be the value given by Lemma \ref{lem:invariance-unstable-curves}. In particular, we will always assume that initial standard pairs are supported on unstable curves in \(\cW(D_1)\) so that their push-forwards will be supported on elements of \(\cW(D_2)\). Therefore, when this does not create confusion, we will suppress the dependence on \(D\) in \(\cW\).     
\end{remark}

The next task is to study the evolution of the density. Let \(W \in \cW\) and assume that \(W\cap \cS_1 = \emptyset\) (if not, we consider connected subsets of \(W\setminus \cS_1\)). Recall that \(I_W = \pi_r (W)\) and \((r_1, \vf_1) = \cF(r,\vf)\). We introduce the function \(r_{1,W} : I_W \to I_{\cF(W)}\) defined by
\begin{equation}\label{eq:def-map-r-coordinate}
\rw (r) = r_1 (r, \vf_W (r)).
\end{equation}
Let \(W\) be a short unstable curve such that \(\cF^n\) is smooth on \(W\). We define \(\rw^n: I_{W} \to I_{\cF^n(W)}\) as the composition
\begin{equation}\label{eq:powers-expanding-base}
r_{1,W}^{n} (r) = r_{1,\cF^{n-1}(W)} \circ r_{1, \cF^{n-2}(W)} \circ ... \circ r_{1, W}(r).
\end{equation}
Moreover, whenever \(r_{1,\cF^{-n}(W)}^n:  I_{\cF^{-n}(W)} \to I_W\) is well defined, we denote its inverse by \(\rw^{-n} :  I_{W} \to I_{\cF^{-n}(W)}\). These compositions behave essentially as expanding maps, even if a single iteration can be even contracting locally. We now prove some basic properties of these compositions. 
To compare the next statements with other literature on hyperbolic billiards, note that \(\rw' \sim \cJ_{W}\cF\), the Jacobian of the map restricted to the unstable curve \(W\) w.r.t.\! the Euclidean metric. This is the case because the small unstable cone has bounded slope and the expansion on the \(r\)-direction is comparable with the expansion of the tangent vectors in the Euclidean norm. In particular the constant in \(\sim\) depends only on the cone. In view of these facts, the results in the next three Lemmata are well known, but we include the proofs because of the slightly different conventions. Considering the derivative of \(\rw\) in place of \(\cJ_{W}\cF\) is natural for our coupling arguments, as it will be clear in the sequel. 

\begin{lemma}\label{lem:basic-FGu}
There exists \(c>0\) such that, for every \(r \in I_W\), 
\[
|\rw ' (r)| \ge \frac{c}{\cos \vf_1 (r,\vf_W(r))}.
\]
Moreover, there exists \(\Lambda > 1\) such that, for every \(n \in \bN\), one has \( | {\rw^n} '| \ge c \Lambda^n\).
\end{lemma}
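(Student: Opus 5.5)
The plan is to compute \(\rw'\) directly from the differential \eqref{eq:differential} and then use the uniform expansion \eqref{eq:cone-espansion} for the iterates.

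\textbf{Single step.} Since \(W\) is the graph of \(\vf_W\), the chain rule gives
\[
\rw'(r) = \partial_r r_1 + \partial_\vf r_1\,\vf_W'(r) = -\frac{A + B\,\vf_W'(r)}{\cos\vf_1} = -\frac{\tau\kappa + \cos\vf + \tau\,\vf_W'(r)}{\cos\vf_1},
\]
evaluated at \((r,\vf_W(r))\). Because \(\cT_x W\in\hat\cC^u_x\), we have \(\vf_W'\ge\kappa>0\). Hence every term in the numerator is nonnegative, and the numerator is bounded below by \(\tau\kappa\ge\tau_{\min}\kappa_{\min}\) by \eqref{eq:bound-table}. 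This yields
\[
|\rw'(r)|\ge \frac{\tau_{\min}\kappa_{\min}}{\cos\vf_1(r,\vf_W(r))},
\]
which is the first claim with \(c=\tau_{\min}\kappa_{\min}\).

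\textbf{Iterates.} By the chain rule, \({\rw^n}'(r)\) is the product of the one-step derivatives along the images \(\cF^k(W)\). Directly multiplying the one-step bounds only gives \(\prod_k c/\cos\vf_{k+1}\), and \(c\) may be smaller than \(1\). So instead we relate \({\rw^n}'\) to the Euclidean expansion of a tangent vector. Take \(v=(1,\vf_W'(r))\), which is tangent to \(W\); its image \(D\cF^n v\) is tangent to \(\cF^n(W)\) at the image point, and its \(dr\)-component equals \({\rw^n}'(r)\). Both \(v\) and \(D\cF^n v\) lie in the small unstable cone by \eqref{eq:cone-invariance}. For vectors in that cone, \(|v_1/v_2|\) lies in \([C_{\mathrm{cone}}^{-1},C_{\mathrm{cone}}]\), so the \(dr\)-component is comparable to the Euclidean norm: \(|u_1|\sim|u|\).

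\textbf{Conclusion.} Using this comparability at both ends and then \eqref{eq:cone-espansion},
\[
|{\rw^n}'(r)| = |(D\cF^n v)_1| \ge C_{\mathrm{cone}}^{-1}\|D\cF^n v\| \ge C_{\mathrm{cone}}^{-1}\, c\,\Lambda_0^n\|v\| \ge c'\Lambda_0^n,
\]
since \(\|v\|\ge 1\). This gives the second claim with \(\Lambda=\Lambda_0\).

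The only real point is to avoid chaining the one-step bounds and instead pass through the cone-wise expansion \eqref{eq:cone-espansion}. One also has to make sure the cone comparability constant stays uniform, which holds because it depends only on \(\kappa_{\min},\kappa_{\max},\tau_{\min}\). Orientation and sign issues are harmless, since only absolute values are involved.
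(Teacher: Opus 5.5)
Your proof is correct. The single-step bound is exactly the paper's computation from \eqref{eq:derivative-FG}, but for the iterates you take a different route.

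You identify \({\rw^n}'(r)\) with the \(dr\)-component of \(D\cF^n(1,\vf_W'(r))\). Vectors in \(\hat\cC^u\) have slope bounded away from \(0\) and \(\infty\), so that component is comparable to the Euclidean norm, and the quoted cone expansion \eqref{eq:cone-espansion} finishes the argument.

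The paper instead stays with the explicit product of one-step derivatives and removes the obstruction you noticed by regrouping denominators. The factor \(1/\cos\vf_k\) produced at step \(k-1\) is paired with the numerator \(\tau_k\kappa_k+\cos\vf_k+\tau_k\vf_{W_k}'\) of step \(k\). Each regrouped factor is then at least \(1+\tau_k\kappa_k\), the leftover first numerator is at least \(\tau_0\kappa_0\), and the leftover \(1/\cos\vf_n\) is at least \(1\). This gives the explicit rate \(\Lambda=1+\tau_{\mathrm{min}}\kappa_{\mathrm{min}}\).

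What each route buys:
\begin{itemize}
\item Your argument is shorter. It makes rigorous the remark before the lemma that \(\rw'\sim\cJ_W\cF\).
\item It is only as self-contained as the cited estimate \eqref{eq:cone-espansion}. The standard proof of that estimate absorbs the initial \(\cos\vf\) through essentially the same first-step bound \(\tau_0\kappa_0/\cos\vf_0\).
\item The paper's version needs only \eqref{eq:differential} and the signs of its entries, and it gives explicit constants.
\end{itemize}
Finally, take the minimum of \(\tau_{\mathrm{min}}\kappa_{\mathrm{min}}\) and your \(c'\), so that one constant \(c\) serves both claims as the statement requires.
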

\begin{proof}
By \eqref{eq:def-map-r-coordinate} and \eqref{eq:differential}, we have
\begin{equation}\label{eq:derivative-FG}
    \rw' (r)= \frac{\partial r_1 }{\partial r}(r,\vf_{W}(r)) + \frac{\partial r_{1}}{\partial \vf} (r,\vf_{W}(r)) \vf_{W}'(r) = -\frac{\tau \kappa + \cos \vf_W + \tau \vf_W '}{\cos \vf_1},
\end{equation}
where all the functions in the last expression are evaluated at \((r, \vf_W(r))\). We have that \(\cos \vf_W \) is positive and, by \eqref{eq:bound-table}, \(\tau\) and \(\kappa\) are strictly positive. Moreover, because \(W\) is an unstable curve, \(\vf_W'\) is positive as well. This proves the first statement. As for the second statement, denoting by \(W_k\) the connected components of \(\cF^k(W)\) containing \((r_k, \vf_k) = x_k = \cF^k(r, \vf_W(r))\) and by \(\tau_k = \tau(x_k)\), \(\kappa_k = \kappa (x_k)\), we have
\[
\begin{split}
    {\rw^n}' &=  \prod_{k=0}^{n-1}-\frac{\tau_k \kappa_k + \cos \vf_k + \tau_k \vf_{W_k}'}{\cos \vf_{k+1}}\\
    &=-\frac{\tau_0 \kappa_0 + \cos \vf_W + \tau \vf_W '}{\cos \vf_{n}} \prod_{k=1}^{n-1}-\frac{\tau_k \kappa_k + \cos \vf_k + \tau \vf_{W_k}'}{\cos \vf_{k}}.
\end{split}
\]
Here \(\vf_{W_k}'\) is the derivative of the graph of \(W_k\) w.r.t.\! \(r_k\). By cone invariance \(\vf_{W_k}' \ge 0\) for all \(k\). Hence, we obtain the lower bound
\[
|{\rw^n}'| \ge \tau_0 \kappa_0 \prod_{k=1}^{n-1}(1 + \tau_k \kappa_k)  \ge \frac{\tau_{\mathrm{min}}\kappa_{\mathrm{min}}}{1 + \tau_{\mathrm{max}}\kappa_{\mathrm{max}}}(1 + \tau_{\mathrm{min}}\kappa_{\mathrm{min}})^{n}.
\]
This concludes the proof of the Lemma with \(\Lambda = 1 + \tau_{\mathrm{min}}\kappa_{\mathrm{min}} >1\).
\end{proof}

The next step is studying the distortion.
\begin{lemma}\label{lem:distortion}
There exists \(C>0\) such that, for every \(r\in I_W\), 
\[
\biggl|\frac {\rw''}{{\rw'}^2} (r) \biggr| \le \frac{C}{\cos \vf_1 (r,\vf_W(r))}.
\]
\end{lemma}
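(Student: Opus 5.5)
The plan is to differentiate the explicit formula \eqref{eq:derivative-FG} once more and bound each resulting term. Write
\[
\rw' = -\frac{N}{\cos \vf_1}, \qquad N(r) := \bigl(\tau\kappa + \cos\vf_W + \tau\vf_W'\bigr)(r,\vf_W(r)),
\]
so that
\[
\rw'' = -\frac{N'}{\cos\vf_1} - \frac{N\sin\vf_1}{\cos^2\vf_1}\,\frac{d\vf_1}{dr}.
\]
Here \(\vf_1\) is understood as the function \(r \mapsto \vf_1(r,\vf_W(r)) = \vf_{\cF(W)}(\rw(r))\). Dividing by \({\rw'}^2 = N^2/\cos^2\vf_1\) gives
\[
\frac{\rw''}{{\rw'}^2} = -\frac{N'\cos\vf_1}{N^2} - \frac{\sin\vf_1}{N}\,\frac{d\vf_1}{dr}.
\]
The proof then reduces to two bounds: the first term should be \(O(1)\), and the second should be \(O(1/\cos\vf_1)\).

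For the first term, I will use \(N \ge \tau_{\mathrm{min}}\kappa_{\mathrm{min}}>0\), which holds by the positivity argument of Lemma \ref{lem:basic-FGu}, and aim to show that \(N'\) is bounded. The factors \(\kappa\) and \(\cos\vf_W\) are uniformly \(\cC^1\) along \(W\), and \(|\vf_W''| \le D\) since \(W \in \cW(D)\). The remaining factor is \(\tau\), and the danger is that \(\frac{d}{dr}\tau(r,\vf_W(r))\) might blow up near grazing. Using \(d\tau = -\sin\vf\,dr + \ldots\), or more precisely the standard formula \(\partial\tau/\partial r = -\sin\vf + \sin\vf_1\cdot(\partial r_1/\partial r)\)-type expressions, one finds that \(d\tau/dr\) along \(W\) contains a term of size \(\sin\vf_1\,|\rw'| \sim 1/\cos\vf_1\). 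So \(N'\) itself may be of order \(1/\cos\vf_1\). But in the quotient it is multiplied by \(\cos\vf_1\), so the first term stays \(O(1)\). I will verify this by writing \(\tau\) as the Euclidean distance between \((\bar x(r),\bar y(r))\) and \((\bar x(r_1),\bar y(r_1))\) as in \eqref{eq:tau-regularity}, and differentiating with the chain rule through \(r_1 = \rw(r)\). This gives \(|d\tau/dr| \lesssim 1 + |\rw'| \lesssim 1/\cos\vf_1\).

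For the second term, I use \(d\vf_1/dr = \vf_{\cF(W)}'(\rw(r))\,\rw'(r)\). By cone invariance \eqref{eq:cone-invariance}, \(\cF(W)\) lies in the small unstable cone, so \(\vf_{\cF(W)}'\) is bounded by \(C_{\mathrm{cone}}\). Combined with \(|\rw'| \le C/\cos\vf_1\), which comes from \eqref{eq:bound-differential} together with boundedness of \(N\), and with \(N\) bounded below, this gives
\[
\Bigl|\frac{\sin\vf_1}{N}\,\frac{d\vf_1}{dr}\Bigr| \le \frac{C}{\cos\vf_1}.
\]
Adding the two bounds, and using \(\cos\vf_1 \le 1\), yields the statement.

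The main obstacle is the careful bookkeeping of \(d\tau/dr\) along \(W\). It is tempting to claim that \(\tau\) is uniformly \(\cC^1\) in \(r\), but that is false near tangential collisions. The key point is that its derivative is only of order \(|\rw'|\), and the prefactor \(\cos\vf_1\) in the first term exactly compensates. I will use the distance representation of \(\tau\) together with the \(\cC^3\) regularity of \(\partial\cD\) to make this rigorous, handling connected components of \(W\setminus\cS_1\) separately.
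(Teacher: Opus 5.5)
Your proof is correct and is essentially the paper's argument. You differentiate \eqref{eq:derivative-FG} once more, use that $\tau$ along $W$ has derivative of order $1/\cos\vf_1$ and that $d\vf_1/dr$ is of order $1/\cos\vf_1$, and compare with ${\rw'}^2$ via $N\ge \tau_{\mathrm{min}}\kappa_{\mathrm{min}}$; the paper packages the same estimates as $|\rw''|\lesssim (\cos\vf_1)^{-3}$ plus the lower bound $|\rw'|\ge c/\cos\vf_1$ from Lemma \ref{lem:basic-FGu}, computing $d\vf_1/dr$ from the entries of \eqref{eq:differential} rather than from cone invariance of $\cF(W)$, and your explicit bound $|d\tau/dr|\le 1+|\rw'|$ from \eqref{eq:tau-regularity} justifies a step the paper only asserts.
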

\begin{proof}
Set \(\bold g(r) = (r, \vf_W(r))\). Differentiating \eqref{eq:derivative-FG} and recalling the expression \eqref{eq:differential} for the differential of \(\cF\), we find
\begin{equation}\label{eq:second-deriv-rw}
    \begin{split}
    \rw'' (r) &= \frac{\bigl(A \circ \bold g(r) + B \circ \bold g(r)\vf_{W}'(r)\bigr)}{(\cos \vf_1\circ \bold g(r))^{2}} (\cos \vf_1\circ \bold g)'(r)  \\
    &-\frac{\bigl((A\circ \bold g)' (r) + (B\circ \bold g)' (r) \vf_{W}'(r) + B\circ \bold g(r) \vf_{W}''(r)\bigr)}{\cos \vf_1\circ \bold g(r)}.
\end{split}
\end{equation}
We also have
\begin{equation}\label{eq:deriv-cos-image}
\begin{split}
(\cos \vf_1\circ \bold g)'(r) &= - (\vf_1 \circ \bold g)' (r) \sin \vf_1\circ \bold g(r)  \\
&= \frac{\sin \vf_1}{\cos \vf_1}\circ \bold g(r) \bigl(C\circ \bold g(r) + D\circ \bold g(r)\vf_W' (r)\bigr).
\end{split}
\end{equation}
By \eqref{eq:second-deriv-rw} and \eqref{eq:deriv-cos-image} and using that both \(|\vf_{W}'|\) and \(|\vf_{W}''|\) are bounded and \((A\circ \bold g)'\) and \((B\circ \bold g)'\) are bounded  by \(\mathrm{const} \times (\tau \circ \bold g)' \sim 1/(\cos\vf_1 \circ \bold g)\), we obtain that \( |\rw'' (r)| \le \mathrm{const} \times (\cos \vf_1 \circ \bold g (r))^{-3}\). This, together with the first part of Lemma \ref{lem:basic-FGu}, proves the statement. 
\end{proof}

Lemma \ref{lem:distortion} is discouraging since it hints at the fact that distortion is unbounded near grazing collisions. Homogeneity strips are a tool to control distortion. 

\begin{definition}
    We call \(W \in \cW\) weakly homogeneous if \(W \subset \bH_k\) for some \(k\). We call \(W\) \(n\)-homogeneous if \(\cF^{-k}(W)\) is weakly homogeneous for \(k \in \{0,...,n-1\}\).
\end{definition}

In practice, homogeneous curves do not ‘feel' the unbounded distortion because they are small enough compared to their distance to the singularities. The following Lemma is essentially an adaptation of \cite[Lemma 5.27]{MR2229799} to our notation. 

\begin{lemma}\label{lem:bounded-distortion}
There exists \(C >0\) such that, for every \(n \in \bN\) and every \(n\)-homogeneous \(W \in \cW\), for all \(x, y \in I_{W}\),
\[
\biggl|\ln \frac{|{\rw^{-n}}' (x)|}{|{\rw^{-n}}' (y)|}\biggr| \le C |x - y|^{\frac 1 3}.
\]
\end{lemma}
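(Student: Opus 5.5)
The plan is to write \(\rw^{-n}\) as a composition of single-step inverse maps and reduce the estimate to a sum of one-step distortion bounds, each controlled by Lemma \ref{lem:distortion} and by homogeneity.

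For \(k \in \{1,\dots,n\}\), set \(W_{-k} = \cF^{-k}(W)\), which is well defined as a single curve because \(W\) is \(n\)-homogeneous and \(\cF^{-n}\) is smooth on \(W\). For \(x,y \in I_W\) let \(x_{-k} = \rw^{-k}(x)\) and \(y_{-k} = \rw^{-k}(y)\). By the chain rule,
\[
\ln |{\rw^{-n}}'(x)| = -\sum_{k=1}^{n} \ln \bigl|r_{1,W_{-k}}'(x_{-k})\bigr|.
\]
The difference between the values at \(x\) and at \(y\) is therefore bounded by \(\sum_{k=1}^n |\ln |r_{1,W_{-k}}'(x_{-k})| - \ln|r_{1,W_{-k}}'(y_{-k})||\). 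By the mean value theorem each summand is at most
\[
\sup_{I_{W_{-k}}} \biggl|\frac{r_{1,W_{-k}}''}{r_{1,W_{-k}}'}\biggr| \, |x_{-k}-y_{-k}|.
\]
Writing \(r_{1,W_{-k}}''/r_{1,W_{-k}}' = (r_{1,W_{-k}}''/{r_{1,W_{-k}}'}^2)\, r_{1,W_{-k}}'\) and applying Lemma \ref{lem:distortion}, this summand is \(\lesssim \cos^{-1}\vf\) evaluated on \(W_{-k+1}\), multiplied by \(|x_{-k+1}-y_{-k+1}|\). Here I use that \(r_{1,W_{-k}}'\,|x_{-k}-y_{-k}|\) is comparable to \(|x_{-k+1}-y_{-k+1}|\) up to distortion along a homogeneous curve, or more cleanly via an integral over \([x_{-k},y_{-k}]\) and a change of variables. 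So each term becomes
\[
\int_{[x_{-k+1},y_{-k+1}]} \frac{C}{\cos\vf_{W_{-k+1}}}\,dr .
\]

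The key step is to bound each such term by \(C|x_{-k+1}-y_{-k+1}|^{1/3}\). Since \(W_{-k+1}\) is weakly homogeneous, it lies in some \(\bH_m\), where \(\cos\vf \sim m^{-2}\). An unstable curve inside \(\bH_m\) has length at most \(\lesssim m^{-3}\) (Lemma \ref{lem:unstable-curves-cosine-k}), so \(|x_{-k+1}-y_{-k+1}| \lesssim m^{-3}\). Hence
\[
\frac{|x_{-k+1}-y_{-k+1}|}{\cos\vf} \lesssim m^{2}\,|x_{-k+1}-y_{-k+1}| \lesssim |x_{-k+1}-y_{-k+1}|^{1/3}.
\]
For \(\bH_0\) the cosine is bounded below, so this bound is immediate. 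The same argument, with the factor \(\cos\vf_1\), handles the image strip.

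The final step is summing over \(k\). Backward iteration contracts along unstable curves: by Lemma \ref{lem:basic-FGu}, \(|x_{-k+1}-y_{-k+1}| \le C\Lambda^{-(k-1)}|x-y|\). Therefore \(\sum_k |x_{-k+1}-y_{-k+1}|^{1/3} \le C\sum_k \Lambda^{-(k-1)/3}|x-y|^{1/3} \le C'|x-y|^{1/3}\), with a constant independent of \(n\).

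I expect the main obstacle to be matching indices: Lemma \ref{lem:distortion} controls distortion through \(\cos\vf_1\) on the image curve, so I must make sure the homogeneity of the image curves \(W_{-k+1}\), for \(k\) from \(1\) to \(n\), is exactly what the definition of \(n\)-homogeneous supplies. That definition covers \(\cF^{-j}(W)\) for \(j \in \{0,\dots,n-1\}\), which are exactly these curves. I also need the chain-rule bookkeeping to be stated without invoking distortion circularly; the integral form above avoids that.
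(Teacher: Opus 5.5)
Your proposal is correct and follows the paper's proof. The paper likewise splits \(\ln|{\rw^{-n}}'|\) by the chain rule, then turns Lemma \ref{lem:distortion} into a bound by \(|\rw^{-j+1}(x)-\rw^{-j+1}(y)|/\min\cos\vf\) on \(\cF^{-j+1}(W)\) (it uses the intermediate value theorem on \(\ln|r_{1,W_j}'\circ r_{1,W_j}^{-1}|\), equivalent to your change of variables), converts this into a \(\tfrac13\)-power via weak homogeneity and Lemma \ref{lem:unstable-curves-cosine-k}, and sums the resulting geometric series using Lemma \ref{lem:basic-FGu}; your index check that the required curves are exactly \(\cF^{-j}(W)\), \(j\in\{0,\dots,n-1\}\), is also right.
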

\begin{proof}
Setting \(W_{j} = \cF^{-j} (W)\) and by the chain rule,
\[
\begin{split}
    \ln \frac{|{\rw^{-n}}' (x)|}{|{\rw^{-n}}' (y)|} = \sum_{j=1}^{n} \ln |r_{1,W_{j}}' \circ \rw^{-j}(y)| - \ln |r_{1, W_{j}}' \circ \rw^{-j}(x)|. 
\end{split}
\]
Moreover, 
\[
\begin{split}
\bigl |\ln | r_{1,W_j}' \circ &\rw^{-j}(y)| - \ln |r'_{1,W_j} \circ \rw^{-j}(x)| \bigr | \\
&=\bigl |\ln |r_{1,W_j}'\circ r_{1,W_{j-1}}^{-1}(\rw^{-j+1}(y))| - \ln |r_{1,W_j}'\circ r_{1,W_{j-1}}^{-1} (\rw^{-j+1}(x))| \bigr|.
\end{split}
\]
By the intermediate value theorem, the quantity above is less than
\[
| \rw^{-j+1}(y) -  \rw^{-j+1}(x)|\max_{I_{W_j}} \biggl|\frac{ r_{1,W_{j}}''}{(r_{1,W_{j}}')^2}\biggr|.
\]
(Note that by \eqref{eq:derivative-FG} the derivative \(r_{1,W}'\) is always strictly negative so that the function the function with the absolute value is still differentiable). Using that \(| \rw^{-j+1}(y) -  \rw^{-j+1}(x)| \le |I_{W_{j-1}}| \le |\cF^{-j+1}(W)|\) and Lemma \ref{lem:distortion}, we can further bound the expression above with
\[
| \rw^{-j+1}(y) -  \rw^{-j+1}(x)|^{\frac 1 3} \frac{|\cF^{-j+1}(W)|^{\frac 2 3}}{\min_{\vf \in \pi_{\vf} (\cF^{-j+1}(W))} \cos \vf} \le C| \rw^{-j+1}(y) -  \rw^{-j+1}(x)|^{\frac 1 3},
\]
for some \(C>0\). In the inequality we used Lemma \ref{lem:unstable-curves-cosine-k} together with the fact that \(\cF^{-j+1}(W)\) is weakly-homogeneous. Finally, by the second part of Lemma \ref{lem:basic-FGu}, for some \(\Lambda>1\),
\[
\begin{split}
     \biggl|  \ln \frac{|{\rw^{-n}}' (x)|}{|{\rw^{-n}}' (y)|}\biggr| &\le  C \sum_{j=1}^{n} | \rw^{-j+1}(y) -  \rw^{-j+1}(x)|^{\frac 1 3} \le C \sum_{j=1}^{n} \Lambda^{\frac{-j+1}{3}}|x-y|^{\frac 1 3} \le C |x-y|^{\frac 1 3}.
\end{split}
\]
This concludes the proof of the Lemma.
\end{proof}

We now consider the evolution of densities supported on unstable curves and prove that densities of some regularity are preserved by the dynamics. We also observe that densities with arbitrarily bad regularity recover exponentially fast. For any \(n\)-homogeneous unstable curve \(W\) and \(\rho: I_{\cF^{-n}(W)} \to \bR^+\), we let \(\cL_W^n \rho : I_W \to \bR^+\) be
\begin{equation}\label{eq:transfer-op}
    \cL_W^n \rho =   \rho \circ \rw^{-n} |{\rw^{-n}}'|.
\end{equation}
This defines a rule of evolution for densities supported on unstable curves. We may drop \(W\) from the notation and write simply \(\cL^n \rho\) when this does not create confusion. Recall \(\cC(\varpi)\) from \eqref{eq:cones-def}. In the following Lemma we prove that \(\cL\) leaves invariant the cone \(\cC(\varpi)\) with \(\varpi\) big enough.

\begin{lemma}\label{lem:distortion-bound-density}
There exist \(Q_1, Q_2 >0 \), \(\Lambda_1 >3\) and \(\bar n \in \bN\) such that, for any \(n \ge \bar n\), \(\rho \in \cC(\varpi)\):
\begin{itemize}
    \item[a)] If \(W\) is \(1\)-homogeneous and \(\rho: I_{\cF^{-1}(W)} \to \bR^+\), \(\cL\rho \in \cC( Q_{2} \varpi + Q_1)\);
    \item[b)] If \(W\) is \(n\)-homogeneous and \(\rho: I_{\cF^{-n}(W)} \to \bR^+\), \(\cL^{n} \rho \in \cC(\Lambda_1^{-1}\varpi + Q_1)\).
\end{itemize}
\end{lemma}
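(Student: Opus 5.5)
We write \(\cL^n\rho = \rho\circ\rw^{-n}\,|{\rw^{-n}}'|\) and estimate the logarithm of the ratio at two points \(x,y\in I_W\), splitting it as
\[
\ln\frac{\cL^n\rho(x)}{\cL^n\rho(y)} = \ln\frac{\rho(\rw^{-n}(x))}{\rho(\rw^{-n}(y))} + \ln\frac{|{\rw^{-n}}'(x)|}{|{\rw^{-n}}'(y)|}.
\]
The second (Jacobian) term is controlled by Lemma \ref{lem:bounded-distortion}: for any \(n\)-homogeneous \(W\) it is at most \(C|x-y|^{1/3}\), with \(C\) independent of \(n\). This constant will be \(Q_1\), possibly enlarged.

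For the first term we use \(\rho\in\cC(\varpi)\), which bounds it by \(\varpi|\rw^{-n}(x)-\rw^{-n}(y)|^{1/3}\). Since \(\rw^{-n}\) is the inverse of \(r^n_{1,\cF^{-n}(W)}\), Lemma \ref{lem:basic-FGu} gives \(|{\rw^{-n}}'|\le c^{-1}\Lambda^{-n}\), and therefore \(|\rw^{-n}(x)-\rw^{-n}(y)|\le c^{-1}\Lambda^{-n}|x-y|\). The first term is then at most \(\varpi\, c^{-1/3}\Lambda^{-n/3}|x-y|^{1/3}\). We choose \(\bar n\) so large that \(c^{-1/3}\Lambda^{-\bar n/3}<1/3\), and set \(\Lambda_1 := c^{1/3}\Lambda^{\bar n/3}>3\). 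This proves b), since \(\Lambda^{-n/3}\le\Lambda^{-\bar n/3}\) for \(n\ge\bar n\).

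For a) we take \(n=1\). In the single-step case Lemma \ref{lem:basic-FGu} only gives \(|r_{1,\cF^{-1}(W)}'|\ge c\) with \(c\) possibly smaller than \(1\), so the contraction rate may fail. The same argument nevertheless yields the factor \(Q_2:=\max\{1,c^{-1/3}\}\), while the distortion term is still bounded by Lemma \ref{lem:bounded-distortion} with \(n=1\).

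Two routine checks remain. First, the normalization \(\int_{I_W}\cL^n\rho=1\) holds by change of variables, since \(\rw^{-n}\) maps \(I_W\) bijectively onto \(I_{\cF^{-n}(W)}\). This requires the preimage curve to be the whole domain of \(\rho\); if it is only a piece, one renormalizes, and a constant factor does not affect the ratio condition. Second, positivity and \(\cC^1\) regularity are inherited because \(\rw^{-n}\) is \(\cC^2\) on homogeneous curves.

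The only delicate point is that the constant in Lemma \ref{lem:bounded-distortion} is uniform in \(n\), which is already provided. The main obstacle is therefore just bookkeeping: making sure that \(\Lambda_1>3\) can be arranged by the choice of \(\bar n\), independently of \(\varpi\).
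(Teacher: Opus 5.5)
Your proposal is correct and follows the paper's proof. You split the log-ratio into a density term, controlled by $\rho\in\cC(\varpi)$ together with the contraction $|{\rw^{-n}}'|\le c^{-1}\Lambda^{-n}$ from Lemma~\ref{lem:basic-FGu}, and a Jacobian term, controlled uniformly in $n$ by Lemma~\ref{lem:bounded-distortion}; part a) then comes from the one-step bound $|\rw'|\ge c$, exactly as in the paper. Your choice $\Lambda_1 = c^{1/3}\Lambda^{\bar n/3}$ is slightly more careful than the paper's text, which bounds the derivative by $\Lambda_1^{-1}$ and then writes $\Lambda_1^{-1}\varpi$ without accounting for the cube root.
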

\begin{proof}
Recalling \eqref{eq:transfer-op} we obtain, for any \(n \in \bN\), \(x,y \in I_{W}\),
    \[
    \frac{\cL^n \rho (x)}{\cL^n \rho (y)} = \frac{\rho \circ \rw^{-n}(x)}{\rho \circ \rw^{-n}(y)}  \frac{|{\rw^{-n}}' (x)|}{|{\rw^{-n}}' (y)|}.
    \]
Let \(\bar n \in \bN\) be such that, according to Lemma \ref{lem:basic-FGu}, it holds \( |{\rw^{-\bar n}}'| \le C \Lambda^{-\bar n} \le \Lambda_1^{-1} < 1/3 \). Let \(W\) be \(n\)-homogeneous and pick \(x,y \in I_{W}\). By Lemma \ref{lem:bounded-distortion} and the equation above, and using that \(\rho \in \cC(\varpi)\), for \(Q_1 \in \bR^+\) big enough and for any \(n \ge \bar n\), 
\begin{equation}\label{eq:invariance-densities-final}
\begin{split}
     \biggl|\ln \frac{\cL^{n} \rho (x)}{\cL^{n} \rho (y)}\biggr| &\le \varpi |\rw^{-n}(x) - \rw^{- n}(y)|^{\frac{1}{3}} + Q_1|x-y|^{\frac{1}{3}}\\
     &\le (\Lambda_{1}^{-1} \varpi + Q_1)|x-y|^{\frac{1}{3}}.
\end{split}
\end{equation}
This concludes the proof of the second part of the Lemma. As for the first part, by the first line of \eqref{eq:invariance-densities-final} in the case \(n = 1\), we have,
\[
\begin{split}
     \biggl|\ln \frac{\cL \rho (x)}{\cL \rho (y)}\biggr| &\le \varpi |\rw^{-1}(x) - \rw^{-1}(y)|^{\frac{1}{3}} + Q_1|x-y|^{\frac{1}{3}}\\
     &\le (Q_2 \varpi + Q_1)|x-y|^{\frac{1}{3}},
\end{split}
\]
where we used the lower bound \(|\rw'| > c\) in Lemma \ref{lem:basic-FGu}. This concludes the proof.
\end{proof}

In order to have good distortion properties, we will always consider the functions \(\cL^n \rho\) restricted to \(n\)-homogeneous unstable curves. This is a consequence of how we define the evolution of standard families in Section \ref{subsec:standard-families}, treating \(\bS\) as real discontinuities.  Let \(\Lambda_1\), \(Q_1, Q_{2}\) and \(\bar n\) be given by Lemma \ref{lem:distortion-bound-density}. Set
\begin{equation}\label{eq:contstant-omega2-regularity}
\varpi_2 = 100(Q_2 + Q_1 +1)^{\bar n}\max\biggl\{\frac{Q_1}{1 - \Lambda_1^{-1}}, \varpi_1\biggr\}.
\end{equation}
As a Corollary of Lemma \ref{lem:distortion-bound-density}, we obtain invariance of regular densities. In the following statement, whose proof follows by a direct computation and is omitted, we let \(W\) be an \(n\)-homogeneous unstable curve and \(\rho\) be a density on \(I_{\cF^{-n}(W)}\).

\begin{corollary}\label{cor:useful-density}
There exists \(\Omega>1\) such that:
    \begin{itemize}
        \item[a)] For any \(\rho \in \cC(\varpi_1)\), \(\cL^n \rho \in \cC(\varpi_2)\) for all \(n \in \bN\);
        \item[b)] For any \(\rho \in \cC(\varpi)\) with \(\varpi>\varpi_2/2\), then \(\cL^n \rho \in \cC(\varpi /2)\) for all \(n \ge \bar n\);
        \item[c)] For any \(\rho \in \cC(\varpi)\), \(\cL^{n} \rho \in \cC(\max\{\Omega\varpi, \varpi_2\})\) for all \(n\in \bN\).
    \end{itemize}
\end{corollary}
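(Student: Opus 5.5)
The plan is to iterate Lemma \ref{lem:distortion-bound-density}, writing \(\cL^n\) as a composition of blocks. Since \(\cL^{n}_W\) is defined through the chain rule \eqref{eq:powers-expanding-base}, we have \(\cL^{m+k}_W = \cL^{m}_{W}\circ\cL^{k}_{\cF^{-m}(W)}\) whenever \(W\) is \((m+k)\)-homogeneous. Each piece \(\cF^{-m}(W)\) is then \(k\)-homogeneous. This lets us apply part a) of Lemma \ref{lem:distortion-bound-density} to single steps and part b) to blocks of length at least \(\bar n\). Throughout, we write \(\varpi\) for the regularity of the input density and track only this parameter.

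\emph{Part c).} For \(n<2\bar n\), apply part a) \(n\) times. Each step sends \(\varpi\) to \(Q_2\varpi+Q_1\le (Q_2+Q_1+1)\max\{\varpi,1\}\), so after \(n\) steps the bound is \((Q_2+Q_1+1)^{2\bar n}\max\{\varpi,1\}\). For \(n\ge 2\bar n\), split \(n=q\bar n+s\) with \(\bar n\le s<2\bar n\) and \(q\ge 1\). Apply part b) once with block length \(s\), then \(q\) times with block length \(\bar n\). The map \(\varpi\mapsto\Lambda_1^{-1}\varpi+Q_1\) satisfies
\[
\Lambda_1^{-1}\varpi+Q_1\le \max\Bigl\{\varpi,\tfrac{Q_1}{1-\Lambda_1^{-1}}\Bigr\},
\]
so it never exceeds \(\max\{\varpi, Q_1/(1-\Lambda_1^{-1})\}\le \max\{\varpi,\varpi_2\}\). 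Hence c) holds with \(\Omega=(Q_2+Q_1+1)^{2\bar n}\), after checking \(\Omega\max\{\varpi,1\}\le\max\{\Omega\varpi,\varpi_2\}\), which uses \(\varpi_2\ge 100\).

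\emph{Part a).} Take \(\varpi=\varpi_1\). For \(n<\bar n\), iterating part a) of the Lemma gives at most \((Q_2+Q_1+1)^{\bar n}\max\{\varpi_1,1\}\). This is \(\le\varpi_2\) by \eqref{eq:contstant-omega2-regularity}, since \(\varpi_2\) carries the factor \(100(Q_2+Q_1+1)^{\bar n}\) and dominates both \(\varpi_1\) and \(1\). For \(n\ge\bar n\), one application of part b) gives \(\Lambda_1^{-1}\varpi_1+Q_1\le\max\{\varpi_1,Q_1/(1-\Lambda_1^{-1})\}\le\varpi_2\) directly.

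\emph{Part b).} Take \(\varpi>\varpi_2/2\) and \(n\ge\bar n\), and apply part b) of the Lemma once with block length \(n\). This yields \(\Lambda_1^{-1}\varpi+Q_1\). Since \(\Lambda_1>3\), we get \(\Lambda_1^{-1}\varpi<\varpi/3\). Also \(Q_1<\varpi_2/100<\varpi/50\). Together these give \(\Lambda_1^{-1}\varpi+Q_1\le\varpi/2\).

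The only delicate point is the semigroup/splitting step. It must be checked that restricting \(n\)-homogeneous curves to intermediate preimages keeps the homogeneity needed for each block. It must also be checked that the constants in Lemma \ref{lem:distortion-bound-density} are uniform in the block length, which holds because they come from Lemma \ref{lem:bounded-distortion}, uniform in \(n\). Once this is settled, everything else is bookkeeping against the explicit choice \eqref{eq:contstant-omega2-regularity}.
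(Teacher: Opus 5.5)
Your approach is the intended one. The paper omits this proof as a direct computation, and the constant \eqref{eq:contstant-omega2-regularity} is built for exactly your iteration of Lemma \ref{lem:distortion-bound-density}. Parts a) and b) go through as you wrote them. Part c), however, has a step that fails as written.

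\textbf{The gap in c).} Write \(K=Q_2+Q_1+1\). For \(\varpi<1\), your final check \(\Omega\max\{\varpi,1\}\le\max\{\Omega\varpi,\varpi_2\}\) amounts to \(\Omega\le\varpi_2\), not to \(\varpi_2\ge 100\). With \(\Omega=K^{2\bar n}\) this requires \(K^{\bar n}\le 100\max\{Q_1/(1-\Lambda_1^{-1}),\varpi_1\}\). That does not follow from \eqref{eq:contstant-omega2-regularity}, since \(\varpi_2\) carries only the factor \(K^{\bar n}\) and \(\varpi_1\) is arbitrary in Section \ref{sec:coupling}. Enlarging \(Q_1\) does not help either, because \(K\) grows with it. Concretely, your one-step iteration on the range \(\bar n\le n<2\bar n\) yields the bound \(K^{n}\) for small \(\varpi\), and this can exceed \(\varpi_2\).

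\textbf{The fix.} The splitting \(n=q\bar n+s\) is unnecessary, as your own part a) shows. Part b) of Lemma \ref{lem:distortion-bound-density} holds for every \(n\ge\bar n\) with uniform constants. One application gives \(\Lambda_1^{-1}\varpi+Q_1\le\max\{\varpi,Q_1/(1-\Lambda_1^{-1})\}\le\max\{\varpi,\varpi_2\}\). Only \(n<\bar n\) then needs the one-step bound. There, do not bound \(Q_2\varpi+Q_1\) by \(K\max\{\varpi,1\}\); keep the affine iterate instead:
\[
Q_2^n\varpi+Q_1\sum_{j=0}^{n-1}Q_2^j\le K^{\bar n}\varpi+Q_1(1+Q_2)^{n-1}\le K^{\bar n}\varpi+\frac{\varpi_2}{100},
\]
using \(\varpi_2\ge 100K^{\bar n}Q_1\). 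This gives c) with \(\Omega=2K^{\bar n}\) for every \(\varpi>0\), with no extra assumption. The same sharper bound also removes the tacit assumption \(\max\{Q_1/(1-\Lambda_1^{-1}),\varpi_1\}\ge 1/100\) in your case \(n<\bar n\) of part a). That assumption is harmless there in any case, since one may take \(Q_1\ge1\).
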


As a byproduct of our coupling argument we will be forced to consider densities that lie in \(\cC(\varpi)\) with \(\varpi\) arbitrarily big. Thanks to part b) of Corollary \ref{cor:useful-density} densities recover exponentially fast. However, in this transient, the estimates for the evolution of \(\cZ\) would be out of control if we were to rely only on the cones \(\cC(\varpi)\). To overcome this problem, we will consider yet another type of cone given by the ratio \(\max \rho /\min \rho\). This cone has the advantage on the one hand to bound the oscillations of the density \(\rho\) and on the other hand to be ‘less demanding' than \(\cC(\varpi)\) so that the bad densities that we will consider have a max/min ratio of order one. We cannot rely on those cones only because there is no recovery. However, they have the important property of being `almost preserved' by the dynamics.

\begin{lemma}\label{lem:funny-cone}
    There exists \( Q_3 >0 \) such that, for any \(n \in \bN\), \(W \in \cW\) \(n\)-homogeneous and \(\rho: I_{\cF^{-n}(W)} \to \bR^+\),
    \[
    \frac{\max_{I_W} \cL^n \rho}{\min_{I_W} \cL^n \rho} \le  Q_3 \frac{\max_{I_{\cF^{-n}(W)}} \rho}{\min_{\cF^{-n}(W)} \rho}.
    \]
\end{lemma}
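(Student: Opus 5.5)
The argument rests on the definition \eqref{eq:transfer-op}, $\cL^n\rho = \rho\circ\rw^{-n}\,|{\rw^{-n}}'|$, together with the uniform distortion bound of Lemma \ref{lem:bounded-distortion}. The ratio of maximum to minimum of a product is at most the product of the ratios, so
\[
\frac{\max_{I_W}\cL^n\rho}{\min_{I_W}\cL^n\rho}\le \frac{\max_{I_W}\rho\circ\rw^{-n}}{\min_{I_W}\rho\circ\rw^{-n}}\cdot\frac{\max_{I_W}|{\rw^{-n}}'|}{\min_{I_W}|{\rw^{-n}}'|}.
\]
Since $\rw^{-n}$ is a bijection from $I_W$ onto $I_{\cF^{-n}(W)}$, the first factor equals $\max_{I_{\cF^{-n}(W)}}\rho/\min_{I_{\cF^{-n}(W)}}\rho$. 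Note that this step uses no regularity of $\rho$ at all, which is exactly why this cone is useful for arbitrarily bad densities. It remains to bound the second factor by a constant $Q_3$ independent of $n$ and $W$.

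For the Jacobian factor, take $x$ and $y$ realizing the maximum and the minimum of $|{\rw^{-n}}'|$ on $I_W$; if extrema are not attained, take near-extremal points and pass to a limit. Because $W$ is $n$-homogeneous, Lemma \ref{lem:bounded-distortion} gives
\[
\biggl|\ln\frac{|{\rw^{-n}}'(x)|}{|{\rw^{-n}}'(y)|}\biggr|\le C|x-y|^{1/3}\le C|I_W|^{1/3}\le C\delta_*^{1/3}\le C.
\]
Here I use $|I_W|\le|W|\le\delta_*\le 1$, as in the proof of Lemma \ref{lem:ubiquous-cone}. Setting $Q_3=e^{C}$ therefore works, with $C$ the constant of Lemma \ref{lem:bounded-distortion}. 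This constant is uniform in $n$ because the sum over $j$ in that lemma was controlled by a geometric series.

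The only points needing care are the ones below, and I expect no real obstacle beyond them.

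\emph{Well-definedness of $\rw^{-n}$.} Lemma \ref{lem:bounded-distortion} requires $\cF^{-n}$ to be smooth on $W$ and every intermediate image to be weakly homogeneous. Both are part of the hypothesis "$n$-homogeneous" and of the setup in which $\cL^n_W$ is defined.

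\emph{The case $n=1$.} If Lemma \ref{lem:bounded-distortion} is read as covering only $n$ large, one can argue directly for $n=1$. Lemma \ref{lem:distortion} combined with Lemma \ref{lem:unstable-curves-cosine-k} applied to the weakly homogeneous curve $W$ bounds the logarithmic oscillation of $|r_{1,W_1}'|$ by $C|I_{W_1}|\,|W_1|^{-2/3}\le C$.
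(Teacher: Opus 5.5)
Your proof is correct and matches the paper's: the max/min ratio of $\cL^n\rho$ is split into the ratio for $\rho$ times the ratio of $|{\rw^{-n}}'|$, and the latter is bounded by $e^{C|I_W|^{1/3}}\le e^{C\delta_*^{1/3}}\le e^{C}=Q_3$ via Lemma \ref{lem:bounded-distortion}. The separate treatment of $n=1$ is unnecessary, since that lemma holds for every $n\in\bN$. As written, your fallback bound $C|I_{W_1}|\,|W_1|^{-2/3}$ is also not the right quantity; the relevant one is $|I_W|/\min_W\cos\vf\lesssim|W|^{1/3}$.
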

\begin{proof}
We have 
\[
\frac{\max_{I_W} \cL^n \rho}{\min_{I_W} \cL^n \rho} = \frac{\max_{I_W} \rho \circ \rw^{-n} |{\rw^{-n}}'|}{\min_{I_W} \rho \circ \rw^{-n} |{\rw^{-n}}'|} \le \frac{\max_{I_{\cF^{-n}(W)}} \rho}{\min_{I_{\cF^{-n}(W)}} \rho} \frac{\max_{I_W} |{\rw^{-n}}'|}{\min_{I_W} |{\rw^{-n}}'|}.
\]
By Lemma \ref{lem:bounded-distortion}, there exists \(C>0\) such that the ratio between the derivatives in the r.h.s.\! is less than \(e^{C |I_W|^{1/3}} \le e^{C |W|^{1/3}} \le e^{C\delta_*^{1/3}} \le e^{C}:= Q_3\). This concludes the proof of the Lemma.
\end{proof}

We conclude this section with a classic for cones. \begin{lemma}\label{lem:classical-cones}
Let \(\rho_1, \rho_2 \in \cC(\varpi)\), \(\varpi >0\), defined on \(I\). Then \(\min\left\{\rho_1, \rho_2\right\} \in \cC(\varpi)\) and, if \(\rho_1 > \rho_2\), then 
\[
\rho_1-\rho_2 \in \cC\biggl(\varpi\biggl(1 + 2 \sup_{I}( \rho_1/\rho_2 -1)^{-1}\biggr)\biggr).
\]
\end{lemma}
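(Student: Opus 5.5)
The plan is to verify the defining ratio inequality of \(\cC(\varpi)\) pointwise for any pair \(x,y\in I\). Both claims concern only the Hölder-type log-ratio condition in \eqref{eq:cones-def}. That condition is invariant under multiplication by positive constants, so normalization is irrelevant: if \(\int_I\) must equal \(1\), we divide by the (positive) integral. I will not address \(\cC^1\) regularity of \(\min\{\rho_1,\rho_2\}\). It is only Lipschitz, so the statement should be read with the cone understood as the ratio condition plus positivity, or equivalently for the normalized function. Throughout, fix \(x,y\in I\) and write \(d=|x-y|^{1/3}\) and \(E=e^{\varpi d}\).

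For the minimum, positivity is immediate. For the ratio, for each \(i\in\{1,2\}\) we have \(\min\{\rho_1,\rho_2\}(x)\le \rho_i(x)\le E\,\rho_i(y)\), using \(\rho_i\in\cC(\varpi)\). Taking the minimum over \(i\) on the right gives \(\min\{\rho_1,\rho_2\}(x)\le E\min\{\rho_1,\rho_2\}(y)\), which is the required bound.

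For the difference, assume \(\rho_1>\rho_2\) and set \(s=\sup_I(\rho_1/\rho_2-1)^{-1}=\sup_I \rho_2/(\rho_1-\rho_2)\), which we may assume finite. Positivity of \(\rho_1-\rho_2\) is given. From the cone conditions, \(\rho_1(x)\le E\rho_1(y)\) and \(\rho_2(x)\ge E^{-1}\rho_2(y)\). Combining these,
\[
\rho_1(x)-\rho_2(x)\le E\bigl(\rho_1(y)-\rho_2(y)\bigr)+(E-E^{-1})\rho_2(y).
\]
Dividing by \(\rho_1(y)-\rho_2(y)>0\) and using \(\rho_2(y)/(\rho_1(y)-\rho_2(y))\le s\) yields
\[
\frac{(\rho_1-\rho_2)(x)}{(\rho_1-\rho_2)(y)}\le E+(E-E^{-1})s .
\]

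The only step needing care is converting this additive bound into the exponential form \(e^{\varpi(1+2s)d}\). Writing \(E-E^{-1}=2\sinh(\varpi d)\), I will use the elementary inequality \(\sinh u=\int_0^u\cosh\le u\cosh u\le u e^{u}\) for \(u\ge0\). This gives \(E-E^{-1}\le 2\varpi d\,E\), hence the ratio is at most \(E(1+2\varpi s d)\le E e^{2\varpi s d}=e^{\varpi(1+2s)|x-y|^{1/3}}\). This is exactly membership in \(\cC\bigl(\varpi(1+2\sup_I(\rho_1/\rho_2-1)^{-1})\bigr)\), again after normalizing.
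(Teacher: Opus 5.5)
Your proposal is correct and follows the paper's proof essentially step for step: the same pointwise bounds \(\rho_1(x)\le e^{\varpi|x-y|^{1/3}}\rho_1(y)\) and \(\rho_2(x)\ge e^{-\varpi|x-y|^{1/3}}\rho_2(y)\), then \(1-e^{-2u}\le 2u\) (which is your \(\sinh u\le u e^{u}\) bound), then \(1+v\le e^{v}\). Your caveat is also accurate: the paper's proof likewise checks only the ratio condition, leaving normalization and the fact that \(\min\{\rho_1,\rho_2\}\) is merely Lipschitz rather than \(\cC^1\) implicit.
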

\begin{proof}
For \(x,y \in I\),
\[
\begin{split}
 \frac{\min\left\{\rho_1(x), \rho_2(x)\right\}}{\min\left\{\rho_1(y), \rho_2(y)\right\}} \le  \frac{\min\{\rho_1(y)e^{\varpi |x-y|^{\frac 1 3}}, \rho_2(y)e^{\varpi |x-y|^{\frac 1 3}}\}}{\min\left\{\rho_1(y), \rho_2(y)\right\}} \le e^{\varpi |x-y|^{\frac 1 3}},
\end{split}
\]
proving the first part. And,
\[
\begin{split}
\frac{\rho_1(x) -\rho_2(x)}{\rho_1(y) - \rho_2(y)} &\le \frac{e^{\varpi |x-y|^{\frac 1 3}}\left(\rho_1(y) - e^{-2\varpi |x-y|^{\frac 1 3}} \rho_2(y)\right)}{\rho_1(y) - \rho_2(y)} \\
&\le e^{\varpi |x-y|^{\frac 1 3}}\left(1 + 2\varpi |x-y|^{\frac 1 3}\frac{\rho_2(y)}{\rho_1(y) - \rho_2(y)} \right) \\
& \le e^{\varpi |x-y|^{\frac 1 3}\left(1 + 2 \sup_{I} \frac{\rho_2}{\rho_1 - \rho_2}\right) }.
\end{split}
\]
This concludes the proof.
\end{proof}

\subsection{Standard families and their evolution}\label{subsec:standard-families}
\textit{In this section we define the evolution of standard families.} 

We find it convenient to define the action of \(\cF\), \(\hat \cF_t\) and \(\cL_t\) on standard families in a canonical way. We consider three possible evolutions for a standard family. The first is induced by the standard billiard map \(\cF\). The second consists of evolving via the map \(\hat{\cF}_t\) which acts as \(\cF\) but has the additional discontinuities \(\Xi_{n,t}\) due to the boundary of the hole. The third evolution \(\cL_t\) is given by the leaky system and we renormalize by the surviving mass. We slightly abuse the notation by using the same symbol \(\cL_t\) for both the conditional evolution of measures \eqref{eq:iterate-transf-op-hole} and of standard families. To make this precise, we first consider a single standard pair \(\ell = (W, \rho)\). Let \(\{W_{j}\}\) be the connected components of \(W \setminus \cS_n^{\bH}\), \(\{\hat{W}_j\}\) the connected components of \(W \setminus \cS_{n,t}^{\bH}\) and \(\{V_{j}\}\) the connected components of \((W  \setminus \cS_{n,t}^{\bH}) \cap \cM_t^n\). Note that the \(n^{th}\) images of all these curves are \(n\)-homogeneous. Set 
\begin{equation}\label{eq:density-weights-evolution}
\begin{split}
&\rho_{A,j} = \frac{\cL^n (\rho \Id_{I_{W_j}})}{\int_{I_{W_j}} \rho}, \quad p_{A,j} = \int_{I_{W_j}}  \rho, \quad \rho_{B,j} = \frac{\cL^n (\rho \Id_{I_{\hat W_j}})}{\int_{I_{\hat W_j}} \rho}, \quad p_{B,j} = \int_{I_{\hat W_j}}  \rho,\\
&\hspace{3cm}\rho_{C,j} = \frac{\cL^n (\rho \Id_{I_{V_j}})}{\int_{I_{V_j}} \rho}, \quad p_{C,j} =\frac{ \int_{I_{V_j}}  \rho}{ \mu_{\ell}(\cM_t^n)}.
\end{split}
\end{equation}
These are the densities and weights of the evolved standard pairs. More precisely, we set \(\cF^n \ell = \{(p_{A,j}, \cF^n (W_j), \rho_{A,j})\}_j\), \(\hat \cF_t^n \ell = \{(p_{B,j}, \cF^n (\hat W_j), \rho_{B,j})\}_j\) and \(\cL_t^n \ell = \{(p_{C,j}, \cF^n (V_j), \rho_{C,j})\}_j\).
Note the total remaining mass in the denominator for the weights of \(\cL_t^n \ell\), which makes \(\mu_{\cL_t^n \ell}\) a probability measure. It is possible that \(|\cF^n(W_j)| \ge \delta_{*}\) for some \(W_{j}\) (or \(\hat W_{j}\), \(V_{j}\)). In this case, we partition \(\cF^n(W_{j})\) into at most \(\lfloor |\cF^n(W_{j})|/\delta_{*} \rfloor + 1\) pieces which become the defining unstable curves in \(\cF^n \ell\). In this case, the weights \(p_{A,j}\) are the integral of \(\rho\) over the \(n^{\text{th}}\)-preimage of these new pieces. Finally, given a standard family \(\cG = \{(p_k, W_k, \rho_k)\}\), we set \(\cF^n \cG = \{(p_k p_{A,j,k}, \cF^n(W_{k,j}), \rho_{A,k,j})\}\) where \(p_{A,j,k}\), \(W_{k,j}\) and \(\rho_{A,k,j}\) are defined as above, only relatively to the standard pair \(\ell_k = (W_k, \rho_k)\) in place of \(\ell = (W, \rho)\). The standard families \(\hat \cF_t^n \cG\) and \(\cL_t^n \cG\) are defined analogously with the only caveat that \(p_{C,j,k}\) need to be divided by the surviving mass \(\mu_{\cG}(\cM_t^n)\) and not just the mass lost by the standard pair \(\ell_k = (V_k, \rho_k)\). (This happens because \(\cL_t\) is not a linear operator). The next Lemma clarifies the raison d’être of these definitions.
\begin{lemma}\label{lem:push-forward-evolution}
    For any \(n \in \bN\) and standard family \(\cG\), \(\phi \in \cC^0 (\cM)\) and \(t \in [0,t_0]\),
    \[
   (\cL_{t}^n\mu_{\cG})(\phi) = \mu_{\cL_t^n \cG}(\phi) \quad \text{and} \quad \mu_{\cG}(\phi \circ \cF^n) = \mu_{\cF^n\cG}(\phi) = \mu_{\hat\cF_t^n\cG}(\phi).
    \]
\end{lemma}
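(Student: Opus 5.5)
The plan is to reduce everything to a single standard pair \(\ell=(W,\rho)\) via the change of variables \(r_1=\rw^n(r)\) on each smooth piece, and then sum over the pairs of \(\cG\) using linearity in the weights.

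\emph{Step 1: the identity \(\mu_{\cG}(\phi\circ\cF^n)=\mu_{\cF^n\cG}(\phi)\).} Take the components \(\{W_j\}\) of \(W\setminus\cS_n^{\bH}\). The set \(\cS_n^{\bH}\) consists of countably many curves, each transversal to \(W\), so the removed set \(W\cap\cS_n^{\bH}\) has zero Lebesgue measure in \(I_W\). Hence
\[
\mu_\ell(\phi\circ\cF^n)=\sum_j\int_{I_{W_j}}\rho(r)\,\phi\bigl(\cF^n(r,\vf_W(r))\bigr)\,dr .
\]
On each \(W_j\) the map \(\cF^n\) is smooth and \(\rw^n\colon I_{W_j}\to I_{\cF^n(W_j)}\) is a diffeomorphism. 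Its derivative never vanishes by Lemma \ref{lem:basic-FGu}; it is monotone, possibly orientation reversing, and the absolute value of the Jacobian absorbs this. Moreover \(\cF^n(r,\vf_W(r))=(\rw^n(r),\vf_{\cF^n(W_j)}(\rw^n(r)))\). Substituting \(r=\rw^{-n}(r_1)\) gives
\[
\int_{I_{\cF^n(W_j)}}\rho\circ\rw^{-n}\,|{\rw^{-n}}'|\;\phi\bigl(r_1,\vf_{\cF^n(W_j)}(r_1)\bigr)\,dr_1 ,
\]
and the integrand density is \(\cL^n(\rho\Id_{I_{W_j}})=p_{A,j}\,\rho_{A,j}\) by \eqref{eq:transfer-op} and \eqref{eq:density-weights-evolution}. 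So this term equals \(p_{A,j}\,\mu_{(\cF^n(W_j),\rho_{A,j})}(\phi)\). If \(\cF^n(W_j)\) is cut into pieces of length at most \(\delta_*\), the integral just splits additively, and the weights become integrals of \(\rho\) over the preimages of the pieces, exactly as in the definition. Summing over \(j\) and then over the pairs of \(\cG\) with weights \(p_k\) gives the identity. The case of \(\hat\cF_t^n\cG\) is identical, using the finer partition by \(\cS_{n,t}^{\bH}\). Its extra cuts \(\Xi_{n,t}\) are again finitely many curves transversal to \(W\), hence of zero length-measure on \(W\).

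\emph{Step 2: the identity \((\cL_t^n\mu_{\cG})(\phi)=\mu_{\cL_t^n\cG}(\phi)\).} By \eqref{eq:iterate-transf-op-hole}, this amounts to computing \(\mu_\ell(\phi\circ\cF^n\Id_{\cM_t^n})\). By \eqref{eq:discontinuity-survival}, \(\partial\cM_t^n\subseteq\cS_{n,t}\), so \(\Id_{\cM_t^n}\) is constant on each component \(\hat W_j\). The components on which it equals \(1\) are exactly the \(V_j\). Running the computation of Step 1 only over the \(V_j\) gives
\[
\mu_\ell(\phi\circ\cF^n\Id_{\cM_t^n})=\sum_j\Bigl(\int_{I_{V_j}}\rho\Bigr)\,\mu_{(\cF^n(V_j),\rho_{C,j})}(\phi).
\]
Now multiply by \(p_k\), sum over \(k\), and divide by \(\mu_{\cG}(\cM_t^n)=\sum_k p_k\,\mu_{\ell_k}(\cM_t^n)\). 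This yields precisely the weights \(p_kp_{C,j,k}\) normalized by the global surviving mass, which is the definition of \(\cL_t^n\cG\). For this to be well defined one needs \(\mu_{\cG}(\cM_t^n)>0\). This is implicit in the statement and is guaranteed for \(t\le t_0\) by Lemma \ref{lem:lower-bound-survival} for regular families.

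\emph{Main obstacle.} The delicate point is purely measure-theoretic: justifying that the countably many cuts coming from homogeneity strips, singularities and hole preimages form a null set on each unstable curve, and that the component sums converge. The null-set claim should follow from the fact that each cut curve meets \(W\) in a single point (transversality, uniform cone bounds), so the cuts form a countable set of points in \(I_W\). Convergence of the component sums then follows by monotone convergence, since \(\rho\ge0\) and \(\phi\) is bounded.
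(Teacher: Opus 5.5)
Your proposal is correct and is essentially the paper's argument: a change of variables \(r \mapsto r^n_{1,W_j}(r)\) on each smooth component, then reassembling the weights with the global surviving mass \(\mu_{\cG}(\cM_t^n)\) as normalization. The only differences are minor. You prove the closed-system identities directly, whereas the paper deduces them from the leaky identity at \(t=0\) together with the remark that \(\hat\cF_t\) and \(\cF\) push measures forward identically. Also, drop the aside citing Lemma \ref{lem:lower-bound-survival} for \(\mu_{\cG}(\cM_t^n)>0\): that lemma rests on Proposition \ref{prop:invariance-standard-families}, whose proof uses the present lemma, so the identity should simply be read as holding whenever this denominator is positive.
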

\begin{proof}
Let \(\cG = \{(p_k, \ell_k)\}\) and \(\ell_k = (W_k, \rho_k)\). Recalling that \(W_k \cap \cM_t^n = \cup_j V_{k,j}\),
\[
\begin{split}
    \mu_{\ell_{k}} (\phi \circ \cF^n  \Id_{\cM_t^n}) = \sum_{j} \int_{I_{V_{k,j}}} \phi \circ \cF^n (r, \vf_{V_{k,j}}(r)) \rho_k(r) dr.
\end{split}
\]
By the change of variable \(r \to r'\) specified by \(\cF^n (r, \vf_{V_{k,j}}(r)) = (r', \vf_{\cF^n(V_{k,j})}(r'))\), we obtain
\begin{equation}\label{eq:single-sp-push-forward}
\begin{split}
       \mu_{\ell_{k}} (\phi \circ &\cF^n  \Id_{\cM_t^n}) = \sum_{j} \int_{I_{\cF^n(V_{k,j})}} \phi(r', \vf_{\cF^n(V_{k,j})}(r') ) \cL^n \rho_k (r') dr'\\
       &=\sum_{j} \int_{I_{V_{k,j}}} \rho_k\int_{I_{\cF^n(V_{k,j})}} \phi(r', \vf_{\cF^n(V_{k,j})}(r')) \frac{\cL^n \rho_k (r') dr'}{\int_{I_{\cF^n(V_{k,j})}} \cL^n \rho_k},
\end{split}
\end{equation}
where in the equality we divided and multiplied by \(\int_{I_{V_{k,j}}} \rho_k = \int_{I_{\cF^n(V_{k,j})}} \cL^n \rho_k\). By \eqref{eq:single-sp-push-forward}, and recalling \eqref{eq:density-weights-evolution} and the definitions given afterward,
\[
(\cL_{t}^n \mu_{\cG})(\phi) = \frac{\mu_{\cG}(\phi \circ \cF^n  \Id_{\cM_t^n})}{\mu_{\cG}(\cM_t^n)} = \frac{\sum_{k} p_k \mu_{\ell_{k}} (\phi \circ \cF^n  \Id_{\cM_t^n})}{\mu_{\cG}(\cM_t^n)} = \mu_{\cL_t^n \cG}(\phi).
\]
This proves the first part of the statement. The fact that \(\mu_{\cG}(\phi \circ \cF^n) = \mu_{\cF^n\cG}(\phi)\) follows from the first equation in the statement in the case \(t =0\) and the fact that \(\mu_{\cF^n\cG}(\phi) = \mu_{\hat\cF_t^n\cG}(\phi)\) follows from the fact that the maps \(\hat \cF_t\) and \(\cF\) act in the same way at the level of measures.
\end{proof}

\subsection{Invariance}\label{subsec:invariance}\textit{In this section we establish a Growth Lemma: this is an important tool that guarantees that expansion of unstable curves beats their fragmentation. We prove a Growth Lemma for \(\hat \cF_t\) and a weaker version for \(\cL_t\). An important difference between the two statements is that, for the dynamics with the hole, we can show invariance but we cannot guarantee that arbitrarily small unstable curves become regular families after enough iterations. The physical interpretation is clear: a small piece could just end up in the hole before recovering. We use these results to prove Proposition \ref{prop:invariance-standard-families} about invariance of standard families.}

Recall the definition of the boundary \(\cZ\) from equation \eqref{eq:boundary-def}. The following simple Lemma clarifies in which sense \(\cZ\) determines the proportion of long unstable curves in a family.
\begin{lemma}\label{lem:meaning-cZ}
For any \(\zeta >0\) and standard family \(\cG = \{(p_j, \ell_j)\}\), we have
    \[
    \sum_{j: |W_j| \le \zeta } p_j \le   \cZ(\cG)\zeta .
    \]
\end{lemma}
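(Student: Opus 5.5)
This is a Markov/Chebyshev-type bound, and I would prove it directly from the definition \eqref{eq:boundary-def} of \(\cZ\).

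The key observation is that every index \(j\) with \(|W_j|\le\zeta\) satisfies \(1\le \zeta/|W_j|\). Multiplying by \(p_j\ge 0\) gives \(p_j \le \zeta\, p_j/|W_j|\) for each such \(j\). Summing over these indices and then enlarging the sum to all \(j\in\cJ\), which is allowed because every term \(p_j/|W_j|\) is nonnegative, I would obtain
\[
\sum_{j:|W_j|\le\zeta} p_j \le \zeta \sum_{j:|W_j|\le\zeta}\frac{p_j}{|W_j|}\le \zeta\sum_{j}\frac{p_j}{|W_j|} = \zeta\,\cZ(\cG).
\]

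Two technical points need a word. First, the family may be countable. All the series involved have nonnegative terms, so rearranging them and restricting to subsets is legitimate. If \(\cZ(\cG)=\infty\), the bound holds trivially. Second, one might worry about curves of zero length. Standard pairs carry densities with \(\int_{I_W}\rho=1\), so \(|I_W|>0\) and hence \(|W_j|>0\); every term in \(\cZ\) is therefore well defined.

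I do not expect any real obstacle: the argument is a single comparison inequality followed by summation.
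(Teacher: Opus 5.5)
Your proof is correct and follows the paper's argument: on the indices with $|W_j|\le\zeta$ you bound $p_j\le \zeta\, p_j/|W_j|$ and then sum. Your explicit step of enlarging the partial sum to the full series defining $\cZ(\cG)$ is slightly more careful than the paper, which writes that last step as an equality.
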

\begin{proof}
We have
    \[
  \sum_{j: |W_j| \le \zeta } p_j \le \sum_{j: |W_j| \le \zeta } p_j \frac{\zeta}{|W_j|}   =  \zeta \cZ(\cG).
    \]
\end{proof}
We now introduce a result from \cite{MR3213499}. For an unstable vector \((dr, d\vf) \in \hat \cC^u\), set
\begin{equation}\label{eq:adapted-norm}
\|(dr,d\vf)\|_{*} = \biggl(\kappa + \biggl|\frac{d\vf}{dr}\biggr|\biggr) \frac{\sqrt{dr^2 + d\vf^2}}{\sqrt{1 + \biggl(\frac{d\vf}{dr}\biggr)^2}}.
\end{equation}
By \cite[Exercise~5.55]{MR2229799}, \(\|\cdot\|_{*}\) is an adapted norm for the billiard map. Moreover, since \(C_{\mathrm{cone}}^{-1} \le |d\vf/dr| \le C_{\mathrm{cone}}\) for vectors in the unstable cone, the metric \eqref{eq:adapted-norm} is equivalent to the Euclidean metric in measuring the length of unstable curves. Recall the notation \(\hat W_j\) for the connected components of \(W \setminus \cS_{n,t}^{\bH}\) of an unstable curve \(W\) and denote by \(\hat{W}^n_j = \cF^n(\hat W_j) = \cF_t^n (\hat W_{j})\). Let also  \(|J_{\hat W_j}  \cF^{-n}|_{*}\) denote the smallest contraction on \(\hat W^n_j\) under \(\cF^{-n}\) in the metric induced by the adapted norm \eqref{eq:adapted-norm}. 
\begin{lemma}[{\cite[Lemma~8.4]{MR3213499}}]\label{lem:n-step-contraction-hole-D}
There exists \(\theta_{*} < 1\), \(B_0 >0 \) and a sequence \(\delta_n \searrow 0\) such that
\[
\sup_{W \in \cW, \text{ } |W| \le \delta_n} \sum_{j} |J_{\hat W^n_j}  \cF^{-n}|_{*} \le (1 + n(B_0 -1)) \theta_{*}^n.
\]
\end{lemma}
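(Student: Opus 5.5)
This is the one-step expansion estimate of \cite[Lemma~8.4]{MR3213499}. To reprove it I would follow the standard route of the Growth Lemma for billiards (\cite[Lemma~5.56]{MR2229799}), treating the hole boundary \(\partial\cH_t\) as two extra ``fake corners''. This is the same viewpoint used after Lemma \ref{lem:continuation-of-singularities}.

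\textbf{One-step bound.} In the adapted norm \eqref{eq:adapted-norm}, every unstable vector is expanded by at least \(\Lambda_* := 1+\tau_{\min}\kappa_{\min}\), as in the proof of Lemma \ref{lem:basic-FGu}. A vector landing in \(\bH_k\) is expanded by at least a factor \(\sim k^2\), since \(\cos\vf_1\sim k^{-2}\). Now take a short unstable \(W\) and cut it by \(\cS_1^{\bH}\) and by \(\partial\cH_t\). Two facts control the pieces. First, by bounded horizon and the absence of corners, a short curve crosses only boundedly many smooth components of \(\cS_1\setminus\cS_0\). Second, the two vertical lines \(\partial\cH_t\) are transversal to \(\hat\cC^u\), so each adds at most one cut. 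This gives
\[
\sum_j |J_{\hat W^1_j}\cF^{-1}|_{*} \le \frac{K}{\Lambda_*} + \sum_{k\ge k_0} \frac{C}{k^2} \le \theta_*,
\]
once \(k_0\) is large and \(|W|\le\delta_1\) is small. Here \(K\) counts the non-homogeneity cuts, including the two from the hole. The point is that \(\theta_*<1\) requires the one-step expansion of the relevant iterate to beat the number of cuts. If \(K/\Lambda_*\) is not small, I would replace \(\cF\) by \(\cF^{m}\) for a fixed \(m\), using the complexity bound of \cite[Lemma~5.56]{MR2229799}: the number of smooth components of \(\cS_m\) meeting at a point grows only linearly in \(m\), while expansion is exponential. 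The hole contributes at most two extra cuts per iterate.

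\textbf{Iteration.} For \(n\) steps, I would shrink \(\delta_n\) so that every intermediate image \(\cF^k(\hat W_j)\), \(k<n\), is still shorter than \(\delta_1\). This is possible by Lemma \ref{lem:upp-bound-stretch-curves}, which gives \(|\cF(W)|\le C\sqrt{|W|}\) uniformly in \(t\), so \(\delta_n\approx\delta_1^{2^n}\) works. The sums are then submultiplicative along the tree of pieces, since each piece at step \(k\) is a short unstable curve to which the one-step bound applies.

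\textbf{Main obstacle.} The difficulty is the factor \((1+n(B_0-1))\) rather than a clean \(\theta_*^n\). This factor records that the hole boundaries \(\cF^{-k}(\partial\cH_t)\) need not coincide with the billiard singularities. A short curve can therefore be cut by a new hole preimage at each step, and these extra cuts are not absorbed by the one-step estimate. I would handle this by splitting the pieces at each step into those cut only by \(\cS^{\bH}\), which contract by \(\theta_*\), and those adjacent to \(\cF^{-k}(\partial\cH_t)\). For the latter I would use the fact that \(\partial\cH_t\) consists of only two lines. Along any branch, a curve of length \(\le\delta_n\) meets the lines \(\cF^{-k}(\partial\cH_t)\), \(k\le n\), transversally at most a bounded number of times in total. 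This should hold because short curves cannot accumulate many distinct preimages of two fixed lines before growing; equivalently, this is the continuation property of Lemma \ref{lem:continuation-of-singularities}. The extra pieces then contribute at most \(B_0-1\) additional terms per time step, summing to the linear prefactor. Making this counting rigorous uniformly in \(t\) is the delicate part, and there I would defer to the argument of \cite{MR3213499}.
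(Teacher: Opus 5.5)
There is a genuine gap. The factor $(1+n(B_0-1))$ is never actually derived, and the scheme you build around it does not work as written.

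First, your one-step inequality $\sum_j|J_{\hat W^1_j}\cF^{-1}|_*\le K/\Lambda_*+\sum_{k\ge k_0}Ck^{-2}\le\theta_*$, with $K$ including the hole cuts, fails in general. A regular branch can be cut by $\partial\cH_t$ and $\cF^{-1}(\partial\cH_t)$ into several pieces. Each piece is expanded only by about $\Lambda_*=1+\tau_{\mathrm{min}}\kappa_{\mathrm{min}}$, which need not exceed $3$. This is why the statement carries a prefactor: $\theta_*$ is the one-step constant of the \emph{closed} billiard, and the hole is kept out of it. (For finite horizon without corners, the closed estimate already holds for $\cF$ itself. All branches of $W\setminus\cS_1$ except the one passing every tangent scatterer land in $\bH_k$ with $k$ large, so the bound is $\Lambda_*^{-1}+C\sum_{k\ge k_0}k^{-2}$, not $K/\Lambda_*$.) Your text is also inconsistent here. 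If the hole cuts were absorbed into $\theta_*$, iterating would give a clean $\theta_*^n$ and no linear factor would be needed.

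Second, your last paragraph claims that a curve of length $\le\delta_n$ meets $\bigcup_{k\le n}\cF^{-k}(\partial\cH_t)$ a \emph{bounded} number of times. Read as uniform in $n$, this is false: the number of distinct cut points can grow linearly in $n$, which is exactly what the prefactor records. Read as $n$-dependent, it is too weak to give the prefactor. Lemma \ref{lem:continuation-of-singularities} says nothing about such counts, and the step is ultimately deferred to the reference.

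The missing idea is short. Let $\{W_i\}$ be the components of $W\setminus\cS^{\bH}_n$, i.e.\ closed-billiard cuts only. Since $\cF^n$ is smooth on $W_i$ and $|W|\le\delta_n$, each $\cF^k(W_i)$, $0\le k\le n$, is a single short connected unstable curve. It is therefore a graph over $r$, so it meets each of the two vertical lines of $\partial\cH_t$ at most once. Thus $W_i\cap\Xi_{n,t}$ has at most $2(n+1)$ points, uniformly in $t$, and $W_i$ splits into at most $2n+3$ of the $\hat W_j$. Hole cuts do not change the map, so $|J_{\hat W^n_j}\cF^{-n}|_*\le|J_{W^n_i}\cF^{-n}|_*$ whenever $\hat W_j\subseteq W_i$. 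Hence
\[
\sum_j|J_{\hat W^n_j}\cF^{-n}|_*\le(2n+3)\sum_i|J_{W^n_i}\cF^{-n}|_*\le(2n+3)\theta_*^n .
\]
The last step is your iteration of the closed one-step bound with $\delta_n$ chosen via Lemma \ref{lem:upp-bound-stretch-curves}; that part of your proposal is fine.

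This gives the statement with $B_0-1=2$ hole cuts per time step, up to bookkeeping of the endpoint times (e.g.\ $B_0=5$ covers $2n+3\le 1+4n$ for $n\ge 1$). Your $\cF^m$ fallback could be repaired only by proving this same per-branch count inside each block, and once you have that count the direct argument is already complete.
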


Here, the result is stated for powers \(\cF^{-n}\) of the inverse map and unstable curves, while the original statement in \cite{MR3213499} has \(\cF^n\) in place of \(\cF^{-n}\) and stable curves in place of unstable curves. The two statements are equivalent due to the time reversibility of the billiard. Moreover \(\theta_{*}\) is characterized as the one-step contraction of \(\cZ\) and \(B_0\) as the number of pieces in which an unstable (stable in the original reference) curve can be cut by \(\partial \cH_t\). In our case \(B_0 = 3\) but we won't need to be that precise. Fix some \(\gamma \in (0,1)\).
\begin{lemma}[Growth Lemma]\label{lem:invariance1}
There exist \(n_* \in \bN\), \(Z_0 >0\) such that the following is true. Let \(\cG = \{(p_k, W_k, \rho_k)\}\) be a standard family such that, for every \(k\), \(\max \rho_{k} /\min \rho_{k} \le \funny\). Then, for all \(t \in [0,t_0]\) and \(p \in \bN\),
\[
\cZ(\hat\cF^{(p+1)n_*}_{t} \cG) \le \gamma \cZ(\hat\cF^{pn_*}_{t}\cG) + Z_0.
\]
Moreover, there exists \(Z_1 >1\) such that, for all \(p \in \bN\) and \(t \in [0,t_0]\),
\[
\cZ(\hat \cF_t^{p+1} \cG) \le Z_1 \cZ(\hat\cF_t^p\cG).
\]
\end{lemma}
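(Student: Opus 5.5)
The plan is to reduce the statement to a single standard pair and then average. Since \(\cZ\) is linear in the weights, and \(\hat\cF_t^n\) acts on a family pair by pair without renormalization, the mass \(p_k\) of \(\ell_k\) is redistributed among its images without loss. Hence it suffices to prove, for a standard pair \(\ell=(W,\rho)\) with \(\max\rho/\min\rho\le \funny\),
\[
\cZ(\hat\cF_t^{n_*}\ell)\le \frac{\gamma}{|W|}+Z_0 ,
\]
and to check that the hypothesis on \(\max\rho/\min\rho\) is inherited, up to a constant, by every piece of \(\hat\cF_t^{pn_*}\cG\). Inheritance should come from Lemma \ref{lem:funny-cone}: pieces are \(n\)-homogeneous, so the max/min ratio grows at most by \(Q_3\). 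To avoid this factor accumulating over \(p\), I would work directly with \(\rho_{B,j}\) written in terms of \(\cL^{n}\) of the \emph{original} density. This means writing \(\hat\cF_t^{(p+1)n_*}\cG\) as \(\hat\cF_t^{n_*}\) applied to \(\hat\cF_t^{pn_*}\cG\), where each piece's density has ratio at most \(Q_3\funny\). That is still a fixed constant, which is all the one-pair estimate below needs.

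\emph{One-pair estimate.} Write \(\hat W_j\) for the components of \(W\setminus\cS^{\bH}_{n,t}\) and \(\hat W^n_j=\cF^n(\hat W_j)\), with the extra cutting into pieces of length \(\le\delta_*\). Then
\[
\cZ(\hat\cF_t^n\ell)=\sum_j \frac{\int_{I_{\hat W_j}}\rho}{|\hat W^n_j|}.
\]
Using Lemma \ref{lem:ubiquous-cone} in its max/min form, \(\rho\le \funny\,C_{\mathrm{cone}}/|W|\). We have \(\int_{I_{\hat W_j}}\rho\lesssim |\hat W_j|/|W|\), and by bounded distortion (Lemma \ref{lem:bounded-distortion}) together with equivalence of \(\|\cdot\|_*\) with the Euclidean length, \(|\hat W_j|/|\hat W^n_j|\lesssim |J_{\hat W^n_j}\cF^{-n}|_*\). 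This gives
\[
\cZ(\hat\cF_t^n\ell)\le \frac{C}{|W|}\sum_j |J_{\hat W^n_j}\cF^{-n}|_*
\]
plus the contribution from the artificial cuts at length \(\delta_*\). The artificial pieces have length \(\ge\delta_*/2\), so they contribute at most \(2/\delta_*\).

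\emph{Choice of \(n_*\) and case split.} Choose \(n_*\) so that \(C(1+n_*(B_0-1))\theta_*^{n_*}\le\gamma\).
\begin{itemize}
\item If \(|W|\le\delta_{n_*}\), Lemma \ref{lem:n-step-contraction-hole-D} yields \(\cZ(\hat\cF^{n_*}_t\ell)\le\gamma/|W|+2/\delta_*\).
\item If \(|W|>\delta_{n_*}\), I would subdivide \(W\) artificially into pieces of length between \(\delta_{n_*}/2\) and \(\delta_{n_*}\); this only changes bookkeeping, not the measure. Applying the small case to each piece, with the ratio hypothesis still holding, gives a bound \(\le C\gamma/\delta_{n_*}+2/\delta_*\), which is absorbed into \(Z_0\).
\end{itemize}
Averaging over the pairs with weights \(p_k\) gives the first inequality. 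The second inequality, with a single step, follows in the same way from the one-step version of Lemma \ref{lem:n-step-contraction-hole-D}: its \(n=1\) case with a bounded constant \(B_0\theta_*\), plus a bounded number of cuts for long curves, gives \(\cZ(\hat\cF_t\ell)\le C/|W|+C\le Z_1/|W|\), since \(|W|\le\delta_*\le1\).

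\emph{Main obstacle.} I expect the delicate point to be the uniformity in \(t\) of Lemma \ref{lem:n-step-contraction-hole-D} and of distortion when the hole boundary \(\partial\cH_t\) cuts curves. The extra cuts \(\Xi_{n,t}\) are accounted for by \(B_0\), but I must ensure that the pieces are still \(n\)-homogeneous so that distortion applies. This holds since \(\cS^{\bH}_{n,t}\) includes the homogeneity lines. A second delicate point is the passage from the Euclidean Jacobian to \(\rw'\) and the adapted norm, which costs only cone constants.
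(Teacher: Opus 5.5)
Your proposal is correct and follows essentially the paper's route. Lemma \ref{lem:funny-cone}, applied to the original density, keeps every ratio below $\funny Q_3$; the ratio bound turns $\int_{I_{\hat W_j}}\rho$ into a length fraction so that $\cZ$ is controlled by $|W|^{-1}\sum_j|J_{\hat W^{n}_j}\cF^{-n}|_*$; Lemma \ref{lem:n-step-contraction-hole-D} with $n_*$ large beats the constant; and the case $n=1$ gives $Z_1$.

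The differences are cosmetic. The paper avoids your case split $|W|>\delta_{n_*}$ by fixing $\delta_*=\min\{\delta_{n_*},1\}$ only after $n_*$ is chosen. It also handles the chopping through the max/min bound on the image density, which allows any partition into at most $\lfloor|\hat W^{n_*}_{k,j}|/\delta_*\rfloor+1$ pieces, rather than requiring pieces of length at least $\delta_*/2$.
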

\begin{proof}
Because of the assumption on the max-min ratio of the initial densities and by Lemma \ref{lem:funny-cone}, the standard families \(\hat \cF_t^p\cG\) support densities \(\rho'_j\) with \(\max\rho'_j/\min \rho'_j \le \funny Q_3\) for all \(p\in \bN\). (Recall that by definition the unstable curves in \(\hat \cF_t^p\cG\) are \(p\)-homogeneous). Hence, without loss of generality, we prove the Lemma in the case \(p=0\) assuming that
\[
\max\rho_k/\min \rho_k \le \funny Q_3
\]
for all \(k\). For any \(W_k\) denote by \(\{\hat{W}_{k,j}\}\) the connected components of \(W_k \setminus \cS_{n,t}^{\bH}\) and by \(\hat W_{k,j}^n = \cF^n (\hat W_{k,j})\). Call \(\tilde \cG_n\) the corresponding ‘extended' standard families composed of the curves \(\{\hat W_{k,j}^n\}\). \(\tilde \cG_n\) are not actual standard families as their unstable curves may be longer than \(\delta_{*}\). We will partition each curve in \(\tilde \cG_n\) shortly and estimate the effect on the boundary. For the moment, we observe that
\begin{equation}\label{eq:first-tildeG}
\cZ(\tilde \cG_n) = \sum_k p_k \sum_{j}\int_{I_{\hat W_{k,j}}} \rho_k \frac{1}{|\hat W_{k,j}^n|} \le C_{\mathrm{metric}}\sum_k p_k \sum_{j}\int_{I_{\hat W_{k,j}}} \rho_k \frac{ |J_{\hat W^n_{k,j}}  \cF^{-n}|_{*}}{|\hat W_{k,j}|},
\end{equation}
where the constant \(C_{\mathrm{metric}}>0\) is given by the equivalence between the Euclidean metric \(|\cdot|\) and the adapted metric \(|\cdot|_{*}\) in \eqref{eq:adapted-norm} on vectors in the small unstable cone. Moreover, using that \(\rho_k\) are probability densities on \(I_{W_k}\) and \(\max \rho_{k} /\min \rho_{k} \le \funny Q_3 \), we have,
\begin{equation}\label{eq:unifor-density-effect}
 \int_{I_{\hat W_{k,j}}} \rho_k \le \funny Q_3  \frac{|I_{\hat W_{k,j}}|}{|I_{W_{k}}|}.
\end{equation}
Therefore, since \(|I_{\hat{W}_{k,j}}| \le |\hat W_{k,j}|\) and \(|W_k| \le C_{\mathrm{cone}} |I_{W_{k}}|\),
\begin{equation}\label{eq:intermidiate-ZtildeG}
\begin{split}
\cZ(\tilde \cG_n) &\le C_{\mathrm{metric}}\funny Q_3 \sum_k p_k \sum_{j}\frac{|I_{\hat W_{k,j}}|}{|I_{W_{k}}|}\frac{ |J_{\hat W^n_{k,j}}  \cF^{-n}|_{*}}{|\hat W_{k,j}|} \\
&\le  C_{\mathrm{metric}}\funny Q_3 \sum_k p_k \frac{1}{|I_{W_{k}}|} \sum_{j}|J_{\hat W^n_{k,j}}  \cF^{-n}|_{*}\\
&\le \funny Q_3 C_{\mathrm{cone}}C_{\mathrm{metric}} \sum_k p_k \frac{1}{|W_{k}|} \sum_{j}|J_{\hat W^n_{k,j}}  \cF^{-n}|_{*}.
\end{split}
\end{equation}
Recall that \(\gamma \in (0,1)\) is the sought contraction. Let \(n_*\in \bN\) be the smallest natural number such that, according to Lemma \ref{lem:n-step-contraction-hole-D}, for all \(|W_k|\le \delta_{n_*}\), we have
\begin{equation}\label{eq:tildegamma-contraction}
\begin{split}
 \funny Q_3 C_{\mathrm{cone}}&C_{\mathrm{metric}} \sum_{j} |J_{\hat W^{n_*}_{k,j}}  \cF^{-n_*}|_{*} \\
 &\le  \funny Q_3 C_{\mathrm{cone}}C_{\mathrm{metric}} (1 + n_*(B_0 -1)) \theta_{*}^{n_*} \le \gamma/( Q_3 \funny C_{\mathrm{cone}}) .
\end{split}
\end{equation}
Here, \(\delta_{n_*}\) is given by Lemma \ref{lem:n-step-contraction-hole-D} and we finally fix the parameter \(\delta_{*} = \min \{\delta_{n_*},1\}\) in the Definition \eqref{eq:graph-sp-def} of unstable curves. By \eqref{eq:intermidiate-ZtildeG} and \eqref{eq:tildegamma-contraction},
\begin{equation}\label{eq:contraction-Z}
\begin{split}
\cZ(\tilde \cG_{n_*}) \le \funny Q_3 C_{\mathrm{cone}}C_{\mathrm{metric}}  (1 + n_*(B_0 &-1)) \theta_{*}^{n_*}  \cZ(\cG) \le  \frac{\gamma}{Q_3 \funny C_{\mathrm{cone}}} \cZ(\cG).
\end{split}
\end{equation}
It remains to relate \(\cZ(\tilde \cG_{n_*})\) to \(\cZ(\hat \cF_t^{n_*} \cG)\). Let \(\{\hat W^{n_*}_{k,j,p}\}\) a partition of \(\hat W_{k,j}^{n_*}\) in components not bigger than \(\delta_{*}\). These are the unstable curves defining \(\hat \cF_t^{n_*} \cG\). We can ensure that for each \(k,j\) the number of derived components is less than \(|\hat W_{k,j}^{n_*}|/\delta_{*} + 1\). As we remarked at the beginning of the proof \(\max_{I_{\hat W^{n_*}_{k,j}}} \cL^{n_*} \rho_k /\min_{I_{\hat W^{n_*}_{k,j}}} \cL^{n_*} \rho_k \le \funny Q_3\) and using that \(\hat W^{n_*}_{k,j,p} \subseteq \hat W^{n_*}_{k,j}\), we have
\[
\begin{split}
\int_{I_{\hat W^{n_*}_{k,j,p}}} \cL^{n_*} \rho_k &\le \max_{I_{\hat W^{n_*}_{k,j,p}}} \cL^{n_*} \rho_k |I_{\hat W^{n_*}_{k,j,p}}| \le  \max_{I_{\hat W^{n_*}_{k,j}}} \cL^{n_*} \rho_k |I_{\hat W^{n_*}_{k,j,p}}|  \le \funny Q_3 \min_{I_{\hat W^{n_*}_{k,j}}} \cL^{n_*} \rho_k  |I_{\hat W^{n_*}_{k,j,p}}| \\
&\le \funny Q_3 \frac{|I_{\hat{W}^{n_*}_{k,j,p}}|}{|I_{\hat{W}^{n_*}_{k,j}}|} \int_{I_{\hat{W}^{n_*}_{k,j}}}\cL^{n_*} \rho_k.
\end{split}
\]
The expression above gives an estimate of the mass carried by each unstable curve in \(\hat \cF_t^{n_*} \cG\) after partitioning. Since \(|I_{\hat{W}^{n_*}_{k,j,p}}| \le |\hat W^{n_*}_{k,j,p}|\) and \(|\hat{W}^{n_*}_{k,j}| \le C_{\mathrm{cone}} |I_{\hat{W}^{n_*}_{k,j}}|\),
\[
\begin{split}
\cZ(\hat \cF_t^n \cG) &= \sum_k p_k \sum_{j}\sum_p\int_{I_{\hat W^{n_*}_{k,j,p}}} \cL^{n_*} \rho_k \frac{1}{|\hat W_{k,j,p}^{n_*}|} \\
&\le \funny Q_3 \sum_k p_k \sum_{j}\sum_p  \frac{|I_{\hat{W}^{n_*}_{k,j,p}}|}{|I_{\hat{W}^{n_*}_{k,j}}|} \int_{I_{\hat{W}^{n_*}_{k,j}}}\cL^{n_*} \rho_k \frac{1}{|\hat W^{n_*}_{k,j,p}|}\\
&\le \funny Q_3  C_{\mathrm{cone}} \sum_k p_k \sum_{j} \sum_p  \frac{1}{|\hat{W}^{n_*}_{k,j}|} \int_{I_{\hat{W}^{n_*}_{k,j}}}\cL^{n_*}\rho_k.
\end{split}
\]
Since \(\sum_p 1 \le |\hat W_{k,j}^{n_*}|/\delta_{*} + 1\) and \(\int_{I_{\hat{W}^{n_*}_{k,j}}}\cL^{n_*} \rho_k = \int_{I_{\hat{W}_{k,j}}}\rho_k\), we obtain the following estimate,
\[
\begin{split}
   &\cZ(\hat \cF_t^{n_*} \cG) \le  \funny Q_3 C_{\mathrm{cone}} \sum_k p_k \sum_{j} \biggl(\frac{|\hat W^{n_*}_{k,j}|}{\delta_{*}} +1 \biggr) \frac{1}{|\hat{W}^{n_*}_{k,j}|} \int_{I_{\hat{W}^{n_*}_{k,j}}}\cL^{n_*} \rho_k \\
   &\le \funny Q_3  C_{\mathrm{cone}} \biggl[ \sum_k p_k \sum_{j} \frac{1}{|\hat{W}^{n_*}_{k,j}|} \int_{I_{\hat{W}_{k,j}}}\rho_k +  \delta_{*}^{-1} \sum_k p_k \sum_{j} \int_{I_{\hat{W}_{k,j}}}\rho_k \biggr].
    \end{split}
\]
Using the first equality in \eqref{eq:first-tildeG} and the fact that \(\sum_k p_k \sum_{j} \int_{I_{\hat{W}_{k,j}}}\rho_k = 1\), we obtain
\begin{equation}\label{eq:chopping-estimate}
    \cZ(\hat \cF_t^{n_*} \cG)  \le  \funny Q_3 C_{\mathrm{cone}}\cZ(\tilde \cG_{n_*}) + \frac{\funny Q_3 C_{\mathrm{cone}}}{\delta_{*}}.
\end{equation}
Therefore, by \eqref{eq:contraction-Z} and \eqref{eq:chopping-estimate}, setting \(Z_0 = \frac{\funny Q_3 C_{\mathrm{cone}}}{\delta_{*}}\), we obtain that \(\cZ(\hat \cF_t^{n_*} \cG)  \le  \gamma \cZ(\cG) + Z_{0}\). As for the second part of the statement, by \eqref{eq:chopping-estimate} and the first inequality in \eqref{eq:contraction-Z} with \(1\) in place of \(n_*\) we obtain
\[
\cZ(\hat \cF_t \cG)  \le (\funny Q_3 C_{\mathrm{cone}})^2 C_{\mathrm{metric}} B_0 \theta_{*}  \cZ(\cG) + Z_0 \le \bigl((\funny Q_3 C_{\mathrm{cone}})^2   C_{\mathrm{metric}} B_0 + Z_0 \bigr)  \cZ(\cG).
\]
In the last inequality we have used that \(\cZ(\cG) \ge 1\) since each unstable curve is shorter than \(\delta_* \le 1\).
This implies that \(\cZ(\hat \cF_t \cG) \le Z_1 \cZ(\cG) \) for some \(Z_1>0\) big enough and concludes the proof of the statement.
\end{proof}

Let \(n_*\), \(Z_0\) and \(Z_1\) be given by Lemma \ref{lem:invariance1} and \(\Omega >1\) by Corollary \ref{cor:useful-density}. Set also, for some \(C_{\mathrm{cone}}>0\) big enough,
\begin{equation}\label{eq:intrinsic-constant-boundary}
\begin{split}
&\bold B_1 = Z_0/(1-\gamma), \quad \bold B_2 = Z_1^{n_*} \bold B_1 \quad \text{and} \\
&B_2 = \max\{1000 Z_1^{n_*} \max\{ B_1, \bold B_1\}, 10000  e^{\Omega \varpi_2}C_{\mathrm{cone}} \bold B_2 \}.
\end{split}
\end{equation}
 Note that by Lemma \ref{lem:invariance-unstable-curves}, \eqref{eq:intrinsic-constant-boundary} and \eqref{eq:contstant-omega2-regularity} the constants \(\varpi_2, B_2, D_2\) depend only on the table and the initial constants \(\varpi_1, B_1, D_1\), so that they can be incorporated in the generic constant \(C\). Recall also the terminology introduced after Proposition \ref{prop:invariance-standard-families}, which we prove at the end of this section: We call \((\varpi_1, B_1, D_1)\)-standard families initial regular standard families and \((\varpi_2, B_2, D_2)\)-standard families regular standard families. Lemma \ref{lem:invariance1} has the following Corollary, whose proof is a direct computation and it is omitted.
\begin{corollary}\label{cor:Growth-Lemma}
There exist \(\alpha_1,\alpha_2 >0\) such that, for all \(t \in [0,t_0]\) and standard family \(\cG\) with \(\max \rho_{k} /\min \rho_{k} \le \funny\) for every \(k\), we have:
  \begin{itemize}
        \item [a)] If \(\cZ(\cG) \le \bold B_1\) then \(\cZ(\hat \cF_t^n \cG) \le \bold B_2\) for all \(n \in \bN\);
         \item [b)] \(\cZ(\hat \cF_t^n \cG) \le \max \{Z_1^{n_*} \cZ(\cG), \bold B_2\}\) for all \(n \in \bN\);
        \item [c)] \(\cZ(\hat \cF_t^n \cG) \le \bold B_2\) for all \(n \ge \alpha_1\ln \cZ(\cG) + \alpha_2\).
    \end{itemize}
\end{corollary}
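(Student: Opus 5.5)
The corollary follows by iterating the two inequalities of Lemma \ref{lem:invariance1}: the contraction over blocks of length \(n_*\), and the one-step bound \(\cZ(\hat\cF_t^{p+1}\cG)\le Z_1\cZ(\hat\cF_t^p\cG)\). The max/min hypothesis \(\max\rho_k/\min\rho_k\le\funny\) is needed only to apply that Lemma. Its proof already reduces to families whose ratio is at most \(\funny Q_3\), and this ratio is kept for all later times by Lemma \ref{lem:funny-cone}. Hence both inequalities may be used for every \(p\) with fixed constants.

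Write \(z_p=\cZ(\hat\cF_t^{pn_*}\cG)\). Iterating \(z_{p+1}\le\gamma z_p+Z_0\) gives
\[
z_p\le \gamma^p z_0+Z_0\sum_{i<p}\gamma^i\le \gamma^p\cZ(\cG)+\bold B_1 .
\]
For an arbitrary time \(n=pn_*+s\) with \(0\le s<n_*\), the one-step bound gives \(\cZ(\hat\cF_t^n\cG)\le Z_1^{s}z_p\le Z_1^{n_*}z_p\). The semigroup identity \(\hat\cF_t^{s}\hat\cF_t^{pn_*}\cG=\hat\cF_t^{n}\cG\) holds because the cuts along \(\cS^{\bH}_{n,t}\) compose, possibly up to the re-partitioning into pieces shorter than \(\delta_*\). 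I expect the main care to be needed here: one has to check that the chopping, which is done at every step in one description and only at the end in the other, changes \(\cZ\) only by bounded factors that are already absorbed into \(Z_1\). Alternatively, I would simply read the Lemma as holding for the composite evolution.

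Part a): if \(\cZ(\cG)\le\bold B_1\), then \(z_p\le\gamma^p\bold B_1+(1-\gamma^p)\bold B_1=\bold B_1\). This is the sharper form of the recursion, since the fixed point of \(x\mapsto\gamma x+Z_0\) is exactly \(\bold B_1\). Therefore \(\cZ(\hat\cF_t^n\cG)\le Z_1^{n_*}\bold B_1=\bold B_2\).

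Part b): the recursion gives \(z_p\le\max\{z_0,\bold B_1\}\) by induction, because \(\gamma\max\{z_0,\bold B_1\}+Z_0\le\max\{z_0,\bold B_1\}\) whenever \(\max\{z_0,\bold B_1\}\ge\bold B_1\). Multiplying by \(Z_1^{n_*}\) gives \(\cZ(\hat\cF_t^n\cG)\le \max\{Z_1^{n_*}\cZ(\cG),\bold B_2\}\).

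Part c): choose \(p_0\) with \(\gamma^{p_0}\cZ(\cG)\le\bold B_1-Z_0\sum_{i<p_0}\gamma^i\). This is not directly clean, so instead I restart the argument. Once \(\gamma^{p}\cZ(\cG)\) is small, \(z_p\) is within \(\gamma^p\cZ(\cG)\) of \(\bold B_1\). Getting \(z_p\le\bold B_1\) exactly is not possible this way, so I would argue more simply. Take \(p_0=\lceil \ln\cZ(\cG)/\ln\gamma^{-1}\rceil\), so that \(z_{p_0}\le 1+\bold B_1\). The same induction as in a), applied with \(\bold B_1\) replaced by \(\bold B_1+1\), then bounds \(\cZ(\hat\cF_t^n\cG)\) by \(Z_1^{n_*}(\bold B_1+1)\) for all \(n\ge p_0n_*\). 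To land exactly on \(\bold B_2\), I would either enlarge \(\bold B_2\) slightly, which is harmless since only its finiteness matters later, or absorb the extra unit by requiring \(\gamma^{p_0}\cZ(\cG)\le\bold B_1(1-\gamma)\). For the second option, note that from \(z_{p_0}\le \bold B_1(2-\gamma)\), one more block gives \(z_{p_0+1}\le\gamma(2-\gamma)\bold B_1+Z_0\); I would tune this computation until the exact constant comes out. In either case \(n\ge \alpha_1\ln\cZ(\cG)+\alpha_2\) with \(\alpha_1=n_*/\ln\gamma^{-1}\) and \(\alpha_2\) a constant.
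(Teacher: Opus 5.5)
Your parts a) and b) are correct. They are the direct computation from Lemma \ref{lem:invariance1} that the paper omits: the block recursion gives \(z_p\le\gamma^p\cZ(\cG)+(1-\gamma^p)\bold B_1\), \(\bold B_1\) is its fixed point, and the remaining \(s<n_*\) steps cost at most \(Z_1^{s}\). Your worry about a semigroup identity for the chopping is unnecessary. The Lemma is stated for \(\hat\cF_t^{p}\cG\) versus \(\hat\cF_t^{p+1}\cG\) for every \(p\), so you can chain the one-step bound from time \(pn_*\) to time \(pn_*+s\) without composing evolutions.

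Part c), however, is left unproved.
\begin{itemize}
\item Enlarging \(\bold B_2\) proves a different statement. The constant \(\bold B_2=Z_1^{n_*}\bold B_1\) is fixed in \eqref{eq:intrinsic-constant-boundary} and feeds into \(B_2\) and \(\Delta_*\).
\item The \emph{tuning} cannot succeed. If \(\cZ(\cG)>\bold B_1\), the recursion only yields \(z_p\le\bold B_1+\gamma^p(\cZ(\cG)-\bold B_1)\), which is strictly above \(\bold B_1\) for every \(p\); your value \((1+\gamma(1-\gamma))\bold B_1\) is an instance. So no number of further blocks gives \(z_p\le\bold B_1\), and with your estimate \(Z_1^{s}\le Z_1^{n_*}\) there is no room left to reach \(\bold B_2\).
\end{itemize}
The missing observation is that the slack sits in the leftover steps. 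Since \(0\le s\le n_*-1\) and \(Z_1>1\),
\[
\cZ(\hat\cF_t^{pn_*+s}\cG)\le Z_1^{n_*-1}z_p .
\]
So it is enough that \(z_p\le Z_1\bold B_1\), which follows from \(z_p\le\bold B_1+\gamma^p\cZ(\cG)\) once \(\gamma^p\cZ(\cG)\le(Z_1-1)\bold B_1\). Since \(p=\lfloor n/n_*\rfloor\ge n/n_*-1\), this holds for all \(n\ge\alpha_1\ln\cZ(\cG)+\alpha_2\) with \(\alpha_1=n_*/\ln\gamma^{-1}\) and \(\alpha_2=n_*\bigl(1+\max\{0,-\ln((Z_1-1)\bold B_1)\}/\ln\gamma^{-1}\bigr)\). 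This gives c) with exactly the constant \(\bold B_2\).
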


We now study the evolution \(\cL_t\) of the leaky system. To establish invariance, we use the contraction of \(\cZ\) under \(\hat \cF_t^{n_*}\) and the fact that after a finite number of iterations the lost mass is almost negligible for holes small enough. The following result is a weaker version of Lemma \ref{lem:lower-bound-survival} (to obtain Lemma \ref{lem:lower-bound-survival} we used invariance which we still haven't proved). 

\begin{lemma}\label{lem:lost-mass-with-t}
For any \(n \in \bN\) there exists \(C_{n} >0\) such that, for any regular standard family \(\cG\) and any \(t \in [0, t_0]\),
\[
\mu_{ \cG}(\cM_t^n) \ge 1 - C_{n} t .
\]
\end{lemma}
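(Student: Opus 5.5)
We bound the lost mass by a union bound over the times at which a trajectory can enter the hole: \(\cM\setminus\cM_t^n=\bigcup_{k=0}^n\cF^{-k}(\cH_t)\), hence
\[
1-\mu_{\cG}(\cM_t^n)\le \sum_{k=0}^{n}\mu_{\cG}\bigl(\cF^{-k}(\cH_t)\bigr)=\sum_{k=0}^{n}\mu_{\cG}\bigl(\Id_{\cH_t}\circ\cF^k\bigr).
\]
Each summand is a measure for the \emph{closed} dynamics, which we can rewrite with Lemma \ref{lem:push-forward-evolution} in the case \(t=0\): \(\mu_{\cG}(\Id_{\cH_t}\circ \cF^k)=\mu_{\cF^k\cG}(\cH_t)\). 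The identity there is stated for continuous \(\phi\). However, its proof (the change of variables \eqref{eq:single-sp-push-forward} on each component) works verbatim for bounded measurable \(\phi\), so it applies to \(\phi=\Id_{\cH_t}\). Alternatively, one approximates \(\Id_{\cH_t}\) by continuous functions and uses that \(\partial\cH_t\) is transversal to unstable curves, as in Lemma \ref{lem:convergence-discontinuous-function}. We are then reduced to bounding \(\mu_{\cF^k\cG}(\cH_t)\) by \(C_k t\).

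By Lemma \ref{lem:hole-intersection}, it suffices to know that \(\cF^k\cG\) is a \((\varpi',B',D')\)-standard family whose constants depend only on \(k\), on the regularity \((\varpi_2,B_2,D_2)\) of \(\cG\), and on the table, but not on \(t\). We do not need the full invariance (Proposition \ref{prop:invariance-standard-families}, which is being proved); the cruder finite-time estimates of this section already give this:
\begin{itemize}
\item[(i)] The curves: by Lemma \ref{lem:invariance-unstable-curves}, the curves of \(\cF^k\cG\) lie in \(\cW(D_2')\) for some \(D_2'\) depending only on \(D_2\).
\item[(ii)] The densities: by Corollary \ref{cor:useful-density}(c), the densities of \(\cF^k\cG\) lie in \(\cC(\max\{\Omega\varpi_2,\varpi_2\})=\cC(\Omega\varpi_2)\), since the curves are \(k\)-homogeneous by construction.
\item[(iii)] The boundary: the densities in \(\cC(\varpi_2)\) have max/min ratio at most \(C_{\mathrm{cone}}e^{2\varpi_2}\) by Lemma \ref{lem:ubiquous-cone}. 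The second part of Lemma \ref{lem:invariance1} (whose proof runs with any fixed bound on max/min, up to constants, with \(t=0\), i.e.\ \(\hat\cF_0=\cF\)) then gives \(\cZ(\cF^k\cG)\le Z_1'^{\,k}B_2\).
\end{itemize}
Plugging (i)–(iii) into Lemma \ref{lem:hole-intersection} gives \(\mu_{\cF^k\cG}(\cH_t)\le Ce^{\Omega\varpi_2}Z_1'^{\,k}B_2\,t\). Summing over \(k\le n\) yields the claim with \(C_n=(n+1)Ce^{\Omega\varpi_2}Z_1'^{\,n}B_2\).

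The main obstacle is step (iii): Lemma \ref{lem:invariance1} is stated under the max/min bound \(\funny\), whereas regular families only satisfy a bound in terms of \(\varpi_2\). I would handle this by rerunning its one-step estimate \eqref{eq:chopping-estimate}–\eqref{eq:contraction-Z} with \(\funny\) replaced by \(C_{\mathrm{cone}}e^{2\varpi_2}\). This costs only a constant depending on \(\varpi_2\), and Lemma \ref{lem:funny-cone} keeps the max/min ratio controlled along the \(k\) iterates. If one wants to avoid the extension of Lemma \ref{lem:push-forward-evolution} to indicators, the transversality approximation argument mentioned above is the fallback.
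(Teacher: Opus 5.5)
Your proof is correct and is essentially the paper's: a union bound over the sets $\cF^{-k}(\cH_t)$, transfer to $\cF^k\cG$ via Lemma~\ref{lem:push-forward-evolution}, densities in $\cC(\Omega\varpi_2)$ by Corollary~\ref{cor:useful-density}(c), and then Lemma~\ref{lem:hole-intersection}. The only difference is that the paper bounds the boundary uniformly in $k$, $\cZ(\cF^k\cG)\le Z_1^{n_*}B_2$, via Corollary~\ref{cor:Growth-Lemma}(b), instead of iterating the one-step factor $Z_1$; your version is harmless here since $C_n$ may depend on $n$.

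Your ``main obstacle'' is not actually one. Because $|x-y|\le |I_W|\le\delta_*\le 1$, every $\rho\in\cC(\varpi_2)$ already satisfies $\max\rho/\min\rho\le e^{\varpi_2}<2e^{\varpi_2}$. So Lemma~\ref{lem:invariance1} and Corollary~\ref{cor:Growth-Lemma} apply to regular families exactly as stated, and no rerun with a worse constant is needed. The paper makes this same observation in the proof of Proposition~\ref{prop:invariance-standard-families}.
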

\begin{proof}
Using Lemma \ref{lem:push-forward-evolution} in the last equality,
\[
\mu_{\cG}(\cM_t^n) = 1 - \mu_{\cG}(\cup_{k=0}^n \cF^{-k}(\cH_t)) \ge  1 - \sum_{k=0}^n \mu_{\cG}(\cF^{-k}(\cH_t)) = 1 - \sum_{k=0}^n \mu_{\cF^{k}\cG}(\cH_t).
\]
By part b) of Corollary \ref{cor:Growth-Lemma} for \(t = 0\) and because \(\cG\) is a regular standard family, we have that \(\cZ(\cF^{k} \cG) \le  Z_1^{n_*} B_2\) for any \(k\) and by point c) of Corollary \ref{cor:useful-density}, we have that \(\cF^{k}\cG\) supports densities in \(\cC(\Omega \varpi_2)\) for all \(k\). Therefore, by Lemma \ref{lem:hole-intersection} and the equation above, for every \(n >0\) there exists \(C_n >0\) big enough such that, for all \(t \in [0,t_0]\),
\[
\mu_{\cG}(\cM_t^n) \ge 1 - \sum_{k=0}^{n} C e^{\Omega \varpi_2}B_2 t  \ge 1 - C_nt.
\]
This concludes the proof of the statement.
\end{proof}

The next Lemma relates the evolution of the boundary for the open system to the evolution \(\hat \cF_t\) with added discontinuities.

\begin{lemma}\label{lem:comparison-hole-regularity-images}
    For any \(n \in \bN\) and standard family \(\cG\),
    \[
    \cZ(\cL_t^n \cG) \le \frac{\cZ(\hat{\cF}_t^n \cG)}{\mu_{\cG}(\cM_t^n)}.
    \]
\end{lemma}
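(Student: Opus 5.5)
The plan is to compare the two families curve by curve. The idea is that \(\cL_t^n\cG\) is obtained from \(\hat\cF_t^n\cG\) by discarding some curves and renormalizing the surviving weights by \(\mu_{\cG}(\cM_t^n)\).

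Fix a standard pair \(\ell_k=(W_k,\rho_k)\) of \(\cG\). The components \(\{\hat W_{k,j}\}\) of \(W_k\setminus\cS^{\bH}_{n,t}\) define \(\hat\cF_t^n\ell_k\), and the components \(\{V_{k,j}\}\) of \((W_k\setminus\cS^{\bH}_{n,t})\cap\cM_t^n\) define \(\cL_t^n\ell_k\). The key observation is that every \(V_{k,j}\) is one of the \(\hat W_{k,i}\). Indeed, by \eqref{eq:discontinuity-survival} we have \(\partial\cM_t^n\subseteq\cS_{n,t}\subseteq\cS^{\bH}_{n,t}\), so each connected component \(\hat W_{k,i}\) of \(W_k\setminus\cS^{\bH}_{n,t}\) lies either entirely in \(\cM_t^n\) or entirely in its complement. (If it met both, its connectedness would force it to meet \(\partial\cM_t^n\).) Hence \(\{V_{k,j}\}_j\) is exactly the subcollection of those \(\hat W_{k,i}\) contained in \(\cM_t^n\). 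Their images \(\cF^n(V_{k,j})\), their densities \(\rho_{C,j}=\rho_{B,i}\), and the further subdivision into pieces of length at most \(\delta_*\) all coincide with the corresponding data in \(\hat\cF_t^n\ell_k\). The subdivision agrees provided it is done by the same canonical rule applied to the same curve, which I would state explicitly.

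Next I compare the weights. For a surviving piece, the weight in \(\cL_t^n\cG\) is \(p_k\int_{I_{V}}\rho_k/\mu_{\cG}(\cM_t^n)\), which is the weight of the same piece in \(\hat\cF_t^n\cG\) divided by \(\mu_{\cG}(\cM_t^n)\). The same relation holds after partitioning into subpieces, since both weights are integrals of \(\rho_k\) over the same \(n\)-th preimage.

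Writing \(\cZ\) as a sum of weight over length, I obtain
\[
\cZ(\cL_t^n\cG)=\frac{1}{\mu_{\cG}(\cM_t^n)}\sum_{\text{surviving pieces}} \frac{\hat p}{|\hat W^n|}\le \frac{1}{\mu_{\cG}(\cM_t^n)}\sum_{\text{all pieces}}\frac{\hat p}{|\hat W^n|}=\frac{\cZ(\hat\cF_t^n\cG)}{\mu_{\cG}(\cM_t^n)},
\]
where the inequality drops the nonnegative terms coming from the pieces that do not survive.

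The only delicate step is the inclusion \(\partial\cM_t^n\subseteq\cS^{\bH}_{n,t}\). The endpoints of the components are an issue only on a measure-zero set, and they can be handled by taking components of the open set \(W\setminus\cS^{\bH}_{n,t}\). Once the inclusion is granted, the rest is bookkeeping. I also need \(\mu_{\cG}(\cM_t^n)>0\) for the statement to make sense, and this is implicit whenever \(\cL_t^n\cG\) is defined.
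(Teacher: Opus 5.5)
Your proof is correct and is the paper's argument: the pieces of \(\cL_t^n\cG\) are a subcollection of the pieces of \(\hat\cF_t^n\cG\), with the same curves and with weights rescaled by \(1/\mu_{\cG}(\cM_t^n)\), so dropping the nonnegative terms from non-surviving pieces gives the bound. The paper simply asserts that each surviving piece is also a piece of \(\hat\cF_t^n\cG\); your connectedness argument, using \(\partial\cM_t^n\subseteq\cS_{n,t}\subseteq\cS^{\bH}_{n,t}\) from \eqref{eq:discontinuity-survival}, supplies the justification for that step.
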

\begin{proof}
To fix the notation, let \(\cG = \{(p_k, W_k ,\rho_k)\}\) and \(\{\hat W_{k,j}\}\), \(\{V_{k,j}\}\) be the connected components of \(W_k \setminus \cS_{n,t}^{\bH}\) and \((W_k\setminus \cS_{n,t}^{\bH}) \cap \cM_{t}^n\) respectively. Let also \(\{\hat W^n_{k,j}\}\) and \(\{V^n_{k,j}\}\) be their \(n^{th}\) image (possibly further subdivided in curves of length less than \(\delta_*\)). Note that each element of the partition \(\{V^n_{k,j}\}\) of \(\hat \cF_t^n(W_k \cap \cM_t^n)\) is also an element of the partition \(\{\hat W^n_{k,j}\}\) of \(\hat \cF_t^n (W_k)\). Therefore, recalling the definition of \(\cL_t^{n}\cG\) in \eqref{eq:density-weights-evolution},
\[
\cZ(\cL_t^n \cG) = \sum_{k} p_k \sum_j \frac{\int_{V_{k,j}} \rho_k}{\mu_{\cG}(\cM_t^n)} \frac{1}{|V^n_{k,j}|} \le \sum_{k} p_k \sum_j \frac{\int_{\hat W_{k,j}} \rho_k}{\mu_{\cG}(\cM_t^n)} \frac{1}{|\hat W^n_{k,j}|} = \frac{\cZ(\hat{\cF}_t^n \cG)}{\mu_{\cG}(\cM_t^n)},
\]
where the first sums go over the elements of \(\{V^n_{k,j}\}\) and the second sums over those of \(\{\hat W^n_{k,j}\}\). This concludes the proof of the Lemma.
\end{proof}

We finally establish invariance for the leaky evolution. Recall that \((\varpi_1, B_1,D_1)\)-standard families are called initial regular standard families.

\begin{proof}[Proof of Proposition \ref{prop:invariance-standard-families}]
Let \(\cG\) be an initial regular standard family. By Lemma \ref{lem:push-forward-evolution}, \(\cL_t^n \mu_{\cG} = \mu_{\cL^n_t \cG}\). Hence the statement reduces to prove that \(\cL_t^n \cG\) is a regular standard family for all \(n\) and \(t \in [0,t_0]\) with \(t_0>0\) small enough. Set \(\cL_t^n \cG = \{(p^{(n)}_k, \rho^{(n)}_k, W^{(n)}_k)\}\). By construction the standard families \(\cL_t^n \cG\) are composed by \(n\)-homogeneous unstable curves and the fact that \(\rho^{(n)}_k \in \cC(\varpi_2)\) follows by part a) of Corollary \ref{cor:useful-density} and the fact that \(\rho_k^{(0)} \in \cC(\varpi_1)\). This proves that the densities belong to the right cones. Moreover, it also shows that, for all \(n \in \bN\),
\[
\frac{\max \rho^{(n)}_k}{ \min \rho^{(n)}_k} \le e^{\varpi_2 \delta_{*}^{1/3}} \le e^{\varpi_2}. 
\]
Hence, can apply Lemma \ref{lem:invariance1} to the standard families \(\cL_t^n \cG\). Let \(n_* \in \bN\) be given by Lemma \ref{lem:invariance1}. We first claim that for all \(k \in \bN\) we have \(\cZ(\cL_t^{kn_{*}} \cG ) \le 10\max\{ B_1, Z_0/(1-\gamma)\} < B_2\). We prove the statement by induction. The case \(k = 0\) follows from the fact that \(\cG\) is an initial regular standard family. Assume now that the statement holds for some \(k\).  By the inductive assumption and Lemma \ref{lem:lost-mass-with-t}, there exists \(C_{n_*}>0\) such that, for all \(t \in [0,t_0]\),
\begin{equation}\label{eq:bound-lost-mass-invariance-today}
     \mu_{\cL_t^{k n_{*}}\cG}(\cM_t^{n_*}) \ge 1-C_{n_*}t.
\end{equation}
Therefore, by Lemmata \ref{lem:comparison-hole-regularity-images} and \ref{lem:invariance1} and \eqref{eq:bound-lost-mass-invariance-today},
\[
\begin{split}
    \cZ(\cL_t^{(k+1) n_{*}} \cG ) \le \frac{\cZ(\hat{\cF}_t^{n_{*}} \cL_t^{k n_{*}}\cG)}{ \mu_{\cL_t^{k n_{*}}\cG}(\cM_t^n)} &\le \frac{\gamma  10\max\{ B_1, Z_0/(1-\gamma)\} + Z_0}{1-C_{n_*}t} \le  10\max\{ B_1, Z_0/(1-\gamma)\}.
\end{split}
\]
In the last inequality in the equation above, we consider \(t \in [0,t_0]\) for \(t_0 >0\) small enough depending on \(C_{n_*}\). This is fine since \(n_*\) is fixed (and depends only on the billiard and the initial regularity data). This concludes the proof of the claim. For general \(n = kn_{*} + q\), \(q < n_*\), by Lemma \ref{lem:comparison-hole-regularity-images} and \eqref{eq:bound-lost-mass-invariance-today} again, and for \(t \in [0,t_0]\) and \(t_0 >0\) small enough,
\begin{equation}\label{eq:this-one}
\begin{split}
\cZ(\cL_t^{k n_{*} + q} \cG ) &\le \frac{\cZ(\hat \cF_t^q (\cL_t^{kn_{*}}\cG))}{\mu_{\cL_t^{kn_*}\cG}(\cM_t^q)} \le \frac{Z_1^q \cZ(\cL_t^{kn_{*}}\cG) }{1 - C_{n_*} t}\le \frac{Z_1^{n_*}   10\max\{ B_1, Z_0/(1-\gamma)\}}{1/2}  \le B_2.
\end{split}
\end{equation}
where in the second inequality we also used the second part of Lemma \ref{lem:invariance1}. The last inequality follows from \eqref{eq:intrinsic-constant-boundary}. This concludes the proof of the Proposition.
\end{proof}

\subsection{Coupling} \textit{In this section we define coupled standard families and we show that after a fixed iteration of the map we can couple two initial regular standard families.} 

\begin{definition}\label{def:stacking}
Let \(\ell_1 = (W_1, \rho_1)\) and \(\ell_2 = (W_2, \rho_2)\) be two standard pairs and \(\eta >0\). We say that \(\ell_1\) and \(\ell_2\) are \(\eta\)-coupled if 
\begin{equation}\label{eq:eta-coupled-def}
I_{W_1} = I_{W_2}, \quad \|\vf_{W_1} - \vf_{W_2}\|_{\cC^1} \le \eta, \quad \rho_1 = \rho_2.
\end{equation}
Two standard families are \(\eta\)-coupled if there exists a bijection between their standard pairs so that each standard pair of the first family is \(\eta\)-coupled with the corresponding standard pair of the second family and the corresponding weights are equal. Moreover, if the first two conditions in \eqref{eq:eta-coupled-def} are satisfied for a pair of unstable curves \(W_1\) and \(W_2\) we say that \(W_1\) and \(W_2\) (or \(\vf_{W_1}\) and \(\vf_{W_2}\)) are \(\eta\)-stacked. 
\end{definition}

We start with a simple lemma which tracks the weights in standard families generated by a single standard pair. Intuitively, we want that a long curve in the push-forward carries at least some portion of the original mass. The dependence on \(n\) is not relevant as we need to couple only a small amount of mass but every fixed number of iterations.

\begin{lemma}\label{lem:control-weights}
Let \(\ell = (W, \rho)\) be a regular standard pair and set \(\hat\cF_t^n \ell  = \{(p_k, W_k', \rho_k)\}\). For any \(n \in \bN\), there exists \(c_n >0 \) such that \( p_k \ge  c_n |W_k'|^{2^n}\).
\end{lemma}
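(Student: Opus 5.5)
Since \(p_k\) is the \(\rho\)-mass of the preimage piece, the plan is to write \(p_k = \int_{I_{\hat W_k}} \rho\), where \(\hat W_k \subseteq W\) is the connected subcurve with \(\cF^n(\hat W_k) = W_k'\). It is obtained by taking a component of \(W\setminus\cS^{\bH}_{n,t}\), mapping it forward, and possibly chopping it into pieces of length at most \(\delta_*\). Because \(\ell\) is regular, \(\rho\in\cC(\varpi_2)\), and Lemma \ref{lem:ubiquous-cone} together with \(|W|\le\delta_*\le 1\) gives \(\rho \ge e^{-\varpi_2}/|W| \ge e^{-\varpi_2}\). Hence
\[
p_k \ge e^{-\varpi_2}|I_{\hat W_k}| \ge e^{-\varpi_2} C_{\mathrm{cone}}^{-1}|\hat W_k|,
\]
where the last step uses that unstable curves have bounded slope. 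The problem therefore reduces to a purely geometric lower bound \(|\hat W_k| \ge c_n' |W_k'|^{2^n}\), with \(c_n'\) independent of \(t\) and of the curve.

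For the geometric bound I would iterate Lemma \ref{lem:upp-bound-stretch-curves} for unstable curves: \(|\cF(V)|\le C\sqrt{|V|}\). Set \(V_0=\hat W_k\) and \(V_j=\cF^j(\hat W_k)\) for \(j\le n\). Each \(V_j\) is a smooth unstable curve, since the cut along \(\cS^{\bH}_{n,t}\) ensures that \(\cF^n\) is smooth on \(\hat W_k\) and cone invariance \eqref{eq:cone-invariance} keeps the images in the small unstable cone. The presence of the hole only introduces extra cuts, and Lemma \ref{lem:upp-bound-stretch-curves} applies to any unstable curve, so this causes no trouble. One minor point is that \(V_j\) might exceed length \(\delta_*\) or fail to be a graph in \(\cW\). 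The lemma is stated for unstable curves in the cone sense, so I would apply it in that generality. Alternatively, I would subdivide \(V_j\) into boundedly many pieces; the bound on the number of pieces follows because the length is bounded by \(C\sqrt{\text{previous}}\le C\), and this only changes constants.

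Iterating gives \(|V_n|\le C^{1+1/2+\dots}|V_0|^{2^{-n}}\le C^2 |V_0|^{2^{-n}}\). Since \(W_k'\subseteq V_n\), we get \(|V_0| \ge (|W_k'|/C^2)^{2^n}\), that is, \(|\hat W_k|\ge C^{-2^{n+1}}|W_k'|^{2^n}\). Combining this with the first step yields \(p_k\ge c_n|W_k'|^{2^n}\) with \(c_n = e^{-\varpi_2}C_{\mathrm{cone}}^{-1}C^{-2^{n+1}}\).

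The main obstacle is justifying the per-step use of Lemma \ref{lem:upp-bound-stretch-curves} on intermediate images that are merely unstable curves (possibly long or near grazing), rather than elements of \(\cW\). I expect the square-root bound to hold uniformly for any unstable curve avoiding \(\cS_1\). The reason is that the expansion factor is \(\sim 1/\cos\vf_1\), and along an unstable curve the quantity \(\cos\vf_1\) decreases at most like a square root of the distance to the singularity; this is exactly the content of the cited exercise. The constant is independent of \(t\) because the hole only adds cuts.
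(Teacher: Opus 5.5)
Your proof is correct and follows the paper's argument. You bound \(p_k\) from below by \(C_{\mathrm{cone}}^{-1}e^{-\varpi_2}\) times the length of the preimage piece via Lemma \ref{lem:ubiquous-cone} and \(|W|\le 1\), then iterate the square-root bound of Lemma \ref{lem:upp-bound-stretch-curves} along the smooth unstable images; your explicit constant \(C^{-2^{n+1}}\) is more careful than the paper's \(C_n^{-1}\), though both are absorbed into \(c_n\).
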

\begin{proof}
 Call \(\{W_{k}\}_k\) the connected elements of \(W \setminus \cS_{n,t}^{\bH}\) and \(W_{k}' = \cF^n(W_{k})\) (if \(|W_{k}'|\ge \delta_*\) we partition it in multiple components and this further subdivision does not affect the proof). According to \eqref{eq:density-weights-evolution} and the discussion afterwards, \(\hat \cF_t^{n} \ell  = \{(p_{k}, W_{k}', \rho_{k})\}\), where
    \[
    p_{k} =   \int_{I_{W_{k}}}\rho \ge  e^{-\varpi_2 } \frac{|I_{W_{k}}|}{|W|} \ge C_{\mathrm{cone}}^{-1} e^{-\varpi_2 } |W_k|.
    \]
In the first inequality we used that \(\ell\) supports a regular density and in the second that \(|W| \le 1\). By Lemma \ref{lem:upp-bound-stretch-curves}, for some \(C_n >0\),
    \[
    |W_k'| = |\cF^n(W_k)| \le C_n |W_k|^{\frac{1}{2^n}} \quad \text{or} \quad |W_k| \ge C_n^{-1} |W_k'|^{2^n}.
    \]
   This, together with the equation before, concludes the proof of the Lemma. 
\end{proof}

We introduce partitions \(\cQ_{\eta}\) which are instrumental for our coupling arguments. Let \(\cQ_{\eta}\) be a partition of \(\cM\) into squares, each one of size \(\eta>0\), whose sides are parallel to the \(r\) and \(\vf\) coordinates axes. Here and in the following the size of a square is the length of its sides. For any unstable curve \(W\) we denote by  
\begin{equation}\label{eq:center-unstable-curve-def}
\begin{split}
\Center (W) = \biggl\{ x \in W: &\text{ the segment in \(I_W\) connecting} \\
&\hspace{2.8cm}\text{\(\pi_r(x)\) to \(\pi_r(\partial W)\) is longer than }\frac{|I_W|}{100 e^{\varpi_2}}\biggr\}.
\end{split}
\end{equation}
Set also 
\begin{equation}\label{eq:setDeltastar}
\Delta_{*} = \frac{1}{100\bold B_2}.
\end{equation}
In view of part (c) of Corollary \ref{cor:Growth-Lemma}, \(\Delta_*\) is a lower bound on the length that most (\(\ge 99\%\)) unstable curves have at equilibrium. Note that \(\Delta_* \gg 1/B_2\). More precisely, by \eqref{eq:intrinsic-constant-boundary},
\begin{equation}\label{eq:relationDelta-B2}
    \Delta_* \ge \frac{100 C_{\mathrm{cone}} e^{\Omega\varpi_2}}{B_2} \ge \frac{100 C_{\mathrm{cone}} e^{\varpi_2}}{B_2}.
\end{equation}
This means that we have been quite loose in defining a standard family regular as at equilibrium standard families are actually much better than \(B_2\)-regular. In Lemma \ref{lem:fundamental-coupling} we couple a fraction of two standard families. The next three Lemmata are preliminary results that we comment on the way. First, we show that it is possible to \(\eta\)-stack long curves for \(\eta\) arbitrarily small if they intersect a square in \(\cM \setminus \cS_{-N}\) with \(N\) big enough. The general idea is that the sets \(\cM \setminus \cS_{-N}\) are good approximations of the unstable manifolds when \(N \to \infty\) and, if two long curves happen to be in the same connected component of \(\cM \setminus \cS_{-N}\), at least part of them is close in the \(\cC^1\) norm.

\begin{lemma}\label{lem:geometric-last}
    For any \(\eta>0\) there exists \(N >0\) such that if the centers of two \(N\)-homogeneous unstable curves \(W_{\iota}\) with \(|W_{\iota}| \ge \Delta_{*}\) intersect the same square \(Q\subset \cM\setminus \cS_{-N}\) then there exist two \(\eta\)-stacked curves \(W_{\iota,c}\subseteq W_{\iota}\) with \(|W_{\iota,c}|\ge 1/B_2\).
\end{lemma}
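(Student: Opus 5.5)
The plan is to pull both curves back $N$ steps, use the expansion of the inverse along unstable directions to show that the two pullbacks are tiny, and then push forward again. Forward cone invariance will turn the initial smallness into $\cC^1$-closeness of the slopes of the images.

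\emph{Setup.} Since $W_\iota$ is $N$-homogeneous, $\cF^{-N}$ is smooth on $W_\iota$ and, by Lemma \ref{lem:basic-FGu}, contracts it by a factor at least $c\Lambda^N$. Let $x_\iota \in \Center(W_\iota)\cap Q$. Because $|W_\iota|\ge \Delta_*$, the point $x_\iota$ lies at horizontal distance at least $|I_{W_\iota}|/(100e^{\varpi_2}) \gtrsim \Delta_* e^{-\varpi_2}$ from $\partial W_\iota$. By \eqref{eq:relationDelta-B2} this distance is much larger than $1/B_2$. Hence there is a subcurve $W_{\iota}'\subseteq W_\iota$ containing $x_\iota$ whose $r$-projection has length comparable to $\Delta_* e^{-\varpi_2}\ge 2/B_2$ on each side of $x_\iota$.

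\emph{Common interval.} Choose the square size to be much smaller than $1/B_2$; since the square size is at our disposal, this is harmless. Then $\pi_r(x_1)$ and $\pi_r(x_2)$ are within the square size of each other. Intersecting the two $r$-projections gives a common interval $I$ of length at least $1/B_2$ with $I\subseteq I_{W_1'}\cap I_{W_2'}$. Define $W_{\iota,c}$ as the graph of $\vf_{W_\iota}$ over $I$, so that $I_{W_{1,c}}=I_{W_{2,c}}$ and $|W_{\iota,c}|\ge |I| \ge 1/B_2$.

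\emph{$\cC^1$-closeness.} We need $\|\vf_{W_{1,c}}-\vf_{W_{2,c}}\|_{\cC^1}\le\eta$, and this is the main obstacle.

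For the $\cC^0$ part, the key is that the rectangle $R$ spanned vertically between the two curves over $I$ must contain no point of $\cS_{-N}$. If it did, since $\cS_{-N}\setminus\cS_0$ consists of unstable curves that, by Lemma \ref{lem:continuation-of-singularities}, continue until they terminate on $\partial\cM$, such a curve would have to cross one of the $W_{\iota,c}$ or leave through the vertical sides of $R$. It cannot cross $W_{\iota,c}$, since that would contradict $N$-homogeneity. It cannot leave through the vertical sides either, because the curves run nearly parallel there and the connecting region near $Q$ lies in $\cM\setminus\cS_{-N}$. So $R$ lies in a single component of $\cM\setminus\cS_{-N}$. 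Consequently $\cF^{-N}$ is smooth on $R$, and vertical segments in $R$ (stable-cone directions) are expanded by $\cF^{-N}$ at rate $c\Lambda_0^N$ by \eqref{eq:cone-espansion}. The image $\cF^{-N}(R)$ has diameter bounded by the size of $\cM$, so the vertical gap satisfies $\|\vf_{W_1}-\vf_{W_2}\|_{\cC^0(I)}\lesssim \Lambda_0^{-N}$.

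For the derivative part, use that the tangent lines of $W_{\iota,c}$ are images under $D\cF^N$ of vectors in $\cC^u$ at nearby points of $\cF^{-N}(R)$. The iterated cone $D\cF^N\cC^u$ has angular width exponentially small in $N$, and it varies continuously inside one component of $\cM\setminus\cS_{-N}$. Combined with the uniform bound $|\vf''|\le D_2$ from Lemma \ref{lem:invariance-unstable-curves}, interpolation gives
\[
\|\vf'_{W_1}-\vf'_{W_2}\|_{\cC^0(I)}\lesssim \|\vf_{W_1}-\vf_{W_2}\|_{\cC^0(I)}^{1/2}\, D_2^{1/2}+|I|^{-1}\|\vf_{W_1}-\vf_{W_2}\|_{\cC^0(I)}.
\]
This is small as soon as $\Lambda_0^{-N}\ll \eta^2 B_2^{-1}$. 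Choosing $N$ large accordingly completes the argument.

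The delicate point is the topological claim that $R$ lies in one component of $\cM\setminus\cS_{-N}$. If that fails, I would instead argue via $\cF^{-N}$ directly: the pullbacks of the two curves are short, and the segment joining the preimages of $x_1,x_2$ avoids $\cS_N$.
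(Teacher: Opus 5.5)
Your $\cC^0$ step is the paper's argument. Once the vertical segment $H_0$ joining the two graphs at $r_0\in I$ avoids $\cS_{-N}$, the map $\cF^{-N}$ stretches it by $c\Lambda_0^N$ into a stable curve of bounded length, so $\|\vf_{W_1}-\vf_{W_2}\|_{\cC^0(I)}\lesssim\Lambda_0^{-N}$. Your $\cC^1$ step, however, takes a genuinely different and more elementary route. You obtain the slopes from the $\cC^0$ bound by interpolating against the curvature bound. The curves lie in $\cW(D_2)$ by Lemma~\ref{lem:invariance-unstable-curves}, so $g=\vf_{W_1}-\vf_{W_2}$ satisfies $|g''|\le 2D_2$. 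A Taylor expansion with step $h\le |I|/2$, taken on the longer side of each point, then gives $\|g'\|\lesssim\sqrt{D_2\|g\|}+\|g\|/|I|$, which is correct.

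The paper instead writes the slopes of vectors in $D\cF^N\cC^u$ as continued fractions and uses that the odd and even truncations are close. It then concludes by continuity of a truncation on $\cM\setminus\cS_{-M}$ at the two nearby points. That route ties the slopes to an approximate unstable direction field and needs no a priori curvature control, but it is only qualitative. Yours uses the uniform bound $D_2$, which is available here, and yields the explicit rate $\sqrt{D_2}\,\Lambda_0^{-N/2}+B_2\Lambda_0^{-N}$. Your remark about the narrowing of the iterated cone then becomes unnecessary.

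Two points need repair, and both are fixable.

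\emph{The size of $Q$.} It is not at your disposal: the lemma covers every square $Q\subset\cM\setminus\cS_{-N}$. The paper's first step supplies what you need, by your own mechanism. A vertical segment in $Q$ of length equal to its side is stretched by $\cF^{-N}$ into a stable curve of bounded length, so the side of $Q$ is $\lesssim\Lambda_0^{-N}$. Hence it is much smaller than the center margin $\Delta_*/(100C_{\mathrm{cone}}e^{\varpi_2})$ once $N$ is large.

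\emph{The vertical sides of $R$.} Your reason why a singularity line cannot leave $R$ through its vertical sides (``the curves run nearly parallel there'') is circular, because near-parallelism is what you are trying to prove. In fact, leaving through a vertical side is not excluded at all. The contradiction must come from $Q$, as follows.
\begin{itemize}
\item Continue a line of $\cS_{-N}\setminus\cS_0$ through a point of $H_0$ in both directions, using Lemma~\ref{lem:continuation-of-singularities}, until it reaches $\partial\cM$. The result is a monotone graph over $r$.
\item By $N$-homogeneity it cannot meet $W_1$ or $W_2$.
\item Followed from $H_0$ towards $Q$, it is therefore trapped between the two graphs as long as both are defined. Both extend beyond $\pi_r(Q)$ by the center margin.
\item So it must cross the $r$-range of $Q$ between them, and hence meet the segment in $Q$ joining $x_1$ to $x_2$. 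This contradicts $Q\subset\cM\setminus\cS_{-N}$.
\end{itemize}
This is where the smallness of $Q$ and the centering of $x_\iota$ are really used. The paper compresses the same step into ``same connected component of $\cM\setminus\cS_{-N}$ plus continuation''. With these two fixes your argument is complete.
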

\begin{proof}
First notice that by sending \(N \to \infty\) we have that \(\mathrm{diam}(Q) \to 0\) for any square \(Q \subset \cM \setminus \cS_{-N}\). Indeed, if not, \(Q \subset \cM \setminus \cS_{-N}\) would contain a stable manifold of a certain fixed size, which is absurd when \(N\) approaches infinity. Therefore, we let \(N\) be large enough so that \(\mathrm{diam}(Q) \ll \Delta_{*}\) for any \(Q \subset \cM \setminus \cS_{-N}\). Because both curves \(W_{\iota}\) are aligned with \(\hat \cC^u\) their slopes \(\vf_{W_{\iota}}'\) are bounded from above and below, i.e., for some \(C_{\mathrm{cone}} >0\),
\begin{equation}\label{eq:too-many-cones}
C_{\mathrm{cone}}^{-1} \le |\vf_{W_{\iota}}'| \le C_{\mathrm{cone}}.
\end{equation}
Since \(|W_{\iota}| \ge \Delta_{*}\), \(\Center(W_{\iota}) \cap Q \neq \emptyset\) and \(\mathrm{diam}(Q) \ll \Delta_*\), there exists an interval \(I_c\) such that \(I_{W_1} \cap I_{W_2} \supseteq  I_c\).  Denote by \(\vf_{\iota, c} = \vf_{W_{\iota}} \Id_{I_c}\) and by \(W_{\iota, c}\) the graphs of \(\vf_{\iota, c}\). These are the stacked curves. By \eqref{eq:too-many-cones} the \(r\)-projections of the curves are long at least \(|I_{W_{\iota}}| \ge \Delta_{*}/C_{\mathrm{cone}}\) and by \eqref{eq:center-unstable-curve-def} each \(I_{W_{\iota}}\) extends beyond \(\pi_r(Q)\) on both sides by \(\Delta_*/(C_{\mathrm{cone}}100 e^{\varpi_2})\). In particular, we can require that the projections of the stacked curves are longer than
\begin{equation}\label{eq:geom-square-coupled}
    |I_{W_{\iota,c}}| \ge \frac{\Delta_*}{100 C_{\mathrm{cone}} e^{\varpi_2}} \quad \text{so that} \quad |W_{\iota,c}| \ge\frac{\Delta_{*}}{100 C_{\mathrm{cone}}e^{\varpi_2}} \ge \frac{1}{B_2},
\end{equation}
where in the second inequality on the right we used \eqref{eq:relationDelta-B2}. It remains to prove that \(\vf_{\iota,c}\) are \(\eta\)-stacked if \(N\) is big enough. Since both curves intersect the same square \(Q \subset \cM \setminus \cS_{-N}\) and are \(N\)-homogeneous, they belong to the same connected component of \(\cM \setminus \cS_{-N}\). Let \(r_0 \in I_c\) and \(\eta'>0\) be such that \(|\vf_{1,c}(r_0) -\vf_{2,c}(r_0)| = \eta'\) and denote by \(H_0\) the vertical line connecting \((r_0, \vf_{1,c}(r_0))\) to   \((r_0, \vf_{2,c}(r_0))\). By Lemma \ref{lem:continuation-of-singularities} about continuation of singularity lines, we have that \(H_0 \subseteq \cM \setminus \cS_{-N}\). Therefore, \(\cF^{-N}(H_0)\) is a (continuous) stable curve and its length is at most of order one, as it cannot exceed the phase space. On the other hand, since \(H_0\) is aligned with the large stable cone, by the expansion \eqref{eq:curve-expanding} of stable curves in the past,
\[
|\cF^{-N}(H_0)| \ge C \Lambda_0^{N}|H_0| \ge C\Lambda_0^N \eta'.
\]
Hence, the equation above implies that \(\eta' \lesssim \Lambda_0^{-N}\). Therefore, by choosing \(N\) large enough, we obtain that \(\|\vf_{1,c} -\vf_{2,c}\|_{\cC^0} \le \eta\). As for the bound on the difference of the slopes, for any \(r_0 \in I_c\), we have (see \cite[Equations (3.34) and (3.39)]{MR2229799} and notice that \(\cC^u\) corresponds to wave fronts with \(0 \le \cB \le \infty \))
\begin{equation}\label{eq:slope-vector-push-cone}
 \frac{ \cos \vf_{\iota,c}(r_0)}{\tau_{-1} + \frac{1}{\cR_{-1} + \frac{1}{\ddots +  \frac{1}{\cR_{-N+1}}}}} + \kappa(r_0)  \le \vf_{\iota,c}'(r_0) \le  \frac{\cos \vf_{\iota,c}(r_0)}{\tau_{-1} + \frac{1}{\cR_{-1} + \frac{1}{\ddots + \frac{1}{\tau_{-N}}}}}+ \kappa(r_0),
\end{equation}
where \(\cR = 2\kappa/\cos \vf\) and the negative subscripts indicate that the quantities are evaluated along the backward orbit of \(x_{\iota} = (r_0, \vf_{\iota, c}(r_0))\). Calling \(\cB_{\iota,N}^{e}\) and  \(\cB_{\iota,N}^{o}\) the continued fraction on the right and on the left, we have (see \cite[Lemma 4.36]{MR2229799})
\[
|\cB_{\iota,N}^{e} - \cB_{\iota,N}^o| \le \frac{1}{N \tau_{\mathrm{min}}}.
\]
Therefore, we pick \(M >0\) big enough \(M<N\) so that \(\vf_{\iota,r}'\) is approximated arbitrarily well on \(\cM \setminus \cS_{-M}\) by the truncations \(\cos \vf_{\iota,c} \cB_{\iota, M}^{e} + \kappa\). We now prove that these truncations can be made arbitrarily close at the two points the points \((r_0,\vf_{\iota,c}(r_0))\) whenever \(N\) is big enough. Note that \(\cos \vf_{\iota,c} \cB_{\iota, M}^{e} + \kappa\) are uniformly continuous functions on \(\cM \setminus \cS_{-N} \subset \cM \setminus \cS_{-M}\), \(N \gg M\), since they involve backward iterates until time \(-M\). Moreover, by the first part of the Lemma, the points \((r_0,\vf_{\iota,c}(r_0))\) can be made arbitrarily close by choosing \(N\) big enough. Therefore, by continuity, the function \(\cos \vf_{\iota,c} \cB_{\iota, M}^{e} + \kappa\) takes values arbitrarily close at the points \((r_0,\vf_{\iota,c}(r_0))\) when \(N \to \infty\) uniformly in \(r_0\). This concludes the proof of the Lemma.
\end{proof}

We call a standard pair regular if the associated standard family is a regular standard family. Once two curves are stacked, it is possible to couple a portion of the mass they carry.

\begin{lemma}\label{lem:geometric-intersection}
There exists \(p_c > 0 \) such that, for any two \(W_{1}, W_2 \in \cW\) as in Lemma \ref{lem:geometric-last} and standard pairs \(\ell_{\iota} = (W_{\iota}, \rho_{\iota})\) with \(\rho_\iota \in \cC(\varpi_2/2)\), there exist two \(\eta\)-coupled regular standard pairs \(\ell_{\iota,c}\) and two regular standard families \(\cG_{\iota,u}\) such that
    \[
    \mu_{\ell_{\iota}} = p_c \mu_{\ell_{\iota,c}} + (1-p_c) \mu_{\cG_{\iota,u}} .
    \]
\end{lemma}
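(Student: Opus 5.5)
We start from Lemma \ref{lem:geometric-last}, which yields $\eta$-stacked subcurves $W_{\iota,c}\subseteq W_\iota$ with common projection $I_c := I_{W_{1,c}} = I_{W_{2,c}}$, where $|W_{\iota,c}| \ge 1/B_2$ and $|I_c| \gtrsim \Delta_*/(C_{\mathrm{cone}} e^{\varpi_2})$. The coupled part cannot use $\rho_1$ and $\rho_2$ restricted to $I_c$ directly, because these are different functions. Instead we set $\hat\rho = \min\{\rho_1,\rho_2\}$ on $I_c$. By Lemma \ref{lem:classical-cones}, $\hat\rho$ satisfies the log-H\"older condition with constant $\varpi_2/2$. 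By Lemma \ref{lem:ubiquous-cone}, $\hat\rho \ge e^{-\varpi_2/2}/|W_\iota| \ge e^{-\varpi_2/2}/\delta_*$. Consequently $\int_{I_c}\hat\rho \ge e^{-\varpi_2/2}|I_c|$, and this is bounded below by a constant depending only on the table and the regularity data.

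We then choose $p_c$ as a fixed small fraction of this lower bound, say $p_c = \tfrac12 e^{-\varpi_2/2}\Delta_*/(100\,C_{\mathrm{cone}}e^{\varpi_2})$. The coupled pairs are $\ell_{\iota,c} = (W_{\iota,c},\ \rho_c)$ with the common density
\[
\rho_c = \frac{\tfrac12\hat\rho}{\int_{I_c}\tfrac12\hat\rho},
\]
taken with the common weight $p_c$. Taking half of the minimum, instead of the whole minimum, leaves room for the next step. The two pairs are $\eta$-coupled by construction, $\rho_c \in \cC(\varpi_2/2)\subset\cC(\varpi_2)$, and $W_{\iota,c}\in\cW$. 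Since $|W_{\iota,c}|\ge 1/B_2$, each single pair is regular (its boundary is at most $B_2$). In the actual construction one must take exactly mass $p_c$. To do this we scale: $\rho_c$ carries mass $p_c$ when $p_c \le \int_{I_c}\tfrac12\hat\rho$, so we use $s\hat\rho$ with $s = p_c/\int_{I_c}\hat\rho \le 1/2$.

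The uncoupled remainder is $\rho_\iota - p_c\,\rho_c\,\Id_{I_c}$, normalized by $1-p_c$. It splits into three pieces: the part on $I_c$ and the two pieces of $W_\iota$ outside $I_c$ (the left and right complements). On $I_c$, the function $\rho_\iota - s\hat\rho \ge \rho_\iota - \tfrac12\rho_\iota$ gives $\rho_\iota/(s\hat\rho) - 1 \ge 1$. Lemma \ref{lem:classical-cones} then shows that the difference lies in $\cC(3\cdot\varpi_2/2)$, which is not inside $\cC(\varpi_2)$.

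This is the main obstacle. To handle it, I would instead subtract the smaller multiple $s\hat\rho$ with $s\le 1/4$. Then $\sup(\rho_\iota/(s\hat\rho)-1)^{-1}\le 1/3$, so the difference lies in $\cC\bigl(\tfrac{\varpi_2}{2}(1+\tfrac23)\bigr) \subset \cC(\varpi_2)$. This shrinks $p_c$ by a constant factor, which is harmless.

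On the complementary pieces the density is just the restriction of $\rho_\iota$, so it stays in $\cC(\varpi_2/2)$. What remains is to control the lengths of these pieces, since short pieces would increase the boundary $\cZ$. For this we use the freedom in choosing $I_c$: because the centers lie in $Q$, we can choose $I_c$ so that each leftover piece either is empty or has length at least of order $\Delta_*/(100 C_{\mathrm{cone}}e^{\varpi_2})$. If a leftover piece would be too short, we absorb it into $I_c$ on the side where both curves extend, or shrink $I_c$ so the piece becomes long.

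With at most three pieces, each of length $\gtrsim\Delta_*/(C_{\mathrm{cone}}e^{\varpi_2})$, the resulting family $\cG_{\iota,u}$ has boundary
\[
\cZ(\cG_{\iota,u}) \lesssim \frac{C_{\mathrm{cone}}e^{\varpi_2}}{\Delta_*} \le B_2,
\]
using \eqref{eq:relationDelta-B2}, and its densities lie in $\cC(\varpi_2)$. The identity $\mu_{\ell_\iota} = p_c\,\mu_{\ell_{\iota,c}} + (1-p_c)\,\mu_{\cG_{\iota,u}}$ then holds by linearity, with weights given by the integrals of the pieces divided by $1-p_c$.
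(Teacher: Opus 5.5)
Your proof is correct and is essentially the paper's argument. You take the \(\eta\)-stacked subcurves over a common interval \(I_c\) from Lemma \ref{lem:geometric-last}, subtract on \(I_c\) a common density at most a quarter of each \(\rho_\iota\) so that Lemma \ref{lem:classical-cones} keeps the remainder in \(\cC(\tfrac56\varpi_2)\subset\cC(\varpi_2)\), and bound \(\cZ\) of the leftover family through the lengths of its pieces and \eqref{eq:relationDelta-B2}.

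The only difference is the subtracted density. The paper uses the constant \(c_0/4\), where \(c_0=e^{-\varpi_2}\) is a common lower bound for both \(\rho_\iota\), instead of your \(s\min\{\rho_1,\rho_2\}\). The constant is slightly cleaner: it lies in every cone and is smooth, whereas the minimum of two \(\cC^1\) densities is in general only Lipschitz. On the other hand, your fixed uniform choice of \(p_c\) and your separate treatment of each leftover piece are a bit more careful than the paper's text.
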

\begin{proof}
The fact that there exist \(\eta\)-stacked unstable curves \(W_{\iota, c}\subseteq W_{\iota}\) follows by Lemma \ref{lem:geometric-last}. Since \(\rho_{\iota} \in \cC(\varpi_2 /2 )\), by Lemma \ref{lem:ubiquous-cone},
\[
\rho_{\iota} \ge \frac{e^{-\varpi_2 /2}}{|W_\iota|} \ge \frac{e^{-\varpi_2}}{\delta_{*}} \ge e^{-\varpi_2} := c_0.
\]
Let \(\rho_{*} : I_c \to \bR^+\) be the constant function equal to \(c_0/4\). By Lemma \ref{lem:classical-cones} and because \(\rho_{\iota} \in \cC(\varpi_2/2)\), we have that \((\rho_{\iota} - c_0/4) \Id_{I_c} \in \cC(\varpi_2)\). We define the coupled standard pairs and the coupled mass as
\[
\ell_{\iota, c} = \biggl(W_{\iota,c}, \frac{\rho_*}
{\int_{I_c} \rho_*} \biggr) \quad \text{and} \quad p_c = \int_{I_c} \rho_* = \frac{c_0 |I_c|}{4} \ge \frac{c_0 C_{\mathrm{cone}}^{-1}\min_{\iota}|W_{\iota,c}|}{4}\ge C_{\mathrm{cone}}^{-1} \frac{e^{-\varpi_2}}{ B_2},
\]
where in the first inequality we have used that, by Lemma \ref{lem:geometric-last}, it holds \(|W_{\iota,c}| \ge 1/B_2\). This also shows that the standard pairs \(\ell_{\iota,c}\) are regular standard pairs. Moreover, by \eqref{eq:geom-square-coupled} and by coupling less mass if necessary, we can require that \(|W_{\iota,c}| \le \Delta_{*}/(50 C_{\mathrm{cone}}e^{\varpi_2})\) so that \(|W_{\iota} \setminus W_{\iota,c}|> \Delta_{*}/2 \gg 1/B_2\). This, together with the fact that \((\rho_{\iota} - c_0/4) \Id_{I_c} \in \cC(\varpi_2)\), shows that the uncoupled standard family is regular as well.
\end{proof}

We will use the next result to show that the images of unstable curves actually intersect the small squares \(Q \subset \cM \setminus \cS_{-N}\) of before. The next Lemma is a direct consequence of the known equidistribution of standard pairs for the billiard map. Notably similar statements of `finite mixing for curves' can be deduced just by the mixing of the billiard map (after much more effort of course. See e.g. \cite[Proposition 7.83]{MR2229799}. Note that the statement below isn't an \textit{immediate} consequence of mixing because standard pairs correspond to singular measures.)

\begin{lemma}\label{lem:mixing-standard-pairs-unperturbed}
For any \(\eta> 0\) there exists \(N>0\) such that, for all \(n \ge N\), any regular standard pair \(\ell\) and \(t \in [0,t_0]\), we have
    \[
    \sum_{Q \in \cQ_{\eta}}\bigl|\mu_{\hat \cF^n_t(\ell)}(\Id_Q) - \mu_0 (Q)  \bigr| \le \frac{1}{100}.
    \]
\end{lemma}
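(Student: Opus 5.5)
Since \(\hat \cF_t\) and \(\cF\) act identically on measures (Lemma \ref{lem:push-forward-evolution}), we have \(\mu_{\hat\cF^n_t(\ell)}(\Id_Q) = \mu_{\ell}(\Id_Q\circ \cF^n) = \mu_{\cF^n \ell}(\Id_Q)\). The quantity to bound therefore does not depend on \(t\), and it suffices to prove the statement for the closed billiard, uniformly over regular standard pairs \(\ell\). The plan is to deduce it from equidistribution of standard pairs for \(\cF\). Concretely, we use exponential decay in the dual of a Hölder (or \(\cC^1\)) class, i.e.\ \cite[Theorem 7.31]{MR2229799}, equivalently Theorem \ref{thm:exp-mixing-sf} at \(t=0\). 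This gives, for any regular standard pair \(\ell\) and any \(\psi\in\cC^1(\cM)\),
\[
|\mu_{\cF^n\ell}(\psi) - \mu_0(\psi)| \le C\|\psi\|_{\cC^1}\gamma^n .
\]
To connect this to \(\mu_0\), we use that \(\mu_0\) is a weak-\(\star\) limit of regular standard families (Lemma \ref{lem:exp-conv-initial}), together with Lemma \ref{lem:exp-mixing} at \(t=0\). The constant \(C\) is uniform over regular pairs because regular pairs may be regarded as initial regular data, enlarging \(\varpi_1,B_1,D_1\) as the paper already does.

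The indicators \(\Id_Q\) are not \(\cC^1\), so we smooth them. Fix \(\eta\); the partition \(\cQ_\eta\) has finitely many squares, \(K_\eta \sim |\partial\cD|\pi\eta^{-2}\). For a small \(\epsilon\ll \eta\), take \(\cC^1\) functions \(\psi^-_Q\le \Id_Q\le\psi^+_Q\) with \(\|\psi_Q^\pm\|_{\cC^1}\lesssim \epsilon^{-1}\) and \(\psi^+_Q-\psi^-_Q\) supported in \([\partial Q]_\epsilon\). The sides of \(Q\) are horizontal and vertical segments, hence transversal to the unstable cone, with uniform constants. As in \eqref{eq:discontinuities-measure-standard-family}, the mass that \(\cF^n\ell\) gives to \([\partial Q]_\epsilon\) is bounded using Lemma \ref{lem:ubiquous-cone} and the boundary \(\cZ(\cF^n \ell)\). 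For \(n\ge\alpha_1 \ln B_2+\alpha_2\), Corollary \ref{cor:Growth-Lemma}(c) gives \(\cZ(\cF^n\ell)\le \bold B_2\), and by part c) of Corollary \ref{cor:useful-density} the densities have controlled regularity, say \(\cC(\max\{\Omega\varpi_2,\varpi_2\})\). Each unstable curve meets \([\partial Q]_\epsilon\) in a set of length \(O(\epsilon)\), and at most boundedly many squares meet a curve of length \(\le\delta_*\) per unit length. Summing over \(Q\), we get \(\sum_Q \mu_{\cF^n\ell}([\partial Q]_\epsilon)\lesssim e^{\Omega\varpi_2}\bold B_2\,\epsilon\,\eta^{-1}\). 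The analogous estimate for \(\mu_0\) is \(\lesssim \epsilon\eta^{-1}\).

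Combining the two pieces,
\[
\sum_Q |\mu_{\cF^n\ell}(\Id_Q)-\mu_0(Q)| \lesssim K_\eta\,\epsilon^{-1}\gamma^n + \epsilon\,\eta^{-1}.
\]
First choose \(\epsilon = c\,\eta\) with \(c\) small enough that the second term is \(\le 1/200\). Then choose \(N\) large enough, depending on \(\eta\), that the first term is \(\le 1/200\) for all \(n\ge N\), and also \(N\ge \alpha_1\ln B_2+\alpha_2\). The main obstacle is the uniformity of the decay-of-correlations constant over all regular standard pairs, together with the uniform control of the mass near \(\partial Q\) for the pushed-forward family. Both reduce to the Growth Lemma and to the fact that the decay statement holds with constants depending only on \((\varpi_2,B_2,D_2)\). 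If one prefers to avoid Theorem \ref{thm:exp-mixing-sf} (proved later via coupling, which may use this lemma), one would instead cite the equidistribution result \cite[Theorem 7.31]{MR2229799} for the closed billiard directly; this avoids circularity.
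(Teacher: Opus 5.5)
Your proposal is correct and follows essentially the paper's route: smooth each \(\Id_Q\) in \(\cC^1\), control the mass near \(\partial Q\) by transversality, invoke equidistribution of standard families for the closed billiard \cite[Theorem 7.31]{MR2229799}, and use that \(\cQ_\eta\) is finite. The only bookkeeping difference is that you sum a single \(\epsilon\)-estimate over all squares, whereas the paper takes \(\ve=\mu_0(Q)/(200C_0)\) for each square and sets \(N=\max_Q n_Q\). One correction to your hedging: the route through Theorem \ref{thm:exp-mixing-sf} and Lemma \ref{lem:exp-mixing} is actually circular, not merely something one might prefer to avoid. This lemma is used in Lemma \ref{lem:fundamental-coupling}, which feeds the coupling proof of Theorem \ref{thm:exp-mixing-sf}, so the direct citation of \cite[Theorem 7.31]{MR2229799} is the only admissible input, exactly as in the paper.
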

\begin{proof}
For any \(Q \in \cQ_{\eta}\) and \(\ve >0\), we approximate \(\Id_{Q}\) with \(g_{Q,\ve} \in \cC^1 (\cM)\) such that \(\|g_{Q,\ve}\|_{\cC^0} \le 1\), \(\mu_0 (g_{Q,\ve}) = \mu_0 (Q)\) and \(g_{Q} = \Id_{Q}\) on \(\cM \setminus [\partial Q]_{\ve}\). Therefore, because \(\partial Q\) consists of a union of four segments transversal to the unstable cone, by the same argument of \eqref{eq:discontinuities-measure-standard-family}, there exists \(C_0 >0\) such that, for any \(n\in \bN\) and \(Q \in \cQ_{\eta}\),
\[
|\mu_{\hat \cF_t^n(\ell)}(\Id_Q - g_{Q\ve})| \le 2\mu_{\hat \cF_t^n(\ell)}([\partial Q]_{\ve}) \le C \cZ(\hat \cF_t^n (\ell)) \ve \le C_0 \ve.
\]
In the last inequality we have used part b) of Corollary \ref{cor:Growth-Lemma} and the fact that \(\ell\) is a regular standard family. Take any \(Q \in \cQ_{\eta}\) and set \(\ve = \mu_0(Q)/(C_0 200)\). By \cite[Theorem 7.31]{MR2229799}, we have that there exists \(n_Q\) such that for all \(n\ge  n_Q\),
\[
|\mu_{\hat \cF_t^n(\ell)}(\Id_Q) - \mu_0(Q) | = |\mu_{\hat \cF_t^n(\ell)}(\Id_Q - g_{Q\ve})| + |\mu_{\hat \cF_t^n(\ell)}(g_{Q,\ve}) - \mu_0(Q) | \le \frac{\mu_0(Q)}{200} + \frac{\mu_0(Q)}{200} =\frac{\mu_0(Q)}{100}.
\]
Since there are a finite number of elements \(Q \in \cQ_{\eta}\), the statement follows by taking \(N = \max_{Q \in \cQ_{\eta}} n_Q <\infty\) and the fact that \(\mu_0\) is a probability measure.
\end{proof}

Later on, we will fix \(\eta_0 \ll \Delta_*\) to be a very small number. This is the threshold distance that two standard pairs need to have to be meaningfully considered coupled. The precise value of \(\eta_0\) is given by Lemma \ref{lem:one-step-loss-mass} and depends only on the billiard and the initial regularity of standard families. In practice, we want \(\eta_0\) to be small enough so that not all the mass gets decoupled in a few iterations. In the following Lemma, which is the most important of this section, we argue that it is actually possible to couple some of the surviving mass of two standard families.

\begin{lemma}\label{lem:fundamental-coupling}
For any \(\eta_0>0\) there exists \(N>0\) such that the following is true. For any \(n\ge N\), there exist \(t_0 >0\), \(p_c, p_{\iota,u} \in (0,1)\), \(p_c + p_{\iota,u} \le 1\) with the following property. For any two regular standard families \(\cG_{\iota}\) there exist two \(\eta_0\)-coupled regular standard pairs \(\ell_{\iota,c}\) and two regular standard families \(\cG_{\iota,u}\) such that, for any \(\phi \in \cC^0(\cM)\) and \(t \in [0,t_0]\),
    \[
    \mu_{\cG_{\iota}} (\phi \circ \hat \cF^{n}_t \Id_{\cM_t^n}) = p_c \mu_{\ell_{\iota,c}} (\phi) + p_{\iota,u}\mu_{\tilde \cG_{\iota,u}} (\phi).
    \]
\end{lemma}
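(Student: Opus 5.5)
We would prove the statement by composing three ingredients already in place: equidistribution of long curves under \(\hat \cF_t^n\) (Lemma \ref{lem:mixing-standard-pairs-unperturbed}), the stacking of curves whose centers meet a small square away from \(\cS_{-N}\) (Lemma \ref{lem:geometric-last}), and the mass-splitting Lemma \ref{lem:geometric-intersection}.

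\emph{Step 1 (choice of parameters).} Given \(\eta_0\), let \(N_1\) be given by Lemma \ref{lem:geometric-last} with \(\eta=\eta_0\). The set \(\cS_{-N_1}\) is a finite union of curves, so we fix \(\eta>0\) such that the squares \(Q\in\cQ_\eta\) with \(Q\subset \cM\setminus \cS_{-N_1}\) carry \(\mu_0\)-mass at least \(9/10\). We then take \(N\ge N_1\) from Lemma \ref{lem:mixing-standard-pairs-unperturbed} for this \(\eta\), enlarged further so that part (c) of Corollary \ref{cor:Growth-Lemma} and part (b) of Corollary \ref{cor:useful-density} both apply. This guarantees that for \(n\ge N\) the family \(\hat\cF_t^n\cG_\iota\) satisfies \(\cZ\le \bold B_2\) and has densities in \(\cC(\varpi_2/2)\). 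The densities in \(\cG_\iota\) are in \(\cC(\varpi_2)\), so this density improvement is available.

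\emph{Step 2 (most mass lies on long, central, well-placed curves).} By Lemma \ref{lem:meaning-cZ} and \(\Delta_*=1/(100\bold B_2)\), at most \(1/100\) of the mass of \(\hat\cF_t^n\cG_\iota\) lies on curves shorter than \(\Delta_*\). The curves are \(n\)-homogeneous, hence also \(N_1\)-homogeneous. Applying Lemma \ref{lem:mixing-standard-pairs-unperturbed} pair by pair and averaging over the family, the mass in good squares is at least \(9/10-1/100\). The mass outside the centers of the curves is controlled by the density bound and the definition \eqref{eq:center-unstable-curve-def}: it is a small fraction, of order \(2/(100e^{\varpi_2})\cdot C_{\mathrm{cone}}e^{\varpi_2}\). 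We then apply a pigeonhole argument over the finitely many good squares, whose number depends only on \(\eta\). Since \(\mu_{\hat\cF_t^n\cG_\iota}\) is close to \(\mu_0\) on every square for both \(\iota\), there is a single good square \(Q\) in which both families have centers of long curves carrying mass at least some \(c_\eta>0\), uniformly.

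\emph{Step 3 (coupling and accounting).} For each \(\iota\) we pick a pair \(\ell_\iota\) in \(\hat\cF_t^n\cG_\iota\) whose center meets \(Q\). Lemma \ref{lem:geometric-last} and Lemma \ref{lem:geometric-intersection} then split off \(\eta_0\)-coupled regular pairs \(\ell_{\iota,c}\). The two families may give different weights to the square, so we couple only a common amount. We aggregate over all pairs in \(Q\) on each side and match masses, which is harmless because the coupled densities are the constant \(\rho_*\) on the common interval \(I_c\). This yields a total coupled weight bounded below by \(c_\eta p_c\) uniformly; we fix it at exactly this value by coupling less if needed.

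The remainder of each family is a regular family by Lemma \ref{lem:geometric-intersection}, by the boundary control \(\bold B_2\ll B_2\), and by Lemma \ref{lem:lost-mass-with-t}.

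Finally, we pass from \(\hat\cF^n_t\) to the survival restriction. By Lemma \ref{lem:push-forward-evolution}, \(\mu_{\cG_\iota}(\phi\circ\hat\cF_t^n\Id_{\cM_t^n})\) is the sum over the surviving curves \(V\). The hole removes mass at most \(C_n t\) (Lemma \ref{lem:lost-mass-with-t}), so choosing \(t_0\) small relative to \(c_\eta p_c\) lets us select the coupled pieces among curves that do not meet the hole. Then \(p_{\iota,u}\) equals the remaining surviving mass, and \(p_c+p_{\iota,u}\le 1\).

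The main obstacle is Step 2 together with this last point. We must make the coupled weight \(p_c\) independent of \(\cG_\iota\) and of \(t\), while ensuring the uncoupled remainders stay regular. This is also the step that forces \(t_0\) to depend on \(n\).
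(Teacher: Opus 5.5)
Your argument is correct in substance but takes a different route from the paper's. The paper first treats two single regular standard pairs. It shows only that the two sets of good squares each miss at most $6/100$ of $\mu_0$, and hence intersect. It then picks \emph{one} long curve on each side meeting a common square, couples the two with Lemmas \ref{lem:geometric-last} and \ref{lem:geometric-intersection}, and bounds the coupled weight from below via Lemma \ref{lem:control-weights}, obtaining $p_c\ge c_n\tilde p_c\Delta_*^{2^n}$. Families are then reduced to pairs: the paper iterates $N_*$ more steps, keeps the mass on curves of length $\ge 1/B_2$, and couples every such pair with every pair of the other family, at the cost of $N_*+N$ iterates and $p_c/2$.

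You work with the families directly and use Lemma \ref{lem:mixing-standard-pairs-unperturbed} quantitatively. Let $m_\iota(Q)$ be the surviving mass of $\hat\cF_t^n\cG_\iota$ on centers of long curves in a good square $Q$. Then $\sum_Q m_\iota(Q)\ge 0.85$ while $\sum_Q\max_\iota\mu_{\hat\cF_t^n\cG_\iota}(Q)\le 1.02$, so $\sum_Q\min_\iota m_\iota(Q)\ge 0.68$. Hence some square carries mass $\gtrsim\eta^2$ on both sides, and you couple all qualifying pairs there at once. This buys you three things: Lemma \ref{lem:control-weights} is no longer needed, the coupled mass is independent of $n$ (the paper's bound decays doubly exponentially in $n$, which is harmless only because $n=N_c$ is fixed), and the family-to-pair reduction disappears.

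To make your argument airtight:
\begin{itemize}
\item Run the pigeonhole with the summed $\ell^1$ bound, as above, not with closeness ``on every square''.
\item Apply Lemma \ref{lem:mixing-standard-pairs-unperturbed} to the regular family as a whole (its proof only uses the $\cZ$ bound from Corollary \ref{cor:Growth-Lemma}). Individual pairs of a regular family need not be regular, so applying it ``pair by pair'' is not licensed.
\item Bound the non-central mass using $\rho\le e^{\varpi_2}/|I_W|$. This gives $2/100$, without the factor $C_{\mathrm{cone}}$ coming from Lemma \ref{lem:ubiquous-cone}, which could spoil your numerics.
\item Aggregating many pairs needs one of two things. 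Either use a stacking interval determined by $Q$ alone (true by the proof of Lemma \ref{lem:geometric-last}, not by its statement), or use a product coupling of each qualifying pair on one side with each on the other, with weights rescaled to $\min_\iota$ of the qualifying masses.
\end{itemize}
Either way the output is a pair of $\eta_0$-coupled regular standard \emph{families} rather than single pairs. This matches the paper's own treatment of families and is sufficient for Lemma \ref{lem:linear-scheme1}.
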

To help the reader, we give an informal description before proceeding with the actual proof. We first let \(N_{**}>0\) be big enough such that we can apply Lemma \ref{lem:geometric-last} and two curves that intersect a square in \(\cM \setminus \cS_{-N_{**}}\) can be \(\eta_0\)-stacked. We consider a partition \(\cQ_{\eta_1}\) of \(\cM\) in squares so small that most of them belong to \(\cM \setminus \cS_{-N_{**}}\). We then pick \(N \ge N_{**}\) big enough so that \(n \ge N\) satisfies two properties. The first is that \(\cZ(\hat\cF_t^{n} \ell_{\iota})\) is at equilibrium. Indeed, by \eqref{eq:relationDelta-B2} regular standard families are allowed to have a boundary much bigger (\(\sim B_2\)) than what they tend to have (i.e., roughly \(Z_0/(1-\gamma) \)) after enough iterations. The second property is that plenty of long curves in \(\cF_t^{n} \ell_{\iota}\) intersect most of the little squares in \(Q \subset \cM \setminus \cS_{-N_{**}}\). This follows by mixing of the billiard map via Lemma \ref{lem:mixing-standard-pairs-unperturbed}. Doing so, most curves will be long and equidistributed in the phase space: it is impossible that there are no couple of curves, one from the first family and one from the second, that do not intersect the same little square in \(Q \subset \cM \setminus \cS_{-N_{**}}\). Once \(n\) is fixed, we pick \(t_0 >0\) very small, so that the mass that ends up in the hole in the first \(n\) iterates is almost negligible.
\begin{proof}[Proof of Lemma \ref{lem:fundamental-coupling}.]
We first consider the case in which the two standard families are given single standard pairs \(\ell_{\iota} = (W_{\iota}, \rho_{\iota})\), \(\iota \in \{1,2\}\). Fix some \(\eta_0 >0\). By Corollary \eqref{cor:Growth-Lemma} point (c) and the fact that \(\cZ(\ell_{\iota}) \le B_2\), there exits \(N_{*} \sim \ln B_2\) such that, for any \(n \ge N_{*}\), \(\cZ(\hat \cF_t^n \ell_{\iota}) \le \bold B_2 \). Hence, setting \(\hat\cF^n_t \ell_{\iota} = \{(p_{\iota,k},W_{\iota,k}, \rho_{\iota,k})\}\), by Lemma \ref{lem:meaning-cZ} and \eqref{eq:setDeltastar}, for any \(n \ge N_*\),
\begin{equation}\label{eq:most-curves-are-long-image}
 \sum_{k : |W_{\iota,k}| < \Delta_{*}}p_{\iota,k} \le \cZ(\hat \cF_t^n \ell)  \Delta_{*}\le \bold B_2 \Delta_{*} = \frac{1}{100}.
\end{equation}
Let \(N_{**} >0 \) be given by Lemma \ref{lem:geometric-last} with \(\eta = \eta_0\) and let \(\eta_1>0\) be such that the overwhelming majority of squares in \(\cQ_{\eta_1}\) belong to \(\cM \setminus \cS_{-N_{**}}\). This is possible since \(\cS_{-N_{**}}\) is a finite union of \(\cC^1\) curves. In particular, we take \(\eta_1 > 0\) small enough such that
\begin{equation}\label{eq:eta1-squares}
    \sum_{Q \in \cQ_{\eta_1}: Q \cap \cS_{-N_{**}} \neq \emptyset}\mu_0 (Q) \le \frac{1}{100}
\end{equation}
Let also \(N_{***} \in \bN\) be given by Lemma \ref{lem:mixing-standard-pairs-unperturbed} with \(\eta = \eta_1\). Finally, let \(\bar n\) be given by part b) of Corollary \ref{cor:useful-density}. Set \(N = \max \{N_*, N_{**}, N_{***}, \bar n\}\) and fix some \(n \ge N \). We now define the set of squares \(\cQ_{good} (\iota)\) where we can couple mass. Let
\[
\begin{split}
N_t(\iota) &= \biggl \{x \in W_{\iota} \cap \cM_t^n: \text{ } \hat\cF_t^n(x) \text{ belongs to the Center of an unstable curve in \(\hat \cF_t^n \ell\) } \\
&\hspace{3cm} \text{ not smaller than } \Delta_{*} \biggr\}.
\end{split}
\]
These are the points that survive after \(n\) iterations and belong to the center of long unstable curves. We also set
\[
\cQ_{good} (\iota) = \{Q \in \cQ_{\eta_1}: Q \cap \cF^n(N_t(\iota)) \neq \emptyset \text{ and } Q\subset \cM\setminus \cS_{-N_{**}}\}.
\]
We first prove first the following fact. \textit{Claim \((\star)\)}: there exists \(t_0 >0\) small enough such that
\[
\text{ for any \(t \in [0, t_0]\),}\quad \cQ_{good}(1) \cap \cQ_{good}(2) \neq \emptyset.
\]
If \(Q \notin Q_{good}(\iota)\), either \(Q \cap \cS_{-N_{**}} \neq \emptyset\) or \(Q \cap \cF^n(N_t(\iota)) = \emptyset\). Therefore,
\[
\begin{split}
    &\sum_{Q \notin \cQ_{good} (\iota)} \mu_0 (Q) = \sum_{Q \in \cQ_{\eta_1}: Q \cap \cS_{-N_{**}} \neq \emptyset}\mu_0 (Q) + \sum_{Q \in \cQ_{\eta_1}: Q \cap \cF^n(N_t(\iota)) = \emptyset}\mu_0 (Q)\\
    &\le \sum_{Q \in \cQ_{\eta_1}: Q \cap \cS_{-N_{**}} \neq \emptyset}\mu_0 (Q) + \sum_{Q \in \cQ_{\eta_1}}\bigl| \mu_0 (Q) - \mu_{\hat\cF^n_t(\ell_\iota)}(\Id_Q)\bigr| +  \sum_{Q \in \cQ_{\eta_1}: Q \cap \cF^n(N_t(\iota)) = \emptyset} \mu_{\hat \cF^n_t(\ell_\iota)}(\Id_Q).
\end{split}
\]
By Lemma \ref{lem:mixing-standard-pairs-unperturbed} and Equation \eqref{eq:eta1-squares}, the first two sums in the second line contribute to less than \(1/100\) each. For the third sum, since \(\cQ_{\eta_1}\) is a partition of \(\cM\),
\[
\sum_{Q \in \cQ_{\eta_1}: Q \cap \cF^n(N_t(\iota)) = \emptyset} \mu_{\hat\cF_t^n(\ell_i)}(\Id_{Q}) \le \mu_{\hat\cF_t^n(\ell_{\iota})}(\cM \setminus \hat \cF_t^n(N_t(\iota))).
\]
We note that the points in \(\hat \cF_t^n (W_{\iota}) \cap (\cM \setminus \hat \cF_t^n(N_t(\iota)))\) are those points that either belong to small components, or belong to long components but not in their centers, or end up in the hole. Therefore, according to the previous equation, recalling the notation \(\hat\cF^n_t \ell_{\iota} = \{(p_{\iota,k},W_{\iota,k}, \rho_{\iota,k})\}\),
\[
\begin{split}
\sum_{Q \in \cQ_{\eta_1}: Q \cap \cF^n(N_t(\iota)) = \emptyset} \mu_{\hat\cF_t^n(\ell_i)}(\Id_Q) &\le \sum_{k : |W_{\iota,k}| < \Delta_{*}}  p_{\iota,k}
+\sum_{k : |W_{\iota,k}| \ge \Delta_{*}}p_{\iota,k} \mu_{\ell_{\iota,k}}(\Id_{\cM \setminus \Center W_{\iota, k}}) \\
&+ \mu_{\ell_{\iota}} (\cM \setminus \cM_{t}^n).
\end{split}
\]
We estimate each contribution one by one. By equation \eqref{eq:most-curves-are-long-image}, the first term is less than \(1/100\). By \eqref{eq:center-unstable-curve-def} and the fact that \(\rho_{\iota, k} \in \cC(\varpi_2)\),
\[
\mu_{\ell_{\iota,k}}(\Id_{\cM \setminus \Center W_{\iota,k}}) = \int_{I_{W_{\iota,k}\setminus \Center W_{\iota,k}}} \rho_{\iota,k} \le \frac{e^{\varpi_2 }}{|I_{W_{\iota,k}}|} |I_{W_{\iota, k} \setminus \Center W_{\iota,k}}| \le  \frac{2}{100}.
\]
This estimate holds for each standard pair, giving a \(2/100\) estimate for the second term. Finally, by Lemma \ref{lem:lost-mass-with-t} and for \(t_0 >0\) small enough depending on \(n\), and \(t \in [0,t_0]\),
\[
\mu_{ \ell_{\iota}} (\cM \setminus \cM_{t}^n) \le C_{ n} t \le \frac{1}{100}.
\]
Therefore, collecting the previous estimates, we have, for both \(\iota \in \{1,2\}\),
\[
 \sum_{Q \notin \cQ_{good} (\iota)} \mu_0 (Q)\le \frac{6}{100} < \frac{1}{2}.
\]
In particular, the equation above proves Claim \((\star)\). Let \(Q_c \in \cQ_{good}(1) \cap \cQ_{good}(2)\). By definition of good squares we have that a long unstable curve \(\tilde W_{1} \subset \hat \cF_t^n (W_1 \cap \cM_t^n)\) and a long unstable curve \(\tilde W_{2} \subset \hat \cF_t^n (W_{2} \cap \cM_t^n)\) intersect the same square \(Q_c\in \cQ_{\eta_1}\). Consider the standard pairs
\[
\tilde \ell_{\iota} = \biggl(\tilde W_{\iota}, \frac{\cL^n \rho_{\iota| I_{\tilde W_{\iota}}}}{ \int_{I_{\tilde W_{\iota}}}   \cL^n \rho}\biggr).
\]
By part b) of Corollary \ref{cor:useful-density}, \(\cL^n \rho_{\iota| I_{\tilde W_{\iota}}}\in \cC(\varpi_2/2)\). Moreover, by construction \(\tilde W_{\iota}\) are \(N_{**}\)-homogeneous (because they belong to \(\hat\cF_t^n \ell_{\iota}\) for \(n \ge N_{**}\)), they are long in the sense that \(|\tilde W_{\iota}| \ge \Delta_{*}\) and they intersect the same square \(Q_c \subset \cM \setminus \cS_{-N_{**}}\). Therefore, by Lemma \ref{lem:geometric-last} and the choice of \(N_{**}\), there exist \(\eta_0\)-stacked curves \(W_{\iota,c}\subseteq \tilde W_{\iota}\). Moreover, applying Lemma \ref{lem:geometric-intersection}, there exists \(\tilde p_c >0\), two regular standard families \(\tilde \cG_{\iota,u}\) and regular \(\eta_0\)-coupled standard pairs \(\ell_{\iota,c}\) such that
\[
\mu_{\tilde \ell_{\iota}} = \tilde p_c \mu_{\ell_{\iota,c}} + (1-\tilde p_c)\mu_{\tilde \cG_{\iota,u}}.
\]
Since the curves \(\tilde W_{\iota}\) belongs to \(\cF^n(\cM_t^n)\), the decomposition above implies that, for some \(\tilde p_{\iota}, p_{\iota,u}, p_{\iota,u}'>0\) and standard families \(\cG_{\iota,u}, \cG_{\iota,u}'\), for any \(\phi \in \cC^0(\cM)\),
\begin{equation}\label{eq:coupled-uncoupled-dec-first-meeting}
\mu_{\ell_{\iota}} (\phi \circ \hat \cF_t^n \Id_{\cM_t^n}) =  \tilde p_{\iota} \mu_{\tilde \ell_{\iota}}(\phi) +  p_{\iota,u}\mu_{\cG_{\iota,u}} (\phi)  = \tilde p_c \min_{\iota \in \{1,2\}} \tilde p_{\iota} \mu_{\ell_{\iota,c}} (\phi) + p_{\iota,u}'\mu_{\cG_{\iota,u}'}(\phi).
\end{equation}
Here, \(\cG_{\iota,u}'\) is a convex combination of \(\cG_{\iota,u}\), \(\tilde \cG_{\iota,u}\) and \(\ell_{\iota,c}\). By Lemma \ref{lem:control-weights}, for some \(c_n >0\),
\[
p_c := \tilde p_c \min_{\iota \in \{1,2\}} \tilde p_{\iota} \ge c_n \tilde p_c (\Delta_*)^{2^n} > 0, 
\]
yielding a lower bound on the amount of coupled mass that is uniform in the choice of the initial standard pairs. To conclude the Lemma we need to bound the regularity of \(\cG_{\iota,u}'\). The fact that the densities are in \(\cC(\varpi_2)\) follows from the fact that, by Lemma \ref{lem:geometric-intersection}, \(\tilde \cG_{\iota,u}\) and \(\ell_{\iota, c}\) are regular standard families and \(\cG_{\iota,u}\) supports regular densities which are push-forwards of the original regular density of \(\ell_{\iota}\). Moreover, by \eqref{eq:coupled-uncoupled-dec-first-meeting} and the equation after it,
\begin{equation}\label{eq:estimate-reg-uncoupled}
\mu_{\cL_t^n \ell_{\iota}} = \frac{p_c}{\mu_{\ell_{\iota}}(\cM_t^n)} \mu_{\ell_{\iota,c}} + \frac{p_{\iota,u}'}{\mu_{\ell_{\iota}}(\cM_t^n)}\mu_{\cG_{\iota,u}'}
\end{equation}
By Lemmata \ref{lem:comparison-hole-regularity-images} and \ref{lem:lost-mass-with-t} and recalling the choice of \(N_{*}\) before \eqref{eq:most-curves-are-long-image}, there exists \(t_0 >0\) small enough depending on \(n\) such that, for \(t \in [0,t_0]\), we have \(\mu_{\ell_{\iota}}(\cM_t^n) \ge 1/2\) and
\[
\cZ(\cL^n_t \ell_{\iota}) \le \frac{\cZ(\hat{\cF}_t^n \ell_{\iota})}{\mu_{\ell_{\iota}}(\cM_t^n)} \le \frac{\bold B_2}{1 - C_n t} \le 2 \bold B_2\ll B_2 .
\]
Therefore, by \eqref{eq:estimate-reg-uncoupled} and choosing possibly a smaller \(p_c\) one obtains that the boundary of \(\cG_{\iota,u}'\) is can be made only slightly bigger than \(2\cZ(\cL_t^n \ell_{\iota})\) and so \(\cG_{\iota,u}'\) is a regular standard family. This concludes the proof for the case of two standard pairs. In the general case of two regular standard families \(\cG_{\iota}\), by Corollary \eqref{cor:Growth-Lemma} point (c), after \(N_{*}\sim \ln B_2\) iterations \(\cZ(\cF_t^{N_*} \cG_{\iota}) \le \bold B_2\). Then, calling \(\cF_t^{N_*} \cG_{\iota} = \{(p_{\iota, k}, W_{\iota, k}, \rho_{\iota,k})\}\), 
\[
\sum_{p_{\iota,k}: |W_{\iota,k}| \ge 1/B_2 } p_{\iota,k} \ge 1 - \frac{\bold B_2}{B_2} \ge \frac{1}{2}.
\]
Here, we have used  Lemma \ref{lem:meaning-cZ} in the first inequality and Equation \eqref{eq:intrinsic-constant-boundary} in the second. Therefore, at least half of the mass belongs to regular standard pairs. Calling \(\cH_{\iota}\) the standard families formed by conditioning on these regular standard pairs, we can then apply the argument as above to couple each element of \(\cH_1\) with any other of the family \(\cH_2\). The above argument yields the estimates \(N_{*} + N\) in place of \(N\) for the minimum time needed for coupling and \(p_c/2\) in place of \(p_c\) for the coupled mass. This concludes the proof of the Lemma.
\end{proof}
\begin{comment}
We conclude this section by upgrading Lemma \ref{lem:fundamental-coupling} to standard families.

\begin{lemma}\label{lem:fundamental-coupling-standar-families}
     For any \(\eta >0 \) there exists \(N_{\eta} \in \bN\) such that for any \(n \ge N_{\eta}\) there exists \(p \in (0,1)\) with the following property. For any two regular standard families \(\cG_{\iota}\) there exist two \(\eta\)-coupled regular standard pairs \(\ell_{\iota,c}\) and two regular standard families \(\cG_{\iota,u}\) such that, for any \(\phi \in \cC^0(\cM)\),
    \begin{equation}\label{eq:decomposition-coupling-curves-first-time}
    \mu_{\hat \cF^{n}_t\cG_{\iota}} (\phi) = p \mu_{\ell_c} (\phi) + (1-p)\mu_{\cG_{\iota,u}} (\phi).
    \end{equation}
\end{lemma}
\begin{proof}
    aaa
\end{proof}
\end{comment}

\subsection{Decoupling because of discontinuities}\label{subsec:dec-disc} \textit{In this section we define fake stable foliations for coupling and compute how much mass gets ‘decoupled' because of discontinuities.}

A problematic aspect of Definition \ref{def:stacking} is that the \(\eta\)-coupled mass is not guaranteed to be coupled after just some iterations of the map. This happens because we couple mass along vertical lines instead of stable manifolds. However, we find that the mass lost in the process is very small, since it is proportional to some power of the distance between the standard pairs \(\eta\). For any \(x\in \cM\) and \(n \in \bN_0\), recall that \(\cF^n (x) = (r_n, \vf_n)\) and let \(\cH_n(x)\) be the maximal connected component of 
\begin{equation}\label{eq:holonomy-def}
\cF^{-n} \biggl(\biggl\{(r_n,\vf) \in \cM: \vf \in \biggl[-\frac \pi 2, \frac \pi 2 \biggr]\biggr\}\setminus \cS_{-n,t}^{\bH} \biggr)
\end{equation}
containing \(x\). (The above could be empty). \(\cH_n (x)\) is an approximation of the stable manifold containing \(x\) and we call \(\{\cH_n(x)\}_{x\in \cM}\) \textit{fake stable foliation}. For two \(\eta\)-stacked unstable curves \(W_{\iota}\), \(\iota \in \{1,2\}\), \(n \in \bN\), set
\begin{equation}\label{eq:domain-holonomy}
\hat W_{\iota}^{(n)} = \{x \in W_{\iota}: \cH_n (x) \cap W_{3-\iota} \neq \emptyset\},
\end{equation}
and we define the \textit{fake holonomy map} \(\bold h_n : \hat W_{\iota}^{(n)} \to \hat W_{3-\iota}^{(n)}\) as
\[
\bold h_n(x) = \cH_n(x) \cap W_{3-\iota}.
\]
Note that \(3-\iota\) simply refers to the `other' curve. The map \(\bold h_n\) is well defined since, by transversality between stable and unstable curves, the intersection in \eqref{eq:domain-holonomy} is unique.
 For \(x \in \hat W_1^{(n)}\),  call \(H_x \subseteq \cH_n(x)\) the stable curve connecting \(x\) to \(\bold h_n (x)\). Since the vertical direction belongs to the large stable cone \(\cC^s\) we have that \(\cF^{k}(H_x)\) is aligned with \(\hat \cC^s\) for each \(k \in \{0,...,n-1\}\). Moreover, by \eqref{eq:curve-expanding}, for some \(c, C>0\), \(\Lambda_0 >1\) and any \(k \in \{0,...,n\}\),
\begin{equation}\label{eq:contraction-fake-stable}
\begin{split}
&|\cF^{-k}(\cF^k (H_x))| \ge c\Lambda_0^k |\cF^k(H_x)| \quad \text{or} \quad |\cF^{k} (H_x) | \le C \Lambda_0^{-k}|H_x|.
\end{split}
\end{equation}
Note also that \(\cF^k(H_x)\) is \(k\)-homogeneous by construction. Consider two \(\eta\)-stacked curves \(W_{\iota}\). The set \(\hat W_{\iota}^{(n)}\) where \(\bold h_n\) is defined consists of many, possibly infinite, connected subsets \(\hat W_{\iota,p}^{(n)}\) of \(W_{\iota}^{(n)}\). We choose the indexing such that, for each \(p\),
\[
\bold h_n (\hat W_{\iota}^{(n)}) = \hat W_{3-\iota}^{(n)}.
\]
The gaps between these domains correspond to points where the fake stable foliation meets a singularity or, near the boundary, where it ‘misses' the other unstable curve. Fix some \(\chi \in (0,1/2)\) that represents the contraction among stacked graphs.
\begin{lemma}\label{lem:stacking-images}
For any \(n \in \bN\) big enough and \(\hat W_{\iota,p}^{(n)} \subseteq \hat W_{\iota}^{(n)} \) with \(\bold h_n (\hat W_{1, p}^{(n)}) = \hat W_{2,p}^{(n)}\),  the two unstable curves \(\cF^n (\hat W_{\iota, p}^{(n)})\) are \(\chi \eta\)-stacked.
\end{lemma}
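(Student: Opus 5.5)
We need to show that \(\cF^n(\hat W_{1,p}^{(n)})\) and \(\cF^n(\hat W_{2,p}^{(n)})\) have the same \(r\)-projection, and that their graphs are \(\chi\eta\)-close in \(\cC^1\).

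\textbf{Step 1: same \(r\)-projection.} By definition \eqref{eq:holonomy-def}, for \(x \in \hat W_{1,p}^{(n)}\) the curve \(\cF^n(\cH_n(x))\) lies in the vertical line \(\{r = r_n(x)\}\). Since \(\bold h_n(x) \in \cH_n(x)\), the points \(\cF^n(x)\) and \(\cF^n(\bold h_n(x))\) have the same \(r\)-coordinate. The map \(\bold h_n\) is a bijection \(\hat W_{1,p}^{(n)} \to \hat W_{2,p}^{(n)}\), and \(\cF^n\) is smooth on both pieces because they avoid \(\cS_{n,t}^{\bH}\). Hence the images are graphs over the same interval, \(I_{\cF^n(\hat W_{1,p}^{(n)})} = I_{\cF^n(\hat W_{2,p}^{(n)})}\).

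\textbf{Step 2: \(\cC^0\) bound.} For \(r'\) in this common interval, let \(x\) be the corresponding preimage point. Then \(|\vf_{\cF^n W_1}(r') - \vf_{\cF^n W_2}(r')| \le |\cF^n(H_x)|\). By \eqref{eq:contraction-fake-stable} this is at most \(C\Lambda_0^{-n}|H_x|\). Since \(W_1,W_2\) are \(\eta\)-stacked and \(H_x\) is a stable curve transversal to both, \(|H_x| \le C_{\mathrm{cone}}\eta\). This gives a \(\cC^0\) distance of at most \(C C_{\mathrm{cone}}\Lambda_0^{-n}\eta \le \chi\eta/2\) for \(n\) large.

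\textbf{Step 3: \(\cC^1\) bound (main obstacle).} The slope of \(\cF^n(W_\iota)\) at a point \(y_\iota\) has two sources of difference: the different base points \(y_1,y_2\), and the different initial tangent vectors, whose slopes are \(\eta\)-close. I would use the continued-fraction representation \eqref{eq:slope-vector-push-cone}, or equivalently the projective contraction of \(D\cF\) on the unstable cone. The slope at time \(n\) equals \(\kappa + \cos\vf_n\,\cB_n\), where \(\cB_n\) is obtained by iterating the Möbius maps \(\cB \mapsto 1/(\tau + 1/(\cR+\cB))\) along the respective orbits.

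Compare the two orbits step by step. At step \(k\) the points \(\cF^k(x)\) and \(\cF^k(\bold h_n x)\) lie on the stable curve \(\cF^k(H_x)\) of length \(\le C\Lambda_0^{-k}\eta\). They also lie in the same \(k\)-homogeneous region and off the singularities, since \(\cF^k(H_x)\) avoids \(\cS_{-k,t}^{\bH}\). The one-step map on slopes contracts the previous difference by a uniform factor \(\vartheta<1\); this is the standard contraction of the continued fraction in the cone. It also adds an error bounded by the variation of \(\tau\), \(\kappa\) and \(\cR=2\kappa/\cos\vf\) between the two points.

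The delicate term is \(\cR\) near grazing collisions. There, homogeneity gives \(|\cos\vf - \cos\vf'| \lesssim \operatorname{dist}\cdot\cos^{-1/2}\) relative to \(\cos\vf\), so \(\cR\) may be large. However, \(1/(\cR+\cB)\) has derivative bounded by \(\cR^{-2}\), which absorbs the blow-up, and the variation of \(\tau\) is controlled as in Lemma \ref{lem:distortion}. The resulting per-step error is \(\lesssim (\Lambda_0^{-k}\eta)^{1/3}\) or better.

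Summing the recursion gives a slope difference of at most
\[
\vartheta^n\eta + C\sum_k \vartheta^{n-k}(\Lambda_0^{-k}\eta)^{a}
\]
for some \(a \in (0,1]\). To turn this into a bound of order \(\chi\eta\), I would exploit that the error is Lipschitz in the distance away from grazing. In strips where only the weaker Hölder bound holds, I would instead use distortion-type estimates, getting an error of the form \(\text{dist}\cdot\cos^{-1}\) bounded via Lemma \ref{lem:unstable-curves-cosine-k} by dist\(^{1/3}\) times the curve length\(^{2/3}\). The resulting estimate is of the form \(C\eta\sum_k \vartheta^{n-k}\Lambda_0^{-k/3} \le C\eta\, n\max(\vartheta,\Lambda_0^{-1/3})^n\).

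Taking \(n\) big enough that the total is \(\le \chi\eta/2\), and combining with Step 2, yields the \(\chi\eta\)-stacking.
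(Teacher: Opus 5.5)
Your Steps 1 and 2 match the paper's argument, but Step 3 has a genuine gap. The per-step error you actually justify is H\"older, \((\Lambda_0^{-k}\eta)^{1/3}\), so your sum is of order \(\eta^{1/3}\), not \(\eta\). This matters because \(n\) must be fixed independently of \(\eta\). The lemma is applied with a fixed \(n_0\) or \(N_c\) to curves that are \(\chi^k\eta\)-stacked, for every \(k\). No fixed \(n\) makes \(C\eta^{1/3}\theta^n\le\chi\eta\) for all small \(\eta\).

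Your proposed repair does not work. For a curve \(W\) in a homogeneity strip, Lemma \ref{lem:unstable-curves-cosine-k} gives \(\cos\vf\gtrsim|W|^{2/3}\). Hence \(\mathrm{dist}/\cos\vf\lesssim\mathrm{dist}\cdot|W|^{-2/3}\), which for \(\mathrm{dist}\le|W|\) is only \(\lesssim\mathrm{dist}^{1/3}\). Your bound ``\(\mathrm{dist}^{1/3}\) times length\(^{2/3}\)'' is the wrong way round. So the final estimate \(C\eta\sum_k\vartheta^{n-k}\Lambda_0^{-k/3}\) does not follow. Errors of the form \(\mathrm{dist}/\cos\vf\) are exactly what produce the exponent \(1/3\) in distortion bounds, and they should not appear here at all.

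The fix is to make your own remark about the \(\cR^{-2}\) factor quantitative; this gives Lipschitz errors at every step. Write slopes as \(\kappa+\cos\vf\,\cB^-\), with \(\cB^+_k=\cB^-_k+\cR_k\) and \(\cB^-_{k+1}=\cB^+_k/(1+\tau_k\cB^+_k)\).

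\begin{itemize}
\item Since \(\cF^k(H_x)\) lies in one homogeneity strip, \(|\Delta\cR_k|\lesssim\delta_k/\cos^2\vf_k\). Also \(\partial\cB^-_{k+1}/\partial\cB^+_k=(1+\tau_k\cB^+_k)^{-2}\lesssim\cos^2\vf_k\), because \(\cB^+_k\ge\cR_k\). So the \(\cR\)-contribution is \(O(\delta_k)\).
\item \(\tau\) is \(\cC^1\) in \((r_k,r_{k+1})\) by \eqref{eq:tau-regularity}, not in \((r,\vf)\), where it blows up like \(1/\cos\vf_{k+1}\). Hence \(|\Delta\tau_k|\lesssim\delta_k+\delta_{k+1}\), as in the proof of Lemma \ref{lem:estimates-mismatch-derivatives-different-graphs}.
\item The contraction factor is \((1+\tau\cB^+)^{-2}\le(1+2\tau_{\min}\kappa_{\min})^{-2}=:\vartheta<1\).
\item The final slopes are compared at the same \(r_n\), so \(\kappa\) cancels and \(|\cos\vf_n-\cos\bar\vf_n|\,\cB^-_n\lesssim\delta_n\).
\item Start the recursion at \(\cB^-_1=(\vf'+\kappa)/(\cos\vf+\tau(\vf'+\kappa))\). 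This is smooth with denominator \(\ge2\tau_{\min}\kappa_{\min}\), which avoids the \(1/\cos\vf_0\) blow-up of \(\cB^-_0\).
\item The initial slopes are taken at \(x\) and its holonomy image, which have different \(r\). Using the curvature bound, their difference is \(\gamma\le(1+D_2C_{\mathrm{cone}})\eta\), not just \(\eta\).
\end{itemize}

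This yields \(\gamma_n\le C\bigl(\gamma\vartheta^n+\delta\, n\max\{\vartheta,\Lambda_0^{-1}\}^n\bigr)\). That is exactly Lemma \ref{lem:contraction-angle-stacked-curves}. The paper simply cites it (Chernov--Markarian, Exercise 5.44) after bounding \(\delta\le C_{\mathrm{cone}}\eta\) and \(\gamma\le(1+D_2C_{\mathrm{cone}})\eta\).
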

\begin{proof}
For \(x = (r,\vf) \in \hat W_{1,p}^{(n)}\), recall the notation \(H_x \subseteq \cH_n(x)\) for stable curve connecting \(x\) to \(\bold h_n (x)\). By the definition \eqref{eq:holonomy-def} of fake holonomy, \(\cF^n(H_x)\) are vertical lines, the curves  \(\cF^n (\hat W_{\iota, p}^{(n)})\) are graphs of two \(\cC^2\) functions \(\vf_{\cF^n(\hat W_{\iota,p}^{(n)})}\) defined on the same domain and \(|\vf_{\cF^n(\hat W_{1,p}^{(n)})}(r_n) - \vf_{\cF^n(\hat W_{2,p}^{(n)})} (r_n)| = |\cF^n (H_x)|\). Since \(\cH_n(x)\) is aligned with \(\hat \cC^s\) and \(W_{\iota}\) are \(\eta\)-stacked one has that \(|H_x| \le C_{\mathrm{cone}} \eta\). Therefore, by \eqref{eq:contraction-fake-stable}, the \(\cC^0\) distance between  \(\vf_{\cF^n(\hat W_{1,p}^{(n)})}\) and  \(\vf_{\cF^n(\hat W_{2,p}^{(n)})}\) is not greater than 
\[
\sup_{x \in \hat W_{1, p}^{(n)}}|\cF^n(H_x)| \le  \sup_{x \in \hat W_{1,p}^{(n)}} C \Lambda_0^{-n}|H_x| \le C \Lambda_0^{-n} C_{\mathrm{cone}}\eta \le \chi \eta,
\]
where in the last inequality we consider \(n\) big enough. For the bound on the \(\cC^0\) distance of the derivatives, consider any \(x = (r,\vf)\in \hat W_{1,p}^{(n)}\). Note that  \(\delta := |x - \bold h_n(x)| \le |H_x| \le C_{\mathrm{cone}}\eta\) and, setting \(\bold h_n (x) = (\bar r, \bar \vf)\), we have that
\[
\begin{split}
    \gamma := |\vf_{W_1}'(r) - \vf_{W_2}'(\bar r)| &\le |\vf_{W_1}'(r) - \vf_{W_2}'(r)| + |\vf_{W_2}'(r) - \vf_{W_2}'(\bar r)| \\
    &\le |\vf_{W_1}'(r) - \vf_{W_2}'(r)| + D_2 \delta.
\end{split}
\]
Here \(D_2\) is the bound on the second derivatives of standard curves. By the equation above and using the definition of \(\eta\)-stacked curves we have that \(\gamma \le (1+D_2 C_{\mathrm{cone}})\eta\). We need to bound
\[
\gamma_n : = |\vf_{\cF^n(\hat W_{1,p}^{(n)})}'(r_n) - \vf_{\cF^n(\hat W_{1,p}^{(n)})}'(r_n)|.
\]
This is the angle between \(\cF^n(W_1)\) and \(\cF^n(W_2)\) at \(\cF^n(x)\) and \(\cF^n(\bold h_n(x))\). By Lemma \ref{lem:contraction-angle-stacked-curves} below, \(\gamma_n \le C(\delta n/\Lambda^{n} + \gamma/\Lambda^{n})\) for some \(C>0\) and \(\Lambda>1\). Hence, the statement follows from the previous estimates for \(\delta\) and \(\gamma\) and by taking \(n\) big enough.
\end{proof}

\begin{lemma}\label{lem:contraction-angle-stacked-curves}\cite[Exercise 5.44]{MR2229799}
    Let \(n \in \bN\), \(x \in \hat W_1^{(n)}\) and \(\bold h_n(x) \in \hat W_2^{(n)}\). Denote by \(\gamma\) the angle between the curves \(W_1\) and \(W_2\) at the points \(x\) and \(\bold h_n(x)\) and by \(\delta = |x - \bold h_n(x)|\). Finally, for \(k \in \{1,..,n\}\), denote by \(\gamma_k\) the angle between \(\cF^k(W_1)\) and \(\cF^k(W_2)\) at \(\cF^k(x)\) and \(\cF^k(\bold h_n(x))\). Then, there exists \(C>0\) such that \(\gamma_k \le C(\delta k/\Lambda^{k} + \gamma/\Lambda^{k})\).
\end{lemma}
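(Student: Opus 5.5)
The approach is to track the slopes of the two curves along the two orbits $x_k=\cF^k(x)$ and $y_k=\cF^k(\bold h_n(x))$ using the continued-fraction (wave-front) description of unstable curves. Write $\cB_k^{(1)},\cB_k^{(2)}$ for the curvatures of the wave fronts associated to $\cF^k(W_1)$ at $x_k$ and to $\cF^k(W_2)$ at $y_k$. Following \cite[Equations (3.34) and (3.39)]{MR2229799}, the slope is $\vf'=\kappa+\cos\vf\,\cB^{-}$. Its evolution is the one-step map
\[
\cB_{k+1}^{-}=\frac{1}{\tau_k+\frac{1}{\cR_k+\cB_k^{-}}},\qquad \cR_k=\frac{2\kappa_k}{\cos\vf_k},
\]
where the quantities $\tau_k,\kappa_k,\vf_k,\cR_k$ are evaluated along the respective orbit. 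Because of the relation $\vf'=\kappa+\cos\vf\,\cB^{-}$ and the boundedness of $\kappa'$ and of the slopes, it suffices to bound $|\cB^{(1)}_k-\cB^{(2)}_k|$ together with the base-point distance $\delta_k=|x_k-y_k|$. The term $\cB^{-}$ is bounded uniformly for unstable curves, and $\cos\vf$ and $\kappa$ are Lipschitz in the base point, so these two quantities control the angle.

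The first step is the base-point distance. Since $x_k,y_k$ lie on the same fake stable curve $\cF^k(H_x)$, which is $k$-homogeneous and aligned with $\hat\cC^s$, \eqref{eq:contraction-fake-stable} gives $\delta_k\le C\Lambda_0^{-k}\delta$. The second step is a one-step recursion. The map $\cB\mapsto 1/(\tau+1/(\cR+\cB))$ has derivative $(\cR+\cB)^{-2}(\tau+1/(\cR+\cB))^{-2}\le (1+\tau_{\min}\cR)^{-2}\le \Lambda^{-1}$, so it contracts in $\cB$ uniformly. Its dependence on the base point enters through $\tau_k$ and $\cR_k$. For the $\tau$-dependence, it is Lipschitz in the base point with constant $\lesssim 1/\cos\vf_{k+1}$, which is absorbed using homogeneity. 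For the $\cR$-dependence, the relevant derivative $\partial_\cR$ carries a factor $(\cR+\cB)^{-2}$. That factor cancels the $\cos^{-2}\vf_k$ blow-up of $\partial\cR/\partial\vf$, so the $\cR$-term is bounded by $C\delta_k$ times a constant. This yields
\[
|\cB^{(1)}_{k+1}-\cB^{(2)}_{k+1}|\le \Lambda^{-1}|\cB^{(1)}_k-\cB^{(2)}_k|+C\delta_k ,
\]
with a constant that is uniform thanks to the homogeneity of $\cF^k(H_x)$. Iterating this with $\delta_j\le C\Lambda_0^{-j}\delta$ gives, after possibly reducing $\Lambda$ to $\min\{\Lambda,\Lambda_0\}$,
\[
|\cB^{(1)}_k-\cB^{(2)}_k|\le \Lambda^{-k}|\cB^{(1)}_0-\cB^{(2)}_0|+C\sum_{j<k}\Lambda^{-(k-1-j)}\Lambda^{-j}\delta\le C\Lambda^{-k}\gamma + Ck\Lambda^{-k}\delta .
\]
The factor $k$ arises from the convolution of two geometric sequences with the same rate, and this is exactly the form $\gamma_k\le C(\delta k/\Lambda^k+\gamma/\Lambda^k)$ in the statement. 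The initial term is converted via $|\cB_0^{(1)}-\cB_0^{(2)}|\lesssim \gamma+\delta$, using that $\cos\vf$ is bounded below on the relevant homogeneous piece, or by passing to $\vf'$ directly after one iterate.

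The main obstacle is the uniformity of the constant in the base-point Lipschitz estimate near grazing collisions, where $\cR$ and $\partial\tau/\partial\vf$ blow up. I would first try to exploit the fact that the map is a Möbius contraction whose $\cR$-sensitivity is damped by $(\cR+\cB)^{-2}$, as above. For the remaining $\tau$ and $\cos\vf_{k+1}$ factors, I would use that both points lie on a $k$-homogeneous stable curve inside one homogeneity strip. There $\delta_{k+1}/\cos\vf_{k+1}$ is bounded by a power of $\delta_{k+1}$ by Lemma \ref{lem:unstable-curves-cosine-k}, in the same way as in the proof of Lemma \ref{lem:bounded-distortion}. If this cancellation is only partial, one obtains $\delta_k^{1/3}$ instead of $\delta_k$, which still gives the claim with a smaller $\Lambda$. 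Alternatively, one can simply quote \cite[Exercise 5.44]{MR2229799}, which records precisely this estimate.
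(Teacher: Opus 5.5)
Your argument is correct in its main line, but it takes a different route from the paper. The paper gives no proof of its own: it cites \cite[Exercise 5.44]{MR2229799} and remarks that the argument there uses only the alignment of the connecting curve with $\hat\cC^s$, so it transfers to the fake holonomy $\cH_n(x)$. Your wave-front recursion $\cB^-_{k+1}=f(\cB^-_k;\tau_k,\cR_k)$, with $f=(\cR+\cB)/(1+\tau(\cR+\cB))$, is a self-contained check of exactly that remark. Apart from $\cF^k(H_x)$ being a connected stable curve, the only input about it is $\delta_k\le C\Lambda_0^{-k}\delta$ from \eqref{eq:contraction-fake-stable}. The contraction $\partial_{\cB}f=(1+\tau(\cR+\cB))^{-2}\le(1+2\tau_{\mathrm{min}}\kappa_{\mathrm{min}})^{-2}$ is uniform, and the factor $k$ is indeed the convolution of two geometric sequences. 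Compared with the citation, your route shows why the bound is \emph{linear} in $\delta$. That linearity is what Lemma \ref{lem:stacking-images} needs: there $n$ is fixed, $\eta\to0$, and one must reach $\gamma_n\le\chi\eta$.

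Two steps, as you wrote them, do not close and should be replaced.

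(i) The $\tau$-term. The route through $\partial\tau\sim 1/\cos\vf_{k+1}$ plus homogeneity is not justified as written. Your fallback is false: a $\delta^{1/3}$ term cannot be absorbed into $C\delta k\Lambda^{-k}$ by shrinking $\Lambda$, since $\delta^{1/3}\gg\delta$ as $\delta\to0$, and it would break Lemma \ref{lem:stacking-images}. Instead, use that $\tau$ is the chord length between consecutive base points. This gives $|\tau(x_k)-\tau(y_k)|\le\delta_k+\delta_{k+1}$, which is how the paper handles $\tau$ in the proof of Lemma \ref{lem:estimates-mismatch-derivatives-different-graphs} (cf.\ \eqref{eq:tau-regularity}). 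Together with $|\partial_\tau f|\le\tau_{\mathrm{min}}^{-2}$, this yields $C\delta_k$ with no blow-up. The $\cR$-term also needs no homogeneity. With $\cB\ge0$, one has $|f(\cB;\tau,\cR_1)-f(\cB;\tau,\cR_2)|\le|\cR_1-\cR_2|/(\tau^2\cR_1\cR_2)=\tau^{-2}|\cR_1^{-1}-\cR_2^{-1}|\le C\delta_k$, because $\cR^{-1}=\cos\vf/(2\kappa)$ is Lipschitz.

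(ii) The initial term. $\cos\vf$ is not bounded below on homogeneous curves, since they can lie arbitrarily close to grazing. In general $|\cB^{(1)}_0-\cB^{(2)}_0|$ is only bounded by $C(\gamma+\delta)/\cos\vf(x)$. Your alternative of passing through one iterate is the one that works. The first step multiplies this difference by $(1+\tau(\cR(x)+\cB))^{-2}\le(\tau\cR(x))^{-2}\le C\cos^2\vf(x)$. Hence $|\cB^{(1)}_1-\cB^{(2)}_1|\le C(\gamma+\delta)$, and the recursion can start from there.

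With these repairs the argument is complete, and it uses no homogeneity at all.
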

In the original reference, Lemma \ref{lem:contraction-angle-stacked-curves} is stated for two points joined by a real holonomy and the statement holds for arbitrary \(n\). The proof uses the fact that \(\cH_n(x)\) is aligned with the small stable cone and so is not affected by the fact that we consider fake stable holonomies. The next result is intuitive: if the fake holonomy map \(\bold h_n\) is not defined on some point \(x \in W_\iota\) there can be only be two reasons: either \(x\) is near the boundary of \(W_{\iota}\), or the fake stable foliation \(\cH_n(x)\) intersects a discontinuity. In the second case \(x\) is close to \(W_\iota \cap  \cS^{\bH}_{n,t}\).
\begin{lemma}\label{lem:intersection-singularity}
There exist \(C_{\mathrm{cone}}>0 \) such that, for any \(n \in \bN\) and \(x\in W_{\iota} \setminus \hat W_\iota\), either \(x \in [\partial W_\iota]_{ C_{\mathrm{cone}}\eta}\), or \(x \in B(x_{*}, C_{\mathrm{cone}}\eta)\) for some \(x_{*} \in  W_\iota \cap \cS^{\bH}_{n,t}\) (or both).
\end{lemma}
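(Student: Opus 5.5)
We argue by contraposition: fix \(x \in W_\iota \setminus \hat W_\iota^{(n)}\) with \(x \notin [\partial W_\iota]_{C_{\mathrm{cone}}\eta}\), and we produce a point of \(W_\iota \cap \cS^{\bH}_{n,t}\) within distance \(C_{\mathrm{cone}}\eta\) of \(x\). Let \(L\) be the vertical segment through \(x\), say of length of order \(C\eta\) on each side. Since \(W_1, W_2\) are \(\eta\)-stacked (\(I_{W_1} = I_{W_2}\) and \(\|\vf_{W_1} - \vf_{W_2}\|_{\cC^0} \le \eta\)) and \(x\) is far from \(\partial W_\iota\), this vertical segment meets \(W_{3-\iota}\) at distance at most \(\eta\) from \(x\). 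The vertical direction lies in the large stable cone, so the curve \(\cF^{k}\) of a vertical piece stays aligned with \(\hat\cC^s\). Rather than verticals, though, we must work with the fake leaf \(\cH_n(x)\), which is the preimage of a vertical line in \(\cF^n(x)\).

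The main case split is the following. If \(\cF^n\) is smooth (no element of \(\cS^{\bH}_{n,t}\)) on the ``thin strip'' \(R_x\) bounded by \(W_1\), \(W_2\) and the vertical segments at \(r\)-distance \(C_{\mathrm{cone}}\eta\) on both sides of \(x\), then \(\cF^n(R_x)\) is a connected region bounded by images of the unstable curves, and the vertical line through \(\cF^n(x)\) has a connected component in \(\cF^n(R_x)\setminus \cS_{-n,t}^{\bH}\) joining \(\cF^n(W_\iota)\) to \(\cF^n(W_{3-\iota})\). Its preimage is a stable curve, aligned with \(\hat \cC^s\), whose slope is bounded away from that of the unstable cone. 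By transversality (uniform cone slopes, the constant \(C_{\mathrm{cone}}\)), it exits \(R_x\) only through \(W_{3-\iota}\), hence \(\cH_n(x)\cap W_{3-\iota} \ne \emptyset\), i.e.\ \(x \in \hat W_\iota^{(n)}\), a contradiction. Here one uses that a stable curve starting at \(x\) and steeper than the unstable curves cannot leave the strip through its vertical sides before crossing the opposite graph, since the strip has height \(\le \eta\) and half-width \(C_{\mathrm{cone}}\eta\).

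Therefore \(R_x\) must meet \(\cS^{\bH}_{n,t}\). This set consists of stable curves (plus horizontal lines \(\cS_0\) and the homogeneity lines, which are also transversal to unstable curves). By Lemma \ref{lem:continuation-of-singularities}, any component entering \(R_x\) continues monotonically until \(\partial\cM\); since it is transversal to the unstable cone and \(R_x\) is thin, it must cross \(W_\iota\) (or \(W_{3-\iota}\), in which case it also crosses \(W_\iota\) after passing through the strip, or exits through a vertical side near \(\partial W\)). Its crossing point \(x_*\in W_\iota\cap \cS^{\bH}_{n,t}\) lies within \(C_{\mathrm{cone}}\eta\) of \(x\) by the geometry of the strip. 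The main obstacle is making the continuation argument rigorous for homogeneity lines and \(\partial\cH_t\) preimages (which end on \(\cS_0\) or other singularities rather than crossing), where one should instead argue that the \emph{first} obstruction encountered by the leaf \(\cH_n(x)\) before reaching \(W_{3-\iota}\) is a point of \(\cS^{\bH}_{-n,t}\)-preimage lying on a curve of \(\cS_{n,t}^{\bH}\) which, by monotonicity and transversality, must reach \(W_\iota\) within the strip.
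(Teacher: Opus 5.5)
Your argument is essentially the paper's. The paper also uses transversality to see that, for \(x\notin[\partial W_\iota]_{C_{\mathrm{cone}}\eta}\), the leaf \(\cH_n(x)\) can only miss \(W_{3-\iota}\) by terminating at some \(\bar x\in\cS^{\bH}_{n,t}\) within \(O(\eta)\) of \(x\); this is your Case~1 read in contrapositive form, and exactly the ``first obstruction'' you suggest at the end. It then continues the singular curve through \(\bar x\) via Lemma~\ref{lem:continuation-of-singularities} until it crosses \(W_\iota\).

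The obstacle you flag is milder than you suggest. Lemma~\ref{lem:continuation-of-singularities} is stated for \(\cS_{n,t}\supseteq\Xi_{n,t}\), so preimages of \(\partial\cH_t\) are already covered. For homogeneity lines the paper is equally brief: the horizontal lines \(\bS_k\) cross \(W_\iota\) directly, while components of \(\cF^{-s}(\bS_k)\) end on \(\cS_s\), from which the lemma continues them to \(\partial\cM\).
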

\begin{proof}
Call the two cases (a) and (b). We assume without loss of generality that \(x \notin \cS^{\bH}_{n,t}\) (otherwise we are in case (b) with \(x_* = x\)). Therefore, \(\cH_n(x) \neq \emptyset\) and both endpoints of \(\cH_n(x)\) belong to \(\cS^{\bH}_{n,t}\). Since \(W_{\iota}\) are \(\eta\)-stacked, there exists \(C_{\mathrm{cone}}>0\) such that any long enough stable curve intersecting \(W_{\iota} \setminus [\partial W_\iota]_{ C_{\mathrm{cone}}\eta}\) must intersect also \(W_{3-\iota}\). Let us assume that we are not in case (a). By the sentence above, it must be that \(\cH(x)\) terminates on a point \(\bar x \in \cS^{\bH}_{n,t}\) before intersecting the other curve. By Lemma \ref{lem:continuation-of-singularities}, there exists a curve \(S \subseteq \cS^{\bH}_{n,t}\) connecting \(\bar x\) to \(\partial \cM\). Moreover, \(S\) is aligned to the stable cone and so it is transversal to \(W_{\iota}\). By possibly choosing a even bigger \(C_{\mathrm{cone}}\) in point (a), we can ensure that \(\bar x\) does not belong to a \(C_{\mathrm{cone}}'\eta\) neighborhood of \(\partial W_{\iota}\) for \(C_{\mathrm{cone}}'>0\) sufficiently big. Hence, \(S\) must intersect \(W_{\iota}\) in a point \(x_{*}\). Finally, for some other \(C_{\mathrm{cone}}''>0\), \(|x-x_{*}| \le |x-\bar x| + |\bar x - x_{*}| \le 2 C_{\mathrm{cone}}'' \eta\). After renaming the constants, this concludes the proof.
\end{proof}

The next Lemma tells that, for any fixed iterate, the complexity is uniform in the parameter of the perturbation.

\begin{lemma}\label{lem:complexity-small-pieces}
For any \(n \in \bN\), there exists \(K_n \in \bN\) such that for any unstable curve \(W\) (even longer than \(\delta_{*}\)) and \(t \in [0,t_0]\), 
    \[
   \# \{W \cap \cS_{n,t}\} \le K_n.
    \]
\end{lemma}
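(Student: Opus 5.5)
We bound the number of intersection points of an unstable curve $W$ with $\cS_{n,t}$ by combining two earlier facts: the number of smooth pieces of $\cS_{n,t}$ is bounded uniformly in $t$ (Lemma \ref{lem:bound-number-smooth-components}), and each such piece is transversal to the unstable cone. We do not need to restrict to short curves. Since $W$ is aligned with $\hat\cC^u$, its length is bounded by a constant depending only on the phase space. Indeed, $\pi_r(W)$ is an interval of length at most $|\partial \cD|$ and the slope is bounded by $C_{\mathrm{cone}}$; in fact, because the slope is at least $\kappa_{\min}$ and $\vf$ ranges over an interval of length $\pi$, we get $|W|\le C_{\mathrm{cone}}\pi/\kappa_{\min}$.

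The plan is as follows. By Lemma \ref{lem:bound-number-smooth-components}, write $\cS_{n,t}=\bigcup_{i=1}^{M_n}\alpha_i$, where each $\alpha_i$ is a $\cC^1$ curve and $M_n$ is independent of $t$. It then suffices to show that each $\alpha_i$ meets $W$ in at most a uniformly bounded number of points, say at most one point, after possibly refining the decomposition.

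There are three kinds of pieces. The pieces $\alpha_i\subset\cS_0$ are horizontal lines $\vf=\pm\pi/2$. Since $W$ is a graph with strictly positive slope $\vf_W'\ge\kappa_{\min}$, it crosses each of these lines at most once. The pieces in $\cS_{n,t}\setminus\cS_{0}$ are stable curves, as noted after Lemma \ref{lem:unstable-curves-cosine-k}, and they include the vertical lines $\partial\cH_t$ and their preimages. A stable curve with tangent in $\hat\cC^s$ is the graph of a nonincreasing function of $r$: its slope is at most $-\kappa_{\min}<0$, or it is vertical in the case of $\partial\cH_t$. The unstable curve $W$ is the graph of a strictly increasing function. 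So the difference of the two $\vf$-values is strictly monotone along $r$, and the two graphs intersect at most once. One technical point is that a $\cC^1$ component of $\cS_{n,t}\setminus\cS_0$ is genuinely a single monotone graph. This follows from the monotonicity part of Lemma \ref{lem:continuation-of-singularities}, which also covers the fake-corner case $t>0$. If a component is only piecewise monotone, we refine it into monotone pieces; the number of pieces is still bounded uniformly in $t$, by the same induction as in the proof of Lemma \ref{lem:bound-number-smooth-components}.

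Combining the three cases gives $\#\{W\cap\cS_{n,t}\}\le M_n$ (or $M_n$ times the refinement constant), so we can take $K_n$ to be this bound.

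The main obstacle is making sure that the count of pieces and their monotonicity are uniform in $t$. For preimages of $\partial\cH_t$, the vertical lines move with $t$, but their preimage structure under $\cF^{-k}$ for $k\le n$ is cut only by $\cS_{-1}$-type curves. That is exactly what the inductive count in Lemma \ref{lem:bound-number-smooth-components} controls.
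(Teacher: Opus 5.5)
Your argument is correct, but it takes a more direct route than the paper. The paper argues locally. The smooth pieces of $\cS_{n,t}$ move continuously with $t$, so pieces that never meet for $t\in[0,t_0]$ stay a uniform distance $\delta_{**}$ apart. A curve of length $\delta_{**}/2$ then meets $\cS_{n,t}$ in at most $M_n$ points, where $M_n$ is the maximal number of pieces through a common point. Chopping $W$ into such pieces, using a uniform bound $\tilde C$ on the length of unstable curves, gives $K_n=M_n(2\tilde C/\delta_{**}+1)$.

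You instead use a global fact. A connected $\cC^1$ curve with tangents in $\hat\cC^s$ is the graph of a strictly decreasing function of $r$, $W$ is the graph of a strictly increasing one, and the lines in $\cS_0$ and $\partial\cH_t$ are trivial cases. So each of the uniformly many components from Lemma~\ref{lem:bound-number-smooth-components} meets $W$ at most once. This removes the continuity in $t$, the separation scale and the length bound, and lets you take $K_n$ equal to the number of components.

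The paper's local count tacitly uses the same one-intersection property on short pieces, as does the proof of Lemma~\ref{lem:bound-number-smooth-components} against the components of $\cS_{-1}$. Its covering scheme is the more generic compactness-plus-transversality template, which does not need the global graph structure of the $(r,\vf)$ chart.

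Two of your steps are not needed. Your length bound on $W$ is never used. The appeal to Lemma~\ref{lem:continuation-of-singularities}, or to a refinement into monotone pieces, is unnecessary, since monotonicity follows directly from the sign and boundedness of the slopes in $\hat\cC^s$.

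One point that neither you nor the paper address: $r$ is periodic on each scatterer, so two graphs could a priori share two disjoint $r$-arcs. This cannot happen, because the $r$-projection of a stable or unstable curve carries total curvature at most $\pi$, while $\partial\cD_i$ carries $2\pi$. Alternatively, cutting at a fixed $r$ changes $K_n$ only by a bounded factor.
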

\begin{proof}
For any \(n \in \bN\), \(\cS_{n,t} = \cS_n \cup \Xi_{n,t}\) is a finite union of \(\cC^1\) curves. We say that a family \(\gamma_t: [0,1] \to \cM\) of curves varies continuously with \(t\) if \(t \to \gamma_t\) is a continuous function w.r.t.\! the \(\cC^0\) topology on the space of curves. We have that
\[
\cS_n \cup \Xi_{n,t} = \cS_n \cup \bigcup_{k=0}^{n}\cF^{-k}(\partial \cH_t \setminus \cS_{-k}).
\]
By transversality, \(\partial \cH_t \setminus \cS_{-k}\) is a finite union of curves which varies continuously with \(t\) (admitting the presence of degenerate elements, curves are allowed to gradually `grow or shrink' to a point) and by continuity of \(\cF^{-k}\) on \(\cM \setminus \cS_{-k}\) also \(\cS_{n,t} =\cS_n \cup \Xi_{n,t}\) is a finite union of curves which vary continuously with \(t\). Therefore, by the established continuity, there is a uniform lower bound on the distance between those curves in \(\cS_{n,t}\) that never intersect for \(t \in [0,t_0]\). I.e., if the intersection of two \(\cC^1\) curves is empty for all \(t \in [0,t_0]\), then they are at least \(\delta_{**}\) apart, for some \(\delta_{**} >0\), uniformly in \(t\). Hence, any unstable curve with length \(\delta_{**}/2\) can intersect \(\cS_{n,t}\) in at most \(M_n\) points, where \(M_n\) is the maximal number of intersections in a point for curves in \(\cS_{n,t}\) over all \(t \in [0,t_0]\). By Lemma \ref{lem:bound-number-smooth-components}, there are only finitely many smooth components in \(\cS_{n,t}\) uniformly in \(t\), so that \(M_n < \infty\). To conclude, we divide each unstable curves in \(\delta_{**}/2\)-long components and, since any unstable curve (possibly longer than \(\delta_*\)) is smaller than some constant \(\tilde C>0\), related to the diameter of the table, we prove the statement with \(K_n = M_n (2 \tilde C /\delta_{**} + 1)\).
\end{proof}

 Let \(W\) be an unstable curve. If \(\cS^{\bH}_{n,t} \cap W \neq \emptyset\), there are typically accumulation points of \(\cS^{\bH}_{n,t}\) on \(W\). In the next Lemma we characterize how fast these discontinuities accumulate around points in \(\cS^{\bH}_{n,t} \cap W\). We define the enlargement \(W_{e}\) as an unstable curve that contains \(W\) so that \(2\delta_{*} \le |W_{e}| \le 3\delta_{*}\) and \(W\) belongs to the ‘center' of \(W_e\). If the extension of \(W\) intersects the boundary \(\partial \cM\) we do not extend \(W_e\) any further independently of the length of \(W_e\). We obtain \(W_e\) by extending the function \(\vf_{W}\) to a bigger interval in a way that it belongs to \(\cW\) except for the requirement of the length being less than \(\delta_*\) (this is possible thanks to the standard extension theorems, while the actual way we extend \(\vf_{W}\) does not play any role in what follows). In practice, these extensions are a tool to book-keep the intersections of unstable curves with \(\cS^{\bH}_{n,t}\) in terms of the intersections with \(\cS_{n,t}\). In the next Lemma we fix \(k_0\) from \eqref{eq:homogeneity} large enough.
\begin{lemma}\label{lem:accumulation-disc}
There exists \(C>0\) with the following property. For any \(n \in \bN\), there exists \(k_0 \in \bN\) such that, for any \(k \ge k_0\), \(W \in \cW\) and \(x \in W \cap \cup_{s=0}^n \cF^{-s}(\bS_k)\), there exists \(z \in W_e \cap \cS_{n}\) with \(|x - z| \le C k^{-2}\).
\end{lemma}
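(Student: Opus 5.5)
We argue by induction on the time index \(s\in\{0,\dots,n\}\) such that \(\cF^s(x)\in\bS_k\), and treat \(s=0\) and \(s\geq 1\) separately.

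\textbf{Case \(s=0\).} Here \(x\in W\cap\bS_k\), so \(\cos\vf\sim k^{-2}\) at \(x\), i.e.\ \(x\) lies within vertical distance \(\sim k^{-2}\) of \(\partial\cM=\cS_0\subseteq\cS_n\). The unstable curve \(W_e\) through \(x\) has slope bounded above and below by \(C_{\mathrm{cone}}\). Continuing \(W_e\) from \(x\) in the direction of increasing \(|\vf|\), we reach \(\partial\cM\) after length at most \(C k^{-2}\). We need this to be smaller than \(\delta_*\), so that the crossing happens inside \(W_e\); this holds once \(k_0\) is large, namely \(k_0^{-2}\ll\delta_*\). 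This is also why the extension \(W_e\) stops at \(\partial\cM\): it gives a point \(z\in W_e\cap\cS_0\) with \(|x-z|\le Ck^{-2}\).

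\textbf{Case \(s\ge1\).} Now \(y:=\cF^s(x)\in\bS_k\), and \(\cF^s\) is smooth near \(x\) along \(W_e\) unless \(W_e\) already meets \(\cS_s\) close to \(x\); in that case we are done. Otherwise, let \(V\) be the component of \(\cF^s(W_e\setminus\cS_s)\) containing \(y\). The plan is to show that \(V\) is long enough to reach \(\partial\cM\), at distance \(\sim k^{-2}\) from \(y\). Suppose instead that the component of \(W_e\setminus \cS_s\) containing \(x\) has both endpoints at distance \(\ge Ck^{-2}\) from \(x\). By the uniform expansion of unstable curves (Lemma \ref{lem:basic-FGu}, or \eqref{eq:curve-expanding}), its image under \(\cF^s\) then has, on each side of \(y\), length at least \(c\Lambda^s Ck^{-2}\ge cCk^{-2}\). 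Taking \(C\) large, this image crosses \(\bS_{k}\) transversally and reaches \(\partial\cM\). But \(\cF^s\) is smooth on that component, so it cannot cross \(\partial\cM=\cS_0\) in the interior, since \(\cF^{-s}(\cS_0)\subseteq\cS_s\). This is a contradiction, so some point \(z\in W_e\cap\cS_s\subseteq W_e\cap\cS_n\) lies within \(Ck^{-2}\) of \(x\).

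The length bound must be set up in the forward direction, comparing lengths along \(W_e\) measured from \(x\). The lower bound on expansion along \(r\) in Lemma \ref{lem:basic-FGu} holds uniformly in \(s\) (with constant \(c\)), so \(C\) does not depend on \(n\). The only \(n\)-dependence enters through the requirement that the neighbourhood of \(x\) of size \(Ck^{-2}\) stays inside \(W_e\), and through \(k_0\). That requirement is met because \(W\) sits in the centre of \(W_e\) with margin \(\ge\delta_*/2\), so we only need \(Ck_0^{-2}\le\delta_*/2\); if the extension was truncated at \(\partial\cM\), that boundary point itself serves as \(z\).

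The main obstacle is making the contradiction rigorous. We must make sure that the image of a component of \(W_e\setminus\cS_s\) containing a long two-sided neighbourhood of \(x\) is genuinely a single smooth unstable curve through \(y\), and that its expansion near \(y\), where \(\cos\vf\) is small, is at least that guaranteed by Lemma \ref{lem:basic-FGu}. We would use the first part of Lemma \ref{lem:basic-FGu}, namely \(|r_{1,W}'|\ge c/\cos\vf_1\), which only helps near grazing points. Alternatively, a cruder route avoids tracking two sides: cover one side only, moving toward \(\partial\cM\) on the image side. Since the image direction toward \(\partial\cM\) corresponds to a definite direction along \(W_e\) (monotonicity of the \(r\)-coordinate), the argument needs only the half of the neighbourhood in that direction.
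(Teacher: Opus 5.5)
Your proof is correct and follows essentially the same route as the paper. The \(\cF^s\)-image of the smooth piece of \(W_e\) through \(x\), on the side heading toward \(\partial\cM\), is an unstable curve confined to the strip of height \(k^{-2}\) between \(\bS_k\) and \(\partial\cM\), so it has length \(O(k^{-2})\); uniform expansion of unstable curves (uniform in \(s\)) then gives an \(O(k^{-2})\) bound along \(W_e\). The paper argues directly on that one side instead of by contradiction, and your explicit handling of the endpoints of \(W_e\) (margin \(\delta_*/2\), truncation at \(\partial\cM\)) makes the existence of \(z\) inside \(W_e\) clearer than the paper's continuity justification of \(k_0\).
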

\begin{proof}
For each \(s \in \{0,..,n\}\), there exists \(k_0 \in \bN\) big enough such that, for any \(W \in \cW\), if \(W \cap \cF^{-s}(\bS_k)\) for some \(k \ge k_0\) is non empty then also \(W_e \cap \cS_{s} = W_{e} \cap \cF^{-s}(\cS_{-s})\) is non empty. This is possible since \(\bS_{k}\), \(k \ge k_0\), approaches \(\cS_0\) as \(k_0 \to \infty\). Therefore, since \(\cF^{-s}(\bS_k)\) intersects \(W\), by continuity of \(\cF^{-s}\) on \(\cM \setminus \cS_{-s}\), we have that \(\cF^{-s}(\cS_{-s}) = \cS_s\) intersect the enlargement \(W_e\), which is at least \(\delta_{*}\) longer than the original \(W\). We set \(k_0\) to be the largest value so obtained for all \(s \in \{0,..,n\}\). Fix some \(s \in \{0,...,n\}\) and consider the curve \(\gamma \subseteq W_{e}\) that connects \(x \in W \cap \cF^{-s}(\bS_k)\) to  the first point \(z \in W_e \cap \cS_s\) and such that it is oriented in the direction of \(\cF^{-s}(\bH_p)\) with \(p\ge k\). In particular, we construct \(\gamma\) so that \(\cF^{s}(\gamma)\subseteq \cF^s(W_e)\) is a curve starting at \(\cF^s(x) \in \bS_k\), oriented in a way that \(\cF^s(\gamma) \subseteq \cup_{p\ge k}\bH_p\) (i.e., it goes in the direction of the singularity) and such that it terminates on the first intersection with \(\cF^s(\cS_s) = \cS_{-s}\). The inclusion \(\cF^s(\gamma) \subseteq \cup_{p\ge k}\bH_p\), together with the fact that \(\cF^s(\gamma)\) is an unstable curve, implies that \(|\cF^{s}(\gamma)| \le C_{\mathrm{cone}} k^{-2}\). Therefore, by \eqref{eq:curve-expanding}, for some \(C>0\) and \(\Lambda_0 >1\),
\[
|x-z| \le |\gamma| \le C \Lambda_0^{-s} |\cF^s(\gamma)| \le C C_{\mathrm{cone}} \Lambda_0^{-s} k^{-2} \le C k^{-2}.
\]
Since the above estimate holds for any \(x \in W\cap \cF^{-s}(\bS_k)\) and for any \(s \in \{0,..,n\}\), the equation above concludes the proof.
\end{proof}

On a couple of occasions, we will need to compute the effect on the boundary of conditioning on the event of being or not being stacked. Let \(\cG_{\iota} =\{(p_{j,\iota}, W_{j,\iota},\rho_{j,\iota})\}\) be two regular standard families such that \(W_{j,1}\) and \(W_{j,2}\) are \(\eta\)-stacked and let \(\cG_{\iota}^{(n)}\) and \(\bar \cG_{\iota}^{(n)}\) the standard families obtained by conditioning on the sets \( \hat W_{j,\iota}^{(n)}\) and \(W_{j,\iota} \setminus \hat W_{j,\iota}^{(n)}\) introduced in \eqref{eq:domain-holonomy}. More precisely, denote by \(\hat W_{j,p,\iota}^{(n)}\) and \(\bar W_{j,q,\iota}^{(n)}\) the maximal connected components of \(\hat W_{j,\iota}^{(n)}\) and \(W_{j,\iota} \setminus \hat W_{j,\iota}^{(n)}\) (possibly further refined if their \(n^{\mathrm{th}}\) images are longer than \(\delta_{*}\)) and set
\begin{equation}\label{eq:conditioning-def}
\begin{split}
&\cG_{\iota}^{(n)} = \biggl\{\biggl(\frac{p_{j,\iota} \int_{I_{\hat W_{j,p,\iota}^{n}}} \rho_{j,\iota}}{\sum_{j} p_{j,\iota} \int_{I_{\hat W_{j,p,\iota}^{(n)}}} \rho_{j,\iota}}, \hat W_{j,p,\iota}^{(n)}, \frac{\rho_{j,\iota}\Id_{I_{\hat W_{j,p,\iota}^{n}}}}{\int_{ \hat W_{j,p,\iota}^{(n)}}\rho_{j,\iota}} \biggr)\biggr\} \quad \text{and} \\
&\bar \cG_{\iota}^{(n)} = \biggl\{\biggl(\frac{p_{j,\iota} \int_{I_{\bar W_{j,q,\iota}^{n}}} \rho_{j,\iota}}{\sum_{j} p_{j,\iota} \int_{I_{\bar W_{j,q,\iota}^{(n)}}} \rho_{j,\iota}}, \bar W_{j,q,\iota}^{(n)}, \frac{\rho_{j,\iota}\Id_{I_{\bar W_{j,q,\iota}^{n}}}}{\int_{ \bar W_{j,q,\iota}^{(n)}}\rho_{j}}\biggr)\biggr\}.
\end{split}
\end{equation}

\begin{lemma}\label{lem:conditioning}
 For any \(n, \eta >0\) and any two regular standard family \(\cG_{\iota}\) as above,
    \[
    \cZ(\hat \cF_t^{n}\cG_{\iota}^{(n)}) \le  \frac{C_{\mathrm{cone}}e^{\Omega \varpi_2}\cZ(\hat \cF_t^{n}\cG_{\iota})}{\sum_{j} p_{j,\iota} \int_{I_{\hat W_{j,p,\iota}^{(n)}}} \rho_{j,\iota}} \quad \text{and}\quad \cZ(\hat \cF_t^{n}\bar\cG_{\iota}^{(n)}) \le  \frac{C_{\mathrm{cone}}e^{\Omega \varpi_2}\cZ(\hat \cF_t^{n}\cG_{\iota})}{\sum_{j} p_{j,\iota} \int_{I_{\bar W_{j,q,\iota}^{(n)}}} \rho_{j,\iota}}.
    \]
\end{lemma}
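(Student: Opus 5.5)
The plan is a direct comparison, curve by curve, between the pieces of $\hat\cF_t^n\cG_\iota^{(n)}$ (and of $\hat\cF_t^n\bar\cG_\iota^{(n)}$) and the pieces of $\hat\cF_t^n\cG_\iota$. The only additional ingredient is a uniform max/min bound on the pushed-forward densities. I only treat $\cG_\iota^{(n)}$; the argument for $\bar\cG_\iota^{(n)}$ is identical with $\bar W^{(n)}_{j,q,\iota}$ in place of $\hat W^{(n)}_{j,p,\iota}$. Write $m_\iota = \sum_j p_{j,\iota}\int_{I_{\hat W^{(n)}_{j,p,\iota}}}\rho_{j,\iota}$ for the conditioned mass. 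This is the only place the denominator enters: every weight in $\hat\cF^n_t\cG^{(n)}_\iota$ equals the corresponding unnormalized mass divided by $m_\iota$.

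Fix $j$ and a connected component $V$ of $W_{j,\iota}\setminus\cS^{\bH}_{n,t}$, so that $\cF^n(V)$ (after the $\delta_*$-refinement) is a curve of $\hat\cF_t^n\cG_\iota$. Consider a piece $V'\subseteq V$ of $\hat W^{(n)}_{j,\iota}$; its image $\cF^n(V')$ is a subcurve of $\cF^n(V)$. The first step is a density bound. Since $\rho_{j,\iota}\in\cC(\varpi_2)$, part c) of Corollary \ref{cor:useful-density} gives $\cL^n\rho_{j,\iota}\in\cC(\Omega\varpi_2)$ on $I_{\cF^n(V)}$. Lemma \ref{lem:ubiquous-cone} then yields
\[
\max\cL^n\rho/\min\cL^n\rho\le C_{\mathrm{cone}}e^{\Omega\varpi_2}.
\]
Consequently
\[
\frac{\int_{I_{\cF^n(V')}}\cL^n\rho_{j,\iota}}{|\cF^n(V')|}\le C_{\mathrm{cone}}e^{\Omega\varpi_2}\,\frac{\int_{I_{\cF^n(V)}}\cL^n\rho_{j,\iota}}{|\cF^n(V)|}.
\]
This is the same computation used in the chopping step of Lemma \ref{lem:invariance1}. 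In words, the contribution of $\cF^n(V')$ to $\cZ$ is at most a constant times the contribution of $\cF^n(V)$. If $\cF^n(V')$ is further cut into $\delta_*$-pieces, each piece is compared in the same way with the corresponding $\delta_*$-piece of $\cF^n(V)$, or one absorbs the additive $\delta_*^{-1}$ term exactly as in \eqref{eq:chopping-estimate}, using $\cZ\ge1$.

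The second step is to control how many pieces $V'$ a single $V$ can contain, and I expect this to be the main obstacle. I would use Lemma \ref{lem:intersection-singularity}: points of $V$ outside the domain of the fake holonomy lie either within $C_{\mathrm{cone}}\eta$ of $\partial W_{j,\iota}$ or within $C_{\mathrm{cone}}\eta$ of $W_{j,\iota}\cap\cS^{\bH}_{n,t}$. Since $V$ contains no point of $\cS^{\bH}_{n,t}$ in its interior, the second kind of excluded point can only occur near the endpoints of $V$. The fake stable leaves $\cH_n(x)$ for $x\in V$ are all mapped by $\cF^n$ to vertical segments over the same smooth branch. So, by transversality, the set where $\cH_n(x)$ meets $W_{3-\iota}$ should form a single interval inside $V$. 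This would give a uniform bound, at most $1$ for $\cG^{(n)}$ and at most $2$ for $\bar\cG^{(n)}$, on the number of pieces per $V$. If this monotonicity argument fails, I would fall back on the observation that each piece is a subinterval of $V$ separated by points where a fake leaf hits $\cS^{\bH}_{n,t}$; the count would then be absorbed into $C_{\mathrm{cone}}$, at the cost of an argument that such hits occur only near $\partial V$.

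Finally, sum over $V'$, $V$, $j$ with the weights $p_{j,\iota}/m_\iota$. Each term is bounded by $C_{\mathrm{cone}}e^{\Omega\varpi_2}$ times the corresponding term of $\cZ(\hat\cF_t^n\cG_\iota)$, with a bounded multiplicity from the second step. Since the unnormalized weights of the pieces of $\hat\cF^n_t\cG^{(n)}_\iota$ are obtained from those of $\hat\cF^n_t\cG_\iota$ by restricting the density, summing gives $\cZ(\hat\cF^n_t\cG^{(n)}_\iota)\le C_{\mathrm{cone}}e^{\Omega\varpi_2}\cZ(\hat\cF^n_t\cG_\iota)/m_\iota$, after renaming $C_{\mathrm{cone}}$.
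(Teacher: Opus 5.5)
Your proposal matches the paper's proof. The paper also compares each conditioned piece $\cF^n(\hat W^{(n)}_{j,p,\iota})$ with the unique curve of $\hat\cF^n_t\cG_\iota$ containing it, using the max/min bound from part c) of Corollary \ref{cor:useful-density}, and then sums with the normalization by the conditioned mass.

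The one real difference is how the counting step is justified, namely that each component of $W_{j,\iota}\setminus\cS^{\bH}_{n,t}$ carries at most one stacked piece and at most two unstacked ones. The paper gets this from continuation of singularity lines (Lemma \ref{lem:continuation-of-singularities}): two points in different components of $\hat W^{(n)}_{j,\iota}$ must be separated by a discontinuity on $W_{j,\iota}$. This is the tool you should use. Transversality of the vertical fake leaves to $\cF^n(W_{3-\iota})$ does not by itself stop a singularity curve from cutting fake leaves in the middle of a component. With the continuation lemma in hand, your fallback becomes unnecessary.
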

\begin{proof}
By Lemma \ref{lem:continuation-of-singularities} on the continuation of singularity lines, if two points lie in different connected components of \(\hat W_{j,\iota}^{(n)}\) then they must be separated by a a discontinuity, and hence lie in different components of \(W_{j,\iota} \setminus \cS_{n,t}^{\bH}\). Consequently, each element \(\hat W_{j,p,\iota}^{(n)}\) corresponds to a unique unstable curve \(W_{j,p,\iota}^{n}\) in the standard family \(\hat \cF_t^{n}\cG_{\iota}\) such that \(\cF^n(\hat W_{j,p,\iota}^{(n)}) \subseteq W_{j,p,\iota}^{n}\). Using part c) of Corollary \ref{cor:useful-density},
\[
 \int_{I_{\hat W_{j,p,\iota}^{n}}} \rho_{j,\iota}\frac{1}{|\hat \cF_t^{n}(\hat W_{j,p,\iota}^{(n)})|} =  \int_{I_{\hat \cF^{n}_t(\hat W_{j,p,\iota}^{n})}} \cL^n \rho_{j,\iota}\frac{1}{|\hat \cF_t^{n}(\hat W_{j,p,\iota}^{(n)})|} \le C_{\mathrm{cone}}e^{\Omega \varpi_2}\int_{I_{W_{j,p,\iota}^n}} \cL^n \rho_{j,\iota} \frac{1}{|W_{j,p,\iota}^n|}.
\]
Therefore, denoting by \(R = \sum_{j} p_{j} \int_{I_{\hat W_{j,p,\iota}^{(n)}}} \rho_{j,\iota}\) the total mass of the stacked family, using the definition in Equation \eqref{eq:conditioning-def},
\[
\begin{split}
 \cZ(\hat \cF_t^{n}\cG_{\iota}^{(n)}) &= R^{-1}\sum_{j} p_{j,\iota} \sum_p \int_{I_{\hat W_{j,p,\iota}^{n}}} \rho_{j,\iota}\frac{1}{|\hat \cF_t^{n}(\hat W_{j,p,\iota}^{(n)})|} \\
 &\le R^{-1}  C_{\mathrm{cone}}  e^{\Omega \varpi_2}\sum_j p_{j,\iota} \sum_p\int_{I_{W_{j,p,\iota}^n}} \cL^n \rho_{j,\iota} \frac{1}{|W_{j,p,\iota}^n|} \le R^{-1}C_{\mathrm{cone}} e^{\Omega\varpi_2}\cZ(\hat \cF_t^{n}\cG_{\iota}).
 \end{split}
\]
The proof of the second inequality is completely analogous with the only difference that each component of \(W_{j,\iota} \setminus \cS_{n,t}^{\bH}\) corresponds to at most two unstable curves among the family composed by \(\hat \cF_t^{n}(\bar W_{j,p,\iota})\). 
\end{proof}

We are ready to compute how much mass gets decoupled for two \(\eta\)-coupled standard families because of discontinuities. We find that if they start sufficiently close, they lose only \(\cO(\eta^{1/2})\) mass, during all their future evolution. Note however that the decoupled mass has very bad regularity (how bad depends on when it decouples), so that we need to implement some scheme in order to book-keep everything. This is done in Section \ref{subsec:linear-scheme} and the net effect is exponential mixing. Recall \(\chi \in (0,1/2)\) from Lemma \ref{lem:stacking-images} and \eqref{eq:domain-holonomy} for the notation \(\hat W_{\iota}^{(n)}\).

\begin{lemma}\label{lem:decouple-single-sp}
    There exist \(C, \eta_0 >0\) and \(k_0\in \bN\) such that, for any \(\eta<\eta_0 \), \(n \in \bN\), \(t \in [0,t_0]\) and any two \(\eta\)-coupled regular standard family \(\cG_{\iota} = \{(p_{j}, W_{j,\iota}, \rho_{j})\}\), for each \(\iota \in \{1,2\}\),
    \[
    \sum_{j} p_{j} \int_{I_{W_{j,\iota} \setminus \hat W^{(n)}_{j,\iota}}} \rho_{j} \le C \eta^{\frac 1 2}.
    \]
\end{lemma}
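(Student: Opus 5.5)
The plan is to organize the decoupled mass according to the \emph{first time} at which the fake stable curve through a point breaks. We then show that the mass decoupling at time \(s\) is \(\lesssim (\Lambda_0^{-s}\eta)^{1/2}\), up to a boundary term of order \(\eta\). Summing the geometric series in \(s\) then gives \(C\eta^{1/2}\) uniformly in \(n\).

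\textbf{Boundary term.} By Lemma \ref{lem:intersection-singularity} applied with \(n\), a point \(x\in W_{j,\iota}\setminus \hat W^{(n)}_{j,\iota}\) satisfies one of two alternatives. Either \(x\in[\partial W_{j,\iota}]_{C_{\mathrm{cone}}\eta}\), or the curve \(\cH_n(x)\) meets \(\cS^{\bH}_{n,t}\) within distance \(C_{\mathrm{cone}}\eta\). The first case contributes at most
\[
\sum_j p_j\, C_{\mathrm{cone}}e^{\varpi_2}\,\frac{\eta}{|W_{j,\iota}|}\le C e^{\varpi_2}B_2\,\eta
\]
by Lemma \ref{lem:ubiquous-cone} and the bound on \(\cZ\). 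This is much smaller than \(\eta^{1/2}\).

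\textbf{First breaking time.} For the second case, let \(s\le n\) be the smallest time for which the piece \(H_x\) of \(\cH_s(x)\) (the stable segment of length \(\le C_{\mathrm{cone}}\eta\) through \(x\)) is cut by \(\cS^{\bH}_{1,t}\) after \(s-1\) iterates. Up to that time \(H_x\) is an honest stable curve. By \eqref{eq:contraction-fake-stable}, \(|\cF^{s-1}(H_x)|\le C\Lambda_0^{-s}\eta\). Hence \(\cF^{s-1}(x)\) lies within distance \(\ve_s:=C\Lambda_0^{-s}\eta\) of \(\cS^{\bH}_{1,t}=\cS_1\cup\cF^{-1}(\bS)\cup\Xi_{1,t}\), measured along the unstable curve of \(\hat\cF^{s-1}_t\cG_\iota\) containing it; this uses transversality and Lemma \ref{lem:continuation-of-singularities}, as in the proof of Lemma \ref{lem:intersection-singularity}. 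By Lemma \ref{lem:push-forward-evolution}, the mass decoupled at step \(s\) is therefore at most \(\mu_{\hat\cF_t^{s-1}\cG_\iota}\bigl([\cS^{\bH}_{1,t}]^{u}_{\ve_s}\bigr)\), where the superscript \(u\) denotes the neighbourhood taken along unstable curves.

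\textbf{Estimate per step.} The family \(\hat\cF_t^{s-1}\cG_\iota\) has densities with max/min ratio bounded by Lemma \ref{lem:funny-cone}, and \(\cZ(\hat\cF_t^{s-1}\cG_\iota)\le \max\{Z_1^{n_*}B_2,\bold B_2\}\) uniformly in \(s\) and \(t\) by Corollary \ref{cor:Growth-Lemma}(b). The \(\ve\)-neighbourhood splits into three pieces.
\begin{itemize}
\item[(i)] \(\cS_1\cup\Xi_{1,t}\) consists of boundedly many curves uniformly transversal to unstable curves (Lemmas \ref{lem:bound-number-smooth-components} and \ref{lem:complexity-small-pieces}). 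As in \eqref{eq:discontinuities-measure-standard-family}, its \(\ve\)-neighbourhood has mass \(\lesssim \cZ\,\ve\).
\item[(ii)] For \(\cF^{-1}(\bS_k)\), first pull back one step. The point is then within \(\lesssim\ve^{1/2}\) of \(\bS_k\) (Lemma \ref{lem:upp-bound-stretch-curves}).
\item[(iii)] Near \(\bS=\cup_k\bS_k\), the strips \(\bH_k\) have width \(\sim k^{-3}\). The \(\ve\)-neighbourhood therefore covers a full strip once \(k\gtrsim\ve^{-1/3}\). The total contribution is \(\lesssim \cZ\bigl(\ve\cdot\#\{k\le \ve^{-1/3}\}+\ve^{2/3}\bigr)\lesssim\cZ\,\ve^{2/3}\) on unstable curves. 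Lemma \ref{lem:accumulation-disc} keeps these accumulation points tied to genuine singularities, so that the estimate is uniform.
\end{itemize}
Combining the three pieces, the mass decoupled at step \(s\) is \(\le C(\Lambda_0^{-s}\eta)^{1/2}\).

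\textbf{Summation.} Summing over \(s\ge1\) gives \(\sum_s C\Lambda_0^{-s/2}\eta^{1/2}\le C\eta^{1/2}\), independent of \(n\). The choice of \(\eta_0\) and \(k_0\) is made so that the stacked curves stay within a single homogeneity regime at scale \(\eta\), and so that Lemma \ref{lem:accumulation-disc} applies.

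\textbf{Main obstacle.} I expect step (iii) to be the main obstacle: controlling, uniformly in \(s\), the mass near the infinitely many homogeneity lines and their preimages, together with the square-root loss from one backward step near grazing collisions. If the \(\ve^{2/3}\) count fails, I would fall back on the cruder bound \(\ve^{1/2}\) obtained via Lemma \ref{lem:upp-bound-stretch-curves}. That bound still suffices, since \(\sum_s\Lambda_0^{-s/2}\) converges.
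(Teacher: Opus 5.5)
Your strategy is sound and genuinely different from the paper's. As written, however, the exponents in (ii)--(iii) do not deliver $\eta^{1/2}$, and the ``first breaking time'' needs to be reformulated.

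\textbf{Comparison with the paper.} The paper works in blocks of $n_0\sim\ln B_2$ iterates. Within a block it makes a static count on the initial curves. Crossings with $\cS_{n_0,t}$ are at most $K_{n_0}$ (Lemma \ref{lem:complexity-small-pieces}). The accumulating preimages $\cF^{-s}(\bS_k)$ lie within $Ck^{-2}$ of such a crossing (Lemma \ref{lem:accumulation-disc}), so only the $k\le\eta^{-1/2}$ must be paid for individually; this cutoff is the source of $\eta^{1/2}$. The paper then recurses by conditioning on the still-stacked part. Lemma \ref{lem:conditioning} keeps $\cZ\le B_2$, which is why $\eta_0$ must be small, and Lemma \ref{lem:stacking-images} shrinks the stacking distance to $\chi^k\eta$. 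You instead take a union bound over the time at which the fake stable segment through $x$ (length $\lesssim\eta$, contracting like $\Lambda_0^{-s}$) reaches a singularity. At each time you estimate the mass of the whole family $\hat\cF^{s}_t\cG_\iota$ near the one-step singularities, using only Corollary \ref{cor:Growth-Lemma}(b) and Lemma \ref{lem:funny-cone}. This is the classical short-stable-manifold estimate. It avoids conditioning, the block induction, and any smallness of $\eta_0$ or choice of $k_0$, and done carefully it gives a better exponent.

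\textbf{First repair: the breaking time.} ``The piece $H_x$ of $\cH_s(x)$'' and its first breaking time are not well defined. $\cH_s$ is the pull-back of vertical lines from time $s$, so it changes with $s$, and $H_x$ exists only for $x\in\hat W^{(n)}_\iota$. Use $\cH_n(x)$ directly. If $x\notin\hat W^{(n)}_\iota$ and $x\notin[\partial W_\iota]_{C\eta}$, the part $\gamma$ of $\cH_n(x)$ on the side of $W_{3-\iota}$ has length $\le C\eta$ and ends on $\cS^{\bH}_{n,t}$. Hence for some $j\le n$ the curve $\cF^{j}(\gamma)$ ends on $\cS^{\bH}_{1,t}$ and, by \eqref{eq:contraction-fake-stable}, has length $\le C\Lambda_0^{-j}\eta$. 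The union bound over $j$ needs no minimality.

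\textbf{Second repair: the exponents.} They do not combine as you state. In (ii) you move from $\cF^{-1}(\bS_k)$ to $\bS_k$ along the unstable curve via Lemma \ref{lem:upp-bound-stretch-curves}, arriving at distance $\ve^{1/2}$. Feeding that into (iii), which costs $\cZ\delta^{2/3}$ at distance $\delta$, gives $\ve^{1/3}$ per step and only $C\eta^{1/3}$ in total, short of the statement. The fix has two parts.
\begin{enumerate}
\item Push the stable segment one more step instead of the unstable curve. By \eqref{eq:contraction-fake-stable}, $\cF^{j+1}(x)$ is then within $C\Lambda_0^{-(j+1)}\eta$ of $\bS_k$, with no square-root loss.
\item For $s\ge 1$ the curves of $\hat\cF^{s}_t\cG_\iota$ are homogeneous: each lies in a single strip and crosses neither $\bS$ nor $\partial\cH_t$. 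The points of such a curve within $\delta$ of $\bS$ are therefore within $\delta$ in $\vf$ of the two boundary lines of its own strip. Since $\vf_W'\ge\kappa_{\mathrm{min}}$, their $r$-projection has length $\le 4\delta/\kappa_{\mathrm{min}}$, so the mass is $\lesssim\cZ\delta$, linear in $\delta$.
\end{enumerate}
Your count over $k\le\ve^{-1/3}$ is needed only at time $0$, where the curves of $\cG_\iota$ need not be homogeneous, and there it costs $\lesssim\cZ\eta^{2/3}$. Altogether the decoupled mass is $\lesssim\eta^{2/3}\le\eta^{1/2}$, as required.
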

\begin{proof}
 Without loss of generality, we consider the case \(\iota =1\) and we set \(\cG_{1} = \{(p_{j}, W_{j}, \rho_{j})\}\). By part c) of Corollary \ref{cor:Growth-Lemma}, there exists \(n_0 \sim \ln B_2\) big enough such that \(\cZ(\hat \cF_t^{n_0} \tilde \cG) \le \bold B_2\) for any regular standard family \(\tilde \cG\). Let also \(n_0\) be big enough so that Lemma \ref{lem:stacking-images} is satisfied. We compute the lost mass by iterating \(\hat \cF_t^{n_0}\). According to Lemma \ref{lem:intersection-singularity}, we have \(W_{j} \setminus \hat W_{j}^{(n_0)} \subseteq  W_{j}^A \cup  W_{j}^B\), where
\[
\begin{split}
 W_{j}^A &= \{x \in W_{j}: \text{ there exists } z \in W_{j}\cap \cS_{n_0, t}^{\bH} \text{ such that } x \in B(z, C_{\mathrm{cone}}\eta)\}, \\
W_{j}^B &= \{x \in W_{j}: x \in [\partial W_{j}]_{C_{\mathrm{cone}}\eta}\},
\end{split}
\]
for some \(C_{\mathrm{cone}}>0\) big enough. Since \(\cG_1\) is a regular standard family, \(\rho_j \in \cC(\varpi_2)\) and by Lemma \ref{lem:ubiquous-cone}, we have
\begin{equation}\label{eq:estimate-boundary}
    \int_{I_{W_{j}^B}} \rho_{j} \ \le \frac{ C_{\mathrm{cone}}e^{\varpi_2} |I_{W_{j}^B}|}{|W_{j}|} = \frac{C \eta}{|W_{j}|}.
\end{equation}
As for the mass supported on \(W_{j}^A\), we need to study the intersection of \(W_{j}\) and \(\cS_{n_0, t}^{\bH}\). Recall the enlargements \(W_{j,e}\) of \(W_{j}\) defined before Lemma \ref{lem:accumulation-disc} and that \(\sup_{t \in [0,t_0]}\# \{ W_{j,e} \cap \cS_{n_0,t}\} \le K_{n_0}\) by Lemma \ref{lem:complexity-small-pieces}. We set
\begin{equation}\label{eq:list-intersection}
\begin{split}
    W_{j, e} \cap \cS_{n_0,t} &= \{z_{p}\}_{p \in \cI_1}, \quad \# \cI_{1} \le K_{n_0},\\
   W_{j} \cap (\cS_{n_0}^{\bH} \setminus \cS_{n_0}) & =  W_{j} \cap \bigcup_{k=k_0}^{\infty} \bigcup_{s=0}^{n_0}  \cF^{-s}(\bS_k)  = \{x_{r}\}_{r \in \cI_2}, \quad \cI_{2} \subseteq \bN.
\end{split}
\end{equation}
By Lemma \ref{lem:accumulation-disc}, there exist \(C >0 \) and \(k_0 >0\) such that any \(x \in W_{j} \cap \cup_{s=0}^{n_0}\cF^{-s}(\bS_k)\), \(k \ge k_0\), belongs to a \(C k^{-2}\)-neighborhood of some \(z \in W_{j,e} \cap \cS_{n_0,t}\). Therefore for some countable set \(\{x_{q_{k,p}}\}_{q_{k,p}} \subseteq W_{j}\), we have
\begin{equation}\label{eq:re-indexing-decoupling}
\begin{split}
 W_{j} \cap (\cS_{n_0}^{\bH} \setminus \cS_{n_0}) \subseteq \bigcup_{p \in \cI_1}& \bigcup_{k =k_0}^{\infty}  \bigl\{x_{q_{k,p}} \in W_{j} \cap \cup_{s=0}^{n_0}\cF^{-s}(\bS_k):  \text{ } |x_{q_{k,p}} - z_p| \le C k^{-2}\bigr\}, \\
 q_{k,p}\in \cJ_{k,p}, \quad \#\cJ_{k,p} &\le \# \{W_{j} \cap \cup_{s=0}^{n_0}\cF^{-s}(\bS_k)\}.
 \end{split}
\end{equation}
Here, we are labeling the intersections of \(W_j\) with the extended discontinuity set according to the \(\bS_k\) they originate from. By Lemma \ref{lem:complexity-small-pieces} with \(\cup_{s=0}^{n_0}\cF^{-s}(\bS_k)\) in place of \(\cS_{n_0} \subseteq \cS_{n_0,t}\), we have that \(\#\cJ_{k,p}  \le K_{n_0}\) for any \(k\) and \(p\). By definition of \(W_{j}^A\), and equations \eqref{eq:list-intersection} and \eqref{eq:re-indexing-decoupling},
\[
W_{j}^A \subseteq [W_{j} \cap \cup_{p} B(z_p, C_{\mathrm{cone}}\eta)] \cup [W_{j} \cap \cup_{p} \cup_k \cup_{q_{k,p}} B(x_{q_{k,p}}, C_{\mathrm{cone}}\eta)]
\]
and
\begin{equation}\label{eq:bound-density-12}
\int_{I_{ W_{j}^A}} \rho_{j} \le \int_{ I_{W_{j} \cap \cup_{p} B(z_{p}, C_{\mathrm{cone}}\eta )} } \rho_{j}  + \int_{I_{W_{j} \cap \cup_{p} \cup_k \cup_{q_{k,p}} B(x_{q_{k,p}}, C_{\mathrm{cone}}\eta)}} \rho_{j} .
\end{equation}
For the first term, using that \(\# \cI_{1} \le K_{n_0}\), we have, for some \(C_{n_0} >0\),
\[
    \int_{ I_{W_{j} \cap \cup_{p} B(z_{p}, C_{cone}\eta )} } \rho_{j} \le \sum_{p \in \cI_1} \int_{ I_{W_{j} \cap  B(z_{p}, C_{cone}\eta )} } \rho_{j} \le \frac{C_{\mathrm{cone}}e^{\varpi_2} K_{n_0} \eta }{|W_{j}|} = \frac{C_{n_0} \eta}{|W_{j}|}.
\]
The second contribution has to do with the mass lost near grazing collisions. On the one hand there are infinite singularities accumulating near a tangent collision, but on the other hand those singularities are closer and closer to each other, so that their contributions to decoupling do not pile up. Setting \(k_{*} = \lfloor  \eta^{-1/2}\rfloor \), for \(k \ge k_{*}\) and \(x \in B(x_{q_{k,p}}, C_{\mathrm{cone}}\eta)\), we have
\[
| x - z_p |  \le | x - x_{q_{k,p}}| + | x_{q_{k,p}} - z_p| \le C_{\mathrm{cone}} \eta + C k_{*}^{-2} \le (C_{\mathrm{cone}} + C)\eta = C \eta,
\]
where in the second inequality we used the defining property \eqref{eq:re-indexing-decoupling} of \(x_{q_{k,p}}\). Therefore,
\[
\begin{split}
&\cup_{p \in \cI_1}\cup_{k \ge k_0}\cup_{q_{k,p} \in \cJ_{k,p}} B(x_{q_{k,p}}, C_{\mathrm{cone}}\eta) \\
&= \cup_{p \in \cI_1 } \bigl(\cup_{k = k_0}^{k_*} \cup_{q_{k,p} \in \cJ_{k,p}}B(x_{q_{k,p}}, C_{\mathrm{cone}}\eta) \cup \cup_{k = k_* + 1}^{\infty} \cup_{q_{k,p} \in \cJ_{k,p}} B(x_{q_{k,p}}, C_{\mathrm{cone}}\eta) \bigr)\\
& \subseteq \cup_{p\in \cI_1} \bigl(\cup_{k = k_0}^{\lfloor \eta^{- 1/2} \rfloor} \cup_{q_{k,p} \in \cJ_{k,p}} B(x_{q_{k,p}}, C_{\mathrm{cone}}\eta)  \cup B(z_p, C \eta) \bigr).
\end{split}
\]
Hence, we have the bound, 
\begin{equation}\label{eq:second-term-decouples}
\begin{split}
\int_{I_{W_{j} \cap \cup_{p} \cup_k \cup_{q_k} B(x_{q_{k,p}}, C_{\mathrm{cone}}\eta)}}\rho_{j} &\le \sum_{p \in \cI_1} \sum_{k = k_0}^{\lfloor  \eta^{-1/2} \rfloor } \sum_{q_{k,p} \in \cJ_{k,p}} \int_{I_{W_{j}\cap B(x_{q_{k,p}}, C_{\mathrm{cone}}\eta)}} \rho_{j} \\
&\hspace{2cm}+ \sum_{p \in \cI_1}\int_{I_{W_{j} \cap B(z_{p}, C\eta)}} \rho_{j}.
\end{split}
\end{equation}
Using that the standard pair is regular and recalling that \(\# \cI_1 \le K_{n_0}\), \(\#\cJ_{k,p} \le K_{n_0}\), the quantity above is less than, for some \(C_{n_0} >0\),
\[
\frac{e^{\varpi_2} K_{n_0} ( C_{\mathrm{cone}} K_{n_0} \eta^{-\frac 1 2}\eta + C \eta)}{|W_{j}|} =  \frac{C_{n_0} \eta^{\frac 1 2 }}{|W_{j}|}.
\]
Collecting the previous estimates, we have obtained, for any standard pair in the family,
\[
 \int_{I_{W_{j} \setminus \hat W^{(n_0)}_{j}}} \rho_{j} \le \frac{C_{n_0}\eta^{\frac 1 2}}{|W_{j}|}.
\]
Summing over \(j\) and using the fact that \(\cG_1\) is a regular standard family, we have
\begin{equation}\label{eq:lost-mass-one-step}
    \sum_{j} p_{j} \int_{I_{W_{j} \setminus \hat W^{(n_0)}_{j}}} \rho_{j} \le C_{n_0}\eta^{\frac 1 2} \sum_j p_{j} \frac{1}{|W_{j}|}\le C_{n_0} B_2 \eta^{\frac 1 2}.
\end{equation}
We now upgrade the validity of \eqref{eq:lost-mass-one-step} to any \(n\). The idea is that, as stacked curves get closer and closer exponentially fast, the contributions to the lost mass at each step yield a convergent series. 

Let \(\eta_0 >0\) be so small that \(C_{n_0} B_2 \eta_0^{1/2} < 1/2\). Recall \eqref{eq:conditioning-def} for the definition of conditioning on the stacked part. In particular, \(\cG^{(k n_0)}_1\) are the standard families obtained by conditioning \(\cG_1\) on \(\hat W_j^{(kn_0)}\), i.e., on being the part that will be stacked in \(\hat\cF^{k n_0}_t\cG_1\). By Lemma \ref{lem:stacking-images}, the curves in \(\hat\cF^{k n_0}_t\cG^{(kn_0)}_1\) are \(\chi^{k}\eta\)-stacked with the unstable curves belonging to the other standard family \(\hat\cF^{k n_0}_t\cG_2^{(kn_0)}\). We first claim that \(\cZ(\hat\cF^{k n_0}_t\cG^{(kn_0)}_1) \le B_2\) for any \(k \in \bN_0\). For \(k =0\) the statement follows from the fact that \(\cZ(\cG^{(0)}_1) = \cZ(\cG_1) \le B_2\). Assume that the statement holds for some \(k\). Then, by Lemma \ref{lem:conditioning} and applying \eqref{eq:lost-mass-one-step} to the regular standard family \(\hat\cF^{k n_0}_t\cG_1^{(kn_0)} = \{(\tilde p_{j}, \tilde W_{j}, \tilde \rho_{j})\}\), we obtain
\[
\begin{split}
\cZ(\hat\cF^{(k+1) n_0}_t\cG^{((k+1)n_0)}_1)&=\cZ (\hat \cF_t^{n_0}(\hat\cF^{k n_0}_t\cG_1^{(kn_0)})^{(n_0)}) \le \frac{C_{\mathrm{cone}}e^{\Omega \varpi_2}\cZ (\hat \cF_t^{n_0}(\hat\cF^{k n_0}_t\cG_1^{(kn_0)}))}{\sum_{j}  \tilde p_{j} \int_{I_{\hat {\tilde{W}}^{(n_0)}_{j}}} \tilde \rho_{j}}  \\[8pt]
&=\frac{C_{\mathrm{cone}}e^{\Omega \varpi_2}\cZ (\hat \cF_t^{n_0}(\hat\cF^{k n_0}_t\cG_1^{(kn_0)}))}{1- \sum_{j}  \tilde p_{j} \int_{I_{\tilde W_{j} \setminus \hat {\tilde{W}}^{(n_0)}_{j}}} \tilde \rho_{j}} \le  \frac{C_{\mathrm{cone}}e^{\Omega\varpi_2}\bold B_2}{1-C_{n_0} B_2 \eta^{\frac 1 2}} \le B_2.
\end{split}
\]
In the last inequality we have used \eqref{eq:intrinsic-constant-boundary} and the fact that \(\eta_0\) is small and in the second to last we used that \(n_0\sim \ln B_2\) and the inductive hypothesis. This concludes the proof of the claim. Denote by \(m_k \in [0,1]\) the proportion of mass in \(\cG_1\) that is supported on \(\cG_1^{(kn_0)}\). Because  \(\hat \cF_t^{kn_0}\cG_1^{(kn_0)}\) are regular standard families, we can apply the argument before \eqref{eq:lost-mass-one-step} to these standard families and, by the mentioned equation, the mass lost from step \(k\) to step \(k+1\) is at most \(m_k C_{n_0} B_2 \eta^{1/2}\). Therefore, for any \(q\), 
\[
\begin{split}
 \sum_{j} p_{j} \int_{I_{W_{j} \setminus \hat W^{(q n_0)}_{j}}} \rho_{j} \le \sum_{k=0}^{q} m_k  C_{n_0} B_2 (\chi^k\eta)^{\frac 1 2}  \le \sum_{k=0}^{\infty} m_k  C_{n_0} B_2 (\chi^k\eta)^{\frac 1 2} \le C_{n_0} \eta^{\frac 1 2}.
 \end{split}
\]
In the last inequality we used that \(m_k \le 1\). To conclude, recall that \(n_0 \sim \ln B_2\) and \(B_2\) depends on the table and on the initial conditions \(\varpi_1, B_1\) only, so that the above quantity is less than \(C\eta^{1/2}\). Since for a general \(n\) the proportion of mass supported on \(\cG_1^{(n)}\) is more than the proportion of mass supported on \(\cG_1^{(qn_0)}\) for \(q> n/n_0\), the last equation concludes the proof of the Lemma.
\end{proof}

\subsection{Decoupling because of the mismatch of the densities}\label{subsec:decoupling-mismatch}\textit{In this section we compute how much mass gets ‘decoupled' because of the mismatch between the evolved densities of two coupled standard pairs.}

Let \(\ell_{\iota} = (W_{\iota}, \rho)\) be any two \(\eta\)-coupled regular standard pairs and recall \(\hat W^{(n)}_{\iota}\) from \eqref{eq:domain-holonomy}. In this section, we denote by \(V_{\iota}\) two arbitrary connected subsets of \(\hat W_{\iota}^{(n)}\) such that \(\bold h_n (V_1) = V_2\), and, for \(k \in \{0,...,n\}\), we also set \(V_{\iota,k} = \cF^k(V_{\iota})\) and \(I=I_{V_{1,n}} = I_{V_{2,n}}\). The following result is essentially a finite time version of \cite[Theorem 5.42]{MR2229799}. Recall the functions \(r^n_{1,W_{\iota}}\) from \eqref{eq:powers-expanding-base}.
\begin{lemma}\label{lem:estimates-mismatch-derivatives-different-graphs}
There exists \(C>0\) such that, for any \(n \in \bN\) and \(\eta>0\), for any \(r \in I\), 
    \[
\biggl|\ln \frac{|{r_{1, V_{1,n}}^{-n}}'(r)|}{|{r_{1, V_{2,n}}^{-n}}'(r)|} \biggr|\le C \eta^{\frac{1}{3}}. 
    \]
\end{lemma}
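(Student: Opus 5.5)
The approach is to expand both logarithms with the chain rule into sums of one-step terms, and to compare them term by term along the two orbits joined by the fake holonomy. For $r\in I$, let $x_n=(r,\vf_{V_{1,n}}(r))$ and $y_n=(r,\vf_{V_{2,n}}(r))$. Their backward images $x_{n-k}, y_{n-k}$ lie on $V_{1,n-k}$ and $V_{2,n-k}$. Since $\cF^n(H_x)$ is vertical, $y_{n-k}=\cF^{-k}(\cF^{k}\ldots)$ lies on the same fake stable leaf as $x_{n-k}$. As in the proof of Lemma \ref{lem:bounded-distortion},
\[
\ln \frac{|{r_{1, V_{1,n}}^{-n}}'(r)|}{|{r_{1, V_{2,n}}^{-n}}'(r)|}=\sum_{m=0}^{n-1}\Bigl(\ln|r_{1,V_{2,m}}'(y_m)|-\ln|r_{1,V_{1,m}}'(x_m)|\Bigr),
\]
where $x_m,y_m$ denote the $r$-coordinates at time $m$. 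By \eqref{eq:derivative-FG},
\[
\ln|r_{1,V_{\iota,m}}'|=\ln(\tau\kappa+\cos\vf+\tau\vf_{V_{\iota,m}}')-\ln\cos\vf_1 .
\]
Each summand therefore splits into three differences:
\begin{itemize}
\item a smooth part $\ln(\tau\kappa+\cos\vf+\tau\vf')$, which is Lipschitz in the point (with $\tau,\kappa$ regular on homogeneous components away from $\cS_1$) and in the slope;
\item a singular part $\ln\cos\vf_{m+1}(x)-\ln\cos\vf_{m+1}(y)$;
\item implicitly, the slope mismatch $|\vf_{V_{1,m}}'-\vf_{V_{2,m}}'|$.
\end{itemize}

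Next I control the distances. Set $\delta_m=|x_m-y_m|$, the length of $\cF^m(H_x)$ between the two curves. By \eqref{eq:contraction-fake-stable}, $\delta_m\le C\Lambda_0^{-m}\eta$. By Lemma \ref{lem:contraction-angle-stacked-curves}, the slope mismatch at time $m$ is at most $C(\delta_0 m+\eta)\Lambda^{-m}\lesssim m\Lambda^{-m}\eta$. The smooth part then contributes $\sum_m C(\delta_m+m\Lambda^{-m}\eta)\le C\eta$, which is better than needed.

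The main obstacle is the singular part $\ln\cos\vf_{m+1}$, whose derivative blows up like $1/\cos\vf$. Here I use homogeneity: $\cH_n(x)$ avoids $\cS^{\bH}_{-n,t}$, so $x_{m+1}$ and $y_{m+1}$ lie in the same homogeneity strip $\bH_k$. Integrating $|\tan\vf|$ along the stable segment joining them, and using $\cos\vf\sim k^{-2}$ on $\bH_k$, gives
\[
|\ln\cos\vf_{m+1}(x)-\ln\cos\vf_{m+1}(y)|\le C\frac{\delta_{m+1}}{k^{-2}}.
\]
I then split into two cases:
\begin{itemize}
\item If $\delta_{m+1}\le k^{-3}$, Lemma \ref{lem:unstable-curves-cosine-k}-type width considerations apply (the strip has width $\sim k^{-3}$), and $\delta_{m+1}/k^{-2}\le\delta_{m+1}^{1/3}$.
\item Otherwise $\delta_{m+1}$ exceeds the width of the strip $\bH_k$, but both points lie in it, so the stable segment along which they are joined has length comparable to $k^{-3}$ times the cone constant. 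This gives $\delta_{m+1}\lesssim k^{-3}$, and we are back in the first case.
\end{itemize}
In either case the term is at most $C\delta_{m+1}^{1/3}\le C\Lambda_0^{-(m+1)/3}\eta^{1/3}$.

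Summing the geometric series over $m$ gives $C\eta^{1/3}$ uniformly in $n$. The constant depends only on the cones, the table, and the curvature bound $D_2$, which enters through the $\cC^1$ regularity of $\vf'$ when comparing at different base points.
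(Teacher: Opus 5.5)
Your proposal takes essentially the same route as the paper. You expand the logarithm via the chain rule and \eqref{eq:derivative-FG} into one-step terms, control the singular $\ln\cos\vf$ term through homogeneity of the fake stable segments $\cF^m(H_x)$ (so that $\cos\vf\gtrsim\delta_{m+1}^{2/3}$ and the term is $\lesssim\delta_{m+1}^{1/3}$), handle the remaining terms with \eqref{eq:contraction-fake-stable} and Lemma \ref{lem:contraction-angle-stacked-curves}, and sum a geometric series. One small correction: $\tau$ is not uniformly Lipschitz in the point $(r,\vf)$, since $\partial_{\vf}\tau=\mp\tau\tan\vf_1$ blows up near grazing next collisions even on homogeneous components; as in the paper, you should instead regard $\tau$ as the chord length between the base points $r_m$ and $r_{m+1}$ via \eqref{eq:tau-regularity}, which gives $|\tau(x_m)-\tau(y_m)|\le C(\delta_m+\delta_{m+1})$ and leaves your conclusion unchanged.
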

\begin{proof}
To any \(r \in I\) it corresponds a point \(x_{\iota} = (r, \vf_{V_{\iota,n}} (r))\) in \(V_{\iota,n}\). For \(k \in \{1,...,n\}\), set \(x_{k} = \cF^{-k}(r, \vf_{V_{1,n}} (r))\), \(\bar x_{k} = \cF^{-k}(r, \vf_{V_{2,n}} (r))\) and call \(x_{k} = (r_{k}, \vf_{k})\), \(\bar x_{k} = (\bar r_{k}, \bar \vf_{k})\). Then
\begin{equation}\label{eq:log-series-hol}
\ln \frac{|{r_{1, V_{1,n}}^{-n}}'(r)|}{|{r_{1, V_{2,n}}^{-n}}'(r)|} = \sum_{k=0}^{n-1} \ln |r_{1, V_{2, k}}' (r_{n-k})| - \ln | r_{1, V_{1, k}}' (\bar r_{n-k})|.
\end{equation}
Using the equation \eqref{eq:derivative-FG} for derivative of \(\rw\) and setting also \(\tau_j = \tau (x_{j})\), \(\kappa_j = \kappa(x_{j})\) and \(\bar \tau_j =  \tau (\bar x_{j})\), \(\bar \kappa_j = \kappa(\bar x_{j})\), we have
\begin{equation}\label{eq:log-1}
\begin{split}
    \ln |r_{1, V_{1, k}}' (r_{n-k})| & = - \ln \cos \vf_{n-k-1} \\
    &+ \ln (\tau_{n-k} \kappa_{n-k} + \cos \vf_{n-k} + \tau_{n-k} \vf_{V_{1,k}}'(r_{n-k}) ) 
\end{split}
\end{equation}
and similarly
\begin{equation}\label{eq:log-2}
\begin{split}
    \ln |r_{1, V_{2, k}}' (\bar r_{n-k})| & = - \ln \cos \bar \vf_{n-k-1} \\
    &+ \ln (\bar \tau_{k-n} \bar \kappa_{n-k} + \cos \bar \vf_{n-k} + \bar \tau_{n-k} \vf_{V_{2,k}}'(\bar r_{n-k}) ) .
\end{split}
\end{equation}
Let \(H_0 \subseteq \cH_n(x_n)\) be the curve connecting \(x_n\) to \(\bold h_n (x_n)\). Set \(H_k = \cF^k(H_0)\) and \(\delta_{k} = |H_k|\). By the definition \eqref{eq:holonomy-def} of fake holonomy, we have that \(H_k\) is weakly homogeneous for \(k \in \{0,1,...,n\}\). Recalling that \(x_{n-k}, \bar x_{n-k} \in H_k\), there exists \(C>0\) such that
\[
\begin{split}
    |\ln \cos \vf_{n -k -1} - \ln \cos \bar \vf_{n-k-1} | &\le C \frac{|\vf_{n -k - 1} - \bar \vf_{n -k- 1}|}{\cos \vf_{n -k-1}} \le C\frac{\delta_{k +1}}{\delta_{k + 1}^{\frac 2 3}} \le C \delta_{k +1}^{\frac{1}{3}},
\end{split}
\]
where in the second inequality we used Lemma \ref{lem:unstable-curves-cosine-k}. Denoting by
\[
\gamma_{k} = | \vf_{V_{1,k}}'(r_{n-k}) - \vf_{V_{2,k}}'(\bar r_{n-k}) |
\]
the angle between the tangent vectors to the unstable curves \(V_{\iota,k}\) and using that the argument of the logarithms in \eqref{eq:log-1} and \eqref{eq:log-2} are bounded from below, we obtain
\begin{equation}\label{eq:exp-decreasing-hol}
| \ln | r_{1, V_{1, k}}' (r_{n-k})| -  \ln | r_{1, V_{2, k}}' (\bar r_{n-k})| | \le C (\delta_{k+1}^{\frac 1 3} + \delta_{k} + \gamma_{k} ).
\end{equation}
Here we also used that \(\tau\) and \(\bar \tau\) are \(\cC^1\) as a function of \(r\) and \(r_1\) on \(V_{\iota, k}\subseteq \cM \setminus \cS_{-1}^{\bH}\) (see \eqref{eq:tau-regularity}) and so
\[
|\tau_{n-k} - \bar \tau_{n-k}| = |\tau (r_{n-k}, r_{n-k-1}) - \tau (\bar r_{n-k}, \bar r_{n -k-1})| \le C(\delta_{k} + \delta_{k+1}).
\]
By \eqref{eq:contraction-fake-stable} and  Lemma \ref{lem:contraction-angle-stacked-curves} there exists \(\Lambda >1\), \(C>0\) such that \(\delta_{k} \le C\Lambda^{-k} \delta_0\) and \(\gamma_k \le C (\delta_0 k/\Lambda^{k} + \gamma_0 /\Lambda^{k})\). Therefore, by \eqref{eq:exp-decreasing-hol} and \eqref{eq:log-series-hol}, there exists \(C>0\) such that, for all \(n \in \bN\),
\[
\biggl|\ln \frac{|{r_{1, V_{1,n}}^{-n}}'(r)|}{|{r_{1, V_{2,n}}^{-n}}'(r)|} \biggr| \le C (\delta_0^{\frac 1 3} + \gamma_0).
\]
Since \(W_{\iota}\) are \(\eta\)-stacked and \(H_0\) is a stable  curve, \(\delta_0 \le C_{\mathrm{cone}}\eta\). Moreover,
\[
\gamma_0 = |\vf_{W_1}'(r_n) - \vf_{W_2}'(\bar r_n)| \le |\vf_{W_1}'(r_n) - \vf_{W_2}'(r_n)| + |\vf_{W_2}'(r_n) - \vf_{W_2}'(\bar r_n)|,
\]
and the first quantity is less than \(\eta\) because \(W_{\iota}\) are \(\eta\)-stacked while the second is less than \(C\eta\) because \(|\vf_{W_2}''|\) is bounded and 
\begin{equation}\label{eq:displacemet-holonomy}
    |r_n - \bar r_n| \le |H_0| \le  C_{\mathrm{cone}} \eta.
\end{equation}
This concludes the proof of the statement.
\end{proof}

If we start with two \(\eta\)-coupled standard pairs their densities don't undergo the same evolution under \(\cL\) because they are supported on different curves. However, since the curves are close, the evolutions almost coincide.

\begin{lemma}\label{lem:mismatch-density}
There exists \(C>0\) such that, for all \(n \in \bN\) and \(\eta>0\), for any \(r \in I\),
 \[
\biggl|\ln \frac{\cL^n_{V_{1,n}} \rho (r)}{ \cL_{V_{2,n}}^n \rho (r)} \biggr |\le C  \eta^{\frac 1 3}.
 \]
\end{lemma}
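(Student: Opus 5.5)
By the definition \eqref{eq:transfer-op} of the transfer operator along unstable curves, for \(r \in I\) we have
\[
\frac{\cL^n_{V_{1,n}} \rho (r)}{ \cL_{V_{2,n}}^n \rho (r)} = \frac{\rho\bigl(r^{-n}_{1,V_{1,n}}(r)\bigr)}{\rho\bigl(r^{-n}_{1,V_{2,n}}(r)\bigr)} \cdot \frac{|{r_{1, V_{1,n}}^{-n}}'(r)|}{|{r_{1, V_{2,n}}^{-n}}'(r)|}.
\]
Here we use that the two coupled standard pairs carry the \emph{same} density \(\rho\) on the common interval \(I_{W_1} = I_{W_2}\). We therefore split \(\ln\) of the ratio into two terms and bound each by \(C\eta^{1/3}\).

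\textbf{Jacobian term.} The second factor is exactly the quantity controlled by Lemma \ref{lem:estimates-mismatch-derivatives-different-graphs}. Its logarithm is bounded by \(C\eta^{1/3}\), uniformly in \(n\) and \(r\in I\).

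\textbf{Density term.} Set \(r_n := r^{-n}_{1,V_{1,n}}(r)\) and \(\bar r_n := r^{-n}_{1,V_{2,n}}(r)\). These are the \(r\)-coordinates of the points \(x_n=\cF^{-n}(r,\vf_{V_{1,n}}(r))\in V_1\subseteq W_1\) and \(\bar x_n=\cF^{-n}(r,\vf_{V_{2,n}}(r))\in V_2\subseteq W_2\). By construction of the fake holonomy, \(\bar x_n = \bold h_n(x_n)\): the segment joining \((r,\vf_{V_{1,n}}(r))\) to \((r,\vf_{V_{2,n}}(r))\) is vertical, so its \(n\)-th preimage is the curve \(H_0\subseteq \cH_n(x_n)\). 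By \eqref{eq:displacemet-holonomy} we then have \(|r_n-\bar r_n|\le |H_0|\le C_{\mathrm{cone}}\eta\).

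Because \(\ell_\iota\) are regular standard pairs, \(\rho\in\cC(\varpi_2)\). Hence
\[
|\ln\rho(r_n)-\ln\rho(\bar r_n)|\le \varpi_2|r_n-\bar r_n|^{1/3}\le \varpi_2 C_{\mathrm{cone}}^{1/3}\eta^{1/3}.
\]
The constant \(\varpi_2\) is absorbed into \(C\), as agreed in Remark \ref{rem:constant-invariance}.

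Adding the two bounds gives the claim. The only delicate point is the identification of \(\bar x_n\) with \(\bold h_n(x_n)\), so that the displacement estimate \eqref{eq:displacemet-holonomy} applies. This follows from \eqref{eq:holonomy-def}: the fake stable leaves are exactly the preimages of vertical segments, and the \(n\)-th images \(V_{1,n}, V_{2,n}\) share the projection \(I\). Everything else is a direct combination of the earlier lemma with the cone condition. The estimate is uniform in \(n\) precisely because Lemma \ref{lem:estimates-mismatch-derivatives-different-graphs} is, and the density comparison is made only at time \(0\).
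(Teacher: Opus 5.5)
Your proposal is correct and follows the paper's proof essentially verbatim: you split the logarithm of \eqref{eq:transfer-op} into the density ratio and the Jacobian ratio, bound the density term by \(\varpi_2|r_n-\bar r_n|^{1/3}\le C\eta^{1/3}\) using \(\rho\in\cC(\varpi_2)\) and \eqref{eq:displacemet-holonomy}, and bound the Jacobian term by Lemma \ref{lem:estimates-mismatch-derivatives-different-graphs}. Your explicit identification \(\bar x_n=\bold h_n(x_n)\) via \eqref{eq:holonomy-def} just spells out what the paper uses implicitly when it invokes \eqref{eq:displacemet-holonomy}.
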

\begin{proof}
Recalling \eqref{eq:transfer-op},
\[
\biggl|\ln \frac{\cL^n_{V_{1,n}} \rho (r)}{ \cL_{V_{2,n}}^n \rho (r)} \biggr| \le \biggl| \ln \frac{\rho \circ r_{1, V_{1,n}}^{-n}(r)}{\rho \circ r_{1, V_{2,n}}^{-n}(r)}\biggr| + \biggl|\ln \frac{|{r_{1, V_{1,n}}^{-n}}'(r)|}{|{r_{1, V_{2,n}}^{-n}}'(r)|}\biggr|. 
\]
Denote as before \( (r_{n}, \vf_{n}) = \cF^{-n}(r, \vf_{V_{1,n}} (r))\), \( (\bar r_{n}, \bar \vf_{n}) = \cF^{-n}(r, \vf_{V_{2,n}} (r))\). Using the fact that \(\rho \in \cC(\varpi_2)\) and \eqref{eq:displacemet-holonomy}, there exists \(C>0\) such that for all \(n\) and \(\eta\),
\[
\biggl|\ln\frac{\rho \circ r_{1, V_{1,n}}^{-n}(r)}{\rho \circ r_{1, V_{2,n}}^{-n}(r)}\biggr| \le \varpi_2 |r_n - \bar r_n|^{\frac{1}{3}} \le C \eta^{\frac 1 3}.
\]
This, together with Lemma \ref{lem:estimates-mismatch-derivatives-different-graphs}, concludes the proof of the statement.
\end{proof}

We now split the push-forward of the densities supported by the two curves into a coupled part \(\rho_*\) and uncoupled parts \(\rho_1\) and \(\rho_2\). We show that the mass that we are not able to couple is controlled by some power of the distance \(\eta\) between the original standard pairs. We find that the coupled density is regular while the other two densities (which we call \textit{leftover densities}) belong to the very bad cone \(\cC(C\eta^{-1})\). This does not compromise our coupling scheme as these densities recover after \(\sim \ln \eta^{-1}\) iterations. During this transient, before they recover, we find that their oscillations (precisely their \(\max/\min\) ratio) remain of order one because of Lemma \ref{lem:funny-cone}. Leveraging on this fact, we can control the evolution of the boundary \(\cZ\) of standard pairs supporting leftover densities.

\begin{lemma}\label{lem:mismatch-density2}
There exists \(C>0\) such that for all \(n \in \bN\) big enough and \(\eta>0\) small enough, there exist \(\rho_*, \rho_1, \rho_2: I \to \bR^+\) such that
    \[
\begin{split}
&\frac{\cL^n_{V_{1,n}} \rho}{\int_{ I} \cL^n_{V_{1,n}} \rho} = \rho_* + \rho_1, \quad \frac{\cL^n_{V_{2,n}} \rho}{\int_{I} \cL^n_{V_{2,n}} \rho}  = \rho_* + \rho_2 \quad \text{and}\quad \int_{I} \rho_{*} \ge 1- C\eta^{\frac{1}{3}}.
\end{split}
    \]
Moreover, \(\rho_{*} \in \cC(\varpi_2)\) and, for \(\iota \in \{1,2\}\), \(\rho_{\iota} \in \cC(C\eta^{-1})\) and \(\max \rho_{\iota} / \min \rho_{\iota} \le 2e^{\varpi_2}\).
\end{lemma}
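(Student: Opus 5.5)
Set $f_\iota = \cL^n_{V_{\iota,n}}\rho/\int_I \cL^n_{V_{\iota,n}}\rho$ for $\iota\in\{1,2\}$. These are probability densities on $I$. Since $V_{\iota,n}$ are $n$-homogeneous images of the common density $\rho\in\cC(\varpi_2)$, part b) of Corollary \ref{cor:useful-density} (for $n\ge\bar n$) gives $f_\iota\in\cC(\varpi_2/2)$. Normalising and restricting to a subinterval does not affect membership in the cone.

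By Lemma \ref{lem:mismatch-density}, $|\ln(\cL^n_{V_{1,n}}\rho/\cL^n_{V_{2,n}}\rho)|\le C\eta^{1/3}$ pointwise on $I$. Integrating, the normalising constants also have ratio within $e^{\pm C\eta^{1/3}}$. Hence $|\ln(f_1/f_2)|\le 2C\eta^{1/3}$ on $I$.

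Define the coupled density as $\rho_* = (1-\beta)\min\{f_1,f_2\}$ for a small $\beta$, and set $\rho_\iota = f_\iota-\rho_*$. By Lemma \ref{lem:classical-cones}, $\min\{f_1,f_2\}\in\cC(\varpi_2/2)\subset\cC(\varpi_2)$, so $\rho_*\in\cC(\varpi_2)$ once it is normalised or considered up to scaling. Here I read the cone condition in the log-ratio sense, since $\rho_*$ is not normalised. For the mass, $\min\{f_1,f_2\}\ge e^{-2C\eta^{1/3}}f_1$, so
\[
\int_I\rho_*\ge (1-\beta)e^{-2C\eta^{1/3}}\ge 1-\beta-2C\eta^{1/3}.
\]
I choose $\beta = \eta^{1/3}$, which gives $\int_I \rho_*\ge 1-C\eta^{1/3}$.

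The delicate part is the regularity of the leftover densities $\rho_\iota$. I write $\rho_\iota = f_\iota - (1-\beta)m$ with $m=\min\{f_1,f_2\}$. On the set where $f_\iota\ge f_{3-\iota}$, $\rho_\iota$ is the difference of two densities in $\cC(\varpi_2/2)$, and $f_\iota/((1-\beta)m)\ge (1-\beta)^{-1}$. So $\sup (f_\iota/((1-\beta) m)-1)^{-1}\lesssim\beta^{-1}$. On the complementary set, $\rho_\iota = \beta f_\iota$.

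To avoid the case split, it is cleaner to apply Lemma \ref{lem:classical-cones} to $\rho_1' = f_\iota$ and $\rho_2' = (1-\beta)m$, both in $\cC(\varpi_2/2)$, since $m\le f_\iota$ everywhere. The lemma then gives
\[
\rho_\iota\in\cC\bigl(\tfrac{\varpi_2}{2}(1+2\beta^{-1}(1-\beta))\bigr)\subset \cC(C\eta^{-1/3})\subset\cC(C\eta^{-1})
\]
for small $\eta$. Again this is read as a log-ratio cone without normalisation.

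For the max/min ratio, the pointwise bounds give
\[
\beta f_\iota\le\rho_\iota\le f_\iota-(1-\beta)e^{-2C\eta^{1/3}}f_\iota\le(\beta+2C\eta^{1/3})f_\iota\le 3C'\beta f_\iota .
\]
So $\rho_\iota$ is comparable to $\beta f_\iota$ up to a bounded factor. Then
\[
\frac{\max\rho_\iota}{\min\rho_\iota}\le 3C'\,\frac{\max f_\iota}{\min f_\iota}\le 3C' e^{\varpi_2\delta_*^{1/3}/2}.
\]

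The main obstacle is obtaining the precise constant $2e^{\varpi_2}$ rather than an $O(1)$ constant. I would handle this by choosing $\beta=K\eta^{1/3}$ with $K$ large relative to the constant from Lemma \ref{lem:mismatch-density}. Then $\rho_\iota/(\beta f_\iota)\in[1,1+2C/K]\subset[1,2)$. Combined with $\max f_\iota/\min f_\iota\le e^{\varpi_2/2}$ (since $\delta_*\le1$), this yields $\max\rho_\iota/\min\rho_\iota\le 2e^{\varpi_2/2}\le 2e^{\varpi_2}$. The requirements "$n$ large" and "$\eta$ small" enter only through Corollary \ref{cor:useful-density} b) and through $K\eta^{1/3}<1/2$.
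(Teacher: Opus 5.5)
Your proposal is correct and follows essentially the same route as the paper: you take $\rho_*=(1-\beta)\min\{f_1,f_2\}$ with $\beta\sim\eta^{1/3}$, use Lemma~\ref{lem:mismatch-density} for the mass bound and Lemma~\ref{lem:classical-cones} for both cone memberships, and obtain the max/min ratio from the pointwise sandwich $\beta f_\iota\le\rho_\iota\lesssim\beta f_\iota$. The only cosmetic difference is the choice of $\beta$: the paper takes $\beta=c_0\eta^{1/3}$ with $c_0$ equal to the mismatch constant, which gives the factor $2$ directly, whereas you take $\beta=K\eta^{1/3}$ with $K$ large, and either choice works.
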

\begin{proof}
Let \(\bar n \in \bN\) be given by part b) of Corollary \ref{cor:useful-density} and consider \(n \ge \bar n\). For some \(c_0 >0 \) to be fixed shortly, we define
\[
\begin{split}
    \rho_{*} &= \min \biggl\{\frac{\cL^n_{V_{1,n}} \rho}{\int_{ I} \cL^n_{V_{1,n}} \rho} , \frac{\cL^n_{V_{2,n}} \rho}{\int_{ I} \cL^n_{V_{2,n}} \rho} \biggr\}(1- c_0\eta^{\frac 1 3}),\\
    \rho_1 & = \frac{\cL^n_{V_{1,n}} \rho}{\int_{ I} \cL^n_{V_{1,n}} \rho}  - \rho_{*} \quad \text{and}\quad\rho_2  = \frac{\cL^n_{V_{2,n}} \rho}{\int_{ I} \cL^n_{V_{2,n}} \rho}  - \rho_{*}.
\end{split}
\]
By Lemma \ref{lem:mismatch-density}, there exists \(C>0\) such that, for all \(n \in \bN\) and \(\eta >0\) small enough, 
\[
\ \frac{\int_{ I} \cL^n_{V_{1,n}} \rho}{\int_{ I} \cL^n_{V_{2,n}} \rho} = \int_{I} (\cL^n_{V_{1,n}} \rho/\cL^n_{V_{2,n}} \rho) \frac{\cL^n_{V_{2,n}} \rho}{\int_{I} \cL^n_{V_{2,n}} \rho} \ge 1 - C\eta^{\frac 1 3}.
\]
Hence, using Lemma \ref{lem:mismatch-density} again, for any \(r \in I\) and \(\eta\) small enough,
\[
  \frac{\cL^n_{V_{2,n}} \rho \int_{I} \cL^n_{V_{1,n}} \rho}{\cL^n_{V_{1,n}} \rho \int_{I} \cL^n_{V_{2,n}} \rho} \ge 1 - 2C\eta^{\frac{1}{3}}.
\]
Renaming the constant, we have obtained that, for some \(C >0\),
\begin{equation}\label{eq:estimate-min-density-fraction}
  \min \biggl\{1, \frac{\cL^n_{V_{2,n}} \rho \int_{I} \cL^n_{V_{1,n}} \rho}{\cL^n_{V_{1,n}} \rho \int_{I} \cL^n_{V_{2,n}} \rho}\biggr\} \ge 1 - C\eta^{\frac{1}{3}}.
\end{equation}
Set \(c_0\) to be equal to the constant \(C\) in \eqref{eq:estimate-min-density-fraction}. Then,
\[
\begin{split}
\int_{I} \rho_{*} &= (1-c_0 \eta^{\frac{1}{3}}) \int_{I} \frac{\cL^n_{V_{1,n}} \rho}{\int_{I} \cL^n_{V_{1,n}} \rho} \min \biggl\{1, \frac{\cL^n_{V_{2,n}} \rho \int_{I} \cL^n_{V_{1,n}} \rho}{\cL^n_{V_{1,n}} \rho \int_{I} \cL^n_{V_{2,n}} \rho}\biggr\}\\
&\ge (1-c_0 \eta^{\frac{1}{3}})(1- c_0\eta^{\frac{1}{3}}) \ge 1 - 2c_0\eta^{\frac{1}{3}}.
\end{split}
\]
This proves the lower bound on the amount of coupled mass. The fact that \(\rho_{*} \in \cC(\varpi_2)\) is a consequence of the definition of \(\rho_*\) and the first part of Lemma \ref{lem:classical-cones}. As for the leftover densities, we have
\[
\begin{split}
\min \rho_1 &\ge \min_I \frac{\cL^n_{V_{1,n}} \rho}{\int_{I} \cL^n_{V_{1,n}} \rho}\biggl(1 - \max_I\min \biggl\{1, \frac{\cL^n_{V_{2,n}} \rho \int_{I} \cL^n_{V_{1,n}} \rho}{\cL^n_{V_{1,n}} \rho \int_{I} \cL^n_{V_{2,n}} \rho}\biggr\}(1 - c_0 \eta^{\frac 1 3})\biggr) \\
&\ge \frac{e^{-\delta_{*}^{1/3}\omega_2/2}}{|I|} [1 - (1-c_0 \eta^{\frac 1 3})] \ge \frac{c_0 e^{-\omega_2/2} \eta^{\frac 1 3 }}{|I|}.
\end{split}
\]
In the second inequality we have used that \(\cL^n_{V_{1,n}} \rho \in \cC(\varpi_2/2)\), which is true because of the choice of \(n\) specified at the beginning of the proof. Using \eqref{eq:estimate-min-density-fraction} again and the choice of \(c_0\),
\[
\begin{split}
    \max \rho_1 &\le \max_I \frac{\cL^n_{V_{1,n}} \rho}{\int_{I} \cL^n_{V_{1,n}} \rho}\biggl(1 - \min_I\min \biggl\{1, \frac{\cL^n_{V_{2,n}} \rho \int_{I} \cL^n_{V_{1,n}} \rho}{\cL^n_{V_{1,n}} \rho \int_{I} \cL^n_{V_{2,n}} \rho}\biggr\}(1 - c_0 \eta^{\frac 1 3 })\biggr)\\
    &\le \frac{e^{\delta_{*}^{1/3}\omega_2/2}}{|I|}\biggl(1 - (1-c_0\eta^{\frac 1 3 })(1-c_0 \eta^{\frac 1 3 }) \biggr)\le \frac{2e^{\omega_2/2}c_0\eta^{\frac 1 3}}{|I|}.
\end{split}
\]
Thus, \(\max \rho_1/\min \rho_1 \le 2e^{\omega_2}\). The estimate for \(\rho_2\) is analogous. Finally, for \(\eta\) small enough,
\[
\frac{\cL^n_{V_{1,n}} \rho}{\rho_* \int_{ I} \cL^n_{V_{1,n}} \rho} = \frac{1}{(1-c_0\eta^{\frac 1 3}) \min \biggl\{1, \frac{\cL^n_{V_{2,n}} \rho \int_{I} \cL^n_{V_{1,n}} \rho}{\cL^n_{V_{1,n}} \rho \int_{I} \cL^n_{V_{2,n}} \rho}\biggr\}} \ge 1 + \frac{c_0}{2}\eta^{\frac{1}{3}}.
\]
Therefore, for \(\eta\) small enough, by Lemma \ref{lem:classical-cones}, we have that \(\rho_{1} \in \cC(\varpi_*)\) with
\[
\varpi_* = \varpi_2 \biggl (1 + 2 \sup \frac{1}{\frac{\cL^n_{V_{1,n}} \rho}{\rho_* \int_{ I} \cL^n_{V_{1,n}} \rho} -1} \biggr) = \varpi_2 \biggl( 1 + 4 \frac{1}{c_0 \eta^{\frac 1  3}} \biggr)\le C \eta^{-\frac 1 3}.
\]
This concludes the proof.
\end{proof}

\subsection{Evolution of two coupled standard families}\label{subsec:evolution-two-coupled-sf}\textit{In this section we collect the results of Sections \ref{subsec:dec-disc} and \ref{subsec:decoupling-mismatch} in a single statement which records how much mass gets decoupled and the time needed to recover. The main result of this section is Lemma \ref{lem:one-step-loss-mass}.}

We consider a longer time step for the dynamics. This is a big enough iterate \(\bar N_c\) of \(\hat \cF_t\) such that a list of good properties is met. First, we require that 
\begin{equation}\label{eq:conditionN_c}
\bar N_c \ge \alpha_1 \ln B_2 + \alpha_2,
\end{equation}
where \(\alpha_1\) and \(\alpha_2\) are given by Corollary \ref{cor:Growth-Lemma}. We also assume that \(\bar N_c\) is big enough so that Lemmata \ref{lem:distortion-bound-density}, \ref{lem:stacking-images} and \ref{lem:mismatch-density2} apply. Informally, at the time scale \(\bar N_c\) stacked curves become even more stacked and the regularity data \((\varpi, B)\) have enough time to see contraction. 

We now discuss in detail the order of the quantifiers. We first fix the length scale \(\eta_0\) for coupling. The number \(\eta_0 > 0\) is small enough such that Lemma \ref{lem:one-step-loss-mass} is satisfied and depends only on the billiard table and the regularity data  \(\varpi_1, B_1\) of the initial standard families. With \(\eta_0\) fixed, we set \(N_c \ge \bar N_c\) depending on \(\eta_0\) so that Lemma \ref{lem:fundamental-coupling} is satisfied. This guarantees that a positive amount of mass can be coupled at the length scale \(\eta_0\) every \(N_c\) iterations of the dynamics. Finally, we set \(t_0\) small enough depending on \(N_c\). For example, by Lemma \ref{lem:lost-mass-with-t}, we can require \(t_0 >0\) to be small enough such that, for every regular standard family \(\cG\),
\begin{equation}\label{eq:condition-on-t_0}
    \mu_{\cG}(\cM_t^{N_c}) \ge \frac{3}{4},
\end{equation}
for all \(t \in [0,t_0]\) and \(\iota \in \{1,2\}\). To keep the presentation readable, the explicit order of all the quantifiers is specified only in Lemma \ref{lem:one-step-loss-mass}, which summarizes all the results of this section. We now introduce \textit{regularity classes} \(\fR_k\). These are a tool to book-keep very irregular mass and the time needed to become regular again.

\begin{definition}\label{def:regularity-classes}
We say that a standard family \(\cG\) belongs to \(\fR_{k}\), \(k \in \bN_0\), if \(\hat \cF_t^{kN_c} \cG\) is a regular standard family.
\end{definition}

Note that Definition \ref{def:regularity-classes} does not include the hole but only its derived discontinuities, even if ultimately we are interested in exponential mixing for the leaky evolution. Because of the Growth Lemma, mass with vary bad regularity takes only moderate time to recover.

\begin{lemma}\label{lem:characterization-regularity} There exists \(C>0\) such that the following is true. Let  \(\cG = \{(p_k, W_k, \rho_k)\}\) be a standard family such that \(\max \rho_k/ \min \rho_k \le \funny\) for each \(k\). For all \(0< \eta< 1/2\), if either \(\cG\) is a \((\omega_2, \eta^{-2})\)-standard family or a \((\eta^{-2}, B_2)\)-standard family, then  \(\cG \in \fR_{\lfloor C\ln \eta^{-1}\rfloor }\).
\end{lemma}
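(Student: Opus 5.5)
We need to show that \(\hat\cF_t^{kN_c}\cG\) is a regular standard family for \(k = \lfloor C\ln\eta^{-1}\rfloor\). This means two things: the densities must lie in \(\cC(\varpi_2)\), and \(\cZ \le B_2\). The curves lie in \(\cW(D_2)\) automatically by Lemma \ref{lem:invariance-unstable-curves}. We treat the boundary and the density regularity separately, and at the end take the larger of the two required times. The max/min hypothesis \(\max\rho_k/\min\rho_k\le \funny\) is exactly what allows Lemma \ref{lem:invariance1} and Corollary \ref{cor:Growth-Lemma} to be applied for \(\hat\cF_t\) regardless of how bad the \(\cC(\varpi)\) regularity of the densities is.

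\textbf{Boundary.} In the first case \(\cZ(\cG)\le \eta^{-2}\). Part c) of Corollary \ref{cor:Growth-Lemma} gives \(\cZ(\hat\cF_t^n\cG)\le \bold B_2 \le B_2\) for all \(n\ge \alpha_1\ln\eta^{-2}+\alpha_2 = 2\alpha_1\ln\eta^{-1}+\alpha_2\). In the second case \(\cZ(\cG)\le B_2\). Part b) gives \(\cZ(\hat\cF^n_t\cG)\le \max\{Z_1^{n_*}B_2,\bold B_2\}\) for every \(n\); this bound is slightly bigger than \(B_2\), but part c) brings it back below \(\bold B_2\le B_2\) once \(n \ge \alpha_1\ln B_2+\alpha_2\). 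That threshold is at most \(\bar N_c\) by \eqref{eq:conditionN_c}, so \(n\ge N_c\) suffices. Since \(\eta<1/2\), \(\ln\eta^{-1}\ge\ln 2\), and any constant number of steps can be absorbed into \(C\ln\eta^{-1}\). In both cases, therefore, \(\cZ\) is at most \(B_2\) after \(kN_c\) steps once \(k\ge C\ln\eta^{-1}\).

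\textbf{Densities.} In the first case the densities are already in \(\cC(\varpi_2)\). After pushing forward, each density is restricted to \(n\)-homogeneous pieces and renormalized, and restriction and renormalization do not change the cone. Part a) of Lemma \ref{lem:distortion-bound-density} and part b) of Corollary \ref{cor:useful-density} show that \(\cC(\varpi_2)\) is invariant under \(\cL^n\) for \(n\ge\bar n\); for shorter times one uses the explicit choice \eqref{eq:contstant-omega2-regularity}, which absorbs the finitely many steps below \(\bar n\). Since \(N_c\ge\bar N_c\ge\bar n\), the densities of \(\hat\cF_t^{kN_c}\cG\) stay in \(\cC(\varpi_2)\).

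In the second case the densities start in \(\cC(\eta^{-2})\). Iterate part b) of Corollary \ref{cor:useful-density}: each block of \(N_c\ge\bar n\) steps halves \(\varpi\) as long as \(\varpi>\varpi_2/2\). After \(j\) blocks the densities are in \(\cC(\max\{2^{-j}\eta^{-2},\varpi_2\})\), so \(j\ge \log_2(\eta^{-2}/\varpi_2)\lesssim \ln\eta^{-1}\) blocks suffice. Once the densities reach \(\cC(\varpi_2)\), the invariance above keeps them there.

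\textbf{Main obstacle.} The delicate point is the order of the halving step and the boundary control in the second case: the intermediate families are not regular, so they must not be fed into any lemma that assumes regularity. To avoid this, I will only use Corollary \ref{cor:Growth-Lemma}, which requires the max/min bound and no cone regularity, and Corollary \ref{cor:useful-density}, which is pointwise on each curve. Neither needs regularity of the intermediate family, so the two estimates can be run independently and combined at time \(kN_c\). The max/min bound is needed at every time, not just at the start; Lemma \ref{lem:funny-cone} propagates it up to the factor \(Q_3\), and this is already built into the proof of Lemma \ref{lem:invariance1}. That closes the argument.
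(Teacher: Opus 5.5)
Your proposal is correct and follows essentially the same route as the paper: you treat the two cases separately, control the boundary with Corollary~\ref{cor:Growth-Lemma} (which needs only the max/min hypothesis), and control the densities with Lemma~\ref{lem:distortion-bound-density} and the halving in part b) of Corollary~\ref{cor:useful-density}. In the second case, your boundary argument (part c) together with \eqref{eq:conditionN_c}, so that $\cZ$ is back below $\bold B_2$ after one block of $N_c$ steps) is slightly more careful than the paper's one-line appeal to invariance of $\cZ$ via Lemma~\ref{lem:invariance1}.
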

\begin{proof}
Take a \((\omega_2,  \eta^{-2})\)-standard family \(\cG\). Since \(N_c \ge \bar n\), by Lemma \ref{lem:distortion-bound-density} the standard families \(\hat \cF_t^{nN_c}\cG\) support densities in \(\cC(\varpi_2)\) for all \(n \in \bN\). Moreover, by part c) of Corollary \ref{cor:Growth-Lemma}, for any \(k \ge \alpha_1 \ln \eta^{-2} + \alpha_2\), we have that \(\cZ(\hat \cF_t^{k N_c} \cG) \le \bold B_2 \ll B_2\). Since \( \alpha_1 \ln \eta^{-2} + \alpha_2 \le \lfloor C \ln \eta^{-1} \rfloor\) for \(C>0\) big enough, this proves the first part of the statement. The second part follows in a similar way using part b) of Corollary \ref{cor:useful-density}, the assumption on the densities \(\rho_k\) and the invariance of \(\cZ(\cG)\) guaranteed by Lemma \ref{lem:invariance1}.
\end{proof}

Let \(\cG_{\iota} = \{(p_j, W_{j,\iota}, \rho_j)\}\) be two \(\eta\)-coupled regular standard families and denote by \(W_{j,p, \iota}\) the connected component of \(W_{j,\iota} \setminus \cS^{\bH}_{N_c, t}\) such that \(W_{j,p,\iota} \subset \cM_t^{N_c}\). By further subdividing them, we require as usual that \(|\cF^{N_c} (W_{j,p, \iota})| \le \delta_*\). Denote by \(\hat W_{j,p,\iota} = \hat W_{j,p,\iota}^{(N_c)} \subseteq  W_{j,p, \iota}\) the maximal connected subsets of \(W_{j, \iota}\) where \(\bold h_{N_c}\) is well defined and such that \(\bold h_{N_c} (\hat W_{j,p,1}) = \hat W_{j,p,2}\). Finally, call \(\bar W_{j,q, \iota}\) the connected components of \(W_{j,p, \iota} \setminus \hat W_{j,p, \iota}\) such that \(\bar W_{j,q, \iota} \subseteq \cM_t^{N_c}\). We are considering all the mass in \(\hat \cF_t^{N_c} \cG_{\iota}\) that will not end up in the hole in the next \(N_c\) iterations and we are further subdividing it according to whether it is supported on curves that are getting stacked or not. We now introduce two standard families \(\hat \cG_{\iota}\) and \(\bar \cG_{\iota}\). These are \textit{essentially} obtained by conditioning \(\cG_{\iota}\) on the events \(\hat W_{j,p,\iota}\) and \(\bar W_{j,p,\iota}\). However, in order to have coupled standard families, we need to make sure that \(\hat \cG_{1}\) and \(\hat \cG_{2}\) have the same weights, and so we need to declare uncoupled some mass supported on \(\hat W_{j,p,\iota}\) as well.  \mbox{More precisely, call}
\[
\begin{split}
 &\Omega_c = \sum_{j} p_j \sum_{p} \min_{\iota \in \{1,2\}}\int_{I_{\hat W_{j,p,\iota}}} \rho_j \quad \text{and}\\
 &\Omega_{u,\iota} = \sum_{j} p_j \sum_{q} \int_{I_{\bar W_{j,q,\iota}}} \rho_j + \sum_{j} p_j \sum_{p} \biggl(\int_{\hat W_{j,p,\iota}} \rho_j - \min_{\iota \in \{1,2\}}\int_{I_{\hat W_{j,p,\iota}}} \rho_j \biggr),
\end{split}
\]
the coupled and non-coupled mass respectively. Since some mass might end up in the hole, \(\Omega_c\) and \(\Omega_{u,\iota}\) don't necessarily sum up to one. We set 
\begin{equation}\label{eq:almost-coupled-def}
\begin{split}
&\hat \rho_{j,p,\iota} = \frac{\rho_j \Id_{I_{\hat W_{j,p, \iota}}}}{\int_{I_{\hat W_{j,p, \iota}}} \rho_j}, \quad \hat s_{j,p} = \frac{p_j \min_{\iota \in \{1,2\}}\int_{I_{\hat W_{j,p, \iota}}} \rho_j}{\Omega_c},\\
&\bar \rho_{j,q,\iota} = \frac{\rho_j \Id_{I_{\bar W_{j,q, \iota}}}}{\int_{I_{\bar W_{j,q, \iota}}} \rho_j}, \quad \bar s_{j,q,\iota} = \frac{p_j \int_{I_{\bar W_{j,q, \iota}}} \rho_j}{\Omega_{u,\iota}},
\end{split}
\end{equation}
and we consider the standard families 
\[
\begin{split}
&\hat \cG_{\iota} = \{( \hat s_{j,p}, \hat W_{j,p, \iota}, \hat \rho_{j,p,\iota})\}\quad \text{and} \\[10pt] 
&\bar \cG_{\iota} = \{(\bar s_{j,q,\iota}, \bar W_{j,q, \iota},\bar \rho_{j,q,\iota})\}\cup \biggl\{\biggl(\frac{p_j \bigl(\int_{I_{\hat W_{j,p, \iota}}} \rho_j - \min_{\iota \in \{1,2\}}\int_{I_{\hat W_{j,p, \iota}}} \rho_j\bigr)}{\Omega_{u,\iota}}, \hat W_{j,p, \iota}, \hat \rho_{j,p,\iota}\biggr)\biggr\} .
\end{split}
\]
Since \(\cG_{\iota}\) are regular standard families, by Lemma \ref{lem:decouple-single-sp},
\[
\begin{split}
    \sum_{j} p_j \sum_p \int_{I_{\bar W_{j,p, \iota}}} \rho_j \le \sum_{j} p_j \int_{I_{W_{j,\iota} \setminus \hat W_{j, \iota}}} \rho_j &\le C \eta^{\frac 1 2}.
\end{split}
\]
and, using Lemma \ref{lem:mismatch-density} and denoting by \(I_{j,p} = I_{\cF^n(\hat W_{j,p, 1})} = I_{\cF^n(\hat W_{j,p, 2})}\), for any \(\eta>0\) small enough, we have that
\begin{equation}\label{eq:difference-stacked-mass}
\begin{split}
 \sum_{j} p_j\sum_{p}&\biggl|\int_{I_{\hat W_{j,p, 1}}} \rho_j - \int_{I_{\hat W_{j,p, 2}}} \rho_j \biggr| = \sum_{j} p_j \sum_{p}\int_{I_{\hat W_{j,p, 2}}} \rho_j\biggl|\frac{\int_{I_{j,p}} \cL^n_{\cF^n(\hat W_{j,p, 1})} \rho_j}{\int_{I_{j,p}} \cL^n_{\cF^n(\hat W_{j,p, 2})} \rho_j} - 1 \biggr|\\[8pt]
 &\le \sum_{j} p_j \sum_{p}\int_{I_{\hat W_{j,p, 2}}} \rho_j\int_{I_{j,p}} \biggl|\frac{\cL^n_{\cF^n(\hat W_{j,p, 1})}\rho_j}{\cL^n_{\cF^n(\hat W_{j,p, 2})}\rho_j} - 1\biggr| \frac{\cL^n_{\cF^n(\hat W_{j,p, 2})}\rho_j}{\int_{I_{j,p}} \cL^n_{\cF^n(\hat W_{j,p, 2})} \rho_j}\le C\eta^{\frac 1 3}.
 \end{split}
\end{equation}
Therefore, the non-stacked mass \(\Omega_{u,\iota}\), which is bounded by the contributions from the last two equations, is small. In particular, by possibly coupling less mass, we can ensure that, for \(\eta\) sufficiently small,
\begin{equation}\label{eq:estimate-a}
   \Omega_c = 1 - \Omega_{u,\iota} - \mu_{\cG_{\iota}}(\cM \setminus \cM_t^{N_c}) \ge \frac{1}{2} \quad \text{and} \quad \frac{C \eta^{\frac 1 3}}{2} \le \Omega_{u,\iota} \le C \eta^{\frac 1 3}.
\end{equation}
For the lower bound for \(\Omega_c\), we use the upper bound on \(\Omega_{u,\iota}\) and the fact that,  according to \eqref{eq:condition-on-t_0}, the mass that enters the hole is less than \(1/4\) for \(t_0\) small enough. Let also \(\hat \cG_{\iota,N_c} = \hat \cF_t^{N_c} \hat \cG_{\iota}\) and \(\bar \cG_{\iota,N_c} = \hat \cF_t^{N_c} \bar \cG_{\iota}\). By construction, for any \(\phi \in \cC^0(\cM)\),
\begin{equation}\label{eq:first-dec}
\mu_{\cG_{\iota}} (\phi \circ \hat \cF_t^{N_c} \Id_{\cM_t^{N_c}}) = \Omega_c \mu_{\hat\cG_{\iota,N_c}}(\phi) + \Omega_{u,\iota} \mu_{\bar \cG_{\iota,N_c}}(\phi).
\end{equation}

\begin{lemma}\label{lem:reg-coupled-left-over}
For all \(\eta >0\) small enough, \(\hat \cG_{\iota, N_c}\) are regular standard families. Moreover, there exists \(C>0\) such that, for all \(\eta >0\) small enough, \(\bar \cG_{\iota, N_c} \in \fR_{\lfloor C\ln \eta^{-1}\rfloor }\).
\end{lemma}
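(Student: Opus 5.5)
The proof splits into the two claims: regularity of the coupled part $\hat\cG_{\iota,N_c}$, and membership of the leftover part $\bar\cG_{\iota,N_c}$ in $\fR_{\lfloor C\ln\eta^{-1}\rfloor}$. In both cases I would control the densities and the boundary $\cZ$ separately. The curves are automatically fine, since by construction they lie in $\cW(D_2)$ after chopping to length $\le\delta_*$ (Lemma \ref{lem:invariance-unstable-curves}).

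\emph{Coupled part.} The densities of $\hat\cG_\iota$ are restrictions $\hat\rho_{j,p,\iota}$ of $\rho_j\in\cC(\varpi_2)$ to subintervals, renormalized, so they stay in $\cC(\varpi_2)$. Since $N_c\ge\bar n$, part b) of Corollary \ref{cor:useful-density} (or Lemma \ref{lem:distortion-bound-density} b) directly) puts their $N_c$-th pushforwards in $\cC(\varpi_2)$. For the boundary I would use Lemma \ref{lem:conditioning} with $n=N_c$. It bounds the boundary of $\hat\cF_t^{N_c}$ of the conditioned family by $C_{\mathrm{cone}}e^{\Omega\varpi_2}\cZ(\hat\cF_t^{N_c}\cG_\iota)$ divided by the conditioned mass. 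Because $N_c\ge\alpha_1\ln B_2+\alpha_2$, part c) of Corollary \ref{cor:Growth-Lemma} gives $\cZ(\hat\cF_t^{N_c}\cG_\iota)\le\bold B_2$; it applies because the max/min ratio hypothesis follows from $\rho_j\in\cC(\varpi_2)$ and $|W|\le\delta_*\le1$. The weights $\hat s_{j,p}$ differ from the conditioned weights only by the factor $\min_\iota/\int$, which is at least $1-C\eta^{1/3}$ by \eqref{eq:difference-stacked-mass}. Together with $\Omega_c\ge 1/2$ from \eqref{eq:estimate-a}, this gives
\[
\cZ(\hat\cG_{\iota,N_c})\le 4C_{\mathrm{cone}}e^{\Omega\varpi_2}\bold B_2\le B_2 .
\]
The last inequality holds by \eqref{eq:intrinsic-constant-boundary}, so $\hat\cG_{\iota,N_c}$ is regular.

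\emph{Leftover part.} Here the mass is tiny, of order $\Omega_{u,\iota}\sim\eta^{1/3}$, so the same Lemma \ref{lem:conditioning} argument only bounds the boundary by $C\bold B_2/\Omega_{u,\iota}\le C\eta^{-1/3}\le\eta^{-2}$ for small $\eta$. The densities are either restrictions of $\rho_j$ or, for the density-mismatch piece, of the type $\rho_\iota$ of Lemma \ref{lem:mismatch-density2}. The latter lie in $\cC(C\eta^{-1})$ with max/min ratio $\le 2e^{\varpi_2}$. To put the mismatch piece into the standard-family framework I would represent it at time $N_c$ as $\cL^{N_c}(\rho_j\Id)-\rho_*$ pulled back, using Lemma \ref{lem:mismatch-density2} with $n=N_c$.

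So after $N_c$ steps, $\bar\cG_{\iota,N_c}$ is a union of two subfamilies. One is a $(\varpi_2,\eta^{-2})$-family, the other an $(\eta^{-2},B_2)$-type family after again bounding its boundary via the mass normalization. All densities have max/min ratio $\le\funny$, which holds either from Lemma \ref{lem:mismatch-density2} or from $\cC(\varpi_2)$ on curves of length $\le1$. Lemma \ref{lem:characterization-regularity} then places each subfamily, and hence their convex combination, in $\fR_{\lfloor C\ln\eta^{-1}\rfloor}$.

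The main obstacle is the bookkeeping in this second part. The mismatch densities $\rho_\iota$ are only of bad cone type $\cC(C\eta^{-1})$, and a convex combination of families with different recovery regimes must still be regular after $\lfloor C\ln\eta^{-1}\rfloor N_c$ steps. For this I would verify that $\cZ$ is linear under convex combinations and that both recovery mechanisms (part b) of Corollary \ref{cor:useful-density} for densities, part c) of Corollary \ref{cor:Growth-Lemma} for $\cZ$) act within $O(\ln\eta^{-1})$ blocks of length $N_c$, enlarging $C$ if needed.
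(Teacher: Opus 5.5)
Your treatment of the coupled part $\hat\cG_{\iota,N_c}$ is the paper's argument. The restricted densities stay in $\cC(\varpi_2)$ and remain there after $N_c\ge\bar n$ steps. Lemma \ref{lem:conditioning} together with part c) of Corollary \ref{cor:Growth-Lemma} bounds the stacked boundary by a multiple of $\mathbf{B}_2$. Then $\Omega_c\ge 1/2$ gives $\cZ(\hat\cG_{\iota,N_c})\le 4C_{\mathrm{cone}}e^{\Omega\varpi_2}\mathbf{B}_2\le B_2$. For this upper bound you only need the trivial inequality $\min_\iota\int\le\int$, not the lower bound $1-C\eta^{1/3}$ you quote.

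For the leftover part you have misread the definition of $\bar\cG_\iota$: it contains no density-mismatch pieces. It consists of two kinds of pairs, and both carry restrictions of $\rho_j$:
\begin{itemize}
\item the non-stacked curves $\bar W_{j,q,\iota}$, with the restrictions $\bar\rho_{j,q,\iota}$ of $\rho_j$;
\item the stacked curves $\hat W_{j,p,\iota}$, carrying the same restricted density $\hat\rho_{j,p,\iota}$ but with weight proportional to the excess $\int_{I_{\hat W_{j,p,\iota}}}\rho_j-\min_\iota\int_{I_{\hat W_{j,p,\iota}}}\rho_j$.
\end{itemize}
All its densities therefore lie in $\cC(\varpi_2)$ and stay there after $N_c$ steps. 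Combine this with your bound $\cZ(\bar\cG_{\iota,N_c})\lesssim \mathbf{B}_2/\Omega_{u,\iota}\lesssim\eta^{-1/3}\le\eta^{-2}$, which uses the lower bound on $\Omega_{u,\iota}$ in \eqref{eq:estimate-a}. Then $\bar\cG_{\iota,N_c}$ is a single $(\varpi_2,\eta^{-2})$-family, and the first case of Lemma \ref{lem:characterization-regularity} finishes the proof. That is exactly the paper's proof, so the ``main obstacle'' you describe is not present. The mismatch densities of Lemma \ref{lem:mismatch-density2} enter only in the next lemma, Lemma \ref{lem:holonomy-mismatch-decomposition}, where they form $\hat\cG_{\iota,N_c,u}$.

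Your proposed handling of that nonexistent piece also would not give the $(\eta^{-2},B_2)$-family you claim. Renormalizing a subfamily of mass of order $\eta^{1/3}$ by its own mass, ``again'' via the Lemma \ref{lem:conditioning}-type argument, yields only an $O(\eta^{-1/3})$ boundary. A family bad in both parameters is covered by neither case of Lemma \ref{lem:characterization-regularity} as stated. The paper avoids this in Lemma \ref{lem:holonomy-mismatch-decomposition} by exploiting that the mismatch fraction is comparable on every curve. It uses a common fraction $b$, padding the leftover density with some coupled density, so that curves and weights are unchanged and $\cZ(\hat\cG_{\iota,N_c,u})=\cZ(\hat\cG_{\iota,N_c})\le B_2$.
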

\begin{proof}
By \eqref{eq:almost-coupled-def} and the fact that \(\cG_{\iota}\) are regular standard families, we have that \(\hat \rho_{j,p, \iota}\) and \(\bar \rho_{j,q,\iota}\) belong to \(\cC(\varpi_2)\). Consequently, by Lemma \ref{lem:distortion-bound-density} and the fact that \(N_c \ge \bar n\), we have that both \(\hat \cG_{\iota,N_c}\) and \(\bar \cG_{\iota, N_c}\) support regular densities. Recalling \eqref{eq:conditioning-def}, denote by \(\cG_{\iota}^{(N_c)}\) the standard families obtained by conditioning on being supported on the `matched part' of the holonomy and \(\hat\cF_t^{N_c}\cG_{\iota}^{(N_c)}\) their images, which coincide with conditioning \(\hat\cF_t^{N_c}\cG_{\iota}\) on the event of being stacked (here we are considering also the mass that ends up in the hole). Then, by Lemma \ref{lem:conditioning} and the fact that the decoupled mass is less than \(1/2\) for \(\eta\) small enough, we have that \(\cZ(\hat\cF_t^{N_c}\cG_{\iota}^{(N_c)}) \le 2e^{\Omega \varpi_2}C_{\mathrm{cone}}\cZ(\hat\cF_t^{N_c}\cG_{\iota})\). Therefore, by the choice \eqref{eq:conditionN_c} of \(N_c\) and part c) of Corollary \ref{cor:Growth-Lemma},
\[
\cZ(\hat\cF_t^{N_c}\cG_{\iota}^{(N_c)}) \le 2 e^{\Omega\varpi_2}C_{\mathrm{cone}}\bold B_2.
\]
Consequently, using also the lower bound for \(\Omega_c\) in Equation \eqref{eq:estimate-a}, we obtain that
\[
\begin{split}
\cZ(\hat \cG_{\iota, N_c}) &= \cZ(\hat \cF_t^{N_c} \hat \cG_{\iota}) = \sum_{j}\sum_p \hat s_{j,p} \frac{1}{|\hat\cF_t^{N_c}(\hat W_{j,p,\iota})|} = \frac{\sum_j p_j \sum_p \min_{\iota}\int_{I_{\hat W_{j,p,\iota}}}\rho_j \frac{1}{|\hat\cF_t^{N_c}(\hat W_{j,p,\iota})|}}{\Omega_c}\\
& \le \frac{\sum_j p_j \sum_p \int_{I_{\hat W_{j,p,\iota}}}\rho_j \frac{1}{|\hat\cF_t^{N_c}(\hat W_{j,p,\iota})|}}{\Omega_c} \le \frac{\cZ(\hat\cF_t^{N_c}\cG_{\iota}^{(N_c)})}{\Omega_c} \le 4 e^{\Omega\varpi_2}C_{\mathrm{cone}}\bold B_2 \le B_2.
\end{split}
\]
In the second inequality, we used that the sums go over the stacked curves that do not end up in the hole, whereas \(\cZ(\hat\cF_t^{N_c}\cG_{\iota}^{(N_c)})\) contains the contributions from all the stacked curves. This proves that \(\hat \cG_{\iota, N_c}\) are regular standard families. As for the second part of the statement, a similar computation based on Lemma \ref{lem:conditioning} yields, for \(\eta>0\) small enough,
\[
\begin{split}
\cZ (\bar \cG_{\iota, N_c}) &\le  \frac{\sum_{j} p_j \sum_q \int_{I_{\bar W_{j,q, \iota}}} \rho_j \frac{1}{|\hat \cF_t^{N_c}(\bar W_{j,q, \iota})|} + \sum_j p_j \sum_p \int_{I_{\hat W_{j,p,\iota}}}\rho_j \frac{1}{|\hat\cF_t^{N_c}(\hat W_{j,p,\iota})|}}{ \Omega_{u,\iota}}\\
&\le  \frac{C_{\mathrm{cone}}e^{\Omega \varpi_2}2\cZ(\hat\cF_t^{N_c}\cG_{\iota})}{ \Omega_{u,\iota}} \le  \frac{C_{\mathrm{cone}}e^{\Omega \varpi_2}2\bold B_2}{ \Omega_{u,\iota}}.
\end{split}
\]
Here, we used Lemma \ref{lem:conditioning} in the second inequality after dividing and multiplying for the correct renormalization. Hence, \(\cZ (\bar \cG_{\iota, N_c}) \lesssim 1/\Omega_{u,\iota} \le C \eta^{-1/3} \le \eta^{-2}\). Therefore, \(\bar \cG_{\iota, N_c}\) is a \((\varpi_2, \eta^{-2})\)-standard family and Lemma \ref{lem:characterization-regularity} concludes the proof.
\end{proof}

By Lemma \ref{lem:stacking-images} and because \(N_c\) is big enough, the two standard families \(\hat\cG_{\iota,N_c}\) are supported on \(\chi \eta\)-stacked unstable curves. However their densities \(\hat \rho_{j,p,\iota}\) may slightly differ. Recall that, according to Definition \ref{def:stacking}, we need the densities to be equal for declaring two standard pairs coupled. In the next Lemma, we take care of this ‘mismatch' between the densities and we estimate the regularity class of standard families supporting leftover densities.

\begin{lemma}\label{lem:holonomy-mismatch-decomposition}
    There exists \(C>0\) such that, for all \(\eta >0\) small enough, there exists \(b >0\),  \(b \le C\eta^{1/3}\), such that, for all \(t \in [0,t_0]\),
    \[
    \mu_{\hat \cG_{\iota, N_c}} = (1-b)\mu_{\hat \cG_{\iota,N_c, c}} + b\mu_{\hat \cG_{\iota,N_c, u}},
    \]
    where \(\hat \cG_{\iota, N_c, c}\) are \(\chi\eta\)-coupled regular standard families and \(\hat \cG_{\iota,N_c, u} \in \fR_{\lfloor C \ln \eta^{-1}\rfloor}\).
\end{lemma}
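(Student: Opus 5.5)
The plan is to apply Lemma \ref{lem:mismatch-density2} to each pair of matched standard pairs in \(\hat\cG_{\iota,N_c}\), and then reorganise the pieces into a coupled family and a leftover family.

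\textbf{Step 1: per-pair splitting.} The family \(\hat \cG_{\iota,N_c}\) has weights \(\hat s_{j,p}\) that do not depend on \(\iota\). Its curves are \(\cF^{N_c}(\hat W_{j,p,\iota})\), which are \(\chi\eta\)-stacked by Lemma \ref{lem:stacking-images} and share the same \(r\)-projection \(I_{j,p}\). The densities are the normalised push-forwards \(\cL^{N_c}_{\cF^{N_c}(\hat W_{j,p,\iota})}\hat\rho_{j,p,\iota}\).

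For each \((j,p)\) I apply Lemma \ref{lem:mismatch-density2} with \(V_\iota=\hat W_{j,p,\iota}\) and \(n=N_c\). This is legitimate because \(N_c\ge\bar N_c\) and \(\bold h_{N_c}(\hat W_{j,p,1})=\hat W_{j,p,2}\). One caveat: \(\hat\rho_{j,p,1}\) and \(\hat\rho_{j,p,2}\) are restrictions of the same \(\rho_j\) to different curves. The lemma's proof only uses \(\rho\in\cC(\varpi_2)\) together with the displacement bound \eqref{eq:displacemet-holonomy}, so the same argument applies after rewriting both densities as \(\rho_j\) on the respective \(I\)-intervals and renormalising. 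The output is
\[
\text{normalised density}_{\iota} = \rho_{*,j,p}+\rho_{\iota,j,p}, \qquad b_{j,p}:=1-\int\rho_{*,j,p}\le C\eta^{1/3}.
\]

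\textbf{Step 2: equalising the coupled mass.} Since \(b_{j,p}\) varies with \((j,p)\), I would first force it to a constant. Set \(b:=C\eta^{1/3}\), the uniform bound. Then replace \(\rho_{*,j,p}\) by \(\frac{1-b}{1-b_{j,p}}\rho_{*,j,p}\) and enlarge each \(\rho_{\iota,j,p}\) by the corresponding multiple of \(\rho_{*,j,p}\). The scaled \(\rho_*\) stays in \(\cC(\varpi_2)\). The new leftover is \(\rho_\iota+\lambda\rho_*\) with \(\lambda\ge0\), and a sum of two densities in \(\cC(C\eta^{-1})\) stays in that cone. Its max/min ratio stays of order one, since both summands have comparable extrema (of order \(\eta^{1/3}/|I|\) up to \(e^{\varpi_2}\)); if this ratio comes out larger than \(2e^{\varpi_2}\) by a constant factor, that is harmless for Lemma \ref{lem:characterization-regularity}.

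With this normalisation I define
\[
\hat\cG_{\iota,N_c,c}=\{(\hat s_{j,p},\cF^{N_c}(\hat W_{j,p,\iota}),\rho_{*,j,p}/(1-b))\},\qquad
\hat\cG_{\iota,N_c,u}=\{(\hat s_{j,p},\cF^{N_c}(\hat W_{j,p,\iota}),\rho_{\iota,j,p}/b)\}.
\]
The decomposition \(\mu_{\hat\cG_{\iota,N_c}}=(1-b)\mu_{c}+b\mu_{u}\) then holds directly.

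\textbf{Step 3: regularity.} The coupled family has equal weights and equal densities on \(\chi\eta\)-stacked curves, so it is \(\chi\eta\)-coupled. Its densities lie in \(\cC(\varpi_2)\). Its boundary equals \(\cZ(\hat\cG_{\iota,N_c})\le B_2\), because the weights are unchanged; this uses Lemma \ref{lem:reg-coupled-left-over}. Hence it is regular.

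The leftover family also has boundary at most \(B_2\) and bounded max/min ratio, with densities in \(\cC(C\eta^{-1})\subseteq\cC(\eta^{-2})\). Lemma \ref{lem:characterization-regularity}, in its \((\eta^{-2},B_2)\) case, then gives \(\hat\cG_{\iota,N_c,u}\in\fR_{\lfloor C\ln\eta^{-1}\rfloor}\).

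The main obstacle is Step 2. There I have to keep the leftover densities in a cone whose max/min ratio is bounded, and this ratio is what the Growth Lemma needs. If that fails, the fallback is to include the bound \(Q_3\) (Lemma \ref{lem:funny-cone}) from the outset in the constant of Lemma \ref{lem:characterization-regularity}.
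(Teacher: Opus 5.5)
Your proposal is correct and follows the paper's proof essentially step by step. You apply Lemma \ref{lem:mismatch-density2} to each matched pair, equalise the uncoupled mass by moving a nonnegative multiple of the coupled density into the leftover, and then use Lemmata \ref{lem:stacking-images}, \ref{lem:reg-coupled-left-over} and \ref{lem:characterization-regularity} for coupling and regularity. The paper takes \(b=\sup_{j,p}\int\rho^{(u)}_{j,p,\iota}\) where you take the uniform bound \(C\eta^{1/3}\); this difference is immaterial.

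The obstacle you flag in Step 2 does not arise, so the fallback via \(Q_3\) is unnecessary. For positive \(f,g\) and \(a,c\ge 0\) one has \(\max(af+cg)/\min(af+cg)\le\max\{\max f/\min f,\ \max g/\min g\}\), and the cone \(\cC(\varpi)\) behaves the same way. Hence the new leftover keeps max/min ratio at most \(2e^{\varpi_2}\), with no need for the two summands to have comparable extrema.
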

\begin{proof}
Recall that the standard families \(\hat \cG_{\iota,N_c} = \hat \cF_t^{N_c}\hat \cG_{\iota}\) support densities which are push-forwards of the densities \(\rho_j\) in \(\hat \cG_{\iota}\). By Lemma \ref{lem:mismatch-density2}, there exist \(\rho^{(c)}_{j,p} \in \cC(\varpi_2)\), \(\rho^{(u)}_{j,p,\iota} \in \cC(C \eta^{-1})\) with \(\max \rho^{(u)}_{j,p,\iota} / \min \rho^{(u)}_{j,p,\iota} \le 2e^{\varpi_2}\) such that
\[
\begin{split}
\frac{\cL^{N_c} \rho_j }{\int_{I_{\cF^{N_c}(\hat W_{j,p, \iota})}} \cL^{N_c}\rho_j} = \rho^{(c)}_{j,p} +\rho^{(u)}_{j,p,\iota} = \biggl(\int \rho^{(c)}_{j,p}\biggr) \frac{\rho_{j,p}^{(c)}}{\int \rho^{(c)}_{j,p}} + \biggl(\int \rho^{(u)}_{j,p,\iota}\biggr) \frac{\rho^{(u)}_{j,p,\iota}}{\int \rho^{(u)}_{j,p,\iota} }.
\end{split}
\]
Introducing \( b = \sup_j \int \rho^{(u)}_{j,p,\iota}\) and \(\tilde\rho^{(u)}_{j,p,\iota}\), which is some weighted average of \(\rho^{(u)}_{j,p,\iota} /(\int \rho^{(u)}_{j,p,\iota})\) and \(\rho^{(c)}_{j,p,\iota} /(\int \rho^{(c)}_{j,p,\iota})\), we have, for all \(j,p\),
\begin{equation}\label{eq:dec-densities}
    \frac{\cL^{N_c} \rho_j }{\int_{I_{\cF^{N_c}(\hat W_{j,p, \iota})}} \cL^{N_c}\rho_j} = (1-b) \frac{\rho_{j,p}^{(c)}}{\int \rho^{(c)}_{j,p}} + b \tilde\rho^{(u)}_{j,p,\iota}.
\end{equation}
The fact that \( b = \sup_j \int \rho^{(u)}_{j,p,\iota} = 1- \inf_{j}\int \rho^{(c)}_{j,p} \le C\eta^{1/3}\) is also part of Lemma \ref{lem:mismatch-density2}. Moreover, \(\tilde\rho^{(u)}_{j,p,\iota}\) belongs to \(\cC(C\eta^{-1})\) as well, because \(\rho_{j,p,\iota}^{(u)}\) does and \(\rho_{j,p}^{(c)}\) belongs to the much better cone \(\cC(\varpi_2)\). Equation \eqref{eq:dec-densities} induces a decomposition for the standard families \(\hat \cG_{\iota,N_c}\), which we denote with the same notation as in the statement of the Lemma. By Lemma \ref{lem:stacking-images} and the choice of \(N_c\), the curves \(\cF^{N_c}(\hat W_{j,p,\iota})\) are \(\chi \eta\)-stacked and the two standard families \(\hat \cG_{\iota, N_c, c}\) support the same probability density \(\rho^{(c)}_{j,p}/\int \rho^{(c)}_{j,p}\). This proves that \(\hat \cG_{\iota, N_c, c}\) are \(\chi \eta\)-coupled. It remains to study the regularity of the standard families. As we already mentioned, \( \rho^{(c)}_{j,p} \in \cC(\varpi_2)\). Since the decomposition \eqref{eq:dec-densities} does not change the unstable curves in the family nor the weights, we have \(\cZ(\hat \cG_{\iota,N_c, c}) = \cZ(\hat \cG_{\iota, N_c})\). Moreover, by Lemma \ref{lem:reg-coupled-left-over}, we have that \(\hat \cG_{\iota,N_c}\) are regular standard families, proving that \(\hat \cG_{\iota, N_c, c}\) are also regular. As for \(\hat\cG_{\iota, N_c, u}\), for \(\eta\) small enough, we have that  \(\tilde \rho^{(u)}_{j,p,\iota} \in \cC(C\eta^{-1}) \subseteq \cC(\eta^{-2})\) and since the decomposition \eqref{eq:dec-densities} does not affect the boundary, \(\cZ(\hat \cG_{\iota,N_c,u}) = \cZ(\hat \cG_{\iota,N_c}) \le B_2\). Finally, using that \(\max \tilde \rho^{(u)}_{j,p,\iota}/\min \tilde \rho^{(u)}_{j,p,\iota}\le 2e^{\varpi_2}\), we can apply Lemma \ref{lem:characterization-regularity} and \(\cG_{\iota,N_c, u} \in \fR_{\lfloor C\ln \eta^{-1}\rfloor }\). This concludes the proof of the Lemma.
\end{proof}

Recall that \(\cG_{\iota}\) are two \(\eta\)-coupled standard families. In the next result, we argue that if two standard families are close, the amount of mass that ends up in the hole for the two of them is also close.

\begin{lemma}\label{lem:difference-measure-hole}
There exists \(C>0\) such that, for all \(\eta>0\) small enough and \(t \in [0,t_0]\),
    \[
    | \mu_{\cG_{1}} (\cM \setminus \cM_t^{N_c}) -  \mu_{\cG_{2}} (\cM \setminus \cM_t^{N_c})| \le C \eta^{\frac{1}{3}}.
    \]
\end{lemma}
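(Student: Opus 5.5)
The plan is to compare the two families piece by piece, using the decomposition already built for \(\eta\)-coupled families. Write \(\cG_\iota = \{(p_j, W_{j,\iota}, \rho_j)\}\). The lost mass is \(\mu_{\cG_\iota}(\cM \setminus \cM_t^{N_c}) = 1 - \mu_{\cG_\iota}(\cM_t^{N_c})\), so it suffices to compare the surviving masses. Split each \(W_{j,\iota}\) into the matched part \(\hat W_{j,\iota}^{(N_c)}\), on which the fake holonomy \(\bold h_{N_c}\) is defined, and the unmatched remainder \(W_{j,\iota} \setminus \hat W_{j,\iota}^{(N_c)}\). By Lemma \ref{lem:decouple-single-sp}, the remainder carries total mass at most \(C\eta^{1/2}\). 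It therefore contributes at most \(C\eta^{1/2} \le C\eta^{1/3}\) to the difference, whatever its fate with respect to the hole.

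On the matched part the key observation is that survival up to time \(N_c\) is decided identically on the two curves. By construction \eqref{eq:holonomy-def}, each fake stable leaf \(\cH_{N_c}(x)\) is a connected component of a set from which \(\cS^{\bH}_{-N_c,t}\) has been removed. Moreover \(\Xi_{N_c,t}\) and \(\cS_{N_c}\) are contained in the discontinuity set used to define the connected components \(W_{j,p,\iota}\) of \(W_{j,\iota} \setminus \cS^{\bH}_{N_c,t}\). Hence for \(0 \le k \le N_c\), \(\cF^k(H_x)\) does not cross \(\partial \cH_t\), so \(\cF^k(x) \in \cH_t\) if and only if \(\cF^k(\bold h_{N_c}(x)) \in \cH_t\). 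I will verify this through the continuation of singularities, Lemma \ref{lem:continuation-of-singularities}, in the same way it is used in Lemma \ref{lem:intersection-singularity}. It follows that the paired components \(\hat W_{j,p,1}\) and \(\hat W_{j,p,2}\) lie in \(\cM_t^{N_c}\) simultaneously or not at all.

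Therefore the difference of surviving masses on the matched part is bounded by
\[
\sum_j p_j \sum_p \biggl|\int_{I_{\hat W_{j,p,1}}}\rho_j - \int_{I_{\hat W_{j,p,2}}}\rho_j\biggr|.
\]
This is exactly the quantity estimated in \eqref{eq:difference-stacked-mass} via Lemma \ref{lem:mismatch-density}, giving the bound \(C\eta^{1/3}\). Summing the matched and unmatched contributions yields the claim, with constants uniform in \(t \in [0,t_0]\), since both lemmas invoked are uniform in \(t\).

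The main obstacle is making rigorous the claim that the fake holonomy respects survival. The delicate cases are leaves \(H_x\) whose iterates come near \(\partial \cH_t\), and whether \(\partial \cH_t\) is genuinely part of the removed set \(\cS^{\bH}_{-n,t}\) at every intermediate time. If that argument fails for some points, I would fall back on Lemma \ref{lem:intersection-singularity}. Such points lie within \(C_{\mathrm{cone}}\eta\) of \(W_{j,\iota} \cap \Xi_{N_c,t}\), and by the counting of Lemma \ref{lem:complexity-small-pieces} together with the regularity of the family they carry at most \(C_{N_c}B_2\eta\) mass, which is still acceptable.
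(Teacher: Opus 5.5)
Your proposal is correct and follows the paper's proof essentially step for step. You split each \(W_{j,\iota}\) into the part matched by the fake holonomy \(\mathbf{h}_{N_c}\) and its complement. You note that \(x\) and \(\mathbf{h}_{N_c}(x)\) are joined by a curve in \(\cH_{N_c}(x)\subset\cM\setminus\cS^{\bH}_{N_c,t}\), so the indicator of \(\cM\setminus\cM_t^{N_c}\) is constant on each matched pair \(\hat W_{j,p,1},\hat W_{j,p,2}\). You then bound the matched contribution by \eqref{eq:difference-stacked-mass} and the unmatched one by Lemma \ref{lem:decouple-single-sp}. The fallback via Lemma \ref{lem:intersection-singularity} is not needed, since the survival-matching argument goes through as you describe.
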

\begin{proof}
Recall the notation \(\cG_{\iota} = \{(p_j, W_{j,\iota}, \rho_j)\}\) for \(\iota \in \{1,2\}\). Then,
\[
\begin{split}
| \mu_{\cG_{1}} (\cM \setminus \cM_t^{N_c}) -  &\mu_{\cG_{2}} (\cM \setminus \cM_t^{N_c})| \le  \sum_{j} p_{j} \bigg| \int_{I_{W_{j,1}}} \rho_{j}(r) \Id_{\cM \setminus \cM_t^{N_c}}(r, \vf_{W_{j,1}}(r))dr \\
&- \int_{I_{W_{j,2}}} \rho_{j}(r) \Id_{\cM \setminus \cM_t^{N_c}}(r, \vf_{W_{j,2}}(r))dr \biggr|.
\end{split}
\] 
Moreover, partitioning \(W_{j,\iota} = \hat  W_{j,\iota} \cup (W_{j,\iota} \setminus \hat W_{j,\iota})\) in points which are linked by the fake holonomy and those that aren't, we have
\begin{equation}\label{eq:split-hole-and-holonomy}
\begin{split}
&\int_{I_{W_{j,1}}} \rho_{j}(r) \Id_{\cM \setminus \cM_t^{N_c}}(r, \vf_{W_{j,1}}(r))dr - \int_{I_{W_{j,2}}} \rho_{j}(r) \Id_{\cM \setminus \cM_t^{N_c}}(r, \vf_{W_{j,2}}(r))dr \\
&= \int_{I_{\hat W_{j,1}}} \rho_{j}(r) \Id_{\cM \setminus \cM_t^{N_c}}(r, \vf_{W_{j,1}}(r))dr - \int_{I_{\hat W_{j,2}}} \rho_{j}(r) \Id_{\cM \setminus \cM_t^{N_c}}(r, \vf_{W_{j,2}}(r))dr\\
&+\int_{I_{W_{j,1} \setminus \hat W_{j,1}}} \rho_{j}(r) \Id_{\cM \setminus \cM_t^{N_c}}(r, \vf_{W_{j,1}}(r))dr - \int_{I_{W_{j,2} \setminus \hat W_{j,2}}} \rho_{j}(r) \Id_{\cM \setminus \cM_t^{N_c}}(r, \vf_{W_{j,2}}(r))dr.
\end{split}
\end{equation}
For any point \(x \in \hat W_{j,1}\), we have that \(x\) belongs to \(\cM_t^{N_c}\) if and only if \(\bold h_{N_c} (x) \in \cM_t^{N_c}\). Indeed, \(x\) and \(\bold h_{N_c}(x)\) are connected via the curve \(\cH_{N_c} (x) \subset \cM \setminus \cS_{N_c,t}^{\bH}\). Hence, denoting by \(\hat W_{j,p,\iota}\) the maximal connected components of \(\hat W_{j,\iota}\), we have that \(\Id_{\cM\setminus\cM_t^{N_c}}\) is either one or zero on both \(\hat W_{j,p,1}\) and \(\hat W_{j,p,2}\). Therefore, by \eqref{eq:difference-stacked-mass}, the contribution derived from the terms in the first line of the equation above is bounded by 
\[
\begin{split}
&\sum_j p_j\sum_p \biggl|\int_{I_{\hat W_{j,p,1}}} \rho_{j}  - \int_{I_{\hat W_{j,p,2}}} \rho_{j}\biggr|\le C\eta^{\frac{1}{3}}.
\end{split}
\]
As for the contribution deriving from the second line of \eqref{eq:split-hole-and-holonomy}, we bound the difference with the sum of the two terms and, by Lemma \ref{lem:decouple-single-sp}, we obtain
\[
\sum_{j} p_j \int_{I_{W_{j,1} \setminus \hat W_{j,1}}} \rho_{j} + \sum_j p_j \int_{I_{W_{j,2} \setminus \hat W_{j,2}}} \rho_{j} \le C\eta^{\frac{1}{2}}.
\]
This concludes the proof of the Lemma.
\end{proof}
We conclude this section by summarizing in the next Lemma all we need to know about the iterations of two coupled standard families. In simple words, if we start with two coupled regular standard families, some mass will be even more coupled, some mass will decouple because of discontinuities and distortion (causing the appearance of the leftovers densities), and some mass will enter the hole. The mass that enters the hole amounts to a common normalization of the two families. Note however that the lost mass between the two families is slightly different, so that also the hole contributes to the decoupling. As we will see in the next section, this difference will appear in our computation in the form of another pair of uncoupled regular standard families \(\cE_{\iota}\). Importantly, by Lemma \ref{lem:difference-measure-hole}, the difference of lost mass is very small when the two coupled standard families are close. We group this mass with the uncoupled one. In this way, all the uncoupled mass is proportional to some power of the proximity \(\eta\) of the initial coupled standard families and the uncoupled part takes \(\sim \ln \eta^{-1}\) iterations to recover. Here, we are also more explicit about the orders of the quantifiers.
\begin{samepage}
\begin{lemma}\label{lem:one-step-loss-mass} 
There exist \(C, \eta_0, \bar N_c > 0\), depending only on the billiard table and the initial regularity, such that, for all \(N_c > \bar N_c\), there exists \(t_0>0\), such that for all \(t \in [0,t_0]\) the following is true. For all \(\eta \le \eta_0\) and any two \(\eta\)-coupled regular standard families \(\cG_{\iota}\) and any two regular standard families \(\cE_{\iota}\), \(\iota \in \{1,2\}\), there exist \(\Omega_c', \Omega_{u}' \ge 0\), \(\Omega_c' + \Omega_{u}' \le 1\), \(\Omega_{u}' \le \eta^{1/4}\), two \(\chi\eta\)-stacked regular standard families \(\cG_{\iota,c}\) and two standard families \(\cG_{\iota,u} \in \fR_{\lfloor C\ln \eta^{-1}\rfloor }\) such that, for all \(\phi \in \cC^0(\cM)\),
    \[
    \begin{split}
        \mu_{\cG_{\iota}} (\phi \circ \hat \cF^{N_c}_t \Id_{\cM_t^{N_c}}) + \bigl(\mu_{\cG_{\iota}} (\cM \setminus \cM_t^{N_c}) - \min_{\iota \in \{1,2\}} \mu_{\cG_{\iota}} (\cM \setminus \cM_t^{N_c}) \bigr)& \mu_{\cE_{\iota}} (\phi)\\
        &= \Omega_c' \mu_{\cG_{\iota,c}}(\phi) + \Omega_{u}' \mu_{\cG_{\iota,u}} (\phi).
    \end{split}
    \]
\end{lemma}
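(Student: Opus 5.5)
The plan is to assemble the decompositions of this section. Fix \(\eta_0\) small enough that Lemmata \ref{lem:decouple-single-sp}, \ref{lem:reg-coupled-left-over}, \ref{lem:holonomy-mismatch-decomposition} and \ref{lem:difference-measure-hole} hold for all \(\eta\le\eta_0\). Fix \(\bar N_c\) as in \eqref{eq:conditionN_c}, large enough for Lemmata \ref{lem:distortion-bound-density}, \ref{lem:stacking-images} and \ref{lem:mismatch-density2}. Finally, given \(N_c\), choose \(t_0\) so that \eqref{eq:condition-on-t_0} holds.

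\textbf{Step 1: split the surviving mass.} Starting from \eqref{eq:first-dec}, we have
\[
\mu_{\cG_\iota}(\phi\circ\hat\cF_t^{N_c}\Id_{\cM_t^{N_c}})=\Omega_c\mu_{\hat\cG_{\iota,N_c}}(\phi)+\Omega_{u,\iota}\mu_{\bar\cG_{\iota,N_c}}(\phi).
\]
Applying Lemma \ref{lem:holonomy-mismatch-decomposition} to \(\hat\cG_{\iota,N_c}\) splits the first term as
\[
\Omega_c(1-b)\mu_{\hat\cG_{\iota,N_c,c}}+\Omega_c b\,\mu_{\hat\cG_{\iota,N_c,u}}.
\]
This uses three facts: the coefficient \(\Omega_c(1-b)\) is the same for \(\iota=1,2\), since \(\Omega_c\) is symmetric by construction and \(b\) is defined through a supremum over both \(\iota\); the families \(\hat\cG_{\iota,N_c,c}\) are \(\chi\eta\)-coupled and regular; and \(\hat\cG_{\iota,N_c,u}\in\fR_{\lfloor C\ln\eta^{-1}\rfloor}\). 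So I set \(\Omega_c'=\Omega_c(1-b)\) and \(\cG_{\iota,c}=\hat\cG_{\iota,N_c,c}\).

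\textbf{Step 2: handle the hole term.} Set \(d_\iota=\mu_{\cG_\iota}(\cM\setminus\cM_t^{N_c})-\min_{\iota'}\mu_{\cG_{\iota'}}(\cM\setminus\cM_t^{N_c})\). By Lemma \ref{lem:difference-measure-hole}, \(0\le d_\iota\le C\eta^{1/3}\). Since \(\cE_\iota\) is regular, it trivially belongs to \(\fR_0\subseteq\fR_{\lfloor C\ln\eta^{-1}\rfloor}\): regular families stay regular under \(\hat\cF_t^{kN_c}\) by Corollary \ref{cor:Growth-Lemma} and Corollary \ref{cor:useful-density} a). This monotonicity of the classes \(\fR_k\) needs a short check.

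\textbf{Step 3: collect the uncoupled mass.} Let \(m_\iota=\Omega_cb+\Omega_{u,\iota}+d_\iota\) be the total uncoupled mass of family \(\iota\), and define \(\cG_{\iota,u}\) as the convex combination of \(\hat\cG_{\iota,N_c,u}\), \(\bar\cG_{\iota,N_c}\) and \(\cE_\iota\) with weights proportional to \(\Omega_cb\), \(\Omega_{u,\iota}\), \(d_\iota\). The statement asks for a single \(\Omega_u'\), independent of \(\iota\). I obtain it by mass balance: the left-hand side has total mass
\[
1-\mu_{\cG_\iota}(\cM\setminus\cM_t^{N_c})+d_\iota=1-\min_{\iota'}\mu_{\cG_{\iota'}}(\cM\setminus\cM_t^{N_c}),
\]
which is the same for both \(\iota\). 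The coupled coefficient \(\Omega_c'\) is also common, so \(m_1=m_2=:\Omega_u'\). The bound follows from \eqref{eq:estimate-a}, \(b\le C\eta^{1/3}\) and \(d_\iota\le C\eta^{1/3}\):
\[
\Omega_u'\le C\eta^{1/3}\le\eta^{1/4}
\]
for \(\eta_0\) small. Moreover \(\Omega_c'+\Omega_u'\le1\), since this sum equals the total mass above.

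\textbf{Main obstacle: regularity of \(\cG_{\iota,u}\).} The main obstacle is showing that the convex combination \(\cG_{\iota,u}\) lies in \(\fR_{\lfloor C\ln\eta^{-1}\rfloor}\). The boundary \(\cZ\) and the density cones behave well under convex combinations, so I would argue directly at the level of \(\hat\cF_t^{kN_c}\). Since \(\hat\cF_t\) acts linearly on families, \(\hat\cF_t^{kN_c}\) of the combination is the same combination of the images. Each image is regular once \(k\ge\lfloor C\ln\eta^{-1}\rfloor\), taking \(C\) as the maximum of the constants from Lemmata \ref{lem:reg-coupled-left-over} and \ref{lem:holonomy-mismatch-decomposition} and using the monotonicity from Step 2. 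Since \(\cZ\) of a convex combination is the same convex combination of the \(\cZ\)'s and the density condition is pairwise, the combination is regular. The remaining care is bookkeeping: the weights must be normalized by \(\Omega_u'\), and the case \(\Omega_u'=0\) is handled trivially.
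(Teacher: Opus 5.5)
Your proposal is correct and follows the paper's proof essentially line by line. It uses the same split via \eqref{eq:first-dec} and Lemma \ref{lem:holonomy-mismatch-decomposition}, the same choices \(\Omega_c'=\Omega_c(1-b)\) and \(\Omega_u'=\Omega_cb+\Omega_{u,\iota}+d_\iota\), the same mass-balance argument that \(\Omega_u'\) is independent of \(\iota\), and the same convex-combination argument for \(\cG_{\iota,u}\in\fR_{\lfloor C\ln\eta^{-1}\rfloor}\).

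One small correction to your check of \(\fR_0\subseteq\fR_k\), which concerns citations rather than the argument itself. The density part needs Corollary \ref{cor:useful-density} b), or Lemma \ref{lem:distortion-bound-density} b), together with \(N_c\ge\bar n\); part a) only covers \(\cC(\varpi_1)\) densities. The boundary part needs Corollary \ref{cor:Growth-Lemma} c) together with \eqref{eq:conditionN_c}; part b) alone only gives the bound \(Z_1^{n_*}B_2\), which exceeds \(B_2\).
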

\end{samepage}
\begin{proof}
By \eqref{eq:first-dec} and Lemma \ref{lem:holonomy-mismatch-decomposition}, there exist \(\Omega_c, \Omega_{u,\iota}, b >0\) and standard families such that
\[
\begin{split}
       \mu_{\cG_{\iota}} (\phi \circ \hat \cF_t^{N_c} \Id_{\cM_t^{N_c}}) &= \Omega_c \mu_{\hat\cG_{\iota, N_c}}(\phi) + \Omega_{u,\iota} \mu_{\bar \cG_{\iota,N_c}}(\phi) \\
       &= \Omega_c \bigl((1-b)\mu_{\hat \cG_{\iota,N_c, c}} (\phi)+ b\mu_{\hat \cG_{\iota, N_c, u}} (\phi)\bigr)+ \Omega_{u,\iota} \mu_{\bar \cG_{\iota, N_c}}(\phi).
\end{split}
\]
The equation above yields the desired decomposition with 
\[
\Omega_{c}' = \Omega_c (1-b)\quad \text{and}\quad \Omega_{u}' = \Omega_c b + \Omega_{u,\iota} + \mu_{\cG_{\iota}} (\cM \setminus \cM_t^{N_c}) - \min_{\iota \in \{1,2\}} \mu_{\cG_{\iota}} (\cM \setminus \cM_t^{N_c}).  
\]
Accordingly, \(\cG_{\iota,c} = \hat \cG_{\iota,N_c, c}\) and \(\cG_{\iota,u}\) is a suitable convex combinations of \(\hat \cG_{\iota,N_c, u}\), \(\bar \cG_{\iota, N_c}\) and \(\cE_{\iota}\). Note that \(\Omega_c' +\Omega_u' = 1 - \min_{\iota} \mu_{\cG_{\iota}} (\cM \setminus \cM_t^{N_c})\), so that \(\Omega_u'\) does not depend on the family \(\iota\). Moreover, by Lemmata \ref{lem:holonomy-mismatch-decomposition} and \ref{lem:difference-measure-hole} and Equation \eqref{eq:estimate-a}, for \(\eta_0\) small enough and \(\eta \le \eta_0\),
\[
\Omega_{u}' \le b + \Omega_{u,\iota} + \mu_{\cG_{\iota}} (\cM \setminus \cM_t^{N_c}) - \min_{\iota \in \{1,2\}} \mu_{\cG_{\iota}} (\cM \setminus \cM_t^{N_c})  \le C \eta^{\frac 1 3} = C \eta_0^{\frac 1 {12}} \eta^{\frac 1 4}  \le \eta^{\frac 1 4},
\]
as required by the statement. By Lemma \ref{lem:reg-coupled-left-over} \(\cG_{\iota,c} = \hat \cG_{\iota, N_c, c}\) is a regular standard family and by Lemmata \ref{lem:reg-coupled-left-over} and \ref{lem:holonomy-mismatch-decomposition} \(\hat \cG_{\iota, N_c, u}\) and \(\bar \cG_{\iota, N_c} \in \fR_{\lfloor C \ln \eta^{-1}\rfloor}\). Moreover, \(\cE_{\iota}\) are regular standard families and so they belong to \(\fR_0 \subset \fR_{\lfloor C \ln \eta^{-1}\rfloor}\). Since a convex combination of standard families with a certain regularity has the same regularity, we have that \(\cG_{\iota,u} \in \fR_{\lfloor C \ln \eta^{-1}\rfloor}\).
\end{proof}
\subsection{Decay of Correlations}\label{subsec:linear-scheme}\textit{In this section we introduce a linear scheme that records the evolution in time of how much mass is coupled, at which distance, and the regularity of the uncoupled part. We use this to prove Theorem \ref{thm:exp-mixing-sf} about decay of correlations.}

Let \(\eta_0>0\), \(\eta_0 \le 10^{-10}\) be small enough so that Lemma \ref{lem:one-step-loss-mass} holds. We also let \(N_c\) be big enough such that Lemma \ref{lem:one-step-loss-mass} and Lemma \ref{lem:fundamental-coupling} hold for the given \(\eta_0\). We fix two initial regular standard families \(\cG_{\iota}\), \(\iota \in \{1,2\}\), as in the statement of Theorem \ref{thm:exp-mixing-sf}  and write \(\cG_{\iota}(k) = \cL_t^{kN_c}\cG_{\iota}\), \(k \in \bN_0\), for their leaky evolution. Recall that \(\chi \in (0,1/2)\) \mbox{is introduced before Lemma \ref{lem:stacking-images}.}

\begin{definition}[Coupled-uncoupled decomposition]\label{def:ckuk}
We say that the two standard families \(\cG_{\iota}(k)\) admit a \((c_r, u_r)\)-decomposition for \(r \in \bN_0\), \(c_r, u_r, \in [0,1]\), if there exist \(\chi^{r}\eta_0\)-coupled regular standard families \(\cG_{1,r}^c\), \(\cG_{2,r}^c\) and standard families \(\cG_{1,r}^u\), \(\cG_{2,r}^u\) in \(\fR_r\) such that,
\[
   \mu_{\cG_{\iota}(k)} =   \sum_{r = 0}^{\infty}u_r\mu_{\cG_{\iota,r}^u}  + \sum_{r = 0}^{\infty} c_r \mu_{\cG_{\iota,r}^c} .
\]
\end{definition}
Recall Definition \ref{def:regularity-classes} for the regularity classes \(\fR_r\). We are now ready to implement the coupling scheme for the leaky system. Using the temporary notation
\[
X(\phi) =  \mu_{\cG_{\iota}(k)}(\phi \circ \cF^{N_c} \Id_{\cM_t^{N_c}}) = \sum_{r = 0}^{\infty} u_r \mu_{\cG_{\iota,r}^u} (\phi \circ \cF^{N_c} \Id_{\cM_t^{N_c}}) + \sum_{r = 0}^{\infty} c_r \mu_{\cG_{\iota,r}^c} (\phi \circ \cF^{N_c} \Id_{\cM_t^{N_c}}),
\]
if the standard families \(\cG_{\iota}(k)\) admit a decomposition as in Definition \ref{def:ckuk}, we have 
\begin{equation}\label{eq:support1}
\begin{split}
    (\cL^{N_c}_t \mu_{\cG_{\iota}(k)}) (\phi) &= \frac{X(\phi)}{\mu_{\cG_{\iota}}(\cM_t^{N_c})} = X(\phi) + X(\phi) \biggl(\frac{1}{\mu_{\cG_{\iota}(k)}(\cM_t^{N_c})} -1\biggr) \\
    &= X(\phi) + (\cL_t^{N_c} \mu_{\cG_{\iota}(k)})(\phi) (1 - \mu_{\cG_{\iota}(k)}(\cM_t^{N_c})).
\end{split}
\end{equation}
Moreover,
\[
1 - \mu_{\cG_{\iota}(k)}(\cM_t^{N_c}) = \mu_{\cG_{\iota}(k)} (\cM \setminus \cM_t^{N_c}) = \sum_{r = 0}^{\infty}  u_r \mu_{\cG_{\iota,r}^u} (\cM \setminus \cM_t^{N_c}) + \sum_{r = 0}^{\infty}  c_r \mu_{\cG_{\iota,r}^c} (\cM \setminus \cM_t^{N_c}).
\]
Inserting the definition of \(X(\phi)\) and the expression above in \eqref{eq:support1}, we obtain
\[
\begin{split}
(\cL^{N_c}_t \mu_{\cG_{\iota}(k)}) (\phi) &= \sum_{r = 0}^{\infty} u_r \bigl(\mu_{\cG_{\iota,r}^u} (\phi \circ \cF^{N_c} \Id_{\cM_t^{N_c}}) + \mu_{\cG_{\iota,r}^u} (\cM \setminus \cM_t^{N_c}) (\cL_t^{N_c} \mu_{\cG_{\iota}(k)})(\phi)\bigr) \\
&+ \sum_{r = 0}^{\infty} c_r \mu_{\cG_{\iota,r}^c} (\phi \circ \cF^{N_c} \Id_{\cM_t^{N_c}}) +  (\cL_t^{N_c} \mu_{\cG_{\iota}(k)})(\phi) \sum_{r = 0}^{\infty} c_r \mu_{\cG_{\iota,r}^c} (\cM \setminus \cM_t^{N_c}),
\end{split}
\]
and, denoting by 
\[
A_{\iota,r} = \mu_{\cG_{\iota,r}^c} (\cM \setminus \cM_t^{N_c}) - \min_{\iota \in \{1,2\}} \mu_{\cG_{\iota,r}^c} (\cM \setminus \cM_t^{N_c}),
\]
we obtain
\[
\begin{split}
(\cL^{N_c}_t \mu_{\cG_{\iota}(k)}) (\phi) &= \sum_{r = 0}^{\infty} u_r \bigl(\mu_{\cG_{\iota,r}^u} (\phi \circ \cF^{N_c} \Id_{\cM_t^{N_c}}) + \mu_{\cG_{\iota,r}^u} (\cM \setminus \cM_t^{N_c}) (\cL_t^{N_c} \mu_{\cG_{\iota}(k)})(\phi)\bigr) \\
&+ \sum_{r = 0}^{\infty} c_r \bigl(\mu_{\cG_{\iota,r}^c} (\phi \circ \cF^{N_c} \Id_{\cM_t^{N_c}}) + A_{\iota,r}(\cL_t^{N_c} \mu_{\cG_{\iota}(k)})(\phi)  \bigr)\\
&+  (\cL_t^{N_c} \mu_{\cG_{\iota}(k)})(\phi) \sum_{r = 0}^{\infty} c_r \min_{\iota \in \{1,2\}} \mu_{\cG_{\iota,r}^c} (\cM \setminus \cM_t^{N_c})  .
\end{split}
\]
After an algebraic manipulation, we obtain the following expression for \((\cL_t^{N_c} \mu_{\cG_{\iota}(k)})(\phi)\),
\begin{equation}\label{eq:very-funny-decomposition}
   \begin{split}
    (\cL^{N_c}_t \mu_{\cG_{\iota}(k)})(\phi) &= \sum_{r=0}^{\infty} \frac{u_r}{1-\sum_{r = 0}^{\infty} c_r \min_{\iota \in \{1,2\}} \mu_{\cG_{\iota,r}^c} (\cM \setminus \cM_t^{N_c})} \\[10pt]
   & \hspace{3cm}\bigl( \mu_{\cG_{\iota,r}^u} (\phi \circ \cF^{N_c} \Id_{\cM_t^{N_c}})+ \mu_{\cG_{\iota,r}^u} (\cM \setminus \cM_t^{N_c}) (\cL_t^{N_c} \mu_{\cG_{\iota}(k)})(\phi)\bigr) \\[8pt]
&\!\!\!\!\!\!\!\!\!\!\!\!\!\!\!\!\!\!\!\!\!\!\!\!\!\!\!\!\!\!\!\!+ \sum_{r=0}^{\infty} \frac{c_r}{1-\sum_{r = 0}^{\infty} c_r \min_{\iota \in \{1,2\}} \mu_{\cG_{\iota,r}^c} (\cM \setminus \cM_t^{N_c})} \bigl(\mu_{\cG_{\iota,r}^c} (\phi \circ \cF^{N_c} \Id_{\cM_t^{N_c}}) + A_{\iota,r} (\cL_t^{N_c} \mu_{\cG_{\iota}(k)})(\phi)\bigr).
\end{split}
\end{equation}
Note that the expression above contains \( (\cL^{N_c}_t \mu_{\cG_{\iota}(k)})(\phi)\) also on the right hand side. The reason for this decomposition is the following. After conditioning on an irregular standard family (i.e., on \(\cG_{\iota,r}^u\) with \(r \ge 1\)), we would like to observe a regularizing effect of the dynamics. However, small pieces can intersect the hole, so that the surviving part is irremediably small, posing some problem. To obtain regularization we may argue in the following manner: if some mass enters the hole, it is as if an equivalent mass is generated in the form of the whole push-forward \(\cL_t^{N_c}\mu_{\cG_{\iota}(k)}\). Equation \eqref{eq:very-funny-decomposition} makes this fact manifest. By Proposition \ref{prop:invariance-standard-families} on invariance of standard families, which we already established, the measures \(\cL_t^{N_c}\mu_{\cG_{\iota}(k)}\) correspond to regular standard families. Therefore, when some mass enters the hole, it behaves as if it is automatically regularized. The net effect is that short standard families recover. We express this in the following Lemma.
\begin{lemma}\label{lem:growth-conditional}
Assume that \(\cG_{\iota}(k)\) admit a \((c_r, u_r)\) decomposition as in Definition \ref{def:ckuk}. There exists \(t_0 >0\) small enough such, that for all \(t \in [0, t_0]\) and \(r \ge 1\) we have
    \[
  \mu_{\cG_{\iota,r}^u} (\phi \circ \cF^{N_c} \Id_{\cM_t^{N_c}}) + \mu_{\cG_{\iota,r}^u} (\cM \setminus \cM_t^{N_c}) (\cL_t^{N_c} \mu_{\cG_\iota (k)})(\phi) = \mu_{\cG_{\iota, r-1}^u} (\phi),
    \]
for some standard family \(\cG_{\iota, r-1}^u \in \fR_{r-1}\).
\end{lemma}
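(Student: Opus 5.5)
The proof will write the left-hand side as a convex combination of two measures, each of which is a standard family in \(\fR_{r-1}\), and then use that convex combinations preserve the regularity classes. Set \(\cG := \cG_{\iota,r}^u \in \fR_r\) and \(m := \mu_{\cG}(\cM\setminus\cM_t^{N_c}) \in [0,1]\). By Lemma \ref{lem:push-forward-evolution}, \(\mu_{\cG}(\phi\circ\cF^{N_c}\Id_{\cM_t^{N_c}}) = (1-m)\,\mu_{\cL_t^{N_c}\cG}(\phi)\) when \(m<1\). If \(m=1\) the first term vanishes. The left-hand side is therefore
\[
(1-m)\,\mu_{\cL_t^{N_c}\cG}(\phi) + m\,(\cL_t^{N_c}\mu_{\cG_\iota(k)})(\phi).
\]
So the statement reduces to two facts: (i) \(\cL_t^{N_c}\mu_{\cG_\iota(k)}\) is the measure of a standard family in \(\fR_{r-1}\), and (ii) the surviving part of \(\cG\), namely the family obtained by restricting \(\hat\cF_t^{N_c}\cG\) to curves that did not enter the hole, lies in \(\fR_{r-1}\) after normalisation.

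For (i), \(\cG_\iota(k) = \cL_t^{kN_c}\cG_\iota\) with \(\cG_\iota\) initial regular. Proposition \ref{prop:invariance-standard-families} shows that \(\cL_t^{(k+1)N_c}\cG_\iota\) is a regular standard family, so it lies in \(\fR_0\). Regularity is preserved under \(\hat\cF_t^{n}\) up to the constants built into \(B_2,\varpi_2\) (Corollary \ref{cor:Growth-Lemma}(a),(b) and Corollary \ref{cor:useful-density}(a)). I will use this to place regular families in every \(\fR_{s}\); strictly, this needs \(\fR_0\subseteq\fR_s\), which I would verify from the choice \eqref{eq:intrinsic-constant-boundary}, possibly enlarging \(B_2\) harmlessly.

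For (ii), the key observation is that the curves of \(\hat\cF_t^{N_c}\cG\) are exactly the images of the components of \(W_j\setminus\cS^{\bH}_{N_c,t}\). Each such component either lies entirely in \(\cM_t^{N_c}\) or entirely outside it, by \eqref{eq:discontinuity-survival}. Hence \(\hat\cF_t^{N_c}\cG\) splits as \((1-m)\,\cL_t^{N_c}\cG + m\,\cG_{\mathrm{hole}}\), where both pieces are subfamilies of the same curves with the same densities, only renormalised. Densities are untouched by conditioning, so the density regularity of \(\hat\cF_t^{kN_c}\) applied to the surviving part is inherited from \(\hat\cF_t^{(k+1)N_c}\cG\). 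The boundary \(\cZ\), however, is only bounded by \(\cZ(\hat\cF_t^{N_c}\cG)/(1-m)\), as in Lemma \ref{lem:comparison-hole-regularity-images}. This is exactly the obstacle the paper highlights: if \(m\) is close to \(1\), the survivors may be badly irregular.

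The main step is therefore not to bound \(\cL_t^{N_c}\cG\) separately but to pair the surviving mass with the regenerated mass. The combined measure is
\[
\nu = (1-m)\,\mu_{\cL_t^{N_c}\cG} + m\,\mu_{\cL_t^{N_c}\cG_\iota(k)}.
\]
Its boundary, after \((r-1)N_c\) further \(\hat\cF_t\)-iterates, is at most
\[
\sum_{\text{surviving pieces}} p_j\,|W_j|^{-1} + m\,\cZ(\text{regular}).
\]
The first sum is bounded by the corresponding sum over all of \(\hat\cF_t^{rN_c}\cG\), since \(\hat\cF_t\) acts piece by piece and does not mix pieces. That quantity is at most \(\cZ(\hat\cF_t^{rN_c}\cG)\), which is \(\le B_2\) because \(\cG\in\fR_r\). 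The density cone is controlled by the same piecewise argument. So \(\cZ\le (1+o(1))B_2\) overall, using the regular bound from (i) on the second term.

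The remaining worry is the loss from the factor \(1+m\) versus \(1\). I would absorb it by exploiting the slack between \(\bold B_2\) and \(B_2\): apply Corollary \ref{cor:Growth-Lemma}(a) and (c) to the regular part, which is at equilibrium \(\le\bold B_2\ll B_2\). If \(r-1\ge1\), the first piece also contracts by the Growth Lemma. Choosing \(t_0\) small handles the edge bookkeeping.
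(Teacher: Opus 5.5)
Your decomposition matches the paper's. You keep the surviving pieces of \(\hat\cF_t^{N_c}\cG\) with their unnormalised weights, and put \(m\) times the regular family \(\tilde\cG_\iota\) representing \(\cL_t^{N_c}\mu_{\cG_\iota(k)}\) in place of the lost mass. However, your final boundary estimate does not close, and this is a genuine gap. What your argument actually gives is
\[
\cZ\bigl(\hat\cF_t^{(r-1)N_c}\cG^u_{\iota,r-1}\bigr)\le \cZ\bigl(\hat\cF_t^{rN_c}\cG\bigr)+m\,\cZ\bigl(\hat\cF_t^{(r-1)N_c}\tilde\cG_\iota\bigr)\le B_2+m\,\cZ\bigl(\hat\cF_t^{(r-1)N_c}\tilde\cG_\iota\bigr),
\]
whereas membership in \(\fR_{r-1}\) requires the bound \(B_2\) itself. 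None of your proposed fixes removes the additive excess:
\begin{itemize}
\item The mass \(m\) is not small. A family in \(\fR_r\) may consist of curves much shorter than \(t\), so an order-one fraction can fall into the hole, and \(t_0\) must be uniform in \(r\).
\item Without a lower bound on what the discarded hole pieces contributed, the survivors alone may saturate \(B_2\). Since \(\cG\in\fR_r\) says nothing about \(\cG\) before time \(rN_c\), there is nothing for the Growth Lemma to contract.
\item For \(r=1\) no further iterates are available, so Corollary \ref{cor:Growth-Lemma}(c) cannot be applied to \(\tilde\cG_\iota\) at all.
\end{itemize}
The slack \(\mathbf{B}_2\ll B_2\) only shrinks the excess when \(r\ge 2\); it does not eliminate it.

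The missing ingredient, which is the heart of the paper's proof, is that mass lost to the hole within \(N_c\) steps is carried by short curves. Suppose a component \(W\) of \(W_j\setminus\cS^{\bH}_{N_c,t}\) lies in \(\cM\setminus\cM_t^{N_c}\). Lemma \ref{lem:neigh-sing} places \(W\) in the \(h_{N_c}(t)\)-neighbourhood of \(\cS_{N_c}\cup\Xi_{N_c,t}\), and transversality then forces \(|W|\lesssim h_{N_c}(t)\). Iterating Lemma \ref{lem:upp-bound-stretch-curves} gives \(|\hat\cF_t^{N_c}(W)|\le 1/B_2\) for \(t\le t_0\), with \(t_0\) depending on \(N_c\) but not on \(r\). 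So each unit of lost mass contributes at least \(B_2\) to \(\cZ(\hat\cF_t^{N_c}\cG)\), while \(\tilde\cG_\iota\) contributes at most \(B_2\) per unit mass. The swap therefore gives \(\cZ(\cG^u_{\iota,r-1})\le\cZ(\hat\cF_t^{N_c}\cG^u_{\iota,r})\), with no factor \(1+m\). This is what ``\(t_0\) small'' is for; it has nothing to do with making \(m\) small.

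Note also that the paper makes this comparison for \(\hat\cF_t^{N_c}\cG\) itself, not after the further \((r-1)N_c\) iterates as you do. At your later time the lost pieces may have grown, and keeping them short would force \(t_0\) to depend on \(r\). For \(r=1\) the two times coincide, and the argument is then complete. For \(r\ge 2\), the paper passes from the time-\(N_c\) inequality to membership in \(\fR_{r-1}\) in one line, by invoking \(\hat\cF_t^{N_c}\cG^u_{\iota,r}\in\fR_{r-1}\). If you follow that route, this propagation step is where extra care is needed.
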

\begin{proof}
Denote by \(\cG_{\iota,r}^u = \{(p_{j,\iota}, W_{j,\iota}, \rho_{j,\iota})\}\) and let \(W_{j,k,\iota}\) be the maximal connected subsets of \(W_{j,\iota} \setminus \cS_{N_c,t}^{\bH}\). Set also \(\hat \cF_t^{N_c} \cG_{\iota,r}^u = \{(p_{j,k,\iota},\hat{\cF}_t^{N_c} (W_{j,k,\iota}), \rho_{j,k,\iota})\}\) and let \(\cA_{\iota}\) be the collection of indices \((j,k)\) such that \(W_{j,k,\iota} \subset \cM \setminus \cM_t^{N_c}\). We have 
\begin{equation}\label{eq:mass-enter-hole}
 \mu_{\cG_{\iota,r}^u} (\cM \setminus \cM_t^{N_c}) = \sum_{(j,k) \in \cA_{\iota}} p_{j,k,\iota}.
\end{equation}
By Proposition \ref{prop:invariance-standard-families} about invariance of standard families and the fact that \(\cG_{\iota}\) are initial regular standard families, we have that \(\cL_t^{N_c} \mu_{\cG_\iota (k)}  = \cL_t^{(k+1)N_c} \mu_{\cG_{\iota}}= \mu_{\tilde \cG_{\iota}}\) for some regular standard families \(\tilde \cG_{\iota} = \{(\tilde p_{s,\iota}, \tilde \ell_{s,\iota})\}_s\). We set
\[
\cG_{\iota,r-1}^u =  \{(p_{j,k,\iota},\hat{\cF}_t^{N_c} (W_{j,k,\iota}), \rho_{j,k,\iota})\}_{(j,k) \notin \cA_{\iota}} \cup \{(\mu_{\cG_{\iota,r}^u} (\cM \setminus \cM_t^{N_c}) \tilde p_{s,\iota}, \tilde \ell_{s,\iota})\}_s.
\]
The standard families \(\cG_{\iota,r-1}^u\) yield the decomposition in the statement of the Lemma but to conclude the proof we need to bound the regularity of \(\cG_{\iota,r-1}^u\). Note that the densities supported by \(\cG_{\iota,r-1}^u\) either belong to the standard family \(\tilde \cG_{\iota}\) and so they are regular, or they are push-forwards of densities supported on an unstable curve in \(\cG_{\iota,r}^u\). By definition, these densities become regular after at most \(r\) iterates of \(\hat \cF_t^{N_c}\), and so the densities supported by \(\cG_{\iota,r-1}^u\) become regular after at most \(r-1\) iterates of \(\hat \cF_t^{N_c}\). As for the boundary, by the definition in the last displayed equation,
\[
\cZ(\cG_{\iota,r-1}^u) = \sum_{(j,k) \notin \cA_{\iota}} p_{j,k,\iota}  \frac{1}{|\hat \cF_t^{N_c} (W_{j,k,\iota})|} + \mu_{\cG_{\iota,r}^u} (\cM \setminus \cM_t^{N_c}) \cZ (\tilde \cG_{\iota}) .
\]
By \eqref{eq:mass-enter-hole} and  the fact that \(\cZ(\tilde \cG_{\iota}) \le B_2\) because \(\tilde \cG_{\iota}\) are regular standard families, we have
\begin{equation}\label{eq:support-recovery-hole}
\cZ(\cG_{\iota,r-1}^u) \le  \sum_{(j,k) \notin \cA_{\iota}} p_{j,k,\iota}  \frac{1}{|\hat \cF_t^{N_c} (W_{j,k,\iota})|} + B_2 \sum_{(j,k) \in \cA_{\iota}} p_{j,k,\iota}.
\end{equation}
Consider now an unstable curve \(W\) such that \(W \subset \cM \setminus \cM_{t}^{N_c} = \cup_{n = 0}^{N_c} \cF^{-n}(\cH_t)\). Then, by Lemma \ref{lem:neigh-sing} (which is independent on the results of this section), \(W \subset \bigl[ \cS_{N_c} \cup \cup_{n=0}^{N_c}\cF^{-n}(\partial \cH_t) \bigr ]_{h_{N_c}(t)}\) for some function \(h_{N_c}\) such that \(\lim_{t \to 0}h_{N_c} (t) = 0\). By transverslity, this implies that \(|W| \le C_{\mathrm{cone}}h_{N_c} (t)\). In particular, using any bound on the expansion of unstable curves (e.g., iterating Lemma \ref{lem:upp-bound-stretch-curves}), there exists \(t_0 >0\) small enough such that \(1/|\hat \cF_t^{N_c} (W)| \ge B_2\) for any  \(W \subset \cM \setminus \cM_{t}^{N_c}\) and \(t \in [0,t_0]\). Applying this estimate to the unstable curves \(W_{j,k,\iota}\subset \cM \setminus \cM_{t}^{N_c}\) with \((j,k) \in \cA_\iota\) and by \eqref{eq:support-recovery-hole}, we have, for any \(t \in [0,t_0]\),
\[
\cZ(\cG_{\iota,r-1}^u) \le \sum_{(j,k) \notin \cA_{\iota}} p_{j,k,\iota}  \frac{1}{|\hat \cF_t^{N_c} (W_{j,k,\iota})|} + \sum_{(j,k) \in \cA_{\iota}} p_{j,k,\iota} \frac{1}{|\hat \cF_t^{N_c} (W_{j,k, \iota})|} = \cZ(\hat \cF_t^{N_c} \cG_{\iota,r}^u).
\]
Hence, since \(\cG_{\iota,r}^u \in \fR_r\) and consequently \(\hat \cF_t^{N_c} \cG_{\iota,r}^u \in \fR_{r-1}\), we have that \(\cG_{\iota,r-1}^u \in \fR_{r-1}\) as well. This concludes the proof of the statement.
\end{proof}

The following linear scheme encodes all the results obtained so far on coupling. We use the notation \(M \bN = \{M, 2M, ...\}\).

\begin{lemma}\label{lem:linear-scheme1}
    Assume that \(\cG_{\iota}(k)\) admit a \((c_r, u_r)\)-decomposition as in Definition \ref{def:ckuk}. There exist \(C, p_c >0\), \(M \in \bN\) and \(t_0>0\) such that, for all \(t \in [0,t_0]\), the standard families \(\cL_t^{N_c}\cG_{\iota}(k)\) admit a \((c_r', u_r')\)-decomposition with
    \[
        \begin{split}
         & c_r ' \le (1+ C t)c_{r-1}\quad \text{if } r\ge 1,\\
        & c_0' \le (1+ C t)p_c u_0,\\
        & u_0 ' \le (1+ C t) ( u_1 + (1-p_c)u_0 ),\\
         &u_r ' \le \begin{cases}
        (1+ C t) u_{r+1}  & \text{if } r \notin M \bN\\
        (1+ C t)\bigl(u_{r+1} +  \eta_{0}^{\frac 1 4}c_{\frac r M -1}\chi^{\frac{1}{4}(\frac{r}{M} -1)} \bigr) &  \text{if } r \in M \bN.
    \end{cases}
    \end{split}
    \]
\end{lemma}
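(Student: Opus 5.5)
We start from the identity \eqref{eq:very-funny-decomposition}, which expresses \(\cL_t^{N_c}\mu_{\cG_\iota(k)}\) as a sum of three kinds of terms: the evolved uncoupled families of class \(r\), the evolved coupled families of scale \(\chi^r\eta_0\), and a common normalizing factor. The normalizing denominator \(1-\sum_r c_r\min_\iota \mu_{\cG^c_{\iota,r}}(\cM\setminus\cM_t^{N_c})\) is at least \(1-C_{N_c}t\) by Lemma \ref{lem:lost-mass-with-t}, because coupled families are regular. Its inverse is therefore \(\le 1+Ct\), which accounts for every factor \((1+Ct)\) in the statement. The remaining work is to sort each bracketed term into the correct class of the new decomposition.

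\emph{Uncoupled terms with \(r\ge1\).} Lemma \ref{lem:growth-conditional} rewrites the bracket as \(\mu_{\cG^u_{\iota,r-1}}\) with \(\cG^u_{\iota,r-1}\in\fR_{r-1}\). This moves the mass \(u_r\) down to class \(r-1\) and produces the contribution \(u_{r+1}\) to \(u_r'\).

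\emph{The term \(u_0\).} Here the families are regular. For regular families the same argument as in Lemma \ref{lem:growth-conditional} shows that the bracket is a regular standard family (the hole mass is replaced by the regular family \(\cL_t^{N_c}\mu_{\cG_\iota(k)}\)). We then apply Lemma \ref{lem:fundamental-coupling} to the two regular families \(\cG^u_{1,0},\cG^u_{2,0}\). It yields a fraction \(p_c\) of \(\eta_0\)-coupled regular pairs, which goes into \(c_0'\), and a regular remainder, which goes into \(u_0'\).

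One subtlety: Lemma \ref{lem:fundamental-coupling} couples only surviving mass. The hole contribution \(\mu_{\cG^u_{\iota,0}}(\cM\setminus\cM_t^{N_c})\,\mu_{\tilde\cG_\iota}\) has weight at most \(C_{N_c}t\), so we keep it inside the uncoupled regular part. The weights \(p_c u_0\) are the same for \(\iota=1,2\), as coupling requires, and this gives \(c_0'\) and the term \((1-p_c)u_0\) in \(u_0'\).

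\emph{Coupled terms.} For \(c_r\) we apply Lemma \ref{lem:one-step-loss-mass} with \(\eta=\chi^r\eta_0\) and \(\cE_\iota=\tilde\cG_\iota\), the regular family representing \(\cL_t^{N_c}\mu_{\cG_\iota(k)}\) from Proposition \ref{prop:invariance-standard-families}. The bracket \(\mu_{\cG^c_{\iota,r}}(\phi\circ\cF^{N_c}\Id_{\cM_t^{N_c}})+A_{\iota,r}(\cL_t^{N_c}\mu_{\cG_\iota(k)})(\phi)\) is exactly the left side of that lemma. It splits into:
\begin{itemize}
\item a \(\chi^{r+1}\eta_0\)-stacked regular part of weight \(\Omega_c'\le1\), which contributes \(c_r\) to \(c_{r+1}'\);
\item an uncoupled part of weight \(\le(\chi^r\eta_0)^{1/4}\) in \(\fR_{\lfloor C\ln(\chi^{-r}\eta_0^{-1})\rfloor}\).
\end{itemize}
The coupled part is in fact coupled and not merely stacked, since the densities agree after Lemma \ref{lem:holonomy-mismatch-decomposition}. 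We then choose \(M\in\bN\) larger than \(C\ln\chi^{-1}+C\ln\eta_0^{-1}\), so that the class index satisfies \(\lfloor C\ln(\chi^{-r}\eta_0^{-1})\rfloor\le M(r+1)\). Since \(\fR_s\subseteq\fR_{s'}\) for \(s\le s'\), we may place this mass in class \(M(r+1)\). Writing \(r=\frac{r'}{M}-1\) for \(r'\in M\bN\) produces the extra term \(\eta_0^{1/4}c_{r'/M-1}\chi^{\frac14(r'/M-1)}\) in \(u'_{r'}\). The monotonicity \(\fR_s\subseteq\fR_{s'}\) itself follows from invariance of regularity under \(\hat\cF_t^{N_c}\) (Corollary \ref{cor:Growth-Lemma}(a) and Corollary \ref{cor:useful-density}(a)).

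\emph{Main obstacle.} The \(u_0\) step is the delicate one. Lemma \ref{lem:fundamental-coupling} requires the exact same coupled weight for both families, and it couples only surviving mass. We need to check that the leftover, together with the regenerated hole mass, is still regular with the same total weight for \(\iota=1,2\). If necessary we reduce \(p_c\) and choose \(t_0\) small relative to \(N_c\) to absorb the hole contributions into the \((1+Ct)\) factors.
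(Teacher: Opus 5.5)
Your proposal is correct and follows the paper's proof essentially step by step. The normalizing factor in \eqref{eq:very-funny-decomposition} is at most \(1+Ct\) by Lemma \ref{lem:lost-mass-with-t}, and the classes \(r\ge 1\) shift down by Lemma \ref{lem:growth-conditional}. The \(u_0\) term is handled by Lemma \ref{lem:fundamental-coupling}, with the regenerated hole mass placed in the uncoupled regular part; the leftover weights \(1-p_c\) automatically agree for \(\iota=1,2\), since each bracket has total mass one. The coupled terms are handled by Lemma \ref{lem:one-step-loss-mass}, with \(\cE_\iota\) the regular family representing \(\cL_t^{N_c}\mu_{\cG_\iota(k)}\) and \(M\) large.

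One small correction concerns the inclusion \(\fR_s\subseteq\fR_{s'}\) for \(s\le s'\). It follows from Corollary \ref{cor:Growth-Lemma}(c) together with \eqref{eq:conditionN_c}, and from Corollary \ref{cor:useful-density}(b) using \(N_c\ge\bar n\). It does not follow from the parts (a) you cite, whose hypotheses a regular family need not satisfy.
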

\begin{proof}
We decompose \(\mu_{\cL_t^{N_c} \cG_{\iota}(k)} = \cL_t^{N_c}\mu_{\cG_{\iota}(k)} \) as in \eqref{eq:very-funny-decomposition} and we study each term separately. For convenience, we set
\[
W_{t} = \frac{1}{1-\sum_{r = 0}^{\infty} c_r \min_{\iota \in \{1,2\}}\mu_{\cG_{\iota,r}^c} (\cM\setminus \cM_t^{N_c})}.
\]
By Lemma \ref{lem:growth-conditional}, for \(t_0>0\) small enough, there exist standard families \(\tilde \cG_{\iota, r-1}^{u}  \in \fR_{r-1}\) such that, for any \(t \in [0,t_0]\) and \(\phi \in \cC^0(\cM)\),
\[
\begin{split}
    \sum_{r=1}^{\infty} u_r \bigl ( \mu_{\cG_{\iota,r}^u} (\phi \circ \cF^{N_c} \Id_{\cM_t^{N_c}}) + \mu_{\cG_{\iota,r}^u} (\cM \setminus \cM_t^{N_c}) (\cL_t^{N_c} \mu_{\cG_{\iota}(k)})(\phi)\bigr)  &= \sum_{r=1}^{\infty} u_r   \mu_{\tilde \cG_{\iota,r-1}^u}(\phi) . 
\end{split}
\]
Moreover, by Lemma \ref{lem:fundamental-coupling} and our choice of \(N_c\), for \(t_0 >0\) small enough, there exist \(p_c, p_{\iota,u}>0\), two \(\eta_0\)-coupled regular standard pairs \(\ell_{\iota,c}\) and two regular standard families \(\tilde \cG_{\iota, 0}^u \) such that
\[
\begin{split}
    & u_0 \bigl(\mu_{\cG_{\iota,0}^u} (\phi \circ \cF^{N_c} \Id_{\cM_t^{N_c}}) + \mu_{\cG_{\iota,0}^u} (\cM \setminus \cM_t^{N_c}) (\cL_t^{N_c} \mu_{\cG_{\iota}(k)})(\phi)\bigr) \\
   &\hspace{2cm} = u_0 \bigl( p_c \mu_{\ell_{\iota,c}} (\phi) + p_{\iota,u}\mu_{\tilde \cG_{\iota,0}^u} (\phi) +  \mu_{\cG_{\iota,0}^u} (\cM \setminus \cM_t^{N_c}) (\cL_t^{N_c} \mu_{\cG_{\iota}(k)})(\phi)\bigr).
\end{split}
\]
By the invariance established in Proposition \ref{prop:invariance-standard-families} and the fact that \(\cG_{\iota}\) are initial regular, \(\cL_t^{N_c} \mu_{\cG_{\iota}(k)} = \cL_t^{(k+1)N_c}\mu_{\cG_{\iota}}\) are associated to regular standard families. Hence, there exist yet other two regular standard families, which we also denote by \(\tilde \cG_{\iota, 0}^u \), such that,
\[
u_0 \bigl( \mu_{\cG_{\iota,0}^u} (\phi \circ \cF^{N_c} \Id_{\cM_t^{N_c}}) + \mu_{\cG_{\iota,0}^u} (\cM \setminus \cM_t^{N_c}) (\cL_t^{N_c} \mu_{\cG_\iota(k)})(\phi)\bigr)= u_0 \bigl( p_c \mu_{\tilde \cG_{\iota, 0}^c}(\phi) + (1-p_c) \mu_{\tilde \cG_{\iota,0}^u}(\phi)\bigr).
\]
In the last equality we used the notation \( \tilde \cG_{\iota, 0}^c =\ell_{\iota,c}\) and the fact that the total mass on the l.h.s.\! has to coincide with the total mass on the r.h.s.\! Continuing with the last terms, recalling the definition of \(A_{\iota, r}\) and  the fact that \(\cL_t^{N_c} \mu_{\cG_\iota (k)}\) are associated to regular standard families (which we may denote to \(\cE_{\iota}\) to match notations), we can apply Lemma \ref{lem:one-step-loss-mass}. Hence, there exists \(C>0\) and \(t_0 >0\) such that, for each \(r \in \bN\) and \(t \in [0,t_0]\), there exist \(\Omega_{c,r}', \Omega_{u, r}' \in [0,1]\), \(\Omega_{u, r}' \le \eta_0^{1/4} \chi^{r/4}\), and \(\chi^{r+1}\eta_0\)-coupled regular standard families \(\tilde \cG_{\iota, r+1}^{c}\) and standard families \(\tilde \cG_{\iota, M (r+1)}^u \in \fR_{C\ln \eta_0^{-1} \chi^{-r}}\) such that
\begin{equation}\label{eq:tiring}
\begin{split}
\sum_{r=0}^{\infty} c_r \bigl (\mu_{\cG_{\iota,r}^c} (\phi \circ \cF^{N_c} \Id_{\cM_t^{N_c}}) &+  A_{\iota,r} (\cL_t^{N_c} \mu_{\cG_\iota (k)})(\phi) \bigr) \\
&= \sum_{r=0}^{\infty} c_r \bigl( \Omega_{c,r}' \mu_{\tilde \cG_{\iota, r+1}^{c}}(\phi) + \Omega_{u, r}'\mu_{\tilde \cG_{\iota, M (r + 1)}^u} (\phi) \bigr).
\end{split}
\end{equation}
Note that \(\fR_{C\ln (\eta_0^{-1} \chi^{-r}) } \subset \fR_{M(r+1)}\) for \(M \in \bN\) large enough, so that \(\tilde \cG_{\iota, M( r+1)}^u \in \fR_{M(r+1)}\), explaining the notation. Therefore, we can re-write the expression \eqref{eq:very-funny-decomposition} as
\[
\begin{split}
    \cL^{N_c}_t \mu_{\cG_\iota (k)} &= W_{t} \sum_{r=1}^{\infty} u_r \mu_{\tilde \cG_{\iota,r-1}^u} + W_{t} u_0 \bigl(p_c \mu_{\tilde \cG_{\iota, 0}^c} + (1-p_c) \mu_{\tilde \cG_{\iota,0}^u}\bigr) \\
    &+ W_{t}\sum_{r=0}^{\infty} c_r \bigl( \Omega_{c,r}'\mu_{\tilde \cG_{\iota, r+1}^{c}} + \Omega_{u, r}'\mu_{\tilde \cG_{\iota, M(r+1)}^u} \bigr).
\end{split}
\]
Relabeling the indices, we obtain,
\[
\begin{split}
    \cL^{N_c}_t \mu_{\cG_\iota (k)} &= W_{t}\mu_{\tilde \cG_{\iota,0}^u} \bigl((1-p_c) u_0  + u_1\bigr)  + W_{t} \sum_{r=1}^{\infty} u_{r+1} \mu_{\tilde \cG_{\iota,r}^u} + W_{t} u_0 p_c \mu_{\tilde \cG_{\iota, 0}^c} \\
    &+ W_{t}\sum_{r=1}^{\infty} c_{r-1}  \Omega_{c,r-1}' \mu_{\tilde \cG_{\iota, r}^{c}}(\phi) + W_{ t} \sum_{\substack{r=M \\ r \equiv 0 \ (\mathrm{mod}\ M)}}^{\infty} c_{\frac r M -1}\Omega_{u,\frac r M -1}'\mu_{\tilde \cG_{\iota, r}^u}\quad \text{and}\\ 
    &\hspace{3cm}\Omega_{u,\frac r M -1}' \le \eta_{0}^{\frac 1 4}\chi^{\frac{1}{4}(\frac{r}{M}-1)}.
\end{split}
\]
Once we take care of the common factor \(W_{t}\), we can compare the expression above with Definition \ref{def:ckuk} and obtain the desired decomposition.  By Definition \ref{def:ckuk}, \(\cG_{\iota,r}^c\) are regular standard families. Therefore, by Lemma \ref{lem:lost-mass-with-t}, there exists \(C_{N_c}>0\) such that \(\mu_{\cG_{\iota,r}^c} (\cM \setminus \cM_t^{N_c}) \le C_{N_c} t\) for both standard families. Therefore, for \(t_0>0\) small enough and because \(\sum_r c_r\) is less than one
\[
    |W_{t}| \le \frac{1}{1 - C_{N_c} \sum_{r = 0}^{\infty} c_r  t} \le \frac{1}{1-C_{N_c} t} \le 1 + 2C_{N_c} t.
\]
Here we can eliminate the subscript in \(C_{N_c}\) since \(N_c\) is already fixed depending on \(\eta_0\), which in turn depends on the table and the initial regularity (see the discussion at the beginning of Section \ref{subsec:evolution-two-coupled-sf}). This concludes the proof of the Lemma after renaming the constant.
\end{proof}

In order to prove decay of correlations with the scheme introduced above, we introduce the following norm. Recall \(M \in \bN\) from the statement of Lemma \ref{lem:linear-scheme1}. Let \(\psi_{-}<1\) and \(\psi_{+} >1\) such that
\[
\chi^{1/4} < (1/2)^{1/4} < \psi_{-} < 9/10, \qquad 1 < \psi_{+} < \biggl( \frac{\psi_-}{\chi^{1/4}}\biggr)^{\frac 1 M} \quad \text{and} \quad \psi_{+}^{M+1} \eta_{0}^{\frac 1 4} < \frac{1}{100}.
\]
The above relations hold choosing \(\psi_{+}\) sufficiently close to \(1\). For \(c_r,u_r \in [0,1]\), we set
\begin{equation}\label{eq:fancy-norm}
\|(c_r, u_r)\|_{*} = \sum_{r =0}^{\infty} u_r \psi_{+}^{r+1} + \sum_{r =0}^{\infty} c_r \psi_{-}^r .
\end{equation}

\begin{lemma}\label{lem:contraction-norm}
  Assume that \(\cG_{\iota} (k)\) admit a \((c_r, u_r)\)-decomposition as in Definition \ref{def:ckuk}. There exist \(\tau \in (0,1)\) and \(t_0>0\) such that, for all \(t \in [0,t_0]\), the standard families \(\cL_t^{N_c} \cG_{\iota}(k)\) admit a \((c_r', u_r')\)-decomposition with
    \[
    \|(c_r', u_r')\|_{*} \le \tau  \|(c_r, u_r)\|_{*}.
    \]
\end{lemma}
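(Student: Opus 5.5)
The plan is to apply Lemma \ref{lem:linear-scheme1} directly and substitute the resulting inequalities into the weighted norm \eqref{eq:fancy-norm}, then compare coefficient by coefficient. Lemma \ref{lem:linear-scheme1} provides a \((c_r',u_r')\)-decomposition of \(\cL_t^{N_c}\cG_\iota(k)\). Using its bounds, we get
\[
\|(c_r',u_r')\|_{*} \le (1+Ct)\Bigl[ \psi_+\bigl(u_1+(1-p_c)u_0\bigr) + \sum_{r\ge 1} u_{r+1}\psi_+^{r+1} + \sum_{m\ge 1}\eta_0^{\frac14}c_{m-1}\chi^{\frac14(m-1)}\psi_+^{Mm+1} + p_c u_0 + \sum_{r\ge 1} c_{r-1}\psi_-^{r}\Bigr].
\]
The task is then to bound each coefficient of \(u_r\) and \(c_r\) on the right by \(\tau'\) times its weight in \(\|(c_r,u_r)\|_*\), with some \(\tau'<1\) independent of \(t\). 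At the end we choose \(t_0\) so small that \((1+Ct_0)\tau'=:\tau<1\).

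The coefficient checks run as follows.

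\emph{The \(u_0\) term.} Its coefficient is \(\psi_+(1-p_c)+p_c\), to be compared with the weight \(\psi_+\). The ratio is \(1-p_c+p_c/\psi_+<1\).

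\emph{The \(u_1\) term.} It is hit by \(\psi_+\) against the weight \(\psi_+^2\), giving ratio \(1/\psi_+\).

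\emph{The \(u_{r+1}\) terms for \(r\ge1\).} Each is hit by \(\psi_+^{r+1}\) against the weight \(\psi_+^{r+2}\), again giving ratio \(1/\psi_+<1\). This shifting-down of the \(u\) index is exactly why \(\psi_+>1\) is needed.

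\emph{The \(c_{m-1}\) terms, \(m\ge1\).} Here \(c_{m-1}\) receives \(\psi_-^{m}\) from the coupled part and \(\eta_0^{1/4}\chi^{(m-1)/4}\psi_+^{Mm+1}\) from decoupling, against the weight \(\psi_-^{m-1}\). The first contribution gives ratio \(\psi_-\le 9/10\). For the second, the ratio is
\[
\eta_0^{\frac14}\psi_+^{M+1}\,\psi_-\Bigl(\frac{\chi^{1/4}\psi_+^{M}}{\psi_-}\Bigr)^{m-1}.
\]
By the choice \(\psi_+^M<\psi_-/\chi^{1/4}\), the bracket is less than \(1\). The prefactor is bounded by \(\eta_0^{1/4}\psi_+^{M+1}<1/100\). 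The total ratio is therefore at most \(9/10+1/100<1\).

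The constant is then \(\tau'=\max\{1-p_c(1-\psi_+^{-1}),\ \psi_+^{-1},\ 91/100\}\), which depends only on the fixed parameters \(p_c,\psi_\pm,M,\eta_0\) and not on \(t\).

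The main obstacle is keeping the \(t\)-dependence under control. The factor \((1+Ct)\) must be uniform in \(r\), which Lemma \ref{lem:linear-scheme1} already guarantees. Likewise \(p_c\), \(M\) and \(\eta_0\) must be fixed before \(t_0\), so \(t_0\) is taken as the minimum of the \(t_0\) from Lemma \ref{lem:linear-scheme1} and \((1/\tau'-1)/(2C)\). One should also check that the norm is finite, which holds because \(\sum_r u_r\psi_+^{r+1}<\infty\) is preserved by the shift. Finally, the coefficient \(p_c\) in front of \(u_0\) in \(c_0'\) should come with weight \(\psi_-^0=1\), and the bound above already accounts for this.
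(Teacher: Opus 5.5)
Your proposal is correct and follows the paper's proof essentially verbatim. As the paper does, you plug the bounds of Lemma \ref{lem:linear-scheme1} into \eqref{eq:fancy-norm}, compare coefficients of \(u_r\) and \(c_r\) against their weights to get a \(t\)-independent contraction factor, and absorb the factor \((1+Ct)\) by shrinking \(t_0\).

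The only blemish is a stray factor \(\psi_-\) in your displayed ratio for the decoupling contribution to \(c_{m-1}\): the true ratio is \(\eta_0^{1/4}\psi_+^{M+1}\bigl(\chi^{1/4}\psi_+^{M}/\psi_-\bigr)^{m-1}\). This is harmless, because your final bound \(<1/100\) holds for the corrected expression as well.
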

\begin{proof}
By Lemma \ref{lem:linear-scheme1}, \(\cL_t^{N_c} \cG_{\iota}(k)\) admit a \((c_r', u_r')\)-decomposition and,  for some \(C, p_c, M >0\),
    \[
    \begin{split}
        &\|(c_r', u_r')\|_{*} =  \sum_{r =0}^{\infty}  \psi_{+}^{r+1} u_r ' + \sum_{r =0}^{\infty}\psi_{-}^r  c_r ' \\
        &\le (1 + C t) \biggl (\sum_{r=1}^{\infty} \psi_{+}^{r+1}u_{r+1} + \eta_{0}^{\frac 1 4}\sum_{r = 0}^{\infty} \psi_{+}^{(r+1)M + 1}\chi^{\frac r 4} c_{r} + \psi_+ ( u_1 + (1-p_c)u_0 )+ p_c u_0 + \sum_{r=1}^{\infty} \psi_{-}^r c_{r-1} \biggr ).
    \end{split}
    \]
After renaming the indices, we obtain for the part in big brackets
\[
\begin{split}
    \psi_+^{-1} \sum_{r=1}^{\infty} &\psi_+^{r+1} u_r + \psi_+ u_0 ((1-p_c) + \psi_+^{-1}p_c ) + \sum_{r=0}^{\infty} \psi_{-}^r c_r (\psi_{-} + \eta_0^{\frac 1 4}\psi_{+}^{(r+1)M + 1} \psi_{-}^{-r}\chi^{\frac r 4}) \le  \tau  \|(c_r, u_r)\|_{*},
\end{split}
\]
where we have set
\[
\tau = \max \biggl\{(1-p_c) + \psi_{+}^{-1}p_c, \text{ }\sup_{r\ge 0}\bigl\{\psi_{-} + \eta_0^{\frac 1 4}\psi_{+}^{(r+1)M + 1} \psi_{-}^{-r}\chi^{\frac r 4}\bigr\}\biggr\}.
\]
By the conditions before \eqref{eq:fancy-norm}, we have that \(\psi_{+}^M \psi_{-}^{-1}\chi^{1/4} <1\) and consequently, since we also required that \(\eta_{0}^{1/4}\psi_{+}^{M+1} <1/100\), we have that \(\eta_{0}^{1/4}\psi_{+}^{M(r+1) + 1} \psi_{-}^{-r}\chi^{r/4} \le \eta_{0}^{1/4}\psi_{+}^{M+1} < 1/100\) for all \(r \in \bN\). Therefore, using also that \(\psi_{-}<9/10\) and that \(p_c\) is strictly positive,
\[
\tau \le \max \{(1-p_c) + \psi_{+}^{-1}p_c, \text{ } \psi_{-} + \eta_0^{\frac 1 4}\psi_{+}\} < 1.
\]
Thus, we have obtained that \(\|(c_r', u_r')\|_{*} \le (1+ Ct) \tau \|(c_r, u_r)\|_{*}\) and the Lemma follows by taking \(t_0\) small enough and renaming \(\tau\).
\end{proof}

We now show how the above implies exponential decay of correlations.

\begin{proof}[Proof of Theorem \ref{thm:exp-mixing-sf}]
Let \(\cG_{\iota}\) be two initial regular standard families and \(t_0\) be small enough so that Lemma \ref{lem:contraction-norm} applies. For any \(n \in \bN\), write \(n = kN_c + n_0\) with \(n_0 < N_c\). By Proposition \ref{prop:invariance-standard-families} about invariance, the measures \(\cL_t^{n_0}\mu_{\cG_\iota}\) are associated to the regular standard families \(\cL_t^{n_0}\cG_{\iota} = \cG_{\iota}(n_0)\). Therefore, they admit a \((c_r,u_r)\)-decomposition with \(u_0 = 1\) and \(u_r = c_r = c_0 =0\) for \(r\) positive. Applying repeatedly Lemma \ref{lem:linear-scheme1}, we obtain that for any \(p \in \bN\) the measures \(\cL_t^{k N_c} (\cL_t^{n_0}\mu_{\cG_\iota})\) admit a \((c_r,u_r)\)-decomposition. Moreover, denoting these decompositions by \((c_r (k), u_r(k))\), we have, for any \(\phi \in \cC^{1}(\cM)\),
    \[
    |\cL_t^{n} \mu_{\cG_1} (\phi) - \cL_t^n \mu_{\cG_2} (\phi) | \le  \sum_{r=0}^{\infty} u_r(k) |\mu_{\cG_{1,r}^u} (\phi) - \mu_{\cG_{2,r}^u} (\phi) | + \sum_{r=0}^{\infty} c_r(k) |\mu_{\cG_{1,r}^c} (\phi) - \mu_{\cG_{2,r}^c} (\phi) |,
    \]
for some standard families \(\cG_{\iota,r}^{c/u}\) as in Definition \ref{def:ckuk}. Since \(\cG_{\iota,r}^c\) are \(\chi^r\eta_0\)-coupled, the quantity above is bounded by
\[
2\|\phi\|_{\cC^0}\sum_{r=0}^{\infty} u_r(k) + \eta_0 \|\phi\|_{\cC^1} \sum_{r=0}^{\infty}c_r(k)\chi^r \le 2 \|\phi\|_{\cC^1} \|(c_r(k), u_r(k))\|_{*}, 
\]
where in the inequality we used that \(\chi \le \chi^{1/4}  \le \psi_{-}\) and \(\psi_{+} \ge 1\). Therefore, using Lemma \ref{lem:contraction-norm} repeatedly and recalling that \(k = \lfloor n/N_c\rfloor\), we have, for some \(\tau \in (0,1)\),
\[
 |\cL_t^{n} \mu_{\cG_1} (\phi) - \cL_t^n \mu_{\cG_2} (\phi) | \le 2\|\phi\|_{\cC^1} \tau^{k} \|(c_r(0), u_r(0))\|_{*} = 2\psi_{+}\|\phi\|_{\cC^1} \tau^{\lfloor \frac n N_c \rfloor },
\]
where in the last equality we have used that \(c_{r}(0) = 0\), \(u_{0}(0) = 1\) and \(u_r(0) = 0\) for \(r \ge 1\). By setting \(\gamma = \tau^{1/N_c}\) in the equation above, we conclude the proof of the theorem. 
\end{proof}

\newpage

\bibliographystyle{alpha}
\bibliography{modernity.bib}

\end{document}